\documentclass[final]{amsart}
\usepackage[usenames,dvipsnames]{xcolor}
\definecolor{cite}{HTML}{0851A6}
\definecolor{url}{HTML}{0851A6}
\definecolor{link}{HTML}{8F0C00}

\usepackage[colorlinks=true, linkcolor=link, citecolor = cite, linktocpage, backref=page]{hyperref}

\usepackage[shortalphabetic]{amsrefs}

\usepackage{fancyhdr, amsmath, amsthm, enumerate, microtype}
\usepackage[all, cmtip]{xy}

\usepackage{todonotes}




\usepackage[bitstream-charter]{mathdesign}
\usepackage[T1]{fontenc}
\usepackage{stmaryrd}

\DeclareMathAlphabet{\eur}{U}{zeus}{m}{n}
\newcommand{\matheur}[1]{\eur{#1}}


\setlength{\textwidth}{470pt}
\setlength{\textheight}{620pt}
\setlength{\voffset}{-35pt}
\setlength{\oddsidemargin}{0pt}
\setlength{\evensidemargin}{0pt}
\setlength{\headsep}{15pt}
\frenchspacing

\theoremstyle{plain}
\newtheorem{prop}[subsubsection]{Proposition}
\newtheorem{lem}[subsubsection]{Lemma}
\newtheorem{cor}[subsubsection]{Corollary}
\newtheorem{thm}[subsubsection]{Theorem}
\newtheorem*{thm*}{Theorem}

\theoremstyle{definition}
\newtheorem{defn}[subsubsection]{Definition}
\newtheorem{notation}[subsubsection]{Notation}

\theoremstyle{remark}
\newtheorem{rmk}[subsubsection]{Remark}
\newtheorem{expl}[subsubsection]{Example}

\newcommand{\teq}{\addtocounter{subsubsection}{1}\tag{\thesubsubsection}}

\newcommand{\arrdisplacement}{0.36ex}
\newcommand{\arrdisplacementsp}{0.72ex}

\DeclareMathOperator{\add}{add}
\DeclareMathOperator{\addCoFil}{addCoFil}
\DeclareMathOperator{\addFil}{addFil}
\DeclareMathOperator{\addUnit}{addUnit}
\DeclareMathOperator{\assgr}{ass-gr}
\DeclareMathOperator{\aug}{au}
\DeclareMathOperator{\BarO}{Bar}

\DeclareMathOperator{\car}{char}

\DeclareMathOperator{\coChev}{coChev}

\DeclareMathOperator{\coFib}{coFib}
\DeclareMathOperator{\coFil}{coFil}
\DeclareMathOperator{\coFree}{coFree}
\DeclareMathOperator{\Conf}{Conf}
\DeclareMathOperator{\coLie}{coLie}
\DeclareMathOperator{\coLieshriek}{\coLie^!}

\DeclareMathOperator*{\colim}{colim}
\DeclareMathOperator{\const}{const}
\DeclareMathOperator{\coOp}{coOp}
\DeclareMathOperator{\coPrim}{coPrim}
\DeclareMathOperator{\cotriv}{cotriv}

\DeclareMathOperator{\Cinfty}{C_\infty}
\DeclareMathOperator{\coBarP}{coBar}
\DeclareMathOperator{\ComAlg}{ComAlg}
\DeclareMathOperator{\ComAlgshriek}{\ComAlg^!}
\DeclareMathOperator{\ComAlgstar}{\ComAlg^\star}
\DeclareMathOperator{\ComCoAlg}{ComCoAlg}

\newcommand{\cont}{\mathrm{cont}}
\DeclareMathOperator{\Corr}{Corr}
\DeclareMathOperator{\decay}{decay}
\DeclareMathOperator{\DeCat}{DeCat}

\newcommand{\DGCat}{\mathrm{DGCat}}
\newcommand{\DGCatpres}{\DGCat_\pres}

\newcommand{\disj}{\mathrm{disj}}

\newcommand{\enh}{\mathrm{enh}}
\DeclareMathOperator{\ev}{ev}

\newcommand{\etale}{\'etale}
\DeclareMathOperator{\Fact}{Fact}
\DeclareMathOperator{\Factstar}{\Fact^\star}
\DeclareMathOperator{\Fib}{Fib}
\newcommand{\Fil}{\mathrm{Fil}}
\DeclareMathOperator{\filToGr}{\Fil\to\gr}

\newcommand{\Fq}{\mathbb{F}_q}

\DeclareMathOperator{\Free}{Free}
\DeclareMathOperator{\Frob}{Frob}
\newcommand{\fSet}{\mathrm{fSet}}
\DeclareMathOperator{\Fun}{Fun}

\DeclareMathOperator{\Gr}{Gr}
\newcommand{\gr}{\mathrm{gr}}

\DeclareMathOperator{\grToFiltriv}{\gr\overset{0}{\to}\Fil}
\newcommand{\grun}{\underline{\mathrm{un}}}
\newcommand{\graug}{\underline{\mathrm{au}}}
\newcommand{\Ho}{\mathrm{H}}

\newcommand{\id}{\mathrm{id}}

\DeclareMathOperator{\Linfty}{L_\infty}
\DeclareMathOperator{\Lie}{Lie}

\newcommand{\Mod}{\mathrm{Mod}}
\newcommand{\munit}{\mathbb{1}}
\DeclareMathOperator{\oblv}{oblv}
\newcommand{\op}{\mathrm{op}}
\DeclareMathOperator{\Op}{Op}
\newcommand{\open}{\text{open}}

\newcommand{\otimeshat}{\hat{\otimes}}
\newcommand{\otimesshriek}{\overset{!}{\otimes}}
\newcommand{\otimesstar}{\otimes^\star}
\DeclareMathOperator{\Palg}{-alg}
\DeclareMathOperator{\Pcoalg}{-coalg}

\DeclareMathOperator{\Poinc}{Poinc}
\DeclareMathOperator{\PoincVir}{\Poinc^\mathrm{vir}}
\newcommand{\Poincare}{Poincar\'e}
\newcommand{\pres}{\mathrm{pres}}
\newcommand{\PreStk}{\mathrm{PreStk}}

\newcommand{\pt}{\mathrm{pt}}

\newcommand{\Ql}{\mathbb{Q}_\ell}
\newcommand{\Qlbar}{\lbar{\mathbb{Q}}_\ell}
\DeclareMathOperator{\Quot}{Quot}
\DeclareMathOperator{\Ran}{Ran}
\DeclareMathOperator{\RanOpen}{\overset{\circ}{\Ran}}

\DeclareMathOperator{\Rees}{Rees}
\newcommand{\res}{\mathrm{res}}
\DeclareMathOperator{\Sym}{Sym}

\newcommand{\Sch}{\mathrm{Sch}}

\DeclareMathOperator{\Spec}{Spec}

\DeclareMathOperator{\Shv}{Shv}

\newcommand{\Spc}{\mathrm{Spc}}
\DeclareMathOperator{\stab}{stab}

\newcommand{\surj}{\mathrm{surj}}

\DeclareMathOperator{\tr}{tr}
\DeclareMathOperator{\triv}{triv}
\newcommand{\un}{\mathrm{un}}
\newcommand{\union}{\mathrm{union}}
\newcommand{\unit}{\mathbf{1}}
\newcommand{\Vect}{\mathrm{Vect}}

\newcommand{\ardis}{\ar@<\arrdisplacement>}
\newcommand{\ardissp}{\ar@<\arrdisplacementsp>}

\newcommand{\lbar}[1]{\overline{#1}}
\newcommand{\llrrb}[1]{\llbracket#1\rrbracket}

\newcommand{\oversetsupscript}[3]{\overset{#2}{#1}{}^{#3}}


\newcommand{\gConf}[2]{Z^{#1}_{#2}}

\newcommand{\gConfOpen}[1]{\Conf_{#1}}


\newcommand{\alg}[2]{\matheur{A}_{#1, #2}}
\newcommand{\algb}[2]{\lbar{\matheur{A}}_{#1, #2}}
\newcommand{\liealg}[2]{\mathfrak{a}_{#1, #2}}

\newcommand{\algOr}[2]{\mathsf{A}_{#1, #2}}
\newcommand{\freeAlgOr}[1]{\mathsf{A}_{#1, \infty}}
\newcommand{\liealgOr}[2]{\mathsf{a}_{#1, #2}}
\newcommand{\trivLieAlgOr}[1]{\mathsf{a}_{#1, \infty}}

\newcommand{\multgr}[1]{\mathbb{Z}^{#1}_{\geq 0}}
\newcommand{\multgrplus}[1]{\mathbb{Z}^{#1}_{\geq 0, +}}
\newcommand{\multfil}[1]{\mathbb{Z}^{#1, \to}_{\geq 0}}
\newcommand{\multfilplus}[1]{\mathbb{Z}^{#1, \to}_{\geq 0, +}}

\newcommand{\graded}{\mathbb{Z}_{\geq 0}}
\newcommand{\gradedplus}{\mathbb{Z}_{\geq 0, +}}
\newcommand{\filtered}{\mathbb{Z}^{\to}_{\geq 0}}
\newcommand{\filteredplus}{\mathbb{Z}^{\to}_{\geq 0, +}}

\newcommand{\sheerRight}{\overrightarrow{\mathrm{sh}}}
\newcommand{\sheerLeft}{\overleftarrow{\mathrm{sh}}}

\newcommand{\sOmega}{\vec{\omega}}

\title[Homological stability and densities of generalized configuration spaces]{Homological stability and densities of generalized configuration spaces}
\author{Quoc P. Ho}
\address{Institute of Science and Technology Austria, Klosterneuburg, Austria}
\email{qho@ist.ac.at}
\date{\today}

\keywords{Generalized configuration spaces, homological stability, homological densities, chiral algebras, chiral homology, factorization algebras, Koszul duality, Ran space.}
\subjclass[2010]{Primary 81R99. Secondary 18G55.}

\begin{document}
\begin{abstract}
We prove that the factorization homologies of a scheme with coefficients in truncated polynomial algebras compute the cohomologies of its generalized configuration spaces. Using Koszul duality between commutative algebras and Lie algebras, we obtain new expressions for the cohomologies of the latter. As a consequence, we obtain a uniform and conceptual approach for treating homological stability, homological densities, and arithmetic densities of generalized configuration spaces. Our results categorify, generalize, and in fact provide a conceptual understanding of the coincidences appearing in the work~\cite{farb_coincidences_2019} of Farb-Wolfson-Wood. Our computation of the stable homological densities also yields rational homotopy types which answer a question posed by Vakil-Wood in~\cite{vakil_discriminants_2015}. Our approach hinges on the study of homological stability of cohomological Chevalley complexes, which is of independent interest.
\end{abstract}

\maketitle
\tableofcontents

\section{Introduction}

\subsection{Motivation}
For any connected curve $X$ over $\Fq$, consider the symmetric powers and the (unordered) configuration spaces of $X$
\[
	\Sym^d(X) = X^d/\Sigma_d, \qquad \Conf_d(X) = \oversetsupscript{X}{\circ}{d}/\Sigma_d \overset{\open}{\subset} X^d/\Sigma_d = \Sym^d X,
\]
where $\Sigma_d$ denotes the group of permutations on $d$ letters and where in the latter, $\oversetsupscript{X}{\circ}{n}$ is the open subset of $X^n$ where we require all the points to be distinct. Moreover, let
\[
	(\Sym^{d_1} X \times \Sym^{d_2} X)_{\disj} \overset{\open}{\subset} \Sym^{d_1} X \times \Sym^{d_2} X
\]
be an open subscheme such that the collections of points in the two factors are disjoint. Then, it is well-known, at least in the case where $X=\mathbb{A}^1$, that we have the following coincidence of ``arithmetic densities'' (see~\cite{morrison_probability_nodate})
\[
	\lim_{d\to \infty} \frac{|(\Conf_d X)(\Fq)|}{|(\Sym^d X)(\Fq)|} = \lim_{(d_1, d_2)\to \infty} \frac{|((\Sym^{d_1} X \times \Sym^{d_2} X)_\disj)(\Fq)|}{|(\Sym^{d_1} X \times \Sym^{d_2} X)(\Fq)|} = \zeta_X(2)^{-1}.
\]

\subsubsection{}
\label{subsubsec:intro_define_spaces}
More generally, for any variety $X$, a pair of positive integers $m, n$, and an $m$-tuple of numbers
\[
	\mathbf{d} = (d_1, d_2, \dots, d_m) \in \multgr{m},
\]
let
\[
	\Sym^{\mathbf{d}}(X) = \prod_k \Sym^{d_k} (X),
\]
and $\gConf{\mathbf{d}}{n}(X) \subset \Sym^\mathbf{d}(X)$ consisting of sets $D$ of $|\mathbf{d}|$ (not necessarily distinct) points in $X$ such that:
\begin{enumerate}[(i)]
	\item precisely $d_k$ of the points in $D$ are labeled with the ``color'' $k$, and
	\item no point of $X$ has multiplicity of at least $n$ for every color, or, equivalently, for each point $x$ of $X$, there exists a color whose multiplicity at $x$ is less than $n$.
\end{enumerate}

For example, when $m=1$ and $\mathbf{d} = (d)$, we have
\begin{align*}
	\gConf{\mathbf{d}}{\infty} (X) &= \gConf{d}{\infty} (X) = \Sym^d X, \\
	\gConf{\mathbf{d}}{2}(X) &= \gConf{d}{2}(X) = \Conf_d X.
\end{align*}
Similarly, when $m=2$ and $\mathbf{d} = (d_1, d_2)$, we have
\begin{align*}
	\gConf{\mathbf{d}}{\infty}(X) &= \Sym^{d_1} X \times \Sym^{d_2} X,\\
	\gConf{\mathbf{d}}{1}(X) &= (\Sym^{d_1} X \times \Sym^{d_2} X)_\disj.
\end{align*}

\subsubsection{} We have the following result (see, for example,~\cite{farb_coincidences_2019}*{Thm. 2.3}).

\begin{thm} \label{thm:arithmetic_densities}
We have the following equality
\[
	\lim_{\mathbf{d}\to \infty} \frac{|\gConf{\mathbf{d}}{n}(X)(\Fq)|}{|\Sym^\mathbf{d}(X)(\Fq)|} = \zeta_X(mn\dim X)^{-1}
\]
for any connected variety $X$ and any positive integers $m, n$. Here, $m$ denotes the number of colors and $\lim_{\mathbf{d} \to \infty}$ means ``as all $d_i \to \infty$,'' at any rate.
\end{thm}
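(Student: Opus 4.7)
The plan is to compute both sides via Euler products, take the formal ratio, and invoke a Tauberian argument exploiting the pole structure of $Z_X$. First, an $\Fq$-point of $\Sym^d(X)$ is a Galois-stable effective divisor of degree $d$ on $X_{\Fqbar}$, so writing divisors in terms of closed points yields
\[
    \sum_{\mathbf{d}} |\Sym^{\mathbf{d}}(X)(\Fq)|\, t^{\mathbf{d}} = \prod_{k=1}^m Z_X(t_k), \qquad Z_X(t) = \prod_{x \in |X|}(1-t^{\deg x})^{-1}.
\]
The defining condition of $\gConf{\mathbf{d}}{n}(X)(\Fq)$ is local at each closed point $x \in |X|$: it forbids the $m$-tuple of local multiplicities $(n_{x,1},\ldots,n_{x,m})$ from satisfying $n_{x,k} \geq n$ for every $k$. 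A local inclusion-exclusion at $x$ therefore yields the factor $(1 - \prod_k t_k^{n\deg x})/\prod_k (1-t_k^{\deg x})$, and hence
\[
    \sum_{\mathbf{d}} |\gConf{\mathbf{d}}{n}(X)(\Fq)|\, t^{\mathbf{d}} = \prod_{k=1}^m Z_X(t_k) \cdot \prod_{x \in |X|}\Bigl(1-\prod_{k=1}^m t_k^{n\deg x}\Bigr).
\]

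The formal ratio of these two generating series is the single Euler product $\prod_{x \in |X|}(1 - \prod_k t_k^{n\deg x})$. Specializing at $t_k = q^{-\dim X}$ for all $k$, this becomes
\[
    \prod_{x \in |X|}(1 - q^{-mn\dim X \cdot \deg x}) = Z_X(q^{-mn\dim X})^{-1} = \zeta_X(mn\dim X)^{-1},
\]
which converges absolutely (in the non-degenerate case $mn > 1$) because the Weil bounds on $|X(\mathbb{F}_{q^r})|$ ensure $\sum_x q^{-mn\dim X \cdot \deg x} < \infty$, and it yields exactly the conjectured value.

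The main obstacle, and the only analytically nontrivial step, is the Tauberian passage from this formal equality of series to the asymptotics of their coefficients. Here I would appeal to Deligne's proof of the Weil conjectures: $Z_X(t)$ is a rational function whose pole of smallest absolute value is a simple pole at $t = q^{-\dim X}$ with positive residue, and all other singularities of $Z_X$ are strictly dominated. Consequently $|\Sym^{\mathbf{d}}(X)(\Fq)| \sim C_X^m \, q^{(d_1+\cdots+d_m)\dim X}$ as $\mathbf{d}\to\infty$, uniformly in the direction of approach; multiplying by the absolutely convergent correction factor $\prod_x(1 - \prod_k t_k^{n\deg x})$ does not disturb the leading asymptotic but rescales it by the value of this factor at $t_k = q^{-\dim X}$. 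Forming the ratio and passing to the limit gives the claimed identity, and the degenerate cases (e.g. $m=n=1$, where both sides vanish) are handled directly. This analytic argument is carried out in detail in~\cite{farb_coincidences_2016}*{\S 2}.
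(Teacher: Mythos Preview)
Your proposal is correct and follows the classical approach. In fact, the paper does not supply its own proof of this theorem: it is stated in the motivational introduction and simply cited as a classical result, with a pointer to \cite{farb_coincidences_2016}*{Thm.~2.3}. Your sketch---Euler-product generating functions, the ratio $\prod_{x}(1-\prod_k t_k^{n\deg x})$, specialization at $t_k=q^{-\dim X}$, and a Tauberian step using the pole structure of $Z_X$ coming from Deligne---is exactly the argument given in that reference, which you also cite at the end.

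It may be worth noting that the paper does eventually recover this identity by its own methods, in Corollary~\ref{cor:quotient_Frob_trace_infty} and Remark~\ref{rmk:quotient_Frob_trace_infty}: there the arithmetic density is realized as the Frobenius trace on the categorified density $\Lambda\otimes_{\algb{m}{\infty}(X)}\algb{m}{\mathbf{n}}(X)$, computed via the multiplicative trace formula (Proposition~\ref{prop:multiplicative_trace}) combined with Grothendieck--Lefschetz. That derivation is conceptually quite different from the direct generating-function argument you give, but both ultimately land on the same Euler product.
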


\subsubsection{}
Observing the coincidence in the arithmetic densities appearing in Theorem~\ref{thm:arithmetic_densities}, i.e. the written limit only depends on the product $mn$ and $X$ itself, and taking into account similar phenomena observed by Segal in~\cite{segal_topology_1979}, Farb, Wolfson, and Wood~\cite{farb_coincidences_2019} initiated the studies of ``homological densities'' of these spaces in the case where $X$ is a smooth manifold or a complex variety. More specifically, they asked if the following limit of the quotient between the \Poincare{} polynomials
\[
	\lim_{\mathbf{d} \to \infty} \frac{P_{\gConf{\mathbf{d}}{n}(X)}(t)}{P_{\Sym^\mathbf{d} (X)}(t)} \in \mathbb{Z}\llrrb{t}
\]
also has the same kind of coincidence as in the case of $\zeta$-functions. Here, the \Poincare{} polynomials are with respect to Betti cohomology with rational coefficients.

In many cases, the answer is positive, and we have the following result.

\begin{thm}[\cite{farb_coincidences_2019}*{Thm. 1.2}]
\label{thm:FWW_poinc_poly_coincidences}
Let X be a connected orientable smooth manifold with finite dimensional cohomology groups. Then the limit
\[
	\lim_{\mathbf{d} \to \infty} \frac{P_{\gConf{\mathbf{d}}{n}(X)}(t)}{P_{\Sym^\mathbf{d} (X)}(t)} \in \mathbb{Z}\llrrb{t}
\]
exists, and depends only on the product $mn$, the Betti numbers of $X$, and $\dim X$, when the cup-product of any $mn$ compactly supported cohomology classes vanishes.
\end{thm}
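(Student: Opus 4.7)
The plan is to reduce the theorem to a structural statement about factorization algebras and their Koszul duals, where the numerical coincidence becomes transparent.

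First, I would interpret $H^*(\gConf{\mathbf{d}}{n}(X))$ as the $\mathbf{d}$-graded component of the factorization homology of $X$ valued in an $m$-colored truncated commutative algebra $\alg{m}{n}$, namely the quotient of the free $\multgr{m}$-graded polynomial algebra $\alg{m}{\infty}$ by the monomial relations encoding the condition that some color has multiplicity $<n$ at each point. The symmetric power $\Sym^{\mathbf{d}}(X)$ is the analogous factorization homology with coefficients in $\alg{m}{\infty}$. The ratio of Poincaré polynomials appearing in the theorem then becomes the Poincaré series of a ratio of factorization homologies on $X$.

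Second, I would pass through Koszul duality. The cohomological Chevalley functor $\Chev \colon \Lie \to \ComAlg$ realizes each commutative algebra as the Chevalley complex of a Lie (co)algebra, so the factorization homology over $X$ can be computed by a Chevalley-type complex built from the cohomology of $X$ with coefficients in the Koszul dual of $\alg{m}{n}$ (resp.\ $\alg{m}{\infty}$). Since $\alg{m}{\infty}$ is free as a commutative algebra, its Koszul dual is a free Lie (co)algebra concentrated in a single weight, and this explicit structure is what underlies the homological stability of the denominator.

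The third and most technical step is homological stability of the cohomological Chevalley complex associated to $\alg{m}{n}$. I would construct a weight filtration on this Chevalley complex whose associated graded is controlled by free Lie data, and then verify that in each fixed cohomological degree only finitely many multi-weights contribute, so that the ratio of Poincaré series converges in $\mathbb{Z}\llrrb{t}$ as $\mathbf{d} \to \infty$. Establishing the uniform bounds needed to interchange the large-$\mathbf{d}$ limit with the degree-wise Euler characteristic is where I expect the main obstacle to lie.

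Finally, I would use the cup-product vanishing hypothesis to pin down the stable ratio. The assumption that any $mn$-fold cup product of compactly supported classes on $H^*_c(X)$ vanishes forces the relations distinguishing $\alg{m}{n}$ from $\alg{m}{\infty}$ to contribute trivially past length $mn$ in the Chevalley computation, so the surviving contribution is determined by $H^*(X)$ only as a graded vector space together with the data of $mn$ and $\dim X$. This gives both the existence of the limit and its dependence solely on the product $mn$, the Betti numbers of $X$, and $\dim X$, as asserted.
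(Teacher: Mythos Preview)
Your overall architecture matches the paper's: interpret the cohomology of $\gConf{\mathbf{d}}{n}(X)$ as factorization cohomology with coefficients in a truncated polynomial algebra, pass to the Koszul dual $\coLie$-coalgebra, and establish homological stability via a filtration on the Chevalley complex whose associated graded is a symmetric algebra.

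However, there is a genuine gap in your final step. You treat the passage from ``quotient of stable algebras'' to ``quotient of Poincar\'e series'' as automatic, but Poincar\'e polynomials are \emph{not} additive with respect to cofiber sequences (unlike Euler characteristics or virtual Poincar\'e polynomials). So even once you know that the categorified density $\Lambda \otimes_{\algb{m}{\infty}(X)} \algb{m}{n}(X)$ depends only on $mn$, it does not follow that the ratio of $\Poinc$'s does. The paper's mechanism is more specific than ``relations contribute trivially past length $mn$ in the Chevalley computation'': the cup-product hypothesis is used to show that the $\Linfty$-algebra structure on $\Ho^*(\liealg{m}{n}(X)^\vee) \simeq \Ho^*_c(X) \otimes \Ho^*(\liealgOr{m}{n}^\vee)$, obtained by homotopy transfer, is \emph{abelian}. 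By graded-degree considerations the only possibly nonzero $\Linfty$-operations on this object are the $mn$-ary ones, and the homotopy-transfer formula builds those from the $mn$-ary bracket on $\Ho^*(\liealgOr{m}{n}^\vee)$ together with iterated binary cup products on $\Ho^*_c(X)$; the vanishing hypothesis kills the latter. Once the $\coLie$-coalgebra is abelian, $\coChev^{\un}$ reduces to $\Sym$ and the unit splits off as a tensor factor, and only then does $\Poinc$ become multiplicative for the relevant decomposition, yielding the ratio identity and its dependence solely on $mn$, the Betti numbers, and $\dim X$. Your step~4 gestures at the right hypothesis but misidentifies where it enters and omits the non-additivity obstruction that makes it necessary.
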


\subsubsection{} The proof of this theorem uses Bj\"orner-Wachs theory of lexicographic shellability from algebraic combinatorics as the main computational input. The vanishing of the cup products is there to ensure that certain spectral sequences degenerate already at page 2 so we can have a good handle on the \Poincare{} polynomials, since these are not motivic.

Using essentially the same computational input,~\cite{farb_coincidences_2019} went on to show that similar coincidences hold for quotients of Euler characteristics as well as quotients of Hodge-Deligne polynomials (i.e. Euler characteristic densities and Hodge-Deligne densities). Unlike the case of \Poincare{} polynomials, however, these objects are motivic, which obviates the need for the spectral sequences involved to degenerate. As a result, these coincidences hold true without this extra vanishing condition.

\subsection{The goal of this paper}
Since these results are discovered as combinatorial coincidences, it is natural to ask for a conceptual explanation of these coincidences, and in fact, we learned about this question from Farb and Wolfson. Using factorization homology,\footnote{Strictly speaking, we use factorization cohomology. This is, however, more of a minor technical point, which we will ignore in the introduction.} we show that hidden behind the combinatorial complexities is a picture of striking simplicity: the cohomology of generalized configuration spaces are controlled in a precise sense by truncated polynomial algebras. As a result, we are able to provide a uniform and conceptual framework for understanding \emph{homological stability} as well as \emph{coincidences in arithmetic and homological densities} of generalized configuration spaces. This categorifies and in fact generalizes known results in two directions:
\begin{enumerate}[(i)]
	\item $X$ is only required to be irreducible (rather than smooth), and in place of cohomology, we work with Borel-Moore homology $C^*(X, \omega_X[-2\dim X](-\dim X)) = C^*(X, \sOmega_X)$ (see Notation~\ref{notation:sOmega} for the definition of $\sOmega$), which specializes to usual cohomology when $X$ is smooth.
	\item Enlarge the class of configurations allowed (more on this below). 
\end{enumerate}

To reach this final goal, along the way, we prove a general algebraic statement about homological stability and stable homology of the cohomology of $\coLie$-coalgebras,\footnote{Due to technical convenience, we will work most of the time with $\coLie$-coalgebras rather than $\Lie$-algebras. However, since all the concrete $\coLie$-coalgebras that appear in the paper are finite dimensional, the reader should feel free to instead think about their duals, which are $\Lie$-algebras. Note also that since everything is derived in our convention, what we call $\Lie$-algebras are classically called dg-Lie algebras.} which is of independent interest, and which forms the technical heart of the paper. The geometric statements about stability and coincidences are then obtained by applying these general results to certain $\coLie$-coalgebras produced using techniques from factorization homology.

We remark that most of the results in this paper are of a general nature. Moreover, once the general formalism has been laid down, results about the cohomology of these generalized configuration spaces follow naturally, requiring almost no computation. Indeed, configuration spaces do not appear until the last two sections, \S\ref{sec:fact_hom} and~\S\ref{sec:cohomology_of_Z_m_n}, and all the computations, which are simple and elementary in nature, are done in~\S\ref{sec:cohomology_of_Z_m_n}.

\begin{rmk}
Even though the results are stated in the language of algebraic geometry and $\ell$-adic cohomology in this paper, the techniques and cohomological results appearing in this work apply equally well to other settings where a sheaf theory with a six-functor formalism is available. For example, if one is interested in the topological setting, the formalism of~\cite{schnurer_six_2018} can be used to replace $\ell$-adic cohomology.

In this paper, however, we decide to work in the context of algebraic geometry to fix ideas and moreover, our main goal is to demonstrate the link between homological and arithmetic densities.
\end{rmk}

\subsection{Prerequisites and guides to the literature}
For the reader's convenience, we include a quick review of the necessary background as well as pointers to the existing literature in \S\ref{sec:prelim}. The readers who are unfamiliar with the language and notation used in the introduction are encouraged to take a quick look there before returning to the current section.

\subsection{An outline of our strategy and results} We will now give a more precise outline of our results. Technically, our work can be divided into two main parts, factorization homology and Koszul duality. The first brings generalized configuration spaces into the purview of factorization homology. The second provides a way to compute factorization homology as cohomological Chevalley complexes of $\coLie$-coalgebras and hence serves as the technical tool to process the output produced by the first. The two aspects are technically quite different. The readers can thus read either one first, and refer to the other whenever necessary.

Throughout, we fix an irreducible scheme $X$ of dimension $d$ over an algebraically closed field. When we talk about Frobenius actions, we assume that $X$ is over a field of characteristic $p$, and moreover, it is a pullback of a scheme defined over $\Fq$. When we talk about point counts, we refer to the point counts of the original object over $\Fq$.

\subsubsection{Koszul duality and homological stability of cohomological Chevalley complexes}
We start with the Koszul duality aspect of the approach. Let $\mathfrak{a}$ be a $\multgrplus{m}$-graded $\coLie$-coalgebra. We want to ask when $\coChev^{\un} \mathfrak{a}$, the unital cohomological Chevalley complex, satisfies homological stability. The question does not make sense as is, since to talk about homological stability, we need to first have maps between different graded-degrees so that we can compare them. A priori, $\coChev^{\un} \mathfrak{a}$ is only a $\multgr{m}$-graded object.

Suppose that $\coChev^{\un} \mathfrak{a}$ admits the structure of an algebra over the graded polynomial algebra $\Lambda[\multgr{m}] \simeq \Lambda[x_1 \dots, x_m]$ where $\Lambda$ is a fixed field of characteristic $0$ (see also~\S\ref{subsubsec:convention_categories} for our conventions) and where $x_i$ lives in graded degree $(0, \dots, 0, 1, 0, \dots, 0)$ with $1$ being at the $i$-th place. Then, using the algebra action, we obtain maps between the different gradings, which put us in the setting where questions about homological stability make sense. Using Koszul duality, we can translate this condition to a condition on the $\coLie$-coalgebra $\mathfrak{a}$. This allows us to single out a class of $\coLie$-coalgebras, which we call strongly unital, where we can formulate and establish homological stability. The main result we prove in this setting is the following

\begin{thm}[Theorem~\ref{thm:homological_stability_coChev}]
Let $\mathfrak{a} \in \coLie^\un(\Vect^{\multgrplus{m}})$ be a strongly unital $\coLie$-coalgebra. Fix $c_0 \in \mathbb{N}$, $1\leq k\leq m$ and suppose that there exist $s, s_1, \dots, s_{k-1}, s_{k+1}, \dots, s_m \in \mathbb{N}$ such that for any non-zero $v\in \Ho^*(\Quot_\un(\mathfrak{a}))$\footnote{$\Quot_\un(\mathfrak{a})$ denotes the $\coLie$-coalgebra obtained from removing the ``unit'' of $\mathfrak{a}$. See~\S\ref{subsubsec:quot_un_for_coLie} for the precise definition.} of cohomological degree $c-1\leq c_0$ and graded degree $\mathbf{d} \in \multgrplus{m}$, we have
\[
	\mathbf{d}_k \leq sc + \sum_{i\neq k} s_i \mathbf{d}_i,
\]
where $\mathbf{d}_i$ denotes the $i$-th coordinate of $\mathbf{d}$.

Then, for all $c\leq c_0$ and $\mathbf{d} \in \multgrplus{m}$ such that
\[
	\mathbf{d}_k \geq sc + \sum_{i\neq k} s_i \mathbf{d}_i,
\]
the following map is an equivalence
\[
	\tau_{\leq c}(\coChev^{\un} \mathfrak{a})_{\mathbf{d}} \to \tau_{\leq c} (\coChev^{\un} \mathfrak{a})_{\mathbf{d} + \unit_k},
\]
and the following map is injective
\[
	\Ho^{c+1}(\coChev^{\un} \mathfrak{a})_\mathbf{d} \to \Ho^{c+1}(\coChev^{\un} \mathfrak{a})_{\mathbf{d}+\unit_k}.
\]
Here, $\coChev^{\un}$ denotes the functor of taking the unital cohomological complex, see~\S\ref{subsubsec:defn_coChev_et_al} and~\S\ref{subsec:unital_vs_nonunital}.
\end{thm}

In particular, when $\mathfrak{a}$ is strongly unital and finite dimensional, we have homological stability (Corollary~\ref{cor:qualitative_homological_stab_fin_dim} and Remark~\ref{rmk:can_do_better_than_finite_dim}).

\subsubsection{}
When $\mathfrak{a}$ is abelian, $\coChev^{\un} \mathfrak{a} = \Sym(\mathfrak{a}[-1])$, and the result is immediate. To reduce to this case, in Corollary~\S\ref{cor:coChev_as_limit}, we establish a technical fact about the functor $\coChev$ which says that it can be written as a sequential limit, whose successive fibers form $\Sym (\mathfrak{a}[-1])$. In fact, we prove a similar result for $\coBarP_{\matheur{P}}$ for any co-operad $\matheur{P}$ in Proposition~\ref{prop:coBar_as_limit}, which specializes to the case of $\coChev$ when $\matheur{P} = \coLie$. In~\S\ref{subsec:homological_stability_of_a_limit}, we establish the relation between homological stability of a sequential limit and that of the associated fiber, which allows us to make the reduction.

\subsubsection{}
The use of Koszul duality is crucial not only in the understanding of homological stability, but also in extracting information out of an algebra (or more relevant to us, quotients of algebras) via decategorification. Roughly speaking, the functor $\coChev$ exhibits a multiplicative nature, which, in many cases, manifests itself via products/quotients of the decategorification. This is why it is useful for our purposes.

In~\S\ref{subsec:decategorification_coChev}, we establish several results in this direction which will be used later to extract information out of factorization cohomology. We learned this picture from~\cite{gaitsgory_weils_2014}, of which the results in~\S\ref{subsec:decategorification_coChev} are routine adaptations.

\subsubsection{Factorization cohomology and cohomology of configuration spaces}
We will now discuss the factorization cohomology aspect of the approach. In the topological setting, this is also called the topological chiral (co)homology, which is a vast generalization of Hochschild homology.

\subsubsection{}
\label{subsubsec:intro_define_general_spaces}
Let us start with the definition of the generalized configuration spaces associated to $X$ that we will study. Let $m$ be a positive integer and $\mathbf{n}$ an $m$-tuple of positive integers such that
\[
	|\mathbf{n}| = \sum_{k=1}^m \mathbf{n}_k \geq 2.
\]
Similarly to~\S\ref{subsubsec:intro_define_spaces}, for each $\mathbf{d} \in \multgr{m}$,  we let
\[
	\gConf{\mathbf{d}}{\mathbf{n}}(X) \subset \Sym^{\mathbf{d}}(X)
\]
consist of sets $D$ of $|\mathbf{d}|$ (not necessarily distinct) points of $X$ such that
\begin{enumerate}[(i)]
	\item precisely $\mathbf{d}_k$ of the points in $D$ are labeled with the ``color'' $k$, and
	\item no point of $X$ has multiplicities of at least $\mathbf{n}_k$ for every $k$, or, equivalently, for each point $x$ of $X$, there exists a color $k$ whose multiplicity at $x$ is less than $n_k$.
\end{enumerate}
We will use the following notation to gather the whole family together
\[
	\gConf{m}{\mathbf{n}}(X) = \bigsqcup_{\mathbf{d} \in \multgr{m}} \gConf{\mathbf{d}}{\mathbf{n}}(X).
\]

We will also use the following notation
\[
	\gConf{m}{\infty}(X) = \bigsqcup_{\mathbf{d} \in \multgr{m}} \gConf{\mathbf{d}}{\infty}(X) = \bigsqcup_{\mathbf{d} \in \multgr{m}} \Sym^{\mathbf{d}}(X).
\]

\subsubsection{}
Consider the following $\multgr{m}$-graded commutative algebra\footnote{The variables $x_i$'s are in certain graded and cohomological degrees which we suppress in the introduction to keep the introduction simple.\label{fn:intro_ignore_coh_shifts}}
\[
	\algOr{m}{\mathbf{n}} = \Lambda[x_1, \dots, x_m]/(\sqcap_{k=1}^m x_k^{\mathbf{n}_k}),
\]
and when $\mathbf{n} = \infty$,
\[
	\algOr{m}{\infty} = \Lambda[x_1, \dots, x_m].
\]
The starting point is the following

\begin{prop}[Proposition~\ref{prop:factorization_cohomology_vs_Zmn}] \label{prop:intro:factorization_cohomology_vs_Zmn}
The factorization cohomology of $X$ with coefficients in $\algOr{m}{\mathbf{n}}$, denoted by $\alg{m}{\mathbf{n}}(X)$ is the cohomology of $\gConf{m}{\mathbf{n}}(X)$ (including the case of $\mathbf{n} = \infty$).\footnote{As mentioned earlier, by cohomology of $\gConf{\mathbf{d}}{\mathbf{n}}(X)$, for example, we really mean $C^*(\gConf{\mathbf{d}}{\mathbf{n}}(X), \sOmega_{\gConf{\mathbf{d}}{\mathbf{n}}(X)})$. See Notation~\ref{notation:sOmega}.}
\end{prop}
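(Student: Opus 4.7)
The plan is to identify factorization cohomology over the Ran space with the expected cohomology of generalized configuration spaces by decomposing everything according to the multiplicity pattern of a configuration, and then matching the single algebraic relation $\prod_k x_k^{\mathbf{n}_k} = 0$ with the geometric condition defining $\gConf{\mathbf{d}}{\mathbf{n}}(X)$.

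First I would handle the ``unbounded'' case $\mathbf{n} = \infty$ separately. The $\multgr{m}$-graded commutative algebra $\algOr{m}{\infty} = \Lambda[x_1,\dots,x_m]$ is the free (graded) commutative algebra on one generator per color (with the cohomological shifts and Tate twists suppressed as in footnote~\ref{fn:intro_ignore_coh_shifts}), and a Dold--Thom-type computation identifies the $\mathbf{d}$-graded component of its factorization cohomology with $C^*(\Sym^{\mathbf{d}}(X),\sOmega_{\Sym^\mathbf{d} X})$. This is the standard statement that pushforward of $\sOmega_X$ along $X \to \pt$ assembles into the symmetric algebra on $C^*(X,\sOmega_X)$ via the \Kunneth{} formula and $\Sigma_{\mathbf{d}}$-invariants, which in turn is the definition of $\sOmega$-cohomology on $\Sym^\mathbf{d} X$. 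So the proposition is established for $\mathbf{n} = \infty$.

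For general $\mathbf{n}$, I would work on $\Sym^\mathbf{d}(X)$ stratified by partitions. Factorization cohomology of a (graded) commutative algebra $A$ over $X$ admits a filtration whose associated graded, over the stratum parametrizing ordered collections of $r$ distinct points $x_1,\dots,x_r \in X$ with multi-multiplicities $\mathbf{m}^{(1)},\dots,\mathbf{m}^{(r)} \in \multgrplus{m}$ summing to $\mathbf{d}$, is the $!$-restriction of $\sOmega$ to that stratum tensored with $\bigotimes_i A_{\mathbf{m}^{(i)}}$. For $A = \algOr{m}{\mathbf{n}}$, the graded component $(\algOr{m}{\mathbf{n}})_{\mathbf{m}}$ is one-dimensional exactly when $\mathbf{m}_k < \mathbf{n}_k$ for at least one color $k$ and vanishes otherwise, so the surviving strata are precisely those constituting $\gConf{\mathbf{d}}{\mathbf{n}}(X)$.

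To package this into an honest equivalence, I would exploit the cofiber sequence of $\algOr{m}{\infty}$-modules
\[
	(\sqcap_k x_k^{\mathbf{n}_k}) \longrightarrow \algOr{m}{\infty} \longrightarrow \algOr{m}{\mathbf{n}},
\]
apply factorization cohomology, and identify the image of the principal-ideal term at the Ran level with the contribution of the closed subscheme $Z \subset \Sym^\mathbf{d}(X)$ consisting of configurations where some point has multiplicity $\geq \mathbf{n}_k$ in every color $k$. The open/closed recollement $\gConf{\mathbf{d}}{\mathbf{n}}(X) \hookrightarrow \Sym^\mathbf{d}(X) \hookleftarrow Z$ for $\sOmega$-cohomology then yields the desired equivalence $\alg{m}{\mathbf{n}}(X)_\mathbf{d} \simeq C^*(\gConf{\mathbf{d}}{\mathbf{n}}(X),\sOmega)$. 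The main obstacle I expect is the careful bookkeeping in the last step: verifying that the stratification-by-multiplicity filtration on factorization cohomology really computes $\sOmega$-cohomology of $\Sym^\mathbf{d} X$ (as opposed to merely of its coarse underlying space), and that the cofiber identification is compatible with the open/closed decomposition with all cohomological shifts and Tate twists hidden in $\sOmega$ correctly aligned. I would handle these via proper base change for the finite quotient maps $X^\mathbf{d} \to \Sym^\mathbf{d} X$ applied stratum by stratum.
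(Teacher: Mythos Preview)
Your treatment of the $\mathbf{n} = \infty$ case and your identification of which strata survive for finite $\mathbf{n}$ are both correct and match the paper in spirit. The divergence is in how you promote the stratum-by-stratum observation to an honest equivalence.

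The paper does not touch the cofiber sequence of modules. It stays entirely inside the world of commutative algebras and factorization algebras. The key move, carried out in \S\ref{subsec:variants_of_otimesstar}, is to equip the open subprestack $\Ran(X,\multgrplus{m})_{\mathbf{n}}$ with its own $\otimesstar$-monoidal structure (via correspondences), show that $\iota_\mathbf{n}^!$ and $\iota_{\mathbf{n},*}$ preserve factorizability, and then verify that the composite $\delta^! \circ \iota_{\mathbf{n},*} \circ \iota_{\mathbf{n}}^! \circ \delta_?$ on $\ComAlgshriek(\Shv(X)^{\multgrplus{m}})$ is exactly the operation ``set $\matheur{A}_\mathbf{d} = 0$ whenever $\mathbf{d}_k \geq \mathbf{n}_k$ for all $k$'' (Proposition~\ref{prop:taking_quotient_setting_some_d_to_0}). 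Applied to $\algOr{m}{\infty}(X)_+$, whose factorization algebra is $\sOmega_{\Ran}$, this yields $\delta_? \algOr{m}{\mathbf{n}}(X)_+ \simeq \iota_{\mathbf{n},*}\sOmega_{\Ran_\mathbf{n}}$ directly, so factorization cohomology is literally $C^*(\Ran_\mathbf{n},\sOmega) \simeq C^*(\gConf{m}{\mathbf{n}}(X)_+,\sOmega)$.

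Your cofiber-sequence route has a genuine gap. The functor $\pi_{?*}$ (equivalently $\delta_?$) is defined on $\ComAlg$, not on $\algOr{m}{\infty}$-modules, and the passage from a commutative algebra to its factorization algebra on $\Ran$ is not exact in the underlying sheaf category. Applying factorization cohomology to the module sequence $(\sqcap_k x_k^{\mathbf{n}_k}) \to \algOr{m}{\infty} \to \algOr{m}{\mathbf{n}}$ therefore does not produce a cofiber sequence of chain complexes, and ``the principal-ideal term at the Ran level'' is not a well-defined object here without first extending $\delta_?$ to a module-level theory. The stratum-by-stratum proper base change you propose computes $!$-stalks but does not supply the gluing data needed to pin down the sheaf on $\Sym^\mathbf{d} X$. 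To close this gap you would either need factorization homology for modules (not developed in the paper), or you would end up re-deriving the sheaf-level identification $\delta_? \algOr{m}{\mathbf{n}} \simeq \iota_{\mathbf{n},*}\sOmega_{\Ran_\mathbf{n}}$ by hand --- which is precisely what \S\ref{subsec:variants_of_otimesstar} packages once and for all as an operation on commutative factorization algebras.
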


Note that this result establishes a link between an a priori combinatorially complicated object, the cohomology of $\gConf{m}{\mathbf{n}}(X)$, and an extremely simple commutative algebra. Now, since the algebras $\algOr{m}{\mathbf{n}}$ are so simple, their Koszul duals $\liealgOr{m}{\mathbf{n}}$ can be easily computed and turn out to be also very simple. As a result, we have an expression of $\alg{m}{\mathbf{n}}(X)$ as the $\coChev^{\un}$ of a $\coLie$-coalgebra $\liealg{m}{\mathbf{n}}(X)$, see Proposition~\ref{prop:conf_vs_cochev} and Corollary~\ref{cor:computation_of_liealg}. Homological stability of $\gConf{m}{\mathbf{n}}(X)$, or equivalently, of $\alg{m}{\mathbf{n}}(X)$, is now a direct consequence of Theorem~\ref{thm:homological_stability_coChev} and we obtain

\begin{thm}[Theorem~\ref{thm:homological_stability_Zmn}]
Let $X$ be an irreducible scheme of dimension $d\geq 1$. For each $1\leq k\leq m$ and $c\geq 0$, there exists a natural map (see Notation~\ref{notation:sOmega})
\[
	\Ho^c(\gConf{\mathbf{d}}{\mathbf{n}}(X), \sOmega_{\gConf{\mathbf{d}}{\mathbf{n}}(X)}) \to \Ho^c(\gConf{\mathbf{d} + \unit_k}{\mathbf{n}}(X), \sOmega_{\gConf{\mathbf{d}+\unit_k}{\mathbf{n}}(X)})
\]
that is
\begin{enumerate}[(i)]
	\item an equivalence when $\mathbf{d}_k \geq 2c$, and injective when $\mathbf{d}_k = 2c-1$, when $d=1, m = 1, n = 2$ (the case of configuration spaces of a curve),
	\item an equivalence when $\mathbf{d}_k \geq c$, and injective when $\mathbf{d}_k = c - 1$ otherwise.
\end{enumerate}
\end{thm}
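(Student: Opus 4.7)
The plan is to reduce the theorem to the abstract homological stability result for cohomological Chevalley complexes, namely Theorem~\ref{thm:homological_stability_coChev}. By Proposition~\ref{prop:factorization_cohomology_vs_Zmn}, the graded degree $\mathbf{d}$ piece of $\alg{m}{\mathbf{n}}(X) := \Fact(X, \algOr{m}{\mathbf{n}})$ computes $C^*(\gConf{\mathbf{d}}{\mathbf{n}}(X), \sOmega_{\gConf{\mathbf{d}}{\mathbf{n}}(X)})$, and by Proposition~\ref{prop:conf_vs_cochev} together with Corollary~\ref{cor:computation_of_liealg} one has $\alg{m}{\mathbf{n}}(X) \simeq \coChev^\un \liealg{m}{\mathbf{n}}(X)$ for an explicit $\coLie^\un$-coalgebra $\liealg{m}{\mathbf{n}}(X)$. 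The stabilization maps between graded degrees come from the $\algOr{m}{\infty} = \Lambda[x_1,\dots,x_m]$-algebra structure on $\algOr{m}{\mathbf{n}}$: applying $\Fact(X,-)$ endows $\alg{m}{\mathbf{n}}(X)$ with a compatible $\alg{m}{\infty}(X)$-module structure, which on the Koszul dual side is exactly the strongly unital structure on $\liealg{m}{\mathbf{n}}(X)$ required to invoke Theorem~\ref{thm:homological_stability_coChev}. A separate check, which I expect to be routine given the comparison of factorization cohomology with symmetric products, identifies multiplication by $x_k$ with the geometric map induced by ``adding a point of colour $k$''.

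Once this dictionary is in place, the theorem reduces to producing constants $s, s_i \in \mathbb{N}$ such that every non-zero class of cohomological degree $c-1$ in $\Ho^*(\Quot_\un(\liealg{m}{\mathbf{n}}(X)))_\mathbf{d}$ satisfies
\[
	\mathbf{d}_k \;\leq\; s c + \sum_{i \neq k} s_i\, \mathbf{d}_i.
\]
I would do this by tabulating the generators of $\Quot_\un(\liealg{m}{\mathbf{n}}(X))$ using Corollary~\ref{cor:computation_of_liealg}: as Koszul duals of the truncated polynomial relations in $\algOr{m}{\mathbf{n}}$, spread over $X$ by tensoring with shifted copies of $\sOmega_X$. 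Their cohomological degrees are controlled by $\dim X = d$, and their multidegrees are governed by $\mathbf{n}$.

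The main obstacle is extracting sharp slopes from this bookkeeping. In the generic regime of case (ii), the generators in cohomological degree $c-1$ occupy graded degree at most $c$ along any single colour (with no cross-contribution along the $k$-th axis beyond the linear growth already built into the generator weights), yielding $s=1$, $s_i=0$ and the announced range $\mathbf{d}_k \geq c$. In the exceptional case (i), where $d=1$, $m=1$, $\mathbf{n}=2$, the sole relation $x^2 = 0$ Koszul-dualizes to a generator concentrated in graded degree $2$ while contributing only cohomological degree $1$ (coming from the top cohomology of the curve); iterating produces classes in cohomological degree $c-1$ only up to graded degree $2c$, so the effective slope doubles to $s=2$ and gives the range $\mathbf{d}_k \geq 2c$. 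The injectivity at the boundary $\mathbf{d}_k = 2c-1$, respectively $\mathbf{d}_k = c-1$, is then inherited directly from the corresponding injectivity in Theorem~\ref{thm:homological_stability_coChev}.
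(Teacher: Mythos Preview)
Your proposal is correct and follows exactly the paper's approach: identify $\alg{m}{\mathbf{n}}(X)$ with $\coChev^\un \liealg{m}{\mathbf{n}}(X)$ via Propositions~\ref{prop:factorization_cohomology_vs_Zmn} and~\ref{prop:conf_vs_cochev}, read off $\Quot_\un(\liealg{m}{\mathbf{n}}(X))$ from Corollary~\ref{cor:computation_of_liealg}, and plug into Theorem~\ref{thm:homological_stability_coChev} with $s_i=0$ and $s=1$ (resp.\ $s=2$ in the exceptional case). The paper's proof is essentially a two-line citation of these same inputs.

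One small correction to your bookkeeping in case~(i): the summand of $\Quot_\un(\liealg{1}{2}(X))$ in graded degree $2$ is $C^*(X,\omega_X)\otimes\Lambda_2[-2]$, which for a curve lives in cohomological degrees $[0,2]$, not just degree $1$. The class forcing $s=2$ is the one in cohomological degree $0$ (so $c-1=0$, $c=1$, graded degree $2$, hence $2\leq s\cdot 1$). No ``iteration'' is needed or relevant here: Theorem~\ref{thm:homological_stability_coChev} asks only for the bound on $\Quot_\un(\mathfrak{a})$ itself, and the passage to $\Sym$ is already absorbed into that theorem's proof. Your side remark about geometrically identifying the stabilization map with ``adding a point'' is not something the paper addresses, and is not needed for the statement as written.
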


More qualitatively, and more generally, we prove the following result about homological stability of factorization cohomology

\begin{thm}[Theorem~\ref{thm:homological_stab_fact_coh}]
Let $X$ be an irreducible scheme of dimension $d$. Let $\matheur{A} \in \ComAlg(\Vect^{\multgrplus{m}})$ such that $\mathfrak{a} = \coPrim[1](\matheur{A})$ is a finite dimensional $d$-shifted strongly unital $\coLie$-coalgebra (see Definition~\ref{defn:d_shifted_unital}). Then,
\[
	\pi_{?*}^{\un} \pi^! \matheur{A} \simeq \coChev^{\un}(C^*(X, \omega_X) \otimes \mathfrak{a})
\]
satisfies homological stability.
\end{thm}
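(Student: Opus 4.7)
The plan is to recognize $\pi_{?*}^{\un}\pi^!\matheur{A}$ as a cohomological Chevalley complex and then to invoke Theorem~\ref{thm:homological_stability_coChev}. The asserted equivalence
\[
\pi_{?*}^{\un}\pi^!\matheur{A} \simeq \coChev^{\un}(C^*(X,\omega_X)\otimes \mathfrak{a})
\]
should follow from the Koszul-duality description of factorization cohomology developed in the previous section: writing $\matheur{A} \simeq \coChev^{\un}(\mathfrak{a})$ via $\mathfrak{a} = \coPrim[1](\matheur{A})$ and using that $\pi_{?*}^{\un}\pi^!$ applied to a cohomological Chevalley complex is computed by $\coChev^{\un}$ of the tensor with the ``base'' $C^*(X,\omega_X)$. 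Granting this, the theorem reduces to verifying that $\mathfrak{b} := C^*(X,\omega_X)\otimes\mathfrak{a}$ meets the hypotheses of Theorem~\ref{thm:homological_stability_coChev} for every $1 \leq k \leq m$.

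Strong unitality of $\mathfrak{b}$ is inherited directly from strong unitality of $\mathfrak{a}$: the $\Lambda[\multgr{m}]$-algebra structure on $\coChev^{\un}(\mathfrak{b})$ is obtained by base-change along the grading-neutral map $\Lambda \to C^*(X,\omega_X)$ from the one on $\coChev^{\un}(\mathfrak{a})$, since tensoring with a commutative algebra concentrated in graded degree zero commutes with $\coChev^{\un}$. What remains is to exhibit, for each $k$, constants $s, s_1,\dots,s_{k-1},s_{k+1},\dots,s_m \in \mathbb{N}$ such that every nonzero class in $\Ho^{c-1}(\Quot_{\un}(\mathfrak{b}))_{\mathbf{d}}$ satisfies
\[
\mathbf{d}_k \leq sc + \sum_{i\neq k} s_i\,\mathbf{d}_i.
\]

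To produce these constants I would proceed as follows. Finite dimensionality of $\mathfrak{a}$ implies that $\Quot_{\un}(\mathfrak{a})$ is supported on only finitely many graded degrees and is finite dimensional overall, while the $d$-shifted strongly unital hypothesis (Definition~\ref{defn:d_shifted_unital}) forces its cohomology in graded degree $\mathbf{e}$ to sit in cohomological degrees whose lower bound grows with slope $d$ in each $\mathbf{e}_k$. On the other hand, for $X$ irreducible of dimension $d$, the complex $C^*(X,\omega_X)$ has cohomology supported in a bounded cohomological window (essentially $[-2d,0]$). The graded-degree-$\mathbf{d}$ piece of $\Quot_{\un}(\mathfrak{b})$ is a finite direct sum indexed by partitions of $\mathbf{d}$ into graded degrees supporting $\Quot_{\un}(\mathfrak{a})$, each summand being a tensor product of copies of $C^*(X,\omega_X)$ with a piece of $\mathfrak{a}$; its cohomological amplitude is therefore linear in each $\mathbf{d}_k$. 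Balancing the linear lower bound from the shifts of $\mathfrak{a}$ against the width of the $C^*(X,\omega_X)$-window yields the desired inequality with explicit constants, after which Theorem~\ref{thm:homological_stability_coChev} concludes.

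The main obstacle is the bookkeeping of cohomological shifts: one must calibrate the ``$d$-shifted'' condition on $\mathfrak{a}$ precisely so that the slope contributed by the minimum degree of $\Quot_{\un}(\mathfrak{a})$ in direction $\mathbf{e}_k$ overwhelms the $2d$ of slack introduced per tensor factor of $C^*(X,\omega_X)$. Once this calibration is set up correctly (so that each point of $X$ costs at most $2d$ in cohomological degree while contributing at least $d$ worth of ``shift'' via $\mathfrak{a}$, with the extra slack absorbed into the constants $s_i$), the hypothesis of Theorem~\ref{thm:homological_stability_coChev} holds and homological stability of $\pi_{?*}^{\un}\pi^!\matheur{A}$ is immediate.
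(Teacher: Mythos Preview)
Your overall strategy---identify $\pi_{?*}^{\un}\pi^!\matheur{A}$ with $\coChev^{\un}(C^*(X,\omega_X)\otimes\mathfrak{a})$ and then invoke the homological stability result for cohomological Chevalley complexes---is exactly the paper's approach. However, your execution contains real errors and is far more complicated than necessary.

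First, your argument for strong unitality of $\mathfrak{b}=C^*(X,\omega_X)\otimes\mathfrak{a}$ addresses only the \emph{unital} part (the existence of a map from $\bigoplus_k\Lambda_{\unit_k}[1]$), not the \emph{strongly} part, which is a cohomological amplitude condition: $\mathfrak{b}$ must live in degrees $\geq -1$ and the unit map must be an equivalence in degree $-1$. This is where the irreducibility of $X$ and the ``$d$-shifted'' hypothesis on $\mathfrak{a}$ actually enter: $C^*(X,\omega_X)$ lives in degrees $\geq -2d$ with $\Ho^{-2d}\simeq\Lambda$, and $\mathfrak{a}$ lives in degrees $\geq 2d-1$ with the unit an equivalence in degree $2d-1$; tensoring gives exactly the strongly unital condition on $\mathfrak{b}$. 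This is the content of Lemma~\ref{lem:shifted_strongly_unital_to_strongly_unital}.

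Second, your attempt to verify the slope inequality of Theorem~\ref{thm:homological_stability_coChev} directly contains two mistakes. The $d$-shifted strongly unital condition does \emph{not} say that the cohomological lower bound of $\Quot_\un(\mathfrak{a})$ grows with slope $d$ in each $\mathbf{e}_k$; it gives only a uniform lower bound of $2d$. And the graded piece $(\Quot_\un\mathfrak{b})_\mathbf{d}$ is not a sum over partitions of $\mathbf{d}$: since $C^*(X,\omega_X)$ sits in graded degree zero, it is simply $C^*(X,\omega_X)\otimes\mathfrak{a}_\mathbf{d}$ (with a small modification at $\mathbf{d}=\unit_k$). You appear to be confusing $\Quot_\un(\mathfrak{b})$ with $\coChev$ or $\Sym$ of something.

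More to the point, none of this bookkeeping is needed. The paper simply observes that $\Ho^*(\mathfrak{b})$ is finite dimensional (both tensor factors are) and that $\mathfrak{b}$ is strongly unital (Lemma~\ref{lem:shifted_strongly_unital_to_strongly_unital}), and then applies Corollary~\ref{cor:qualitative_homological_stab_fin_dim}, which says precisely that a strongly unital $\coLie$-coalgebra with finite-dimensional cohomology has a homologically stable $\coChev^{\un}$. The explicit slope constants of Theorem~\ref{thm:homological_stability_coChev} are irrelevant for the qualitative statement.
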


\subsubsection{Categorified densities}
In what follows, we will use $\algb{m}{\mathbf{n}}(X)$ to denote the stable homology of $\alg{m}{\mathbf{n}}(X)$ (or equivalently, of $\gConf{m}{\mathbf{n}}(X)$), including the case where $\mathbf{n} = \infty$. Via a general functorial construction (mimicking the telescoping construction of~\cite{mcduff_homology_1976}) which works even when there is no homological stability, we show that $\algb{m}{\mathbf{n}}(X)$ naturally inherits a structure of a commutative algebra (see~\S\ref{subsubsec:telescope_construction} and Corollary~\ref{cor:adjunction_for_algebras_colim}).

Having explained how the factorization homology point of view is beneficial for establishing homological stability phenomena of generalized configuration spaces, we will now briefly sketch how densities could be understood under this lens. We have a natural map of commutative algebras
\[
	\algOr{m}{\infty} \simeq \Lambda[x_1, \dots, x_m] \to \Lambda[x_1, \dots, x_m]/(\sqcap_{k=1}^m x_k^{\mathbf{n}_k}) \simeq \algOr{m}{\mathbf{n}},
\]
which, via taking factorization cohomology, induces a map of commutative algebras
\[
	\alg{m}{\infty}(X) \to \alg{m}{\mathbf{n}}(X),
\]
which, in turn, induces a map of commutative algebras
\[
	\algb{m}{\infty}(X) \to \algb{m}{\mathbf{n}}(X)
\]
by taking stable homology.

\subsubsection{}
Now, there are naturally two kinds of ``densities'' we can examine:
\[
	\Lambda \otimes_{\alg{m}{\infty}(X)} \alg{m}{\mathbf{n}}(X) \quad \text{and}\quad \Lambda \otimes_{\algb{m}{\infty}(X)} \algb{m}{\mathbf{n}}(X).
\]
The first captures the density of the whole family $\gConf{m}{\mathbf{n}}(X)$ inside the family $\gConf{m}{\infty}(X)$, whereas the latter captures the stable density, i.e. density between stable homologies. The goal is to show that these densities depend only on $|\mathbf{n}| = \sum_{k=1}^m \mathbf{n}_k$.

Using the fact that factorization cohomology preserves relative tensors of algebras, it is now a direct consequence of the following equivalences\footnote{See footnote~\ref{fn:intro_ignore_coh_shifts}.} 
\[
	\Lambda\otimes_{\Lambda[x_1, \dots, x_m]} (\Lambda[x_1, \dots, x_m]/(\sqcap_{k=1}^m x_k^{\mathbf{n}_k})) \simeq \Lambda \oplus \Lambda[1] \simeq \Lambda \otimes_{\Lambda[x]} (\Lambda[x]/(x^{|\mathbf{n}|}))
\]
as commutative algebras in $\Vect^{\graded}$, where in the first term on the left, we collapse the grading into one via addition (see~\S\ref{subsec:change_of_gradings} for more details). In other words, the a priori complicated densities between the cohomology of $\gConf{m}{\mathbf{n}}(X)$ (either stable or the whole family) are controlled by extremely simple quotients involving truncated polynomial algebras, which explains the source of the coincidences. We prove the following result.

\begin{thm}[Theorems~\ref{thm:dependence_on_mn_of_graded_quotients}, \ref{thm:equivalence_at_limit}, and Proposition~\ref{prop:stable_homological_density}]
\label{thm:intro:categorified_densities_coincidences}
We have the following natural equivalence of algebras\footnote{Here, $\oblv_{\gr}$ denotes the functor of forgetting the gradings, i.e. taking direct sum of all the graded components.}
\[
	\Lambda \otimes_{\algb{m}{\infty}(X)} \algb{m}{\mathbf{n}}(X) \simeq \oblv_{\gr}(\Lambda \otimes_{\alg{m}{\infty}(X)} \alg{m}{\mathbf{n}}(X)).
\]
Moreover, $\Lambda \otimes_{\alg{m}{\infty}(X)} \alg{m}{\mathbf{n}}(X)$ and $\Lambda \otimes_{\algb{m}{\infty}(X)} \algb{m}{\mathbf{n}}(X)$ depend only on $|\mathbf{n}|$.
\end{thm}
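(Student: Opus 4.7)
The plan is to lift the elementary algebraic identity
\[
\Lambda \otimes_{\Lambda[x_1, \dots, x_m]} \bigl(\Lambda[x_1, \dots, x_m]/(x_1^{\mathbf{n}_1} \cdots x_m^{\mathbf{n}_m})\bigr) \simeq \Lambda \oplus \Lambda[1] \simeq \Lambda \otimes_{\Lambda[x]} \bigl(\Lambda[x]/(x^{|\mathbf{n}|})\bigr),
\]
already highlighted in the discussion preceding the theorem, to an identity of factorization cohomologies and then transport it across the stabilization procedure. The first ingredient is that, as a functor from $\multgr{m}$-graded commutative algebras to graded vector spaces, factorization cohomology preserves relative tensor products of commutative algebras; this is a consequence of its nature as a symmetric monoidal left adjoint. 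Applied to the algebraic identity above, this immediately yields that $\Lambda \otimes_{\alg{m}{\infty}(X)} \alg{m}{\mathbf{n}}(X)$ agrees with the factorization cohomology of $X$ with coefficients in $\Lambda \oplus \Lambda[1]$, where $\Lambda[1]$ is placed in graded degree $|\mathbf{n}|$. Since this coefficient algebra depends on $\mathbf{n}$ only through $|\mathbf{n}|$, the first (non-stable) dependence claim follows.

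For the equivalence $\Lambda \otimes_{\algb{m}{\infty}(X)} \algb{m}{\mathbf{n}}(X) \simeq \oblv_{\gr}(\Lambda \otimes_{\alg{m}{\infty}(X)} \alg{m}{\mathbf{n}}(X))$, I would use the telescoping construction of~\S\ref{subsubsec:telescope_construction} to present $\algb{m}{\mathbf{n}}(X)$ as a filtered colimit along the structure maps coming from multiplication by the generators $x_k$. Since relative tensor products commute with colimits in each variable, the left-hand side is the colimit of $\Lambda \otimes_{\alg{m}{\infty}(X)} \alg{m}{\mathbf{n}}(X)$ along the base-changed structure maps. After base-changing along the augmentation $\alg{m}{\infty}(X) \to \Lambda$, each multiplication by $x_k$ becomes nullhomotopic, so the colimit system degenerates into the direct sum over all graded degrees, i.e.\ into $\oblv_{\gr}$. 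Equivalently: since the graded commutative algebra $\Lambda \oplus \Lambda[1]$ is supported in only two graded degrees, the output of Step~1 already lives in a bounded range of gradings, and stabilization merely forgets them. The $|\mathbf{n}|$-dependence of the stable density is then inherited from Step~1.

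The main obstacle I anticipate is verifying rigorously the compatibility of the telescope colimit with the relative tensor product --- specifically, that the filtered-colimit presentation of $\algb{m}{\mathbf{n}}(X)$ as an $\algb{m}{\infty}(X)$-module is the one that commutes past $\Lambda \otimes_{\algb{m}{\infty}(X)} (-)$, and that the augmentation of $\algb{m}{\infty}(X)$ really annihilates the structure maps. Once this compatibility is set up, the rest is formal. Finally, to obtain the explicit form stated in Proposition~\ref{prop:stable_homological_density}, I would exploit that $\Lambda \oplus \Lambda[1]$ is abelian as a commutative algebra, so via Corollary~\ref{cor:computation_of_liealg} its Koszul-dual $\coLie$-coalgebra is a single shifted copy of $\Lambda$; the $\coChev^{\un}$ computation then reduces to a symmetric algebra on $C^*(X, \sOmega_X)$ placed in the appropriate bi-degree, producing the rational homotopy type which answers the Vakil--Wood question.
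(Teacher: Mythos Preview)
Your treatment of the non-stable dependence claim is essentially the paper's argument, and your final remark about the explicit form via Koszul duality is on the right track. The gap is in the middle step, where you pass from the graded density to the stable one.

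The object whose stabilization you must compute is $\Lambda[\multgr{m}] \otimes_{\alg{m}{\infty}(X)} \alg{m}{\mathbf{n}}(X)$, viewed as a $\Lambda[\multgr{m}]$-module (equivalently, a $\multfil{m}$-filtered object); this is what $\lbar{(-)}$ applies to, since $\lbar{(-)}$ is symmetric monoidal and $\lbar{\Lambda[\multgr{m}]} \simeq \Lambda$. On this object the action of $x_k$ is \emph{not} nullhomotopic: the unit map $\Lambda[\multgr{m}] \to \Lambda[\multgr{m}] \otimes_{\alg{m}{\infty}(X)} \alg{m}{\mathbf{n}}(X)$ is nonzero in every graded degree, so multiplication by $x_k$ carries the unit in degree $\mathbf{0}$ to a nonzero class in degree $\unit_k$. (And even if the transition maps \emph{were} nullhomotopic, a sequential colimit along zero maps is zero, not the direct sum.) Your ``Equivalently'' remark is also off: by Corollary~\ref{cor:computation_of_graded_quotients} the graded density $\Lambda \otimes_{\alg{m}{\infty}(X)} \alg{m}{\mathbf{n}}(X)$ is a symmetric algebra supported in all multiples of $\mathbf{n}$, so it is not bounded in the grading.

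What actually makes the argument work is that $\Lambda[\multgr{m}] \otimes_{\alg{m}{\infty}(X)} \alg{m}{\mathbf{n}}(X)$ is \emph{free} as a $\Lambda[\multgr{m}]$-module, so that Proposition~\ref{prop:stable_piece_easy_case_tensor} applies. In Koszul-dual terms this says that the unit of the $\coLie$-coalgebra $\mathfrak{q} = \coPrim[1]$ of this algebra splits off. The paper establishes this by an $\Linfty$ degree count (Proposition~\ref{prop:stable_homological_density}): the only possibly nonzero higher bracket on $\Ho^*(\mathfrak{q}^\vee)$ is the $|\mathbf{n}|$-ary one, and it vanishes because the target sits in strictly lower cohomological degree than the source whenever $|\mathbf{n}| \geq 2$. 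This is the ``miracle'' you are missing; without it the stabilization and the associated-graded can genuinely differ.
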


We in fact produce canonical maps between these quotients for various $m, |\mathbf{n}|$. Moreover, we explicitly compute these two quotients, in Corollary~\ref{cor:computation_of_graded_quotients} and Proposition~\ref{prop:stable_homological_density} respectively. We note that the equivalence in the Theorem above is a special feature of the current situation, which is due to the fact that a certain $\coLie$-coalgebra happens to be abelian due to degree considerations.

\subsubsection{Decategorifications}
Applying various decategorification procedures (for example, Euler characteristics, \Poincare{} polynomials, or when $X$ is a pullback of a scheme over $\Fq$, virtual \Poincare{} polynomials, and also traces of the Frobenius) to Theorem~\ref{thm:intro:categorified_densities_coincidences}, we obtain generalizations of the results proved in~\cite{farb_coincidences_2019}. This is done in~\S\ref{subsec:decategorifications_Zmn} and~\S\ref{subsec:L_infty_algebras_computation}. In this part of the introduction, let us highlight the two cases of decategorification that are of special interest: Frobenius traces and \Poincare{} polynomials.

\subsubsection{} We show, in Corollary~\ref{cor:quotient_Frob_trace_infty}, that computing the Frobenius trace on $\Lambda \otimes_{\algb{m}{\infty}(X)} \algb{m}{\mathbf{n}}(X)$ recovers the $\zeta$-value $\zeta(\dim X|\mathbf{n}|)^{-1}$ and moreover, it is linked to the limit of the quotient of the point counts on $\gConf{m}{\mathbf{n}}(X)$ and $\gConf{m}{\infty}(X)$ (Remark~\ref{rmk:quotient_Frob_trace_infty}). As mentioned above, the commutative algebra $\Lambda \otimes_{\algb{m}{\infty}(X)} \algb{m}{\mathbf{n}}(X)$ thus provides an answer to the question posed in~\cite{vakil_discriminants_2015}.

\subsubsection{} One feature that makes the case of \Poincare{} polynomials stand out is that it does not behave nicely with respect to cofiber sequences. Because of this reason, quotients of algebras do not translate to quotients of \Poincare{} polynomials. In Proposition~\ref{prop:quotient_decat_Poincare}, we formulate a special condition where \Poincare{} polynomials still behave nicely with respect to relative tensors of commutative algebras. Translated to the geometric setting, it has to do with the $\coLie$-coalgebras $\liealg{m}{\mathbf{n}}(X)$ (the Koszul duals of $\alg{m}{\mathbf{n}}(X)$) being abelian. A simple calculation shows why we need the extra conditions in Theorem~\ref{thm:FWW_poinc_poly_coincidences} of~\cite{farb_coincidences_2019}, which is generalized in Proposition~\ref{prop:densities_Poincare_Zmn}.

\subsection{Other links to the literature}
\subsubsection{} In~\cite{vakil_discriminants_2015}, motivated by the Dold-Thom theorem and its apparent link to homological stability as well as special values of $\zeta$-functions, Vakil-Wood asked if the appearance of special values of $\zeta$-functions is controlled by a certain rational homotopy type. As a result of our work, we see that all the $\zeta$-values that appear as arithmetic densities come from corresponding commutative algebras obtained from taking the quotients, a.k.a relative tensor of commutative algebras, of the stable homology of $\gConf{m}{n}(X)$ by that of $\gConf{m}{\infty}(X)$ (see Corollary~\ref{cor:quotient_Frob_trace_infty} and Remark~\ref{rmk:quotient_Frob_trace_infty}). The resulting commutative algebras, which we call the stable homological densities, could thus be viewed as the rational homotopy types\footnote{Since everything is derived by our convention, by commutative algebra, we mean cdga.} responsible for the appearance of the $\zeta$-values $\zeta_X(mn\dim(X))^{-1}$, and hence, give an answer to a question asked by Vakil-Wood in~\cite{vakil_discriminants_2015}. In fact, these rational homotopy types have a simple description which we compute explicitly, in Proposition~\ref{prop:stable_homological_density}. We expect that the conceptual understanding of these phenomena provided in this paper will pave the way for realizing these coincidences at the level of spaces rather than just algebras.

\subsubsection{}
Another interesting feature of our work is its resemblance to the topological factorization homology picture, making the link between the two worlds more precise.

Firstly, the construction of the limiting rational homotopy type $\algb{m}{n}(X)$ from $\alg{m}{n}(X)$ closely related to the construction of the group completion of an $E_n$-algebra via the telescoping construction in~\cite{mcduff_homology_1976}. This is of special interest since group completion is known to be intimately linked to homological stability (see~\cite{may_geometry_1972,segal_configuration-spaces_1973,kupers_$e_n$-cell_2018,kupers_homological_2016}). Note, however, that our procedure is applied to an $E_\infty$-algebra (i.e. commutative), rather than $E_n$, and the maps between the different pieces in the colimit are induced by forgetting a point rather than adding a point at infinity, as the latter does not have any algebro-geometric meaning.

Secondly, our result on homological stability shares the same local-to-global principle as the one in the work of Kupers-Miller in~\cite{kupers_$e_n$-cell_2018}. More precisely, the finiteness condition on the $\coLie$-coalgebras resembles that which appears in loc. cit (see Remark~\ref{rmk:bounded_generation}). Our result, therefore, effectively provides an algebro-geometric analog of their topological result. 

Finally, note also that the commutative algebras appearing in our work are the $E_n$-Koszul duals to those in~\cite{kupers_$e_n$-cell_2018,knudsen_betti_2017,ho_free_2017}. The fact that these happen to be commutative allows us, via another type of Koszul duality between $\coLie$-coalgebras and commutative algebras, to reduce many computations to the $\coLie$-coalgebra side, which is finite dimensional in nature, greatly simplifying the picture appearing there. Moreover, formulating statements in terms of commutative algebras allows us to define homological densities, which is crucial for our purposes.

\subsubsection{}
Since this paper first appeared, Petersen~\cite{petersen_cohomology_2020} has obtained similar results regarding the cohomology of generalized \emph{ordered} configuration spaces of \emph{one color} by completely different means. Ordered vs. unordered and one color vs. multiple colors aside, our expression of the cohomology of configuration spaces in terms of factorization cohomology (Proposition~\ref{prop:intro:factorization_cohomology_vs_Zmn}) could be viewed as an abstract and model-free construction of the functor $\matheur{C}\matheur{F}$ of~\cite{petersen_cohomology_2020}*{Thm~1.1.6}. Note that even though we focus only on the case of constant sheaves with characteristic $0$ coefficients in this paper, it is easy to see that this step works more generally with arbitrary coefficients as in~\cite{petersen_cohomology_2020}. However, while~\cite{petersen_cohomology_2020} obtains a spectral sequence to compute $\matheur{C}\matheur{F}$ in general, our corresponding results, Proposition~\ref{prop:conf_vs_cochev} and Corollary~\ref{cor:computation_of_liealg}, rely on Koszul duality and as is, only work in characteristic $0$.\footnote{In our case, the spectral sequence is the one attached to a homotopy limit, using Corollary~\ref{cor:coChev_as_limit}.} See also~\S\ref{subsubsec:future_work_general_coefficients}.

\subsection{Future work}\label{subsec:future_work}
Our work points to further questions, to which we hope to return in the future.

\subsubsection{General coefficients.} \label{subsubsec:future_work_general_coefficients}
The techniques developed in this paper are particularly well suited for the studies of generalized configuration spaces. In a recent preprint~\cite{ho_higher_2020}, we introduce twisted commutative factorization algebras and use the techniques developed in this paper to study higher representation stability of generalized ordered configuration spaces in the style of~\cite{miller_higher_2016}. In~\cite{petersen_cohomology_2020}, the author uses twisted commutative algebras to go beyond the case of characteristic $0$ coefficients. It would be very interesting to see if these ideas can be applied to our settings as well.

\subsubsection{Spaces of maps} In a slightly different vein, the spaces of particles have been known to be closely related to the spaces of maps, for example,~\cite{segal_configuration-spaces_1973,mcduff_homology_1976, segal_topology_1979,boyer_topology_1994,guest_topology_1995}. In fact, many of these works serve as the inspiration for~\cite{farb_coincidences_2019} and our current work. A feature shared by these examples is that the target spaces have natural torus actions, whose sets of fixed points are finite. It is thus plausible that the technique of torus localization could be used to produce factorization algebras labeled by these fixed points whose factorization homology
\begin{enumerate}[(i)]
	\item admits a descripion in terms of labeled configuration spaces,
	\item exhibits homological stability, and
	\item whose stable homology is related to the homology of the space of maps under investigation.
\end{enumerate}

More recently, using Fourier--Deligne transform and ideas from the circle method,~\cite{browning_geometric_2020} provides the first evidence pointing to the relation between the spaces of particles and the spaces of maps beyond the toric case mentioned above. In fact, the paper formulates a precise conjecture relating the two~\cite{browning_geometric_2020}*{Conjecture 1.3}. Intriguingly, a certain factorization structure also appears in their work and is used in a crucial way. It is thus very interesting to investigate this question in the framework of factorization homology. 

\subsection{An outline of the paper}
We will now give a brief outline of how the paper is organized.

In~\S\ref{sec:prelim}, besides fixing the notation, we give a brief review of the various mathematical tools used throughout the paper as well as pointers to the exisiting literature. In~\S\ref{sec:coBar_as_limit}, we establish a technical result concerning the functor of taking cohomological Chevalley complex (or more generally, the functor $\coBarP_{\matheur{P}}$ for a co-operad $\matheur{P}$) in a pro-nilpotent category, which will allow us to reduce many arguments to the case where the $\coLie$-coalgebra involved is trivial (a.k.a. abelian). In~\S\ref{sec:generalities_homological_stability} we lay down the framework where homological stability naturally takes place. In~\S\ref{sec:cohomological_chevalley_complex} we prove homological stability for cohomological Chevalley complexes for a large class of $\coLie$-coalgebras (along with the explicit stability bounds) and give an expression for the stable homology in special cases. Moreover, we also introduce various decategorification procedures to extract information out of a cohomological Chevalley complex. Geometry makes its first appearance in~\S\ref{sec:fact_hom}. After a quick review of factorization algebra and homology (in the graded setting), we prove  homological stability of factorization cohomology and end the section with a functorial procedure on commutative factorization algebras which will be used later in~\S\ref{sec:cohomology_of_Z_m_n} to link the cohomology of  $\gConf{m}{\mathbf{n}}(X)$ to factorization cohomology. In~\S\ref{sec:cohomology_of_Z_m_n}, we finally study the spaces $\gConf{m}{\mathbf{n}}(X)$. In particular, we prove agreements between categorified homological densities and show that via decategorifications, we obtain generalizations of previous known results.

\section{Preliminaries} \label{sec:prelim}
In this section, we collect notation, conventions, as well as provide definitions, results, and references to them, that we will use throughout the paper.

\subsection{Notation and conventions}
\subsubsection{Category theory} \label{subsubsec:convention_categories}We will use $\DGCat$ to denote the $(\infty, 1)$-category of stable infinity categories, $\DGCat_\pres$ to denote the full subcategory of $\DGCat$ consisting of presentable categories, and $\DGCat_{\pres, \cont}$ the (non-full) subcategory of $\DGCatpres$ where we only allow continuous functors, i.e. those commuting with colimits. 

We will also fix an algebraically closed field $\Lambda$ of characteristic 0 (see also~\S\ref{subsubsec:convention_sheaves_schemes}). Unless otherwise specified, all our stable infinity categories and functors between them are in $\DGCat_{\pres, \cont}$. In particular, for symmetric monoidal categories, the monoidal/tensor products commute with colimit in each variable. Moreover, we ask that these categories are linear over $\Lambda$.

The main reference for the subject are~\cite{lurie_higher_2017, lurie_higher_2017-1}. For a slightly different point of view, see also~\cite{gaitsgory_study_2017}.

\subsubsection{Algebraic geometry} Throughout this paper, $k$ will be used to denote an algebraically closed ground field. We will denote by $\Sch$ the $\infty$-category obtained from the ordinary category of separated schemes of finite type over $k$. All our schemes will be objects of $\Sch$. In most cases, we will use the calligraphic font to denote prestacks, for eg. $\matheur{X}, \matheur{Y}$ etc., and the usual font to denote schemes, for eg. $X, Y$ etc.

\subsubsection{$t$-structure} Let $\matheur{C}$ be a stable infinity category, equipped with a $t$-structure. Then we have the following diagram of adjoint functors
\[
\xymatrix{
	\matheur{C}^{\leq 0} \ardis[r]^>>>>>{i_{\leq 0}} & \ardis[l]^>>>>>{\tr_{\leq 0}} \matheur{C} \ardis[r]^<<<<<{\tr_{\geq 1}} & \ardis[l]^>>>>>{i_{\geq 1}} \matheur{C}^{\geq 1}
}
\]
Note that we use cohomological indexing.

We use $\tau_{\leq 0}$ and $\tau_{\geq 1}$ to denote
\[
	\tau_{\leq 0} = i_{\leq 0} \circ \tr_{\leq 0}: \matheur{C} \to \matheur{C}
\]
and
\[
	\tau_{\geq 1} = i_{\geq 1} \circ \tr_{\geq 1}: \matheur{C} \to \matheur{C}
\]
respectively.

Shifts of these functors, for e.g. $\tau_{\geq n}$ and $\tau_{\leq n}$, are defined in the obvious ways.

\subsection{Koszul duality} The theory of Koszul duality, initially developed in~\cite{quillen_rational_1969}, illuminated the duality between co-commutative co-algebras and Lie algebras. It was vastly generalized to the operadic setting in~\cite{ginzburg_koszul_1994}. An application of this theory to the factorizable setting was carried out in~\cite{francis_chiral_2011}.

In this paper, we will make extensive use of Koszul duality between commutative algebras and $\coLie$-coalgebras. We will summarize the relevant features of this theory below. The interested readers could find a more detailed development in~\cite{francis_chiral_2011,gaitsgory_study_2017,loday_algebraic_2012}. The presentation here follows closely those of~\cite{francis_chiral_2011, gaitsgory_study_2017}.

\subsubsection{Symmetric sequences} Let $\Vect^\Sigma$ denote the category of symmetric sequences. Namely, it consists, as objects, collections
\[
\matheur{O} = \{\matheur{O}(n), n\geq 1\},
\]
where $\matheur{O}(n) \in \Vect^{B\Sigma_n}$, i.e. chain complexes equipped with actions of $\Sigma_n$, the permutation group of $n$ letters.

This category is equipped with a (non-symmetric) monoidal structure, designed to make the functor
\[
\Vect^\Sigma \to \Fun(\Vect, \Vect)
\]
given by
\[
\matheur{O} \star V = \bigoplus_{n\geq 1} (\matheur{O}(n) \otimes V^{\otimes n})_{\Sigma_n}
\]
monoidal.

\subsubsection{Operads and co-operads} By an operad (resp. co-operad), we will mean an augmented associative algebra (resp. co-algebra) in $\Vect^\Sigma$. We will use $\Op$ (resp. $\coOp$) to denote the categories of operads (resp. co-operads).

\subsubsection{} The Bar and coBar construction gives us a pair of adjoint functors
\[
\BarO: \Op \rightleftarrows \coOp: \coBarP.
\]
For an operad $\matheur{O}$ (resp. co-operad $\matheur{P}$), we will also use $\matheur{O}^\vee$ (resp. $\matheur{P}^\vee$) to denote $\BarO(\matheur{O})$ (resp. $\coBarP(\matheur{P})$). 

In this paper, we will restrict ourselves to the class of operads/co-operads where the augmentation map is an equivalence in degree 1, i.e. $\matheur{O}(1) \simeq \Lambda$ and $\matheur{P}(1) \simeq \Lambda$ (see~\ref{subsubsec:convention_sheaves_schemes}). Under this restriction, one can show that the unit map
\[
\matheur{O} \to \coBarP\circ\BarO(\matheur{O}) = (\matheur{O}^\vee)^\vee
\]
is an equivalence.

\subsubsection{Algebras and co-algebras}
\label{subsubsec:algebras_coalgebras}
Let $\matheur{C}$ be a stable presentable symmetric monoidal $\infty$-category compatibly tensored over $\Vect$. Then, an operad $\matheur{O}$ (resp. co-operad $\matheur{P}$) naturally defines a monad (resp. comonad) on $\matheur{C}$. Thus, we can talk about the category of algebras $\matheur{O}\Palg(\matheur{C})$ (resp. coalgebras $\matheur{P}\Pcoalg(\matheur{C})$) in $\matheur{C}$.

One has the following pairs of adjoint functors
\[
\Free_{\matheur{O}}: \matheur{C} \rightleftarrows \matheur{O}\Palg(\matheur{C}):\oblv_{\matheur{O}} \qquad \text{and}\qquad \BarO_{\matheur{O}}: \matheur{O}\Palg(\matheur{C}) \rightleftarrows \matheur{C} :\triv_{\matheur{O}}
\]
for an operad $\matheur{O}$, and similarly, the following pairs of adjoint functors
\[
\oblv_{\matheur{P}}: \matheur{P}\Pcoalg(\matheur{C}) \rightleftarrows \matheur{C} :\coFree_{\matheur{P}} \qquad\text{and}\qquad \cotriv_{\matheur{P}}: \matheur{C} \rightleftarrows \matheur{P}\Pcoalg(\matheur{C}) :\coBarP_{\matheur{P}}
\]
for a co-operad $\matheur{P}$.

Note that even though the functor $\BarO_{\matheur{O}}$ (resp. $\coBarP_{\matheur{P}}$) is defined abstractly as an adjoint functors, it can also be explicitly written as the homotopy colimit (resp. limit) of an explicit simplicial (resp. cosimplicial) object similar to the usual $\BarO$ (resp. $\coBarP$) construction.

\subsubsection{Koszul duality} The functors $\BarO_{\matheur{O}}$ and $\coBarP_{\matheur{P}}$ mentioned in~\S\ref{subsubsec:algebras_coalgebras} in fact lift to the following pair of adjoint functors
\[
\BarO^\enh_\matheur{O}: \matheur{O}\Palg(\matheur{C}) \rightleftarrows \matheur{P}\Pcoalg(\matheur{C}): \coBarP^\enh_\matheur{P}, \teq\label{eq:koszul_adjunction}
\]
where $\matheur{P} = \matheur{O}^\vee$ and where
\[
\oblv_{\matheur{P}}\circ \BarO_\matheur{O}^\enh \simeq \BarO_\matheur{O} \qquad\text{and}\qquad \oblv_{\matheur{O}}\circ \coBarP_\matheur{P}^\enh \simeq \coBarP_\matheur{P}.
\]

\subsubsection{The pro-nilpotent case} The pair of adjoint functors $\BarO^\enh_\matheur{O} \dashv \coBarP^\enh_\matheur{P}$ are not mutual inverses in general. One of the main achievements of~\cite{francis_chiral_2011} is to formulate a general condition on $\matheur{C}$, namely, the pro-nilpotence condition, under which these functors are inverses. Let us recall the definition.

\begin{defn}[\cite{francis_chiral_2011}*{Defn. 4.1.1}]
	\label{defn:pro_nilpotent}
	Let $\matheur{C}$ be a (not necessarily unital) stable presentable symmetric monoidal category. We say that $\matheur{C}$ is pro-nilpotent if it can be exhibited as a limit
	\[
	\matheur{C} \simeq \lim (\matheur{C}_0 \leftarrow \matheur{C}_1 \leftarrow \matheur{C}_2 \leftarrow \cdots)
	\]
	such that
	\begin{enumerate}[(i)]
		\item $\matheur{C}_0 = 0$;
		\item For every $i\geq j$, the transition functor
		\[
		f_{i, j}: \matheur{C}_i \to \matheur{C}_j
		\]
		commutes with limits;\footnote{They also commute with colimits as per our convention, see~\S\ref{subsubsec:convention_categories}.}
		\item For every $i$, the restriction of the tensor product $\matheur{C}_i \otimes \matheur{C}_i \to \matheur{C}_i$ to $\ker(f_{i, i-1})\otimes \matheur{C}_i$ is null-homotopic.
	\end{enumerate}
\end{defn}

We have the following result.
\begin{prop}[\cite{francis_chiral_2011}*{Prop. 4.1.2., Lem. 3.3.4, Lem. 4.1.6 (b)}]
When $\matheur{C}$ is pro-nilpotent, then the pair of adjoint functors~\eqref{eq:koszul_adjunction} are mutually inverses. Moreover, they exchange trivial and free objects on both sides.
\end{prop}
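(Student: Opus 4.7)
The plan is to first establish the exchange of free and trivial objects (which is a purely formal consequence of adjunction), and then use this together with the pro-nilpotent filtration to prove the mutual inverse claim by induction up the tower.

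For the exchange of free and trivial, I would use a chain of adjunctions. For any $V \in \matheur{C}$ and $\matheur{D} \in \matheur{P}\Pcoalg(\matheur{C})$,
\[
	\Hom(\BarO^\enh_\matheur{O}\Free_\matheur{O} V, \matheur{D}) \simeq \Hom(\Free_\matheur{O} V, \coBarP^\enh_\matheur{P}\matheur{D}) \simeq \Hom(V, \oblv_\matheur{O}\coBarP^\enh_\matheur{P}\matheur{D}) \simeq \Hom(V, \coBarP_\matheur{P}\matheur{D}) \simeq \Hom(\cotriv_\matheur{P} V, \matheur{D}),
\]
where I use the Koszul adjunction \eqref{eq:koszul_adjunction}, the $\Free \dashv \oblv$ adjunction, the compatibility $\oblv_\matheur{O}\circ \coBarP^\enh_\matheur{P} \simeq \coBarP_\matheur{P}$, and the $\cotriv \dashv \coBarP$ adjunction. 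Yoneda gives $\BarO^\enh_\matheur{O}\Free_\matheur{O} V \simeq \cotriv_\matheur{P} V$, and the symmetric argument yields $\coBarP^\enh_\matheur{P}\coFree_\matheur{P} V \simeq \triv_\matheur{O} V$.

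For the equivalence itself, I would exploit the pro-nilpotent presentation $\matheur{C} \simeq \lim \matheur{C}_i$. Since algebras and coalgebras over operads/co-operads are defined by monads/comonads built out of the tensor product, and since the transition functors $f_{i,j}$ commute with limits and colimits, the categories $\matheur{O}\Palg(\matheur{C})$ and $\matheur{P}\Pcoalg(\matheur{C})$ inherit the same kind of tower, and the Koszul functors $\BarO^\enh_\matheur{O}$, $\coBarP^\enh_\matheur{P}$ are computed levelwise. Thus it suffices to check that the unit and counit are equivalences in each $\matheur{C}_i$, which I would do by induction on $i$, the base case $\matheur{C}_0 = 0$ being trivial.

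The inductive step is where condition (iii) does all the work. Given an algebra $A \in \matheur{O}\Palg(\matheur{C}_i)$, consider the fiber sequence coming from $\matheur{K}_i := \ker(f_{i,i-1}) \to \matheur{C}_i \to \matheur{C}_{i-1}$. The image in $\matheur{C}_{i-1}$ is handled by induction, so by stability of the Koszul functors under fiber sequences it is enough to treat the piece supported in $\matheur{K}_i$. For any $V \in \matheur{K}_i$, condition (iii) forces $V^{\otimes n} \simeq 0$ for $n \geq 2$, which collapses every operadic operation of arity $\geq 2$: the canonical maps $\Free_\matheur{O} V \to V \to \triv_\matheur{O} V$ are equivalences, and dually $\cotriv_\matheur{P} V \to V \to \coFree_\matheur{P} V$ are equivalences. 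Combined with the exchange of free and trivial proved in the first paragraph, this shows $\BarO^\enh_\matheur{O}$ and $\coBarP^\enh_\matheur{P}$ are mutually inverse on the subcategories supported in $\matheur{K}_i$, and inductively on all of $\matheur{C}_i$.

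The main obstacle is making the reduction to trivial algebras in $\matheur{K}_i$ precise — specifically, showing that the moduli of $\matheur{O}$-algebra (resp.\ $\matheur{P}$-coalgebra) structures on an object of $\matheur{K}_i$ is contractible, pinned at the trivial/cotrivial structure. The simplicial bar constructions computing $\BarO_\matheur{O}$ and $\coBarP_\matheur{P}$ have terms involving $V^{\otimes n}$ for $n \geq 2$, which vanish by (iii), and it is precisely this observation (packaged as Lem.\ 3.3.4 and Lem.\ 4.1.6(b) of \cite{francis_chiral_2011}) that makes the filtered/cofiltered inductive argument go through cleanly.
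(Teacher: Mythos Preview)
The paper does not prove this proposition at all: it is stated as a citation of \cite{francis_chiral_2011} (Prop.~4.1.2, Lem.~3.3.4, Lem.~4.1.6(b)), with no proof environment following it. So there is nothing in the paper to compare your argument against; you are effectively reconstructing the proof from the cited reference.

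Your adjunction-chasing for $\BarO^\enh_\matheur{O}\Free_\matheur{O} \simeq \cotriv_\matheur{P}$ and $\coBarP^\enh_\matheur{P}\coFree_\matheur{P} \simeq \triv_\matheur{O}$ is correct and purely formal, and your observation that on $\matheur{K}_i$ every (co)algebra is simultaneously free and trivial is also right. The reduction to each $\matheur{C}_i$ via the levelwise computation of $\BarO^\enh$ and $\coBarP^\enh$ is fine, since the transition functors commute with both limits and colimits.

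However, there is a genuine gap in the inductive step. You write ``by stability of the Koszul functors under fiber sequences it is enough to treat the piece supported in $\matheur{K}_i$,'' but $\BarO^\enh_\matheur{O}$ and $\coBarP^\enh_\matheur{P}$ are \emph{not} exact functors: they are built from (co)simplicial objects whose terms involve iterated tensor powers, so they do not preserve fiber sequences. You cannot split $A$ along $\matheur{K}_i \to \matheur{C}_i \to \matheur{C}_{i-1}$ and handle the pieces separately. The correct shape of the argument is rather: the unit $\eta_A: A \to \coBarP^\enh_\matheur{P}\BarO^\enh_\matheur{O} A$ becomes an equivalence after applying $f_{i,i-1}$ (by induction and commutation with the Koszul functors), so $\coFib(\eta_A) \in \matheur{K}_i$; one then needs a separate argument to force this cofiber to vanish, and this is where the nilpotence condition (iii) is used in a more delicate way than your sketch suggests. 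You correctly flag that ``making the reduction precise'' is the main obstacle, but the obstacle is not about pinning down the moduli of structures on objects of $\matheur{K}_i$ --- it is that the devissage along the tower does not interact with $\BarO^\enh$ and $\coBarP^\enh$ via fiber sequences at all.
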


\begin{rmk}
	In this paper, the pro-nilpotent categories that appear will take the following pattern. Let $\matheur{C}$ be a symmetric monoidal stable infinity category, and $\mathbb{Z}_+$ the (discrete) monoidal category whose objects are the positive integers and monoidal structure given by addition. Then, the category of graded objects in $\matheur{C}$, $\Fun(\mathbb{Z}_+, \matheur{C})$, has a natural monoidal structure, which is pro-nilpotent. Indeed, let $\mathbb{Z}_+^{<d}$ be the full-subcategory of $\mathbb{Z}_+$ consisting of integers smaller than $d$. Then, $\Fun(\mathbb{Z}_+^{<d}, \matheur{C})$ has a natural monoidal structure which makes the restriction $\Fun(\mathbb{Z}_+, \matheur{C}) \to \Fun(\mathbb{Z}_+^{<d}, \matheur{C})$ symmetric monoidal. Clearly,
	\[
		\Fun(\mathbb{Z}_+, \matheur{C}) \simeq \lim_{d} \Fun(\mathbb{Z}_+^{<d}, \matheur{C})
	\]
	and it is easy to see that this limit exhibits $\Fun(\mathbb{Z}_+, \matheur{C})$ as a pro-nilpotent category.
	
	See~\S\ref{sec:generalities_homological_stability} for the precise forms of the categories that appear.
\end{rmk}

\subsubsection{The case of $\coLie$ and $\ComAlg$} We have the following equivalence of co-operads (see, for example,~\cite{francis_chiral_2011}*{\S4.3}):
\[
\Lie^\vee \simeq \ComCoAlg[1], \teq\label{eq:lie_cocom_operad}
\]
where
\[
\ComCoAlg[1](n) \simeq \Lambda[n-1]
\]
is equipped with the sign action of the symmetric group $\Sigma_n$. This gives rise to the Koszul duality between Lie algebras, and cocommutative coalgebras, originally discovered by Quillen.

\subsubsection{} In general, for any co-operad $\matheur{P}$, the shifted co-operad $\matheur{P}[1]$ is defined as follows,
\[
\matheur{P}[1](n) \simeq \matheur{P}(n)[n-1]. 
\]
Thus, the functor
\[
[1]: \matheur{C} \to \matheur{C}
\]
gives rise to an equivalence of categories
\[
[1]: \matheur{P}[1]\Pcoalg(\matheur{C}) \simeq \matheur{P}\Pcoalg(\matheur{C}).
\]

\subsubsection{} Dual to~\eqref{eq:lie_cocom_operad} is the following equivalence of operads
\[
\ComAlg^\vee \simeq \coLie[1],
\]
which we will make use of extensively in this paper.

Note that from what we said above, the functor
\[
[1]: \matheur{C} \to \matheur{C}
\]
gives rise to an equivalence of categories
\[
[1]: \coLie[1](\matheur{C}) \simeq \coLie(\matheur{C}).
\]

\subsubsection{} \label{subsubsec:defn_coChev_et_al} This gives us the following commutative diagram which summarizes the situation for the case of $\ComAlg$ and $\coLie$.
\[
\xymatrix{
	\ComAlg(\matheur{C}) \ardissp[ddrr]^{\coPrim[1]} \ardis[dd]^{[1]} \ardis[rr]^<<<<<<<<<<<<<{\BarO_{\ComAlg}} && \ardis[ll]^>>>>>>>>>>>>>{\coBarP_{\coLie[1]}} \coLie[1](\matheur{C}) \ardis[dd]^{[1]} \\ \\
	\ComAlg[-1](\matheur{C}) \ardis[uu]^{[-1]} \ardis[rr]^>>>>>>>>>>>{\BarO_{\ComAlg[-1]}} && \ardis[ll]^<<<<<<<<<<<{\coBarP_{\coLie}} \coLie(\matheur{C}) \ardis[uu]^{[-1]} \ar[uull]^{\coChev}
}
\]

As indicated in the diagram, we record the following definition.

\begin{defn}
We use $\coChev$ to denote
\[
	[-1] \circ \coBarP_{\coLie} \simeq \coBarP_{\coLie[1]} \circ [-1]
\]
and $\coPrim[1]$ to denote
\[
	[1] \circ \BarO_{\ComAlg} \simeq \BarO_{\ComAlg[-1]} \circ [1].
\]
\end{defn}

\subsubsection{} Since a trivial $\Lie$-algebra (one with trivial brackets) is usually called an abelian $\Lie$-algebra, we will adopt the same convention for $\coLie$-coalgebras. Namely, the terms \emph{trivial} and \emph{abelian} $\coLie$-coalgebras are used interchangeably.

\subsection{Unital vs non-unital}
\label{subsec:unital_vs_nonunital}

Because of the way our operad $\ComAlg$ is defined, $\ComAlg(\matheur{C})$ is the category of \emph{non-unital} commutative algebras. In particular, for any object $c\in \matheur{C}$,
\[
\Free_{\ComAlg} c = \bigoplus_{n > 0} \Sym^n c = (\Sym c)_+.
\]

Compared to the usual (i.e. unital) commutative algebras, non-unital ones are somewhat less frequently used, and moreover, many constructions are more naturally implemented in the unital setting. In this subsection, we will give a quick review of the relations between the two.

\subsubsection{}
When $\matheur{C}$ is (unital) symmetric monoidal, we use $\ComAlg^{\un, \aug}(\matheur{C})$ to denote the category of augmented unital commutative algebra objects in $\matheur{C}$, and we have an equivalence of categories
\begin{align*}
\addUnit: \ComAlg(\matheur{C}) \rightleftarrows \ComAlg^{\un, \aug}(\matheur{C}): (-)_+
\end{align*}
given by formally adjoining a unit in one direction and taking the augmentation ideal the other direction.

\subsubsection{}
\label{subsubsec:fake_un_aug}
In general, when $\matheur{C}$ is not unital, but it is a full monoidal subcategory of a unital symmetric monoidal stable $\infty$-category $\matheur{C}^\un$, we let
\[
\ComAlg^{\un, \aug}(\matheur{C})
\]
be the category of unital augmented commutative algebra objects in $\matheur{C}^\un$ whose augmentation ideal lies in $\matheur{C}$. Then, by the same reasoning as above, we have an equivalence of categories
\[
\addUnit: \ComAlg(\matheur{C}) \rightleftarrows \ComAlg^{\un, \aug}(\matheur{C}): (-)_+
\]
given by formally adjoining the unit and removing the unit.

Note that a priori, $\ComAlg^{\un, \aug}(\matheur{C})$ depends on the embedding $\matheur{C} \subset \matheur{C}^\un$. However, the equivalence above implies that this is not the case. This is why we choose to suppress $\matheur{C}^\un$ from the notation.

\subsubsection{} \label{subsubsec:non_unital_quotient} One of the common procedures we perform with commutative algebras is taking relative tensor products. Let
\[
\xymatrix{
	\matheur{A}_1 \ar[d] \ar[r] & \matheur{A}_2 \\
	\matheur{A}_3
}
\]
be a diagram in $\ComAlg(\matheur{C})$. The pushout of this diagram in $\ComAlg(\matheur{C})$, denoted by $\matheur{A}_2 \sqcup_{\matheur{A}_1} \matheur{A}_3$, could be computed as follows
\[
\matheur{A}_2 \sqcup_{\matheur{A}_1} \matheur{A}_3 \simeq (\addUnit(\matheur{A}_2) \otimes_{\addUnit(\matheur{A}_1)} \addUnit(\matheur{A}_3))_+.
\]
Here, the tensor on the right hand side is the usual relative tensor of unital commutative algebras. Note that for unital commutative algebras, relative tensors implement pushouts.

\subsubsection{}
Let $\matheur{C}$ be a pro-nilpotent category. As mentioned above, we have the following equivalence of categories
\[
\coPrim[1]: \ComAlg(\matheur{C}) \rightleftarrows \coLie(\matheur{C}): \coChev.
\]

Suppose $\matheur{C}$ is a full sub-monoidal-category of $\matheur{C}^\un$ as above. We will use the notation
\[
\coChev^\un = \addUnit \circ \coChev
\]
to denote the unital cohomological Chevalley functor. In particular, when $\mathfrak{a}$ is a trivial coLie-coalgebra (i.e. trivial co-multiplication maps), we have
\[
\coChev^\un \mathfrak{a} \simeq \addUnit \circ \Free_{\ComAlg} (\mathfrak{a}[-1]) = \Sym (\mathfrak{a}[-1]).
\]

\subsection{Prestacks} The theory of sheaves on prestacks has been developed in~\cite{gaitsgory_weils_2014,gaitsgory_atiyah-bott_2015}. In this subsection and the next, we will give a brief review of this theory. We will state them as facts, without any proof, which, unless otherwise specified, could all be found in~\cite{gaitsgory_atiyah-bott_2015}. 

Prestacks are needed to set up the theory of factorization algebra and factorization homology. We will only make use of this theory in~\S\ref{sec:fact_hom} and~\S\ref{sec:cohomology_of_Z_m_n}, where it is needed to bridge cohomological Chevalley complexes and cohomology of the spaces $\gConf{\mathbf{d}}{\mathbf{n}}(X)$ mentioned in the introduction. At the first pass, the reader should feel free to treat it as a blackbox.

\subsubsection{} A prestack is a contravariant functor from $\Sch$ to $\Spc$, the $\infty$-category of spaces. In other words, a prestack $\matheur{Y}$ is a functor
\[
	\matheur{Y}: \Sch^\op \to \Spc.
\]

Let $\PreStk$ be the $\infty$-category of prestacks. By Yoneda's lemma, we have a fully-faithful embedding
\[
	\Sch \hookrightarrow \PreStk.
\]

\subsubsection{} Direct from the definition, any prestack $\matheur{Y}$ can be written as a colimit of schemes
\[
	\matheur{Y} \simeq \colim_{i\in I} Y_i.
\]

\subsubsection{} A morphism
\[
	\matheur{X} \to \matheur{Y}
\]
between prestacks is said to be \emph{schematic} if its pullback along any scheme
\[
	S \to \matheur{Y}
\]
is a scheme.

\subsection{Sheaves on prestacks}
As mentioned above, proofs of all the results in this subsection, unless otherwise specified, could be found in~\cite{gaitsgory_atiyah-bott_2015}.

\subsubsection{Sheaves on schemes} \label{subsubsec:convention_sheaves_schemes} For a scheme $S$,
\begin{enumerate}[(i)]
	\item when the ground field is $\mathbb{C}$, and $\Lambda$ is an arbitrary field of characteristic 0, we take $\Shv(S)$ to be the ind-completion of the category of constructible sheaves on $S$ with $\Lambda$-coefficients.
	
	\item for any algebraically closed ground field $k$ in general, and $\Lambda = \Ql, \Qlbar$ with $\ell \neq \car k$, we take $\Shv(S)$ to be the ind-completion of the category of constructible $\ell$-adic sheaves on $S$ with $\Lambda$-coefficients. See also~\cite[\S4]{gaitsgory_weils_2014}, \cite{liu_enhanced_2012}, and~\cite{liu_enhanced_2014}.
\end{enumerate}

For any morphism between schemes
\[
	f: S_1 \to S_2,
\]
we have the following pairs of adjoint functors
\begin{align*}
	f_!: \Shv(S_1) \rightleftarrows \Shv(S_2): f^! \qquad \text{and}\qquad 
	f^*: \Shv(S_2) \rightleftarrows \Shv(S_1): f_*.
\end{align*}
Moreover, we also have the box-product $\boxtimes$, and hence, $\otimes$ and $\otimesshriek$. Note also that $f^!$ is continuous, i.e. it preserves colimits.

\subsubsection{} When $X = \pt = \Spec k$, we use
\[
	\Vect = \Shv(\pt) \teq\label{eq:Vect_is_Shv(pt)}
\]
to denote the category of chain complexes in vector spaces over $\Lambda$.

\subsubsection{Sheaves on prestacks} The theory of sheaves on scheme provides us with a functor
\[
	\Shv: \Sch^\op \to \DGCat_{\pres, \cont},
\]
where we use the $!$-pullback functor to move between schemes.

We can right Kan extend this functor along the Yoneda embedding
\[
	\Sch^\op \hookrightarrow \PreStk^\op
\]
to obtain a functor
\[
	\Shv: \PreStk^\op \to \DGCat_{\pres, \cont}.
\]

For formal reasons, the functor
\[
	\Shv: \PreStk^\op \to \DGCat_{\pres, \cont}
\]
preserves limits, i.e.
\[
	\Shv(\colim_i \matheur{Y}_i) \simeq \lim_i \Shv(\matheur{Y}_i).
\]

In particular, if a prestack
\[
	\matheur{Y} \simeq \colim_i Y_i
\]
is a colimit of schemes, then
\[
	\Shv(\matheur{Y}) \simeq \lim_i \Shv(Y_i).
\]

\subsubsection{} Unwinding the definition of limits, informally, an object $\matheur{F}\in \Shv(\matheur{Y})$ is the same as the following data
\begin{enumerate}[(i)]
	\item A sheaf $\matheur{F}_{S, y} \in \Shv(S)$ for each $S\in \Sch$ and $y: S\to \matheur{Y}$ (i.e. $y\in \matheur{Y}(S)$).
	\item An equivalence of sheaves $\matheur{F}_{S', f(y)} \to f^! \matheur{F}_{S, y}$ for each morphism of schemes $f: S' \to S$.
\end{enumerate}
Moreover, we require that this assignment satisfies a homotopy-coherent system of compatibilities.

\begin{rmk}
Historically, the theory of sheaves on prestacks was first applied to the theory of $D$-modules on ind-schemes which favors the $!$-functors in multiple ways. Firstly, for $D$-modules, $!$-pullback is always defined while it is not true for $*$-pullback. Secondly, for (possibly infinite dimensional) ind-schemes, homology is better behaved compared to cohomology as it has better finiteness properties, and the $!$-functors are designed to capture homology within sheaf cohomology theory.

Locally, the prestacks that appear in this paper are defined by finite colimits of diagrams of schemes where all the maps are \etale{}. We could thus have used the $*$-functors to set up the theory. However, for ease of reference, we decide to stick to the more well-established theory of sheaves on prestacks using the $!$-functors.
\end{rmk}

\subsubsection{} Directly from the definition, for any morphism of prestacks
\[
	f: \matheur{X} \to \matheur{Y},
\]
we obtain a functor
\[
	f^!: \Shv(\matheur{Y}) \to \Shv(\matheur{X})
\]
which commutes with both limits and colimits. In particular, $f^!$ admits a left adjoint $f_!$.

\subsubsection{} In general, we do not have an explicit formula for the functor $f_!$. However, when the target of $f$ is a scheme, i.e.
\[
	f: \matheur{Y} \to S,
\]
where $S$ is a scheme, and suppose that
\[
	\matheur{Y} \simeq \colim_i Y_i.
\]
Then,
\[
	f_! \matheur{F} \simeq \colim_i f_{i!} \matheur{F}_{Y_i}
\]
where
\[
	f_i: Y_i \to \matheur{Y} \to S
\]
and $\matheur{F}_{Y_i}$ is the $!$-pullback of $\matheur{F}$ to $Y_i$.

\subsubsection{} \label{subsubsec:pseudo_proper_basechange}
More generally, suppose $f: \matheur{X} \to \matheur{Y}$ is a pseudo-proper morphism (briefly reviewed below), then the functor $f_!$ satisfies base-change with respect to $g^!$ for any morphism of prestacks $g: \matheur{Y}'\to \matheur{Y}$. In particular, we can take $S$ to be a scheme, and reduce to the situation above. Here, by pseudo-properness, we mean that the base change of $f$ to any scheme $S$ is a colimit of schemes proper over $S$.

\subsubsection{} For a general morphism between prestacks
\[
	f: \matheur{X} \to \matheur{Y},
\]
the adjoint pair $f^* \dashv f_*$ between the associated categories of sheaves is not defined. However, when $f$ is schematic, then the adjoint pair exists and is computable (see~\cite{ho_free_2017}*{\S2.6}). 

The most important property of $f_*$ is that it satisfies base change. Namely for any pullback square of prestacks
\[
\xymatrix{
	\matheur{X}' \ar[d]_f \ar[r]^g & \matheur{X} \ar[d]_f \\
	\matheur{Y}' \ar[r]^g & \matheur{Y}
}
\]
where $\matheur{X} \to \matheur{Y}$, and hence, also $\matheur{X}'\to \matheur{Y}'$, is schematic, and for any $\matheur{F} \in \Shv(\matheur{X})$, we have a natural equivalence
\[
	g^! f_* \matheur{F} \simeq f_* g^! \matheur{F}
\]

\subsubsection{} \label{subsubsec:base_change_for_*}
In particular, if
\[
	\matheur{Y} = \colim_i Y_i
\]
where $Y_i$'s are schemes, then for each $i$, the pullback diagram above becomes
\[
\xymatrix{
	X_i \ar[d]_f \ar[r]^{g_i} & \matheur{X} \ar[d]_f \\
	Y_i \ar[r]^{g_i} & \matheur{Y}
}
\]
where $X_i$ is also a scheme. The object $f_* \matheur{F} \in \Shv(\matheur{Y})$ is determined by its $!$-pullbacks to the $Y_i$'s, which are computed in terms of the usual $*$-pushforward. Namely:
\[
	g_i^! f_* \matheur{F} \simeq f_* g_i^! \matheur{F}
\]
and the $f_*$ on the RHS is the usual pushforward of sheaves between schemes.

\subsubsection{} The functor $f^*$ is slightly more complicated to describe. However, when
\[
	f: \matheur{Y}_1 \to \matheur{Y}_2
\]
is \etale{}, which is the case where we need, we have a natural equivalence (see~\cite{ho_free_2017}*{Prop. 2.7.3})
\[
	f^! \simeq f^*. \teq\label{eq:etale_!_*_pullback_equiv}
\]

\subsubsection{Monoidal structure} The theory of sheaves on prestacks inherits the monoidal structure (i.e. box tensor) from the theory of sheaves on schemes. Namely, let $\matheur{F}_i \in \Shv(\matheur{Y}_i)$ where $\matheur{Y}_i$'s are prestacks, for $i=1, 2$. Then, the sheaf $\matheur{F}_1 \boxtimes \matheur{F}_2$ is characterized by the condition that for any pair of maps $f_i: S_i \to \matheur{Y}_i$ where the $S_i$'s are schemes, we have
\[
	(f_1 \times f_2)^! (\matheur{F}_1 \boxtimes \matheur{F}_2) \simeq f_1^! \matheur{F}_1 \boxtimes f_2^! \matheur{F}_2.
\]

As in the theory of sheaves for schemes, restricting along the diagonal map lets us define $\otimesshriek$ in general, and $\otimesstar$ when the diagonal map is schematic.

\subsection{The $\Ran$ space/prestack}
\label{subsec:intro_ran_space}
The $\Ran$ space plays the central role in the theory of factorization algebras and homology. The version of the $\Ran$ space used in this paper is that of a colored/graded $\Ran$ space. In this section, however, we will review the usual/non-graded $\Ran$ space to familiarize the readers with the kinds of structure involved. The colored version will be defined in~\S\ref{sec:fact_hom}.

All results in this section could be found in~\cite{francis_chiral_2011,gaitsgory_weils_2014,gaitsgory_atiyah-bott_2015}. The readers are referred to these papers for a full account of the theory. The topologically inclined readers could also find a more intuitive introduction in~\cite{ho_free_2017}.

\subsubsection{} For a scheme $X\in \Sch$, we will use $\Ran X$ to denote the prestack parametrizing non-empty finite subsets of $X$. More precisely, for each scheme $S \in \Sch$,
\[
	(\Ran X)(S) = \{\text{non-empty finite subsets of }X(S)\}.
\]
Equivalently, we have
\[
	\Ran X = \colim_{I\in \fSet^{\surj, \op}} X^I
\]
where $\fSet^\surj$ is the category of non-empty finite sets.

\subsubsection{$\otimesstar$-monoidal structure} The $\Ran$ space has a natural (non-unital) monoidal structure given by the union map
\[
	\union: \Ran X \times \Ran X \to \Ran X.
\]
This allows us to define the $\otimesstar$-monoidal structure on $\Shv(\Ran X)$ given by:
\[
	\matheur{F} \otimesstar \matheur{G} = \union_!(\matheur{F} \boxtimes \matheur{G}).
\]

\subsubsection{Commutative factorizable sheaves}
Using $\otimesstar$-monoidal structure on $\Shv(\Ran X)$, one can make sense of the categories of algebras and coalgebras on it. The one that is relevant to us is
\[
	\ComAlgstar(\Ran X) = \ComAlg(\Shv(\Ran X)^{\otimesstar}),
\]
the category of commutative algebra objects in $\Shv(\Ran X)$ with respect to the $\otimesstar$-monoidal structure.

\subsubsection{} For any (positive) integer $n$, let
\[
	j: (\Ran X)^n_{\disj} \to (\Ran X)^n
\]
where $(\Ran X)^n_\disj$ is the open sub-prestack of $(\Ran X)^n$ such that for any scheme $S$, $(\Ran X)^n_\disj(S)$ consists of $n$ non-empty subsets of $X(S)$ whose graphs are pair-wise disjoint.

Let $\matheur{A} \in \ComAlgstar(\Ran X)$, then for any $n$, we have the multiplication map
\[
	\union_!(\matheur{A} \boxtimes \cdots \boxtimes \matheur{A}) = \matheur{A} \otimesstar \matheur{A} \otimesstar \cdots\otimesstar \matheur{A} \to \matheur{A}
\]
which is equivalent to a map
\[
	\matheur{A} \boxtimes \cdots \boxtimes \matheur{A} \to \union^! \matheur{A}.
\]
Applying $j^!$ to both sides, we get a map of sheaves on $(\Ran X)^n_\disj$
\[
	j^!(\matheur{A} \boxtimes \cdots \boxtimes \matheur{A}) \to (j\circ \union)^! \matheur{A}. \teq\label{eq:com_fact_natural_map}
\]

\begin{defn}
Let $\matheur{A} \in \ComAlgstar(\Ran X)$. We say that $\matheur{A}$ is a commutative factorization algebra if the map~\eqref{eq:com_fact_natural_map} is an equivalence for each $n$. We use $\Factstar(X)$ to denote the full subcategory of $\ComAlgstar(\Ran X)$ consisting of commutative factorization algebras.
\end{defn}

\section{$\coBarP_{\matheur{P}}$ as a sequential limit}
\label{sec:coBar_as_limit}

Since the functor $\coChev$, or more generally, the functor $\coBarP_{\matheur{P}}$ for a co-operad $\matheur{P}$, is defined abstractly using operadic Koszul duality, it is too opaque to study directly conveniently. The main result of this section is Proposition~\ref{prop:coBar_as_limit}, which says that inside a pro-nilpotent category, $\coBarP_{\matheur{P}} A$ could be written as a sequential limit, such that the direct sum of successive fibers is, up to a cohomological shift, naturally identified with the free $\matheur{P}^\vee$ object generated by $\oblv_{\matheur{P}} A$, the underlying object of the coalgebra $A$. Many questions about $\coBarP_{\matheur{P}}$, such as those about homological stability, could thus be reduced to the case where $A$ has a trivial $\matheur{P}$-coalgebra structure, which is usually much easier. For example, in the case where $\matheur{P} = \coLie$, the free object is a symmetric algebra, of which we have complete control (see Corollary~\ref{cor:coChev_as_limit}).

In the dual situation, where one wants to understand the functor $\BarO_{\matheur{O}}$ for an operad $\matheur{O}$, a general strategy was given in~\cite{gaitsgory_study_2017}, which goes by the name ``the $\addFil$ trick.'' Unfortunately, the strategy presented in~\cite{gaitsgory_study_2017}*{Vol. II, Chapter 6, \S1,2} cannot be used directly. The main difficulty in the co-operad case lies mainly in the fact that the functor $\coBarP_{\matheur{P}}$ is defined using limits, and most of our functors do not commute with limits.

A similar difficulty appears in~\cite{ho_atiyah-bott_2020}. There, it was overcome by imposing a certain homological stability condition with respect to a certain $t$-structure to make sure that the limits are well-behaved. In this section, we will give a variation of the strategy there. In our case, the situation is better, since our (co-)algebras live in a pro-nilpotent category, and so, our stability condition does not involve any $t$-structure, but instead, is expressed in terms of the limit defining the pro-nilpotent category. As a result, the final statement works without any restriction on the object.

In~\S\ref{subsec:cofiltered_graded_generalities}, we set up the general framework of co-filtered and graded objects, which helps formalizing the notion of taking limits and successive fibers (which we will call associated-graded below) mentioned above. In~\S\ref{subsec:stabilizing_decaying_sequences}, we will introduce the stability and decay conditions on co-filtered and graded objects respectively. These technical conditions enable us to have control over infinite limits. We finally prove our main results in~\S\ref{subsec:cofil_coBar}.

Since the materials presented in this section are rather technical, while the technique being used is unrelated to the rest of the paper, we recommend that the reader take Corollary~\ref{cor:coChev_as_limit} as a blackbox at the first pass.

\subsection{Co-filtered and graded objects}
\label{subsec:cofiltered_graded_generalities}
In this subsection, we will introduce the notation for co-filtered and graded objects as well as basic facts about them. These are used to formulate the idea of taking limit and associated-graded. The discussion here is parallel to that of~\cite{gaitsgory_study_2017}*{Vol. II, Chapter 6 \S1,2}.

\begin{notation}
Let $\matheur{C}$ be a stable $\infty$-category. Then the category of co-filtered objects in $\matheur{C}$ is defined to be the category of functors
\[
	\matheur{C}^{\coFil} = \Fun(\mathbb{Z}^{\to, \op}, \matheur{C}),
\]
and the category of graded objects
\[
	\matheur{C}^{\gr} = \Fun(\mathbb{Z}, \matheur{C}).
\]
Here, the notation $\mathbb{Z}^{\to}$ is used to denote the category obtained from considering the usual ordering of $\mathbb{Z}$.
\end{notation}

\begin{notation}
We will use $\matheur{C}^{\coFil^{>0}}$ and $\matheur{C}^{\gr^{>0}}$ to denote the full subcategories of $\matheur{C}^{\coFil}$ and $\matheur{C}^{\gr}$ respectively of co-filtered and graded objects concentrated in only positive degrees.
\end{notation} 

\subsubsection{} We define
\[
	\assgr: \matheur{C}^{\coFil} \to \matheur{C}^\gr
\]
to be a functor of taking the associated graded object
\[
	\assgr(V)_n = \Fib(V_n \to V_{n-1}).
\]

\begin{rmk}
Later on, we will consider another functor of taking associated graded object of a filtered (rather than co-filtered) object. That functor will be defined as a $\coFib$ rather than $\Fib$. It should be clear from the context which functor we are using.
\end{rmk}

\subsubsection{}
We define
\[
	\oblv_{\coFil}: \matheur{C}^{\coFil} \to \matheur{C}
\]
to be the functor of taking limit
\[
	\oblv_{\coFil}(V) = \lim_{n\in \mathbb{Z}^{\to, \op}} V_n,
\]
where
\[
	V = (\cdots V_{n+1} \to V_n \to V_{n-1} \to \cdots) \in \matheur{C}^{\coFil}.
\]

Similarly, we define
\[
	\prod: \matheur{C}^{\gr} \to \matheur{C}
\]
to be the functor of taking product
\[
	\prod(V) = \prod_{n\in \mathbb{Z}} V_n
\]
where $V = (V_n)_{n\in \mathbb{Z}} \in \matheur{C}^{\gr}$.

Note that both functors can be implemented as the right Kan extensions along $\mathbb{Z}^{\to, \op} \to \pt$ and $\mathbb{Z} \to \pt$ respectively, where $\pt$ is the trivial category.

\subsubsection{} When $\matheur{C}$ is a symmetric monoidal category, then so are $\matheur{C}^{\coFil}$ and $\matheur{C}^{\gr}$ and the functor $\assgr$ is symmetric monoidal. Note that here, we use the right Kan extension to construct these monoidal structures, i.e. 
\[
	(V \otimeshat W)_n \simeq \lim_{i+j \leq n} V_i \otimes W_j \in \matheur{C}^{\coFil} \teq\label{eq:otimes_hat_for_cofiltered}
\]
when $V, W\in C^{\coFil}$ and
\[
	(V \otimeshat W)_n \simeq \prod_{i+j=n} V_i \otimes W_j. \teq\label{eq:otimes_hat_for_gr}
\]
We use the notation $\otimeshat$ to distinguish the monoidal structures obtained by taking limits from the usual ones.

\subsubsection{}
The limits appearing in~\eqref{eq:otimes_hat_for_cofiltered} and~\eqref{eq:otimes_hat_for_gr} are infinite in general. However, restricted to $\matheur{C}^{\coFil^{>0}}$ and $\matheur{C}^{\gr^{>0}}$, these are finite limits. The monoidal products in these cases, therefore, commute with colimits in each variable. Moreover, in the case of $\matheur{C}^{\gr^{>0}}$, the product is equivalent to a finite direct sum, which is how the usual $\otimes$-monoidal structure on $\matheur{C}^{\gr^{>0}}$ is defined. In other words, $\otimeshat \simeq \otimes$ for $\matheur{C}^{\gr^{>0}}$. 

\subsubsection{} The functor $\oblv_{\coFil}$ is not symmetric monoidal in general, even when we restrict ourselves to the case of $\matheur{C}^{\coFil^{>0}}$, due to the fact that
\[
\otimes: \matheur{C} \times \matheur{C} \to \matheur{C}
\]
does not commute with infinite limits in each variable in general. Indeed,
\[
	\oblv_{\coFil} (V\otimeshat W) \simeq \lim_{n\in \mathbb{Z}^{\to, \op}} (V\otimeshat W)_n \simeq \lim_{n\in \mathbb{Z}^{\to, \op}}\lim_{i+j\leq n} V_i\otimes W_j \simeq \lim_{i, j \in \mathbb{Z}^{\to, \op}} V_i\otimes W_j, \teq\label{eq:tensor_computation_oblvcoFil}
\]
but the last term is not equivalent to
\[
	(\lim_{i \in \mathbb{Z}^{\to, \op}} V_i)\otimes (\lim_{j\in \mathbb{Z}^{\to, \op}} W_j) \simeq (\oblv_{\coFil} V) \otimes (\oblv_{\coFil} W)
\]
in general.

Using the same reasoning, we see that the functor $\prod$ is not symmetric monoidal in general.

\subsubsection{} These are the main reasons why we cannot directly apply the $\addFil$ trick of~\cite{gaitsgory_study_2017} in this setting. The fix is to consider certain full sub-categories of $\matheur{C}^{\coFil}$ and $\matheur{C}^{\gr}$ which satisfy some stability/finiteness condition to ensure that the limits are well-behaved. This is exactly what we are doing next.

\subsection{Stabilizing and decaying sequences}
\label{subsec:stabilizing_decaying_sequences}
We will now introduce the stability conditions required to make the limits involved well-behaved.

\subsubsection{} Let $\matheur{C}$ be a pro-nilpotent category (see Definition~\ref{defn:pro_nilpotent}). In particular, it can be exhibited as a limit
\[
	\matheur{C} \simeq \lim_{p\in \mathbb{N}^\op} \matheur{C}_p, \teq\label{eq:pro_nilp_lim}
\]
where all the functors involved in the limit are symmetric monoidal and commute with both limits and colimits.

For each $p\in \mathbb{N}$, we will use
\[
	\ev_p: \matheur{C} \to \matheur{C}_p
\]
to denote the canonical projection/evaluation functor. Note that this functor is symmetric monoidal, and moreover, it commutes with both limits and colimits.

For an object $V$ in $\matheur{C}^{\coFil}$ or $\matheur{C}^{\gr}$, we use $V_{n, p}$ to denote $\ev_p(V_n)$.

\begin{defn}
Let $\matheur{C}$ be as above. Then, a co-filtered object $V\in \matheur{C}^{\coFil^{>0}}$ is said to stabilize if for all $p$, the map
\[
	(\ev_p(V_{n+1}) \to \ev_p(V_n)) = \ev_p(V_{n+1}\to V_n)
\]
is an equivalence when $n\gg 0$.

A graded object $V\in \matheur{C}^{\gr^{>0}}$ is said to be decaying if for all $p$,
\[
	\ev_p(V_n) \simeq 0
\]
when $n\gg 0$.
\end{defn}

\begin{notation}
We use $\matheur{C}^{\coFil^{>0}, \stab}$ and $\matheur{C}^{\gr^{>0}, \decay}$ to denote the full-subcategories of $\matheur{C}^{\coFil^{>0}}$ and $\matheur{C}^{\gr^{>0}}$ consisting of stabilizing and decaying objects respectively.
\end{notation}

\begin{rmk}
Note that the notions of stabilizing and decaying objects depend on the presentation of $\matheur{C}$ as a limit as in~\eqref{eq:pro_nilp_lim}. We will elide this dependence from the notation unless confusion is likely to happen.
\end{rmk}

The following result is tautological.
\begin{lem} \label{lem:stab=decay}
Let $\matheur{C}$ be as above and $V\in \matheur{C}^{\coFil^{>0}}$. Then
\[
	V \in \matheur{C}^{\coFil^{>0}, \stab}
\]
if and only if
\[
	\assgr(V) \in \matheur{C}^{\gr^{>0}, \decay}.
\]
\end{lem}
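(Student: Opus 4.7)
The plan is to unfold both definitions through the functor $\ev_p$ and observe that the two conditions are literally term-by-term equivalent, as the author's label ``tautological'' suggests.

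First I would fix a $p\in \mathbb{N}$ and an integer $n > 0$, and recall that $\assgr(V)_n = \Fib(V_n \to V_{n-1})$. Since $\ev_p\colon \matheur{C}\to \matheur{C}_p$ lies in $\DGCat_{\pres,\cont}$ and in particular commutes with finite limits (being part of the pro-nilpotent presentation of $\matheur{C}$), it preserves the fiber sequence, giving
\[
	\ev_p(\assgr(V)_n) \simeq \Fib\bigl(\ev_p(V_n)\to \ev_p(V_{n-1})\bigr).
\]

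Next I would invoke the standard fact that in a stable $\infty$-category a morphism $f$ is an equivalence if and only if $\Fib(f)\simeq 0$. Applied to the displayed equation, this shows that $\ev_p(V_n)\to \ev_p(V_{n-1})$ is an equivalence if and only if $\ev_p(\assgr(V)_n)\simeq 0$. Quantifying over $n\gg 0$ (and noting the harmless index shift from $n$ to $n+1$), the stability condition on $V$ at level $p$ is precisely the decay condition on $\assgr(V)$ at level $p$. Quantifying over all $p$ yields the claim.

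The only place where there is anything to check is step one, namely that $\ev_p$ commutes with the formation of fibers; but this is automatic from the hypothesis that the transition functors in the pro-nilpotent presentation of $\matheur{C}$ (and hence the projections $\ev_p$) lie in $\DGCatprescont$ and preserve limits. Since no obstacle of substance arises, I would expect a three-line proof, exactly as the word ``tautological'' anticipates.
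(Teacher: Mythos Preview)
Your proposal is correct and is exactly the unfolding of definitions that the paper has in mind when it calls the result ``tautological'' (the paper gives no proof at all). The one point worth noting is that preservation of fibers by $\ev_p$ is automatic simply because $\ev_p$ is an exact functor between stable $\infty$-categories; you do not need to invoke the limit-preservation clause from the pro-nilpotent definition.
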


As a consequence, we have the following
\begin{cor}
The categories $\matheur{C}^{\gr^{>0}, \decay}$ and $\matheur{C}^{\coFil^{>0}, \stab}$ are closed under tensor products in $\matheur{C}^{\gr^{>0}}$ and $\matheur{C}^{\coFil^{>0}}$ respectively. In other words, they are symmetric monoidal categories.
\end{cor}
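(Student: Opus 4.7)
The plan is to prove the graded statement first, then bootstrap to the co-filtered statement via Lemma~\ref{lem:stab=decay} and the fact that $\assgr$ is symmetric monoidal.

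For the graded case, let $V, W \in \matheur{C}^{\gr^{>0}, \decay}$. The key point is that in the subcategory of positively graded objects, we have $\otimeshat \simeq \otimes$ and the $n$-th graded piece
\[
    (V \otimes W)_n \simeq \bigoplus_{\substack{i + j = n \\ i, j \geq 1}} V_i \otimes W_j
\]
is a \emph{finite} direct sum. Since each evaluation functor $\ev_p : \matheur{C} \to \matheur{C}_p$ is symmetric monoidal and continuous, we get $\ev_p((V \otimes W)_n) \simeq \bigoplus_{i+j=n} \ev_p(V_i) \otimes \ev_p(W_j)$. Fix $p$, and choose $N^V_p, N^W_p$ with $\ev_p(V_i) \simeq 0$ for $i \geq N^V_p$ and $\ev_p(W_j) \simeq 0$ for $j \geq N^W_p$. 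For any $n \geq N^V_p + N^W_p$ and any decomposition $i + j = n$ with $i, j \geq 1$, at least one of $i \geq N^V_p$ or $j \geq N^W_p$ must hold (otherwise $i + j < N^V_p + N^W_p \leq n$), so $\ev_p(V_i) \otimes \ev_p(W_j) \simeq 0$. Hence $\ev_p((V \otimes W)_n) \simeq 0$ for $n \gg 0$, proving that $V \otimes W$ is decaying.

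For the co-filtered case, let $V, W \in \matheur{C}^{\coFil^{>0}, \stab}$. Because $\assgr : \matheur{C}^{\coFil} \to \matheur{C}^{\gr}$ is symmetric monoidal, we have
\[
    \assgr(V \otimeshat W) \simeq \assgr(V) \otimes \assgr(W).
\]
By Lemma~\ref{lem:stab=decay}, $\assgr(V)$ and $\assgr(W)$ lie in $\matheur{C}^{\gr^{>0}, \decay}$, so by the graded case just proven their tensor product is also decaying. Applying Lemma~\ref{lem:stab=decay} in the reverse direction yields $V \otimeshat W \in \matheur{C}^{\coFil^{>0}, \stab}$, as desired.

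The argument is essentially formal once the two inputs are in place: (i) $\assgr$ is symmetric monoidal (already noted in the text), and (ii) the $n$-th piece of a graded tensor in $\matheur{C}^{\gr^{>0}}$ is a \emph{finite} direct sum, so continuity of $\ev_p$ lets decay be checked termwise. There is no substantive obstacle — the only place one must be slightly careful is remembering to restrict to the positively graded setting so that $\otimeshat$ reduces to an honest finite sum, which is exactly what prevents the pathology observed in~\eqref{eq:tensor_computation_oblvcoFil} from appearing.
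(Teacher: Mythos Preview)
Your proof is correct and follows essentially the same approach as the paper: reduce the co-filtered case to the graded one via Lemma~\ref{lem:stab=decay} and the symmetric monoidality of $\assgr$, then verify the graded case by observing that for $n$ large enough every summand in the (finite) decomposition of $(V\otimes W)_n$ vanishes after applying $\ev_p$. Your version is slightly more explicit about the bound $N^V_p + N^W_p$, but the argument is the same.
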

\begin{proof}
By Lemma~\ref{lem:stab=decay} above and the fact that $\assgr$ is symmetric monoidal, it suffices to show the statement for $\matheur{C}^{\gr^{>0}, \decay}$. However, this is direct from the way tensors are computed in $\matheur{C}^{\gr^{>0}}$. Indeed, let $V, W \in \matheur{C}^{\gr^{>0}, \decay}$. We want to show that for any fixed $p$,
\[
	 (V\otimeshat W)_{n, p} \simeq \prod_{n_1 + n_2 = n} V_{n_1, p} \otimes W_{n_2, p} \simeq 0
\]
when $n \gg 0$. But this is clear since when $n\gg 0$, either $n_1 \gg 0$ or $n_2 \gg 0$, and we are done.
\end{proof}

Finally, using the computation at~\eqref{eq:tensor_computation_oblvcoFil}, we immediately get the following result promised earlier.
\begin{lem} \label{lem:oblv_coFil_sym_mon}
The functor
\[
	\oblv_{\coFil}: \matheur{C}^{\coFil^{>0}, \stab} \to \matheur{C}
\]
is symmetric monoidal.
\end{lem}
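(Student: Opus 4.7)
\medskip

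The plan is to use the pro-nilpotent presentation $\matheur{C} \simeq \lim_{p} \matheur{C}_p$ to check the desired equivalence one $\matheur{C}_p$ at a time, where the stabilizing hypothesis reduces everything to a finite computation. Concretely, starting from the natural symmetric monoidal structure map
\[
\oblv_{\coFil}(V) \otimes \oblv_{\coFil}(W) \longrightarrow \oblv_{\coFil}(V \otimeshat W)
\]
I would combine it with the computation~\eqref{eq:tensor_computation_oblvcoFil}, which identifies the right hand side with $\lim_{i,j \in \mathbb{Z}^{\to,\op}} V_i \otimes W_j$, and then verify that this comparison map is an equivalence.

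Since $\matheur{C} \simeq \lim_p \matheur{C}_p$ and each $\ev_p$ preserves both limits and the monoidal product, it suffices to prove the equivalence after applying $\ev_p$ for every $p$. This is where the stabilizing condition enters: for $V, W \in \matheur{C}^{\coFil^{>0},\stab}$, there exist $N_V, N_W$ (depending on $p$) so that the maps $V_{n+1,p} \to V_{n,p}$ and $W_{n+1,p} \to W_{n,p}$ are equivalences for $n \geq N_V$ and $n \geq N_W$ respectively. Consequently, the pro-system $\{V_{i,p} \otimes W_{j,p}\}$ is essentially constant in the region $i \geq N_V$, $j \geq N_W$, so its limit computes to $V_{N_V,p} \otimes W_{N_W,p}$. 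On the other side, $\ev_p(\oblv_{\coFil} V) \simeq V_{N_V,p}$ and $\ev_p(\oblv_{\coFil} W) \simeq W_{N_W,p}$, and the comparison map becomes the identity on $V_{N_V,p} \otimes W_{N_W,p}$ under these identifications.

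One should also check compatibility with the unit, but this is immediate from the fact that the unit for $\otimeshat$ on $\matheur{C}^{\coFil^{>0}}$ is the constant co-filtered object with value the unit of $\matheur{C}$ (in the appropriate degree), which manifestly stabilizes and whose $\oblv_{\coFil}$ is the unit. The only real content is the tensor comparison above, and the only place where something could go wrong is infinitely many factors conspiring, which the stabilization hypothesis rules out by making the cofinal diagram essentially finite in each $\matheur{C}_p$.
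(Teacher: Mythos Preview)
Your proof is correct and follows essentially the same approach as the paper: reduce to each $\matheur{C}_p$ using that $\ev_p$ preserves limits and tensor products, then use the stabilizing condition to make the relevant limits eventually constant so that the comparison map becomes an identity. Your write-up is in fact more detailed than the paper's, which simply says that stability ``allows us to turn the limit \ldots\ inside the tensor, since we can just take $i$ and $j$ big enough.''

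One small remark: your aside about the unit is not quite on point. The ambient $\matheur{C}$ is only assumed pro-nilpotent and need not be unital (Definition~\ref{defn:pro_nilpotent}), and in any case $\matheur{C}^{\coFil^{>0}}$ is restricted to positive degrees, so there is no monoidal unit to worry about here; ``symmetric monoidal'' in this context just means the tensor comparison map is an equivalence, which is exactly what you verified.
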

\begin{proof}
We want to show that for each $V, W \in \matheur{C}^{\coFil^{>0}, \stab}$, the natural map
\[
	\oblv_{\coFil} V \otimes \oblv_{\coFil} W \to \oblv_{\coFil}(V \otimeshat W)
\]
is an equivalence. It suffices to show that this is an equivalence after evaluating on $\matheur{C}_p$ for each $p$. But now, the stability condition allows us to turn the limit in the last term of~\eqref{eq:tensor_computation_oblvcoFil} inside the tensor, since we can just take $i$ and $j$ big enough there and remove the limit altogether.
\end{proof}

\subsubsection{} The decay condition on $\matheur{C}^{\gr^{>0}, \decay}$ also makes the functor
\[
\prod: \matheur{C}^{\gr^{>0}, \decay} \to \matheur{C}
\]
well-behaved.

\begin{lem}
	We have the following natural equivalence of functors
	\[
		\bigoplus \simeq \prod: \matheur{C}^{\gr^{>0}, \decay} \to \matheur{C}.
	\]
	In particular, the functor $\prod$ commutes with colimits and is symmetric monoidal
\end{lem}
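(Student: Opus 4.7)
The plan is to reduce everything to a pointwise statement at each level $\ev_p$ of the pro-nilpotent presentation $\matheur{C} \simeq \lim_p \matheur{C}_p$, exploiting the fact that $\ev_p$ commutes with both limits and colimits (and with the tensor product). Since a natural transformation between continuous functors into $\matheur{C}$ is an equivalence iff it is one after each $\ev_p$, I can test the claimed equivalence $\bigoplus \to \prod$ objectwise in $\matheur{C}_p$.

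First I would fix $V \in \matheur{C}^{\gr^{>0}, \decay}$ and $p \in \mathbb{N}$. By the decay condition, the graded object $\ev_p(V) = (V_{n,p})_{n > 0}$ has $V_{n,p} \simeq 0$ for $n \gg 0$, so that both $\prod_n V_{n,p}$ and $\bigoplus_n V_{n,p}$ are computed by the same \emph{finite} indexing set of nonzero terms. In the stable category $\matheur{C}_p$ finite products agree with finite coproducts, hence the canonical map $\bigoplus_n V_{n,p} \to \prod_n V_{n,p}$ is an equivalence. Assembling this over all $p$ yields the promised natural equivalence $\bigoplus \simeq \prod$ on $\matheur{C}^{\gr^{>0}, \decay}$. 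Commutation of $\prod$ with colimits then follows immediately, since $\bigoplus$, being a left adjoint (left Kan extension along $\mathbb{Z}_{>0} \to \pt$), preserves all colimits.

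For the symmetric monoidal statement, I would combine this with the formula $(V \otimeshat W)_n \simeq \prod_{i+j=n} V_i \otimes W_j$ from~\eqref{eq:otimes_hat_for_gr}, which for $>0$-graded objects is a finite product and therefore agrees with $\bigoplus_{i+j=n}$. Then, evaluating at each $p$ and again using that decay makes all indexing sets effectively finite, the standard computation
\[
\bigoplus_n \bigoplus_{i+j=n} V_{i,p}\otimes W_{j,p} \;\simeq\; \Bigl(\bigoplus_i V_{i,p}\Bigr)\otimes \Bigl(\bigoplus_j W_{j,p}\Bigr)
\]
gives $\prod(V \otimeshat W) \simeq \prod(V) \otimes \prod(W)$ compatibly with the symmetric monoidal coherence data inherited from $\bigoplus$.

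The only mild obstacle is the bookkeeping needed to promote the pointwise equivalences to an equivalence of symmetric monoidal functors, but this is routine given that $\ev_p$ is symmetric monoidal and commutes with the relevant (co)limits, so no new homotopical input is required beyond the decay hypothesis.
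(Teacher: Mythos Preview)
Your proof is correct and follows essentially the same approach as the paper: reduce to each $\ev_p$ using that $\matheur{C} \simeq \lim_p \matheur{C}_p$, then observe that the decay condition makes all the relevant (co)products finite in $\matheur{C}_p$. The paper's proof is terser (it treats the symmetric monoidal claim as an immediate consequence), but you have supplied the details correctly.
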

\begin{proof}
	It suffices to show that we have the following natural equivalence
	\[
		\ev_p \circ \bigoplus \simeq \ev_p \circ \prod
	\]
	for each $p$. However, this is clear since within each $\matheur{C}_p$, the products/direct sums that appear are finite.
\end{proof}

\begin{rmk}
	Note that $\prod$ automatically commutes with limits since it's defined as a limit itself.
\end{rmk}

\subsubsection{} To end this subsection, we will record the following piece of notation: we will use
\[
	\addCoFil: \matheur{C} \to \matheur{C}^{\coFil^{>0}, \stab}
\]
to denote the following functor which sends $V \in \matheur{C}$ to the co-filtered object
\[
	\cdots \leftarrow 0 \leftarrow V \leftarrow V \leftarrow \cdots
\]
where $V$'s first appearance is at degree 1 and all the maps between them are identities.

\subsubsection{} As in~\cite{gaitsgory_study_2017}*{Vol. II, Chapter 6, \S1,2}, for any co-operad $\matheur{P}$, the functor $\addCoFil$ upgrades to a functor
\[
	\addCoFil: \matheur{P}\Pcoalg(\matheur{C}) \to \matheur{P}\Pcoalg(\matheur{C}^{\coFil^{>0}, \stab}),
\]
and moreover, as in~\cite{gaitsgory_study_2017}*{Vol. II, Chapter 6, Prop. 1.4.6} the following diagram commutes
\[
\xymatrix{
	\matheur{P}\Pcoalg(\matheur{C}) \ar[r]^<<<<<{\addCoFil} \ar[d]_{\oblv_{\matheur{P}}} & \matheur{P}\Pcoalg(\matheur{C}^{\coFil^{>0}, \stab}) \ar[r]^{\assgr} & \matheur{P}\Pcoalg(\matheur{C}^{\gr^{>0}, \decay})\ar@{=}[d] \\
	\matheur{C} \ar[r]^<<<<<<<<<<<<{\deg = 1} & \matheur{C}^{\gr^{>0}, \decay} \ar[r]^<<<<<<<<<{\cotriv_{\matheur{P}}} & \matheur{P}\Pcoalg(\matheur{C}^{\gr^{>0}, \decay}) 
} \teq\label{eq:addCoFil_triv}
\]
Here, $(\deg = 1) = \assgr\circ \addCoFil$ denotes the functor of placing an object of $\matheur{C}$ in graded degree $1$.

\subsection{A co-filtration on $\coBarP_\matheur{P}$} \label{subsec:cofil_coBar}
Using the framework setup above, in this subsection, we will show that for any coLie-coalgebra $\mathfrak{a}$ in a pro-nilpotent category $\matheur{C}$, $\coChev \mathfrak{a}$ could be computed as a sequential limit
\[
	\coChev \mathfrak{a} = \lim (\coChev^1 \mathfrak{a} \leftarrow \coChev^2 \mathfrak{a} \leftarrow \cdots) = \oblv_{\coFil} ((\coChev^n \mathfrak{a})_{n\in \mathbb{Z}_{>0}})
\]
for some
\[
	(\coChev^n \mathfrak{a})_{n\in \mathbb{Z}_{>0}} \in \matheur{C}^{\coFil^{>0}, \stab}
\]
such that
\[
	\assgr((\coChev^n \mathfrak{a})_{n \in \mathbb{Z}_{>0}}) \simeq (\Sym^n \mathfrak{a}[-1])_{n\in \mathbb{Z}_{>0}} \in \matheur{C}^{\gr^{>0}, \decay}.
\]

In fact, we will show a general statement, Proposition~\ref{prop:coBar_as_limit}, about $\coBarP_{\matheur{P}}$ for any co-operad $\matheur{P}$, which specializes to the case of $\coChev$ described above, Corollary~\ref{cor:coChev_as_limit}. The following Proposition is the technical statement from which our desired result follows.

\begin{prop} \label{prop:addCoFil_fund}
	Let $\matheur{C}$ be a pro-nilpotent category, $\matheur{P}$ a co-operad with Koszul dual $\matheur{O} = \matheur{P}^\vee$. Then, the following diagram commutes
	\[
	\xymatrix{
		\matheur{P}\Pcoalg(\matheur{C}) \ar[d]_{\addCoFil} \ar[rr]^{\coBarP_{\matheur{P}}} && \matheur{O}\Palg(\matheur{C})  \\
		\matheur{P}\Pcoalg(\matheur{C}^{\coFil^{>0}, \stab}) \ar[d]_{\assgr} \ar[rr]^{\coBarP_{\matheur{P}}} && \ar[u]_{\oblv_{\coFil}} \matheur{O}\Palg(\matheur{C}^{\coFil^{>0}, \stab}) \ar[d]^{\assgr} \\
		\matheur{P}\Pcoalg(\matheur{C}^{\gr^{>0}, \decay}) \ar[rr]^{\coBarP_{\matheur{P}}} \ar[d]_{\bigoplus \simeq \prod} && \matheur{O}\Palg(\matheur{C}^{\gr^{>0}, \decay}) \ar[d]^{\bigoplus \simeq \prod} \\
		\matheur{P}\Pcoalg(\matheur{C}) \ar[rr]^{\coBarP_{\matheur{P}}} && \matheur{O}\Palg(\matheur{C})
	} \teq \label{eq:add_cofil_trick_fund}
	\]
\end{prop}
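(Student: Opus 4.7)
The plan is to prove the three sub-squares of the diagram in turn, each by the same mechanism: the corresponding vertical arrow is symmetric monoidal and preserves the (co)limits that appear in the construction of $\coBarP_{\matheur{P}}$, so it intertwines $\coBarP_{\matheur{P}}$ on the two sides. The pro-nilpotence hypothesis (Definition~\ref{defn:pro_nilpotent}) plays an essential role: after applying each evaluation functor $\ev_p$ to a piece $\matheur{C}_p$, any potentially infinite (co)limit showing up in the cobar formula becomes effectively finite, because the stabilization/decay conditions force only finitely many tensor powers to contribute nontrivially. Consequently every interchange we need ultimately takes place at a finite level inside a fixed $\matheur{C}_p$, where symmetric monoidality alone suffices.

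For the middle square, $\assgr: \matheur{C}^{\coFil} \to \matheur{C}^{\gr}$ is symmetric monoidal, and its value in each degree is given by a fiber (hence a finite limit), so it preserves both limits and colimits. This already implies that $\assgr$ commutes with $\coBarP_{\matheur{P}}$ once lifted to the categories of coalgebras and algebras. For the bottom square, the lemma immediately preceding the statement gives the symmetric monoidality of $\bigoplus \simeq \prod: \matheur{C}^{\gr^{>0}, \decay} \to \matheur{C}$, and as a product (equivalently a right Kan extension along $\mathbb{Z} \to \pt$) it tautologically commutes with all limits, yielding the desired interchange with $\coBarP_{\matheur{P}}$.

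For the top square, Lemma~\ref{lem:oblv_coFil_sym_mon} gives the symmetric monoidality of $\oblv_{\coFil}: \matheur{C}^{\coFil^{>0}, \stab} \to \matheur{C}$, and being itself a limit the functor $\oblv_{\coFil}$ automatically commutes with all limits. Combined with the easy observation that $\addCoFil$ is symmetric monoidal, lands in the stabilizing subcategory, and satisfies $\oblv_{\coFil} \circ \addCoFil \simeq \id_{\matheur{C}}$, the commutativity of the top square follows formally from the corresponding statement for $\oblv_{\coFil}$.

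The step I anticipate as most delicate is making precise the slogan that $\coBarP_{\matheur{P}}$ commutes with any symmetric monoidal functor which preserves the relevant limits. To carry this out cleanly I would fix a specific model of $\coBarP_{\matheur{P}}$, for instance as the totalization of the cosimplicial object whose $n$-th term involves $\matheur{P}(n)$ together with the $n$-fold tensor power of the underlying object, suitably $\Sigma_n$-symmetrized, and then verify the interchange with each of $\assgr$, $\prod$, $\oblv_{\coFil}$ directly. The stabilization and decay conditions are exactly what confine the relevant cosimplicial totalizations to a finite range inside each $\matheur{C}_p$, converting the symmetric monoidality of the vertical functor into the required commutation with that finite totalization.
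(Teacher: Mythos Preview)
Your proposal is correct and follows essentially the same approach as the paper: each square commutes because the relevant vertical functor is symmetric monoidal and preserves limits, hence intertwines $\coBarP_{\matheur{P}}$ (a totalization of a cosimplicial object built from tensor powers), and the top square is reduced to this via $\oblv_{\coFil}\circ\addCoFil\simeq\id$ exactly as you suggest. The one point the paper makes more explicit than you do is a preliminary well-definedness check---that the middle two instances of $\coBarP_{\matheur{P}}$ actually land in the stabilizing/decaying subcategories---arguing via Lemma~\ref{lem:stab=decay} that it suffices to treat the graded case, then using pro-nilpotence of $\matheur{C}$ so that in each $\matheur{C}_p$ only finitely many tensor powers survive in the cosimplicial object, after which the decay condition on the input forces decay of the output; you gesture at this mechanism in your final paragraph but conflate it somewhat with the commutativity argument rather than isolating it as a separate first step.
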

\begin{proof}
The first thing we need to check is that this diagram actually makes sense. Namely, the functors involved land in the correct categories. By inspection, we see that the only cases where it is not immediate are the middle two instances of $\coBarP_{\matheur{P}}$. By Lemma~\ref{lem:stab=decay}, it suffices to check the statement for the third instance of $\coBarP_{\matheur{P}}$ (from top down). For that, it suffices to check that for each
\[
	A \in \matheur{P}\Pcoalg(\matheur{C}^{\gr^{>0}, \decay})
\]
and each $p \in \mathbb{Z}_+$,
\[
	\ev_p(\coBarP_\matheur{P}(A)_n) =  \coBarP_\matheur{P}(A)_{n, p} \simeq 0
\]
when $n \gg 0$.

Let $\coBarP^\bullet_{\matheur{P}}(A)$ be the co-simplicial object defining $\coBarP_\matheur{P}(A)$. Then since the monoidal structure on $\matheur{C}_p$ is nilpotent by assumption, only finite numbers of tensor powers of $A$ appear in
\[
	\ev_p(\coBarP_\matheur{P}^\bullet(A)).
\]
The decay condition on $A$ then implies that
\[
	\coBarP^\bullet_\matheur{P}(A)_{n, p} \simeq 0
\]
when $n \gg 0$, and we are done.

Now, observe that the bottom two squares of~\eqref{eq:add_cofil_trick_fund} commute since the vertical arrows are all symmetric monoidal and co-continuous (i.e. commute with limits). It remains to show that the top square commutes. This follows from the equivalence
\[
	\id \simeq \oblv_{\coFil}\circ \addCoFil
\]
and the following commutative diagram
\[
\xymatrix{
	\matheur{P}\Pcoalg(\matheur{C}) \ar[rr]^{\coBarP_{\matheur{P}}} && \matheur{O}\Palg(\matheur{C})  \\
	\matheur{P}\Pcoalg(\matheur{C}^{\coFil^{>0}, \stab}) \ar[rr]^{\coBarP_{\matheur{P}}} \ar[u]^{\oblv_{\coFil}} && \matheur{O}\Palg(\matheur{C}^{\coFil^{>0}, \stab}) \ar[u]_{\oblv_{\coFil}} 
}
\]
which is due to the fact that $\oblv_{\coFil}$ is also co-continuous, being a right adjoint, and symmetric monoidal, by Lemma~\ref{lem:oblv_coFil_sym_mon}.

This concludes the proof of Proposition~\ref{prop:addCoFil_fund}.
\end{proof}

\begin{rmk}
	If we remove the stability conditions, the ``dual'' diagram of~\eqref{eq:add_cofil_trick_fund} (i.e. replacing co-operad by operad, $\BarO$ by $\coBarP$ etc.) is the one appearing in~\cite{gaitsgory_study_2017}, which makes the trick of adding a filtration work. Here, the stability conditions are what needed to make the same diagram commute in the co-operad setting.
\end{rmk}

\subsubsection{} We will now come to the main result of this section.

\begin{prop}
\label{prop:coBar_as_limit}
Let $\matheur{C}$ be a pro-nilpotent category, $\matheur{P}$ a co-operad with Koszul dual $\matheur{O}$, and $A \in \matheur{P}\Pcoalg(\matheur{C})$. Then, $\coBarP_{\matheur{P}} A$ could be canonically written as a limit
\[
	\coBarP_{\matheur{P}} A \simeq \lim_{n\in \mathbb{Z}_{>0}^{\to, \op}} \coBarP_{\matheur{P}}^n A
\]
for some
\[
	(\coBarP_\matheur{P}^n A)_{n\in \mathbb{Z}_{>0}} \in \matheur{O}\Palg(\matheur{C}^{\coFil^{>0}, \stab})
\]
such that
\[
	\assgr((\coBarP^n_{\matheur{P}} A)_{n\in \mathbb{Z}_{>0}}) \simeq \Free_{\matheur{O}}(\oblv_{\matheur{P}}(A)^{\deg=1}) \in \matheur{O}\Palg(\matheur{C}^{\gr^{>0}, \decay}).
\]
\end{prop}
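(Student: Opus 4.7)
The strategy is to extract the statement directly from the ``$\addCoFil$ trick'' established in Proposition~\ref{prop:addCoFil_fund}. The key observation is that applying $\addCoFil$ to $A$ produces a co-filtered $\matheur{P}$-coalgebra whose $\coBarP_{\matheur{P}}$ is the desired co-filtered lift, and whose associated graded is a cotrivial $\matheur{P}$-coalgebra, to which Koszul duality applies in its most transparent form.

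Concretely, define
\[
(\coBarP^n_{\matheur{P}} A)_{n \in \mathbb{Z}_{>0}} := \coBarP_{\matheur{P}}\bigl(\addCoFil(A)\bigr) \in \matheur{O}\Palg(\matheur{C}^{\coFil^{>0}, \stab}),
\]
which is well-defined by the first half of Proposition~\ref{prop:addCoFil_fund}, i.e.\ the fact that the second row of~\eqref{eq:add_cofil_trick_fund} lands in the stabilizing subcategory. To obtain the limit formula, I apply $\oblv_{\coFil}$ and use the commutativity of the top square of~\eqref{eq:add_cofil_trick_fund} together with the tautological equivalence $\id \simeq \oblv_{\coFil} \circ \addCoFil$:
\[
\lim_{n \in \mathbb{Z}_{>0}^{\to,\op}} \coBarP^n_{\matheur{P}} A \;=\; \oblv_{\coFil}\bigl(\coBarP_{\matheur{P}}(\addCoFil(A))\bigr) \;\simeq\; \coBarP_{\matheur{P}}\bigl(\oblv_{\coFil}(\addCoFil(A))\bigr) \;\simeq\; \coBarP_{\matheur{P}} A.
\]

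For the associated graded computation, I apply $\assgr$ to $(\coBarP^n_{\matheur{P}} A)_n$ and invoke the commutativity of the middle square of~\eqref{eq:add_cofil_trick_fund}:
\[
\assgr\bigl((\coBarP^n_{\matheur{P}} A)_n\bigr) \;\simeq\; \coBarP_{\matheur{P}}\bigl(\assgr(\addCoFil(A))\bigr).
\]
By diagram~\eqref{eq:addCoFil_triv}, the object $\assgr(\addCoFil(A))$ is nothing but $\cotriv_{\matheur{P}}\bigl(\oblv_{\matheur{P}}(A)^{\deg=1}\bigr)$, the cotrivial $\matheur{P}$-coalgebra on $\oblv_{\matheur{P}}(A)$ placed in graded degree $1$. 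Since $\matheur{C}^{\gr^{>0}, \decay}$ is pro-nilpotent (the truncations $\matheur{C}^{\gr^{>0}, \decay} \to \matheur{C}^{\gr^{>0}, \decay}_{\leq N}$ combined with the truncations of $\matheur{C}$ itself provide the required presentation), Koszul duality there exchanges trivial and free objects, hence $\coBarP_{\matheur{P}} \circ \cotriv_{\matheur{P}} \simeq \oblv_{\matheur{O}} \circ \Free_{\matheur{O}}$ at the level of underlying algebras, which gives
\[
\assgr\bigl((\coBarP^n_{\matheur{P}} A)_n\bigr) \;\simeq\; \Free_{\matheur{O}}\bigl(\oblv_{\matheur{P}}(A)^{\deg=1}\bigr) \in \matheur{O}\Palg(\matheur{C}^{\gr^{>0}, \decay}),
\]
as claimed.

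The only point that requires care, and hence the main obstacle, is justifying that the Koszul duality input used in the last step (``$\coBarP$ of a cotrivial is a free'') is legitimate in the graded decaying setting; this amounts to verifying that $\matheur{C}^{\gr^{>0}, \decay}$ inherits a pro-nilpotent structure compatible with the monoidal structure $\otimeshat$. This is not formal but follows from combining the decay truncation in the grading direction with the ambient pro-nilpotent tower on $\matheur{C}$, and it is precisely what is encoded in the bottom two rows of~\eqref{eq:add_cofil_trick_fund}. Once this is in hand, everything else is a direct, diagrammatic application of Proposition~\ref{prop:addCoFil_fund}, with no further computation required.
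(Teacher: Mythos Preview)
Your proof is correct and follows essentially the same approach as the paper: define the co-filtered object as $\coBarP_{\matheur{P}}(\addCoFil(A))$, use the top square of~\eqref{eq:add_cofil_trick_fund} together with $\id \simeq \oblv_{\coFil}\circ\addCoFil$ for the limit formula, and use the middle square combined with~\eqref{eq:addCoFil_triv} and the exchange of trivial and free objects under Koszul duality for the associated graded. The only minor difference is that you spell out why the last step is legitimate (pro-nilpotence of the graded decaying category), whereas the paper simply invokes the bottom two squares of~\eqref{eq:add_cofil_trick_fund} and the Koszul duality input directly; note that $\matheur{C}^{\gr^{>0}}$ is already pro-nilpotent from the positive grading alone, so the decay condition plays no role in that particular step.
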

\begin{proof}
The top square of~\eqref{eq:add_cofil_trick_fund} implies that for any $A \in \matheur{P}\Pcoalg(\matheur{C})$, $\coBarP_{\matheur{P}} A$ could be written as a limit
\[
	\coBarP_{\matheur{P}} A \simeq \oblv_{\coFil} \circ \coBarP_{\matheur{P}} \circ \addCoFil(A) \simeq \lim_{n \in \mathbb{Z}_{>0}^{\to, \op}} (\coBarP^n_\matheur{P} A)
\]
for some
\[
	(\coBarP_\matheur{P}^n A)_{n\in \mathbb{Z}_{>0}} \in \matheur{O}\Palg(\matheur{C}^{\coFil^{>0}, \stab}).
\]
Moreover, the bottom two squares of~\ref{eq:add_cofil_trick_fund}, coupled with~\eqref{eq:addCoFil_triv} imply that
\[
	\assgr((\coBarP_{\matheur{P}}^n A)_{n\in \mathbb{Z}_{>0}}) \simeq \coBarP_\matheur{P}(\cotriv_{\matheur{P}}\circ \oblv_\matheur{P}(A)^{\deg = 1}) \simeq \Free_\matheur{O}(\oblv_{\matheur{P}}(A)^{\deg=1}).
\]
\end{proof}

Specializing to the case where $\matheur{P} = \coLie$ we get the desired result stated at the beginning of this subsection.

\begin{cor}
\label{cor:coChev_as_limit}
Let $\matheur{C}$ be a pro-nilpotent category and $\mathfrak{a} \in \coLie(\matheur{C})$. Then $\coChev \mathfrak{a}$ could be canonically written as a limit
\[
	\coChev \mathfrak{a} \simeq \lim_{n\in \mathbb{Z}^{\to, \op}_{>0}} \coChev^n \mathfrak{a}
\]
for some
\[
	(\coChev^{n} \mathfrak{a})_{n\in \mathbb{Z}_{>0}} \in \ComAlg(\matheur{C}^{\coFil^{>0}, \stab})
\]
such that
\[
	\assgr((\coChev^n \mathfrak{a})_{n\in \mathbb{Z}_{>0}}) \simeq \Sym (\mathfrak{a}^{\deg =1}[-1])_+ \in \ComAlg(\matheur{C}^{\gr^{>0}, \decay}).
\]
\end{cor}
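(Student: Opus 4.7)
The plan is to derive the corollary as a direct specialization of Proposition~\ref{prop:coBar_as_limit} to $\matheur{P} = \coLie$, followed by a cohomological shift to translate statements about $\coBarP_{\coLie}$ into statements about $\coChev$.

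First, I would identify the relevant Koszul duals. From the diagram preceding the statement, the pair $(\ComAlg[-1], \coLie)$ is Koszul dual, so $\coLie^\vee \simeq \ComAlg[-1]$. Applying Proposition~\ref{prop:coBar_as_limit} with $\matheur{P} = \coLie$ and $\matheur{O} = \ComAlg[-1]$ yields a canonical equivalence
\[
	\coBarP_{\coLie}\mathfrak{a} \;\simeq\; \lim_{n \in \mathbb{Z}_{>0}^{\to, \op}} \coBarP_{\coLie}^n \mathfrak{a},
\]
with $(\coBarP_{\coLie}^n \mathfrak{a})_n \in \ComAlg[-1]\Palg(\matheur{C}^{\coFil^{>0}, \stab})$ and
\[
	\assgr\bigl((\coBarP_{\coLie}^n \mathfrak{a})_n\bigr) \;\simeq\; \Free_{\ComAlg[-1]}(\oblv_{\coLie}(\mathfrak{a})^{\deg = 1}) \;\in\; \ComAlg[-1]\Palg(\matheur{C}^{\gr^{>0}, \decay}).
\]

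Next, I invoke the equivalence $[-1]: \ComAlg[-1]\Palg(-) \xrightarrow{\sim} \ComAlg\Palg(-)$ recorded in the diagram above the corollary, applied levelwise. Setting $\coChev^n \mathfrak{a} := [-1]\,\coBarP_{\coLie}^n \mathfrak{a}$ and recalling that $\coChev = [-1] \circ \coBarP_{\coLie}$, the desired limit presentation follows because $[-1]$, being an equivalence, commutes with limits. For the associated graded, note that $[-1]$ (induced from $\matheur{C}$) commutes both with $\assgr$ (since shifts are exact and $\assgr$ is defined by levelwise fibers) and with the free-forget adjunctions (so $[-1] \circ \Free_{\ComAlg[-1]} \simeq \Free_{\ComAlg} \circ [-1]$); moreover it preserves both the stabilizing and decaying conditions because these are detected objectwise by the projections $\ev_p$, and $[-1]$ is an auto-equivalence of each $\matheur{C}_p$. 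Thus
\[
	\assgr\bigl((\coChev^n \mathfrak{a})_n\bigr) \;\simeq\; \Free_{\ComAlg}(\mathfrak{a}^{\deg = 1}[-1]) \;=\; \Sym(\mathfrak{a}^{\deg = 1}[-1])_+,
\]
where the last equality is the description of the non-unital free commutative algebra recalled in \S\ref{subsec:unital_vs_nonunital}.

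There is essentially no genuine obstacle beyond Proposition~\ref{prop:coBar_as_limit} itself; the only care required is bookkeeping the cohomological shift when passing from $\coLie$ to its Koszul dual $\ComAlg[-1]$ and then to the usual $\ComAlg$, together with verifying that the shift equivalence transports the stability/decay decorations. Both of these are formal consequences of the fact that $[-1]$ is a continuous, cocontinuous symmetric monoidal equivalence of the ambient pro-nilpotent category commuting with every $\ev_p$.
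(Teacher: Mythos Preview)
Your proposal is correct and follows exactly the paper's approach: the paper simply states that the corollary is obtained by specializing Proposition~\ref{prop:coBar_as_limit} to $\matheur{P} = \coLie$, and you have filled in the routine bookkeeping with the shift $[-1]$ needed to pass from $\ComAlg[-1]$-algebras to $\ComAlg$-algebras.
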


\begin{rmk}
\label{rmk:limit_coBar_converges_strongly}
The limit in Proposition~\ref{prop:coBar_as_limit}, and hence also the one in Corollary~\ref{cor:coChev_as_limit}, ``converges in a strong sense.'' Namely, for each $p$, we have
\[
	\ev_p(\coBarP_{\matheur{P}} A) \simeq \ev_p(\lim_{n\in \mathbb{Z}_{>0}^{\to, \op}} \coBarP_{\matheur{P}}^n A) \simeq \ev_p(\coBarP_{\matheur{P}}^N A)
\]
for some $N\gg 0$ due to the stabilizing condition.
\end{rmk}

\section{Some generalities on homological stability} \label{sec:generalities_homological_stability}

In this section, we set up the general formalism (subsections~\S\ref{subsec:filtered_and_graded}--\S\ref{subsec:stabilization}) and prove general results (subsections~\S\ref{subsec:relation_to_hom_stab}--\S\ref{subsec:stable_homology_simple}) needed for the discussion of homological stability. The upshot of this section is that stable homology (when homological stability does occur) can be captured by taking the colimit along the stabilizing morphisms, a process we call stabilization. The latter, however, makes sense regardless of whether homological stability occurs or not, and in fact, it is the analog of the group completion operation in spaces. Moreover, being a categorical construction operating at the chain level, it enjoys nice categorical properties. We should, therefore, think of it as the generalization of stable homology, and study it carefully, even if the ultimate aim is to understand homological stability and stable homology.

We will now give a detailed overview of the section. We start, in~\S\ref{subsec:filtered_and_graded} and \S\ref{subsec:augmented_unital_algebras}, with the fundamental building block, the notion of a (multi-)graded/filtered object/algebra in a given category. Here, the grading is given by $\mathbb{Z}^m_{\geq 0}$ rather than just $\mathbb{Z}$. In the geometric applications that we have in mind, these gradings come from the natural gradings on the spaces $(\gConf{\mathbf{d}}{\mathbf{n}}(X))_{\mathbf{d} \in \multgr{m}}$. The discussion here is only a slight extension of~\cite{gaitsgory_study_2017}*{Vol. II, Chapter 6, \S1.3}, where the case of $\mathbb{Z}$-graded and filtered case is treated. 

In~\S\ref{subsec:graded_unital_graded_augmented}, we introduce the notion of a graded-unital algebra, a special kind of graded algebras which upgrades to a filtered algebra so that questions about homological stability can be discussed. \S\ref{subsec:change_of_gradings} discusses the formalism needed to compare objects filtered/graded by different gradings. Finally, in~\S\ref{subsec:stabilization}, the notion of stabilization of a filtered object is introduced. In particular, in~\S\ref{subsubsec:telescope_construction}, we explain how one stabilizes graded-unital algebras introduced in~\S\ref{subsec:graded_unital_graded_augmented}.

The highlight of this section comes in~\S\ref{subsec:relation_to_hom_stab}, which proves that this stabilization captures stable homology (when homological stability does occur). In~\S\ref{subsec:homological_stability_of_a_limit}, we provide a general strategy for attacking homological stability questions for objects presented as a sequential limit, whose successive fibers are known. Coupled with the result in~\S\ref{sec:coBar_as_limit}, this will be used in~\S\ref{sec:cohomological_chevalley_complex} to prove homological stability for the cohomological Chevalley complexes of a large class of $\coLie$-coalgebras by reducing to the case of trivial $\coLie$-coalgebras. In the last subsection~\S\ref{subsec:stable_homology_simple}, we provide a calculation of the stable homology in a particularly simple case.

\subsection{Filtered and graded objects} 
\label{subsec:filtered_and_graded}
In this subsection, we introduce the categories of (multi-)filtered and (multi-)graded objects in a given category. These categories provide the natural setting where one can discuss homological stability. The readers might find the discussion here parallel to that of~\S\ref{subsec:cofiltered_graded_generalities}. Indeed, this subsection deals with a slight generalization of the dual of the picture presented there, and is, in turn, a slight generalization of~\cite{gaitsgory_study_2017}*{Vol. II, Chapter 6, \S1.3}.

\subsubsection{} We start with the indexing categories. Let
\[
	\multgrplus{m} = \multgr{m} - \{0\}
\]
be the sub-semigroup of $\multgr{m}$. It can be viewed as a (discrete) symmetric monoidal category, where the monoidal structure is given by addition.

Consider also the symmetric monoidal category $\multfil{m}$ whose objects are the same as for $\multgr{m}$, morphisms given by the natural partially ordered structure, and the monoidal structure given by the monoid structure on $\multgr{m}$. Similarly, consider the full (non-unital) monoidal subcategory
\[
	\multfilplus{m} = \multfil{m} - \{0\}.
\]

\subsubsection{} \label{subsubsec:notation_graded_degree_1k} We will employ the following notation: for each $1 \leq k \leq m$, we will use $\unit_k$ to denote
\[
	(0, \dots, 0, 1, 0, \dots, 0) \in \multgr{m}
\]
where the only ``$1$'' appears in the $k$-th position.

\subsubsection{} Recall the following general construction: for any symmetric monoidal categories $\matheur{C}$ and $\Gamma$ with $\matheur{C}$ being cocomplete, we can associate to them a new symmetric monoidal category
\[
	\matheur{C}^\Gamma = \Fun(\Gamma, \matheur{C})
\]
of all functors from $\Gamma$ to $\matheur{C}$, where the monoidal structure on $\matheur{C}^\Gamma$ is given by Day convolution (see~\cite{lurie_higher_2017-1}*{\S2.2.6}). More explicitly, for $V, W \in \Fun(\Gamma, \matheur{C})$, we have following diagram
\[
\xymatrix{
	\Gamma \times \Gamma \ar[d]_{\otimes} \ar[dr] \ar[r]^{(V, W)} & \matheur{C} \times \matheur{C} \ar[d]_{\otimes} \\
	\Gamma \ar@{.>}[r]^{V\otimes W} & \matheur{C}
}
\]
where $V\otimes W$ is defined to be the left Kan extension of the diagonal arrow along the vertical arrow on the left.

\begin{expl}
When $\Gamma = \mathbb{Z}$ is the (discrete) monoidal category, $\matheur{C}^{\mathbb{Z}}$ is the category of graded objects in $\matheur{C}$. Similarly, when $\Gamma = \mathbb{Z}^{\to}$ is the monoidal category with morphisms given by the ordering on $\mathbb{Z}$, $\matheur{C}^{\mathbb{Z}^\to}$ is the category of filtered objects in $\matheur{C}$. These categories are symmetric monoidal categories in the usual way.
\end{expl}

\subsubsection{} In this paper, we will deal with categories of the form
\[
	\matheur{C}^{\multgr{m}}, \quad \matheur{C}^{\multgrplus{m}}, \quad
	\matheur{C}^{\multfil{m}}, \quad
	\matheur{C}^{\multfilplus{m}} \teq\label{eq:flavors_gr_fil}
\]
where $\matheur{C}$ is a stable symmetric monoidal category. We will call objects in the first two and the last two categories multi-graded and multi-filtered respectively. To keep the terminology simple, we will simply refer to them as graded and filtered objects, unless confusion might arise.

Note that we have the following inclusions (i.e. fully-faithful embeddings) of categories
\[
	\matheur{C}^{\multgrplus{m}} \subset \matheur{C}^{\multgr{m}} \qquad \text{\and} \qquad \matheur{C}^{\multfilplus{m}} \subset \matheur{C}^{\multfil{m}}.
\]
For later use, we record the following straightforward lemma.

\begin{lem}
\label{lem:multgrplus_multfilplus_pronilpotent}
Let $\matheur{C}$ be a stable symmetric monoidal category. Then, the symmetric monoidal categories
\[
	\matheur{C}^{\multgrplus{m}} \quad\text{and}\quad \matheur{C}^{\multfilplus{m}}
\]
are pro-nilpotent.
\end{lem}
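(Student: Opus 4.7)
My plan is to treat both statements uniformly by using the total-degree function $|\mathbf{d}| = \sum_i \mathbf{d}_i$ to set up a sequential-limit presentation of each of $\matheur{C}^{\multgrplus{m}}$ and $\matheur{C}^{\multfilplus{m}}$, then verify the three axioms of Definition~\ref{defn:pro_nilpotent} directly. Write $\Gamma$ for either $\multgrplus{m}$ or $\multfilplus{m}$, and note that $|\cdot|$ is monoidal from $\Gamma$ to the positive integers under addition. For each $p \geq 0$ I would define $\matheur{C}_p \subset \matheur{C}^{\Gamma}$ to be the full subcategory of those $V$ with $V_{\mathbf{d}} \simeq 0$ whenever $|\mathbf{d}| > p$, and introduce the truncation functor $T_p\colon \matheur{C}^{\Gamma} \to \matheur{C}_p$ that pointwise leaves $V_{\mathbf{d}}$ unchanged for $|\mathbf{d}| \leq p$ and sets the remaining components to $0$. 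The tower is then $\cdots \to \matheur{C}_p \xrightarrow{f_{p,p-1}} \matheur{C}_{p-1} \to \cdots$ with $f_{p, p-1} = T_{p-1}|_{\matheur{C}_p}$, and I would equip $\matheur{C}_p$ with the symmetric monoidal structure $V \otimes_{\matheur{C}_p} W := T_p(V \otimes W)$ obtained by truncating the Day convolution on $\matheur{C}^{\Gamma}$.

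Next I would verify the three axioms in order. The vanishing $\matheur{C}_0 = 0$ is immediate since $\Gamma$ contains no object of total degree $\leq 0$. Each transition functor $f_{p, p-1}$ is pointwise in components, and both limits and colimits in $\matheur{C}_p$ agree with those in $\matheur{C}^{\Gamma}$ (which are pointwise), so $f_{p, p-1}$ commutes with both. For the nilpotent-tensor axiom, an object $V \in \ker(f_{p, p-1})$ is supported in total degree exactly $p$; for any $W \in \matheur{C}_p$, the Day convolution
\[
(V \otimes W)_{\mathbf{d}} \simeq \colim_{a + b \to \mathbf{d}} V_a \otimes W_b
\]
has non-zero summands only when $|a| = p$ and $|b| \geq 1$, forcing $|a+b| \geq p + 1$; since the source of these contributions lives in degrees $|\mathbf{d}| \geq p+1$, the truncation $T_p(V \otimes W)$ vanishes. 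Finally, the identification $\matheur{C}^{\Gamma} \simeq \lim_p \matheur{C}_p$ follows because $V \mapsto (T_p V)_p$ and the reconstruction $(V_p)_p \mapsto V$ with $V_{\mathbf{d}} := (V_p)_{\mathbf{d}}$ for any $p \geq |\mathbf{d}|$ (with transitions inherited from the $V_p$'s) are mutually inverse.

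The one step requiring genuine care is checking that the truncated Day convolution defines a symmetric monoidal structure on $\matheur{C}_p$ and that $T_p$ together with the transition functors are symmetric monoidal with respect to it. This reduces to the identity $T_p(V \otimes T_p X) \simeq T_p(V \otimes X)$ for $V \in \matheur{C}_p$, which at an index $\mathbf{d}$ with $|\mathbf{d}| \leq p$ is immediate because any pair $(a, b)$ contributing to the Day convolution automatically satisfies $|a|, |b| \leq p$: in the graded case this comes from $a + b = \mathbf{d}$, and in the filtered case from $a + b \leq \mathbf{d}$. This is the only substantive difference between the graded and filtered cases, and it is a minor one; the rest of the argument proceeds identically.
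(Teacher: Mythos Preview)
Your proof is correct and is exactly the standard argument the paper has in mind; the paper records this lemma as ``straightforward'' and supplies no proof of its own, and your total-degree truncation tower is the canonical way to verify Definition~\ref{defn:pro_nilpotent} in this setting.
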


\subsubsection{}
When $\matheur{C}$ is a unital symmetric monoidal category with monoidal unit $\munit_{\matheur{C}}$, then so are $\matheur{C}^{\multgr{m}}$ and $\matheur{C}^{\multfil{m}}$. The units are $\munit_{\matheur{C}}$ in graded degrees $\mathbf{0}$ in the first case and the constant object $\munit_{\matheur{C}}$ living in each graded-degree in the second case. The symmetric monoidal categories $\matheur{C}^{\multgrplus{m}}$ and $\matheur{C}^{\multfilplus{m}}$ are, however, non-unital.

\subsubsection{Rees construction}
We have a natural embedding of categories
\[
	\multgr{m} \hookrightarrow \multfil{m}.
\]
Restricting and left-Kan extending along this functor give us a pair of adjoint functors
\[
	\Rees: \matheur{C}^{\multgr{m}} \rightleftarrows \matheur{C}^{\multfil{m}}: \filToGr.
\]

\subsubsection{} When $m=1$, the functor $\Rees$ is indeed the usual Rees construction: for any object $V\in \matheur{C}^{\multgr{m}}$
\[
	\Rees(V)_{d} \simeq \bigoplus_{d' \leq d} V_{d'}
\]
and moreover, when $d_1 \leq d_2$,
\[
	\Rees(V)_{d_1} \simeq \bigoplus_{d'\leq d_1} V_{d'} \to \bigoplus_{d'\leq d_2} V_{d'} \simeq \Rees(V)_{d_2}
\]
is the obvious map.

\begin{lem}
The functor $\Rees$ is symmetric monoidal. As a result, $\filToGr$ is right lax monoidal.\footnote{Right-lax (resp. left-lax) monoidal functors are sometimes called lax (resp. op-lax) monoidal in the literature.}
\end{lem}
\begin{proof}
Since the monoidal structures on both sides are defined by left-Kan extension, $\Rees$ is symmetric monoidal, and hence, its right adjoint, $\filToGr$ is right-lax monoidal.
\end{proof}

The same discussion applies equally well to $\matheur{C}^{\multgrplus{m}}$ and $\matheur{C}^{\multfilplus{m}}$.

\subsubsection{Associated-graded}
\label{subsubsec:associated_graded}
One common way to extract information out of a $\filtered$-filtered objects in a stable $\infty$-category is to take its associated-graded. This functor commutes with both limits, colimits, and moreover, it is conservative. In addition, when $\matheur{C}$ is symmetric monoidal, so is $\assgr$ (see~\cite{gaitsgory_study_2017}*{Vol. II, Chapter 6, \S1.3}).

\subsubsection{}
We will now construct a generalization of this functor in the case of multi-filtered objects. First, consider the following functor
\[
	(\grToFiltriv)_{\matheur{C}, m}: \matheur{C}^{\multgr{m}} \to \matheur{C}^{\multfil{m}}
\]
which sends an object $V = (V_\mathbf{d})_{\mathbf{d}\in \multgr{m}}$ to a filtered object $(V_\mathbf{d})_{\mathbf{d}\in \multgr{m}}$ where all the maps are 0. Clearly, this functors commutes with both limits and colimits, and hence, it admits a left adjoint, which we will denote by
\[
	\assgr_{\matheur{C}, m}: \matheur{C}^{\multfil{m}} \to \matheur{C}^{\multgr{m}}.
\]

Unless confusion is likely to occur, we will omit the subscript $\matheur{C}$, or $m$, or both, from our notation.

\subsubsection{} 
The functor $\assgr_{\matheur{C}, m}$ we just defined is indeed closely related to the usual functor of taking associated graded. In fact, as we will soon see, it is the functor of taking successive associated graded with respect to all the gradings. This will help us establish nice formal properties of $\assgr_{\matheur{C}, m}$.

First, observe that in the case where $m=1$,
\[
	\assgr_{\matheur{C}}: \matheur{C}^{\filtered} \to \matheur{C}^{\graded}
\]
is indeed the functor of taking the associated-graded object of a filtered object. Namely,
\[
	\assgr(V)_d = \coFib(V_{d-1} \to V_d)
\]
where $V = (V_d)_{d\in \graded} \in \matheur{C}^{\filtered}$.

Next, observe that for any $m$, 
\[
	\matheur{C}^{\multgr{m}} \simeq (\matheur{C}^{\multgr{m-1}})^{\graded} \simeq (\matheur{C}^{\graded})^{\multgr{m-1}} \quad\text{and}\quad \matheur{C}^{\multfil{m}} \simeq (\matheur{C}^{\multfil{m-1}})^{\filtered} \simeq (\matheur{C}^{\filtered})^{\multfil{m-1}}.
\]
Since compositions of left adjoints are left adjoints, we have the following decomposition of $\assgr_{\matheur{C}, m}$
\[
	\assgr_{\matheur{C}, m} \simeq \assgr_{\matheur{C}^{\graded}, m-1} \circ \assgr_{\matheur{C}^{\multfil{m-1}}},
\]
which means that $\assgr_{\matheur{C}, m}$ could be computed by taking successive associated-graded in the usual sense.

By induction, this immediately gives us the following results.
\begin{lem}
\label{lem:ass_gr_is_nice}
The functor $\assgr_{\matheur{C}, m}$ commutes with limits, colimits, and is conservative.
\end{lem}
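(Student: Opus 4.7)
The plan is to prove the lemma by straightforward induction on $m$, using the decomposition
\[
	\assgr_{\matheur{C}, m} \simeq \assgr_{\matheur{C}^{\graded}, m-1} \circ \assgr_{\matheur{C}^{\multfil{m-1}}}
\]
already recorded immediately before the lemma statement. All three properties claimed — preservation of limits, preservation of colimits, and conservativity — are preserved under composition of functors, so the inductive step is purely formal once the base case $m=1$ is established. Hence the entire content of the proof is in the case of the classical one-variable associated-graded functor $\assgr_{\matheur{C}}\colon \matheur{C}^{\filtered}\to\matheur{C}^{\graded}$.

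For the base case, I would argue each property in turn. Preservation of colimits is automatic because $\assgr_{\matheur{C}}$ is defined as a left adjoint (to $\grToFiltriv$). For preservation of limits, I would invoke the pointwise formula $\assgr_{\matheur{C}}(V)_d \simeq \coFib(V_{d-1}\to V_d)$ (with $V_{-1}=0$): since limits in $\matheur{C}^{\graded}$ are computed graded-degree-wise, and since $\coFib$ is a finite colimit that, in the stable $\infty$-category $\matheur{C}$, can equivalently be expressed as a finite limit (using that $\coFib(f)\simeq \Fib(f)[1]$), the operation $V\mapsto \coFib(V_{d-1}\to V_d)$ commutes with arbitrary limits in $V$. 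Finally, for conservativity: if $\assgr_{\matheur{C}}(V)\simeq 0$, then the formula forces $V_0\simeq 0$ and each transition map $V_{d-1}\to V_d$ to be an equivalence, so an easy induction on $d$ gives $V_d\simeq 0$ for all $d\in\filtered$, hence $V\simeq 0$.

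With the base case in hand, the inductive step says that if $\assgr_{\matheur{C}^{\multfil{m-1}}}$ and $\assgr_{\matheur{C}^{\graded}, m-1}$ each commute with limits, commute with colimits, and are conservative, then so does their composite $\assgr_{\matheur{C}, m}$. The first functor is an instance of the base case (applied to the stable symmetric monoidal category $\matheur{C}^{\multfil{m-1}}$), and the second is an instance of the inductive hypothesis (applied to $\matheur{C}^{\graded}$), which closes the induction.

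The only place a mild check is required is the limit-preservation in the base case, since \emph{a priori} $\assgr_{\matheur{C}}$ is displayed as a left adjoint, and left adjoints do not usually preserve limits. The point where stability is genuinely used is precisely to see that $\coFib$ is also a finite limit, so this is really the key — but routine — observation that makes the whole argument go through.
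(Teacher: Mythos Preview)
Your proposal is correct and takes essentially the same approach as the paper: induction on $m$ via the decomposition $\assgr_{\matheur{C}, m} \simeq \assgr_{\matheur{C}^{\graded}, m-1} \circ \assgr_{\matheur{C}^{\multfil{m-1}}}$. The only difference is that the paper treats the base case $m=1$ as known (citing \cite{gaitsgory_study_2017}*{\S \textsc{iv}.2.1.3}), whereas you spell out the argument for it explicitly.
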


\begin{lem}
When $\matheur{C}$ is symmetric monoidal, so is the functor $\assgr_{\matheur{C}, m}$.
\end{lem}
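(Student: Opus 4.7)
The plan is to induct on $m$, reducing to the case $m = 1$ and then invoking the decomposition
\[
\assgr_{\matheur{C}, m} \simeq \assgr_{\matheur{C}^{\graded}, m-1} \circ \assgr_{\matheur{C}^{\multfil{m-1}}}
\]
from the preceding lemma. Concretely, first I would check that this decomposition is a composition of symmetric monoidal functors, and then invoke the base case $m=1$ together with the inductive hypothesis (applied to the stable symmetric monoidal base categories $\matheur{C}^{\multfil{m-1}}$ and $\matheur{C}^{\graded}$ respectively).

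For the base case $m=1$, I would use the explicit formula $\assgr(V)_d \simeq \coFib(V_{d-1} \to V_d)$ together with the Day convolution formulas $(V \otimes W)_d \simeq \colim_{d_1 + d_2 \leq d} V_{d_1} \otimes W_{d_2}$ on filtered objects and $(A \otimes B)_d \simeq \bigoplus_{d_1 + d_2 = d} A_{d_1} \otimes B_{d_2}$ on graded objects. The claim that the natural map $\assgr(V) \otimes \assgr(W) \to \assgr(V \otimes W)$ is an equivalence then reduces to the fact that the tensor product in the stable symmetric monoidal category $\matheur{C}$ preserves cofiber sequences in each variable. This is classical and recorded in~\cite{gaitsgory_study_2017}*{\S \textsc{iv}.2.1.3}, and I would simply cite it rather than reprove it.

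For the inductive step, I would verify that the two natural equivalences of categories needed to make sense of the decomposition,
\[
\matheur{C}^{\multfil{m}} \simeq (\matheur{C}^{\multfil{m-1}})^{\filtered} \qandq (\matheur{C}^{\multfil{m-1}})^{\graded} \simeq (\matheur{C}^{\graded})^{\multfil{m-1}},
\]
are symmetric monoidal with respect to the Day convolution structures. This is formal: Day convolution for a product indexing category $\Gamma_1 \times \Gamma_2$ is canonically equivalent to iterated Day convolution in either order, which follows from the universal property of Day convolution (equivalently, from the fact that left Kan extension along $\otimes \colon \Gamma_1 \times \Gamma_2 \to \Gamma_1 \times \Gamma_2$ can be computed factor by factor). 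Once this repackaging is in place, the first factor $\assgr_{\matheur{C}^{\multfil{m-1}}}$ is an instance of the base case, the second factor is an instance of the inductive hypothesis, and the composition of symmetric monoidal functors is symmetric monoidal.

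The only part that requires any care is the verification that the repackaging identifications are symmetric monoidal, but since Day convolution is defined by a universal property, this is essentially automatic; there are no computations to grind through.
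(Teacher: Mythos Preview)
Your proposal is correct and matches the paper's approach exactly: the paper proves this lemma by induction using the decomposition $\assgr_{\matheur{C}, m} \simeq \assgr_{\matheur{C}^{\graded}, m-1} \circ \assgr_{\matheur{C}^{\multfil{m-1}}}$, with the base case $m=1$ cited from~\cite{gaitsgory_study_2017}*{\S \textsc{iv}.2.1.3}. In fact, the paper's proof is just the single phrase ``By induction, this immediately gives us the following results,'' so your write-up is more detailed than what appears there.
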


\subsubsection{} The two statements above also hold for $\matheur{C}^{\multfilplus{m}}$ and $\matheur{C}^{\multgrplus{m}}$. Indeed, this can be seen by viewing these categories as full-subcategories of $\matheur{C}^{\multfil{m}}$ and $\matheur{C}^{\multgr{m}}$ and checking that the functors land in the correct place, which is trivially true. We will use the same notation $\assgr_{\matheur{C}, m}$ to denote the functor of taking associated-graded in this case.

\subsection{Unital and augmented algebras}
\label{subsec:augmented_unital_algebras}
For any symmetric monoidal category $\matheur{C}$ as above, we let
\[
	\ComAlg(\matheur{C}^{\multgr{m}}), \quad
	\ComAlg(\matheur{C}^{\multgrplus{m}}), \quad \ComAlg(\matheur{C}^{\multfil{m}}), \quad \ComAlg(\matheur{C}^{\multfilplus{m}})
\]
be the categories of graded and filtered commutative algebras respectively. Moreover, when the monoidal structure on $\matheur{C}$ is unital, we can consider the categories of unital and augmented algebras, denoted by 
\[
	\ComAlg^{\aug}(-), \quad \ComAlg^{\un}(-), \quad \ComAlg^{\un, \aug}(-),
\]
where ``$-$'' is the place-holder for $\matheur{C}^{\multgr{m}}$ or $\matheur{C}^{\multfil{m}}$.

\subsubsection{} 
As we have seen above, the monoidal structure on $\matheur{C}^{\multgrplus{m}}$ is not unital. However, the fully-faithful embedding
\[
	\matheur{C}^{\multgrplus{m}} \subset \matheur{C}^{\multgr{m}}
\]
brings us to the setting of~\S\ref{subsubsec:fake_un_aug}, which allows us to talk about the category $\ComAlg^{\un, \aug}(\matheur{C}^{\multgrplus{m}})$ of unital, augmented commutative algebra objects in $\matheur{C}^{\multgr{m}}$ whose augmentation ideal lives in $\matheur{C}^{\multgrplus{m}}$.

\subsubsection{} The various categories of algebras mentioned above naturally contain a specific class of polynomial algebras of interest to us: $\Lambda[\multgr{m}]$ and its associated non-unital algebra $\Lambda[\multgr{m}]_+ = \Lambda[\multgrplus{m}]$. Let us explain what we mean by these notations. Let  $\munit_{\matheur{C}}$ denote the monoidal unit of $\matheur{C}$. By $\Lambda[\multgr{m}]$, we mean the free unital commutative algebra in $\ComAlg(\matheur{C}^{\multgr{m}})$ generated by
\[
	\munit_{\unit_1} \oplus \munit_{\unit_2} \oplus \cdots \oplus \munit_{\unit_m} \in \matheur{C}^{\multgr{m}}
\]
i.e. $m$ copies of $\munit_{\matheur{C}}$, sitting in graded-degrees
\[
	\unit_1, \unit_2, \dots, \unit_m \in \multgr{m}.
\]
Note that $\Lambda[\multgr{m}] \in \ComAlg^{\un, \aug}(\matheur{C}^{\multgrplus{m}})$, and its augmentation ideal $\Lambda[\multgr{m}]_+ \simeq \Lambda[\multgrplus{m}]$ is an object of $\ComAlg(\matheur{C}^{\multgrplus{m}})$. In other words, we have the following augmentation map $\Lambda[\multgr{m}] \to \Lambda$ where the target lives in graded degree $\mathbf{0}$.

\subsubsection{}
Consider the object $\Lambda[\multfil{m}] \in \matheur{C}^{\multfil{m}}$, such that at each graded degree $\mathbf{d}$,
\[
	\Lambda[\multfil{m}]_{\mathbf{d}} \simeq \munit_{\matheur{C}, \mathbf{d}}
\]
and all the maps (from the filtration) are equivalences. We see at once that this is the monoidal unit of $\matheur{C}^{\multgr{m}}$ and hence, it has a natural structure of a commutative algebra.

As above, we let $\Lambda[\multfilplus{m}]$ be obtained from $\Lambda[\multfil{m}]$ by removing the $\Lambda$ at graded degree $\mathbf{0}$.

We see at once from the construction that these objects go to $\Lambda[\multgr{m}]$ and $\Lambda[\multgrplus{m}]$ respectively under $\filToGr$. Because of this reason, unless confusion is likely to occur, we will use $\Lambda[\multgr{m}]$ and $\Lambda[\multgrplus{m}]$ in place of $\Lambda[\multfil{m}]$ and $\Lambda[\multfilplus{m}]$ respectively.

\begin{rmk}
Note that $\Lambda[\multgr{m}] \in \ComAlg(\Vect^{\multfil{m}})$ is the monoidal unit of the category $\matheur{C}^{\multfil{m}}$, and is, therefore, a commutative algebra object. We can thus alternatively define $\Lambda[\multgr{m}] \in \ComAlg(\multgr{m})$ as $(\filToGr)(\munit_{\matheur{C}^{\multfil{m}}})$, which is also a commutative algebra object, since $\filToGr$ is right-lax monoidal.
\end{rmk}

\begin{rmk}
In the case where $\matheur{C} = \Vect$ (see~\eqref{eq:Vect_is_Shv(pt)} for the conventions), $\Lambda[\multgr{m}] \in \ComAlg(\Vect^{\multgr{m}})$ is precisely the polynomial algebra generated by $m$ variables sitting in appropriate graded-degrees. For the applications of this paper, it suffices to consider the case where $\matheur{C} = \Vect$. The readers who prefer to think about $\Vect$ rather than a general category $\matheur{C}$ are welcome to do so without losing any content. In fact, the general story goes exactly the same for $\Vect$ as for a general $\matheur{C}$. 
\end{rmk}

\subsubsection{Another take on $\Rees$ and $\filToGr$}
\label{subsubsec:another_take_Rees}
The language of graded algebras and graded modules over a graded algebra allows us to have a convenient interpretation of the functors $\Rees$ and $\filToGr$. Indeed, we have the following well-known symmetric monoidal identification
\[
	\Mod_{\Lambda[\multgr{m}]}(\matheur{C}^{\multgr{m}}) \simeq \matheur{C}^{\multfil{m}}, \teq\label{eq:filt_as_mod_cat_over_gr}
\]
where the multiplication by $\Lambda[\multgr{m}]$ gives rise precisely to the maps in the filtration. Under this identification, $\filToGr$ is given by forgetting the module structure, and $\Rees$ is given by tensoring up $-\otimes \Lambda[\multgr{m}]$.

\begin{rmk}
The identification~\eqref{eq:filt_as_mod_cat_over_gr} can be seen by applying the Barr--Beck--Lurie monadicity theorem,~\cite{lurie_higher_2017}*{Theorem 4.7.3.5}, to the adjunction $\Rees \dashv (\filToGr)$. The fact that it is symmetric monoidal is a consequence of~\cite{lurie_higher_2017}*{Corollary 4.8.5.20}. Alternatively, using the fact that $\Rees$ and $-\otimes \Lambda[\multgr{m}]$ are symmetric monoidal and that~\eqref{eq:filt_as_mod_cat_over_gr} is compatible with these two functors, we see that~\eqref{eq:filt_as_mod_cat_over_gr} is symmetric monoidal when restricted to the essential images of these functors. Moreover, since the symmetric monoidal structures in these two categories commute with colimits in each variable and since these categories are generated under colimits by the images of $\Rees$ and $-\otimes \Lambda[\multgr{m}]$ respectively, we are done.
\end{rmk}

\subsubsection{Another take on $\assgr_{\matheur{C}, m}$}
\label{subsubsec:assgr_as_tensoring}
The description of $\matheur{C}^{\multfil{m}}$ above also gives us a convenient way of interpreting $\assgr$. Indeed, consider the following map of algebras in $\matheur{C}^{\multgr{m}}$
\[
	\Lambda[\multgr{m}] \to \munit_{\matheur{C}, \mathbf{0}}
\]
by modding out the augmentation ideal. The functor $\grToFiltriv$ is given by restricting along this map, and hence, its left adjoint is given by tensoring up, i.e.
\[
	\assgr_{\matheur{C}, m} \simeq - \otimes_{\Lambda[\multgr{m}]} \munit_{\matheur{C}}. \teq\label{eq:assgr_tensor}
\]
Note that here, we have implicitly used the identification
\[
	\matheur{C}^{\multgr{m}} \simeq \Mod_{\munit_{\matheur{C}, \mathbf{0}}}(\matheur{C}^{\multgr{m}})
\]
since $\munit_{\matheur{C}, \mathbf{0}}$ is the monoidal unit of $\matheur{C}^{\multgr{m}}$.

\subsubsection{} The same discussion carries over for $\matheur{C}^{\multgrplus{m}}$ and $\matheur{C}^{\multfilplus{m}}$. Indeed, we still have
\[
	\Mod_{\Lambda[\multgr{m}]}(\matheur{C}^{\multgrplus{m}}) \simeq \matheur{C}^{\multfilplus{m}}
\]
which is a full-subcategory of $\Mod_{\Lambda[\multgr{m}]}(\matheur{C}^{\multgr{m}})$, and the category $\matheur{C}^{\multgrplus{m}}$ is a full-subcategory of $\Mod_{\munit_{\matheur{C}}}(\matheur{C}^{\multgr{m}})$. These two categories are preserved under $\grToFiltriv$ and $\assgr_{\matheur{C}, m}$, and so, we have the same formula in this case as in~\eqref{eq:assgr_tensor}. In particular, it is important to note that we still tensor over $\Lambda[\multgr{m}]$.

\subsection{Graded-unital and graded-augmented}
\label{subsec:graded_unital_graded_augmented}
In this paper, we usual start with a graded object. But to be able to talk about homological stability, we need to have maps between the various graded-degrees, i.e. we need a filtered object. The equivalence~\eqref{eq:filt_as_mod_cat_over_gr} gives us the following convenient way of doing so.

\subsubsection{} Suppose we have a morphism in $\ComAlg^{\un}(\matheur{C}^{\multgr{m}})$
\[
	\Lambda[\multgr{m}] \to \matheur{A}.
\]
Then $\matheur{A}$ acquires the structure of a $\Lambda[\multgr{m}]$-module and hence, the equivalence~\eqref{eq:filt_as_mod_cat_over_gr} implies that $\matheur{A}$ comes from an object in $\ComAlg^\un(\matheur{C}^{\multfil{m}})$. We call such an object \emph{graded-unital}, and let $\ComAlg^{\grun}(\matheur{C}^{\multgr{m}})$ denote the category of all such objects. Since $\Lambda[\multgr{m}]$ is the monoidal unit of $\matheur{C}^{\multfil{m}}$, we have an equivalence
\[
	\ComAlg^{\grun}(\matheur{C}^{\multgr{m}}) \simeq \ComAlg^{\un}(\matheur{C}^{\multfil{m}}).
\]

\subsubsection{} Similarly, we will use $\ComAlg^{\grun, \graug}(\matheur{C}^{\multgr{m}})$ to denote the category of \emph{graded-unital augmented} algebras, where these objects are defined in the obvious way. As above, we have the following equivalence
\[
	\ComAlg^{\grun, \graug}(\matheur{C}^{\multgr{m}}) \simeq \ComAlg^{\un, \aug}(\matheur{C}^{\multfil{m}}).
\]

\subsubsection{} We have a natural forgetful functor
\[
	\ComAlg^{\grun, \graug}(\matheur{C}^{\multgr{m}}) \to \ComAlg^{\un, \aug}(\matheur{C}^{\multgr{m}}).
\]
We can thus define the category $\ComAlg^{\grun, \graug}(\matheur{C}^{\multgrplus{m}})$ to be the full-subcategory of $\ComAlg^{\grun, \graug}(\matheur{C}^{\multgr{m}})$ spanned by objects whose augmentation ideals live in $\matheur{C}^{\multgrplus{m}}$. Namely, we have the following pullback square of categories
\[
\xymatrix{
	\ComAlg^{\grun, \graug}(\matheur{C}^{\multgrplus{m}}) \ar[d] \ar[r] & \ComAlg^{\grun, \graug}(\matheur{C}^{\multgr{m}}) \ar[d] \\
	\ComAlg^{\un, \aug}(\matheur{C}^{\multgrplus{m}}) \ar[r] & \ComAlg^{\un, \aug}(\matheur{C}^{\multgr{m}})
}
\]

\subsubsection{} \label{subsubsec:quot_grun_comalg} Let $\matheur{A} \in \ComAlg^{\grun}(\matheur{C}^{\multgr{m}})$, one can ``quotient out'' the graded-unit by taking the tensor product, or equivalently, by taking the associated graded (see~\ref{subsubsec:assgr_as_tensoring})
\[
	\assgr \matheur{A} \simeq \matheur{A} \otimes_{\Lambda[\multgr{m}]} \munit_\matheur{C} \in \ComAlg^{\un}(\matheur{C}^{\multgr{m}})
\]
where $\munit_{\matheur{C}}$ on the right sits in graded-degree $(0, 0, \dots, 0)$, i.e. it is the monoidal unit $\munit_{\matheur{C}^{\multgr{m}}}$. 

When we want to emphasize the perspective that we are taking the quotient by the unit rather than just taking the associated graded, we will use $\Quot_{\grun}$ to denote this functor
\[
	\Quot_{\grun}: \ComAlg^{\grun}(\matheur{C}^{\multgr{m}}) \to \ComAlg^{\un}(\matheur{C}^{\multgr{m}}).
\]

\subsection{Change of gradings}
\label{subsec:change_of_gradings}
One of the goals of the paper is to compare relative tensors of graded algebras. Strictly speaking, however, these algebras live in different categories, since they are graded by different gradings (i.e. different $m$'s, in our notation).\footnote{The algebras we are talking about are the $\alg{m}{n}(X)$ in the introduction.} We thus need a functorial way to tie them together.

\subsubsection{} We have the following monoidal functors
\[
	\add: \multgr{m} \to \graded \quad\text{and}\quad \add^\to: \multfil{m} \to \filtered
\]
obtained by adding all the components together. This gives rise to the following pairs of adjoint functors
\[
	\add_!: \matheur{C}^{\multgr{m}} \rightleftarrows \matheur{C}^{\graded}: \add^!
\]
and
\[
	\add^\to_!: \matheur{C}^{\multfil{m}} \rightleftarrows \matheur{C}^{\filtered}: \add^{\to, !},
\]
obtained by left Kan extending and restricting along $\add$/$\add^\to$ respectively.

\subsubsection{} Since the monoidal structures on these graded and filtered categories are formed by Day's convolutions, which is a left Kan extension, it is clear that $\add_!$ and $\add^\to_!$ are monoidal, which implies that $\add^!$ and $\add^{\to, !}$ are right-lax monoidal.

\begin{lem}
\label{lem:add_!_vs_ass_gr}
In the following diagram,
\[
\xymatrix{
	\matheur{C}^{\multgr{m}} \ardis[dd]^{\grToFiltriv} \ardis[rr]^{\add_!} && \ardis[ll]^{\add^!} \matheur{C}^{\graded} \ardis[dd]^{\grToFiltriv} \\ \\
	\matheur{C}^{\multfil{m}} \ardis[uu]^{\assgr} \ardis[rr]^{\add^\to_!} && \ardis[ll]^{\add^{\to, !}}  \matheur{C}^{\filtered} \ardis[uu]^{\assgr}
}
\]
the right adjoints commute, and hence, so do the left adjoints. In particular,
\[
	\add_! \circ \assgr \simeq \assgr \circ \add^\to_!.
\]
\end{lem}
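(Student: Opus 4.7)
The plan is to first establish the commutativity of the right adjoints; by uniqueness of adjoints, the commutativity of the left adjoints (and in particular the stated equivalence $\add_! \circ \assgr \simeq \assgr \circ \add^\to_!$) follows automatically. Thus the entire content of the lemma is the natural equivalence $\grToFiltriv \circ \add^! \simeq \add^{\to,!} \circ \grToFiltriv$.

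To produce this equivalence, I would use the module-theoretic descriptions from~\S\ref{subsubsec:another_take_Rees} and~\S\ref{subsubsec:assgr_as_tensoring}. Under the identifications $\matheur{C}^{\multfil{m}} \simeq \Mod_{\Lambda[\multgr{m}]}(\matheur{C}^{\multgr{m}})$ and $\matheur{C}^{\filtered} \simeq \Mod_{\Lambda[\graded]}(\matheur{C}^{\graded})$, the functor $\grToFiltriv$ (on either side) is restriction along the augmentation $\Lambda[\multgr{m}] \to \munit_{\matheur{C},\mathbf{0}}$ (respectively $\Lambda[\graded] \to \munit_{\matheur{C},0}$). On the other hand, since $\add:\multgr{m} \to \graded$ is symmetric monoidal, $\add_!$ is symmetric monoidal and sends $\Lambda[\multgr{m}]$ to $\Lambda[\graded]$. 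Hence $\add^{\to,!}$ is computed as restriction of the underlying graded object along $\add$ together with pullback of the module structure along the unit map $\Lambda[\multgr{m}] \to \add^!(\Lambda[\graded])$.

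Given these two descriptions, for $W \in \matheur{C}^\graded$ the two composites produce the same underlying object $\add^! W \in \matheur{C}^{\multgr{m}}$, and the comparison of their $\Lambda[\multgr{m}]$-module structures reduces to the commutativity of the triangle
\[
\xymatrix{
\Lambda[\multgr{m}] \ar[r] \ar[dr] & \add^!(\Lambda[\graded]) \ar[d] \\
& \munit_{\matheur{C}, \mathbf{0}}
}
\]
where the slanted arrow is the augmentation of $\Lambda[\multgr{m}]$ and the composite along the top and right is induced by the augmentation of $\Lambda[\graded]$ (using $\add^{-1}(0) = \{\mathbf{0}\}$, which gives $\add^!(\munit_{\matheur{C},0}) \simeq \munit_{\matheur{C},\mathbf{0}}$). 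This commutativity is formal once one notes that $\add$ sends $\multgrplus{m}$ into $\mathbb{Z}_{>0}$, so that the augmentation ideal of $\Lambda[\graded]$ pulls back under $\add$ to the augmentation ideal of $\Lambda[\multgr{m}]$.

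The only subtle point I foresee is making the equivalences $\matheur{C}^{\multfil{m}} \simeq \Mod_{\Lambda[\multgr{m}]}(\matheur{C}^{\multgr{m}})$ sufficiently coherent to transport the comparison of module structures to an equivalence of $\infty$-functors rather than merely a pointwise equivalence. As an alternative route one can verify the equivalence directly on objects by observing that every non-identity morphism $\mathbf{d} \leq \mathbf{d}'$ in $\multfil{m}$ satisfies $|\mathbf{d}| < |\mathbf{d}'|$, so both composites send $W$ to the multi-filtered object $\mathbf{d} \mapsto W_{|\mathbf{d}|}$ with all transition maps zero, and then promote this to a natural equivalence by standard $\infty$-categorical coherence arguments. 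Either route yields the result.
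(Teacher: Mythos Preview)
Your proposal is correct. The paper actually gives no proof for this lemma---the statement itself is the proof, as the commutation of right adjoints is considered evident---though a later remark offers the module-theoretic alternative proof (applied directly to the left adjoints via $\assgr(\add_!^\to (V)) \simeq \Lambda \otimes_{\Lambda[\graded]} \Lambda[\graded] \otimes_{\Lambda[\multgr{m}]} V \simeq \add_!(\Lambda\otimes_{\Lambda[\multgr{m}]} V)$), which is close in spirit to your primary route; your secondary ``direct'' observation that every non-identity $\mathbf{d} \leq \mathbf{d}'$ satisfies $|\mathbf{d}| < |\mathbf{d}'|$ is exactly what makes the lemma immediate and is the intended argument.
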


\subsubsection{} The functor $\add^\to_!$ has the following convenient interpretation in a way that is similar to the case of $\assgr$ considered above. Indeed, consider the following diagram
\[
\xymatrix{
	\Mod_{\Lambda[\multgr{m}]}(\matheur{C}^{\multgr{m}}) \ardis[rr]^{\add_!} \ar@{=}[d] && \ardis[ll]^{\add^!} \Mod_{\Lambda[\multgr{m}]}(\matheur{C}^{\graded}) \ardis[rr]^{-\otimes_{\Lambda[\multgr{m}]} \Lambda[\graded]} && \ardis[ll]^{\res^{\Lambda[\multgr{m}] \to \Lambda[\graded]}} \Mod_{\Lambda[\graded]}(\matheur{C}^{\graded}) \ar@{=}[d] \\
	\matheur{C}^{\multfil{m}} \ardis[rrrr]^{\add^\to_!} &&&& \ardis[llll]^{\add^{\to, !}} \matheur{C}^{\filtered}
}
\]
where, in the second term on the top, by $\Lambda[\multgr{m}]$, we mean $\add_! \Lambda[\multgr{m}]$, but this is only a mild abuse of notation since the two are essentially ``the same'' ring: only the gradings differ. Moreover, the adjunction pair $\add_!\dashv \add^!$ upgrades to one between module categories because $\add_!$ is monoidal and hence, $\add^!$ is right-lax monoidal. Finally, we see easily that the right adjoints commute, i.e.
\[
	\add^{\to, !} \simeq \add^! \circ \res
\]
and hence, so do the left adjoints. Thus, we get the following convenient interpretation of $\add^\to_!$

\begin{lem}
For each $V \in \matheur{C}^{\multfil{m}}$, we have a natural equivalence
\[
	\add^\to_! V \simeq (\add_! V) \otimes_{\Lambda[\multgr{m}]} \Lambda[\graded].
\]
\end{lem}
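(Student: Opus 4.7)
The plan is to derive this lemma as an immediate consequence of the commutative diagram just set up, by reading off the left adjoints once we have identified the right adjoints. Throughout, we use the identifications $\matheur{C}^{\multfil{m}} \simeq \Mod_{\Lambda[\multgr{m}]}(\matheur{C}^{\multgr{m}})$ and $\matheur{C}^{\filtered} \simeq \Mod_{\Lambda[\graded]}(\matheur{C}^{\graded})$ from~\eqref{eq:filt_as_mod_cat_over_gr}, together with the fact that $\add_!$ is monoidal (so $\add^!$ is right-lax monoidal, and therefore lifts to module categories over $\Lambda[\multgr{m}]$).

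First, I would verify that the right adjoints commute, i.e. $\add^{\to, !} \simeq \add^! \circ \res^{\Lambda[\multgr{m}]\to \Lambda[\graded]}$. Unwinding, $\add^{\to, !}$ is restriction along the monoidal functor $\add^\to\colon \multfil{m}\to \filtered$, which on the module-theoretic side sends a $\Lambda[\graded]$-module $W$ in $\matheur{C}^\graded$ to its underlying diagram indexed by $\multfil{m}$, with module structure over $\Lambda[\multgr{m}]$ induced by the algebra map $\Lambda[\multgr{m}]\to \Lambda[\graded]$. This is precisely the composite: first apply $\res^{\Lambda[\multgr{m}]\to \Lambda[\graded]}$ to regard $W$ as a $\Lambda[\multgr{m}]$-module in $\matheur{C}^\graded$, then apply $\add^!$ to pull back the underlying diagram from $\graded$ to $\multgr{m}$. (The module structure is preserved because $\add^!$ is right-lax monoidal and acts trivially on $\Lambda[\multgr{m}]$.)

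Since the composite of right adjoints equals a right adjoint, the uniqueness of adjoints gives the dual identification of left adjoints:
\[
\add^\to_! \simeq \bigl(-\otimes_{\Lambda[\multgr{m}]}\Lambda[\graded]\bigr)\circ \add_!.
\]
Applied to $V\in \matheur{C}^{\multfil{m}}$, this yields $\add^\to_! V \simeq (\add_! V)\otimes_{\Lambda[\multgr{m}]}\Lambda[\graded]$, as claimed.

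The only subtle point, which I would check carefully, is that the adjunction $\add_!\dashv \add^!$ actually upgrades to the level of $\Lambda[\multgr{m}]$-modules as drawn in the diagram, and that passing to left adjoints in a diagram of right adjoints respects composition in the stable presentable setting; both follow from standard formalism (see~\S\ref{subsubsec:convention_categories}), so there is no real obstacle beyond bookkeeping.
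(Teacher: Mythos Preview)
Your proof is correct and follows essentially the same approach as the paper: set up the commutative diagram of module categories, verify that the right adjoints commute ($\add^{\to,!}\simeq \add^!\circ \res$), and then pass to left adjoints. The paper's argument is slightly terser, but the substance is identical.
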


\begin{rmk}
This lemma, combined with~\S\ref{subsubsec:assgr_as_tensoring}, gives us an alternative proof of Lemma~\ref{lem:add_!_vs_ass_gr}. Namely,
\[
	\assgr(\add_!^\to (V)) \simeq \Lambda \otimes_{\Lambda[\graded]} \Lambda[\graded] \otimes_{\Lambda[\multgr{m}]} \add_! (V) \simeq \add_!(\Lambda\otimes_{\Lambda[\multgr{m}]} V).
\]
\end{rmk}

\subsection{Stabilization} \label{subsec:stabilization}
We will now come to the concept of stabilization. In the simplest case, where $m=1$, given an object
\[
	V = (\cdots \to V_1 \to V_2 \to \cdots) \in \matheur{C}^{\filtered},
\]
where the maps are not necessarily equivalences, we want to construct an object $\lbar{V} \in \matheur{C}$, which, in some sense, captures the stable part of the sequence $V$. From the categorical point of view, the obvious candidate is to take
\[
	\lbar{V} = \colim_{d\in \filtered} V_d,
\]
which happily turns out to capture precisely the stable homology of $V$, even though this construction makes sense in general. In this subsection, we will discuss various formal properties of this construction. The link to homological stability will be discussed in the next subsection.

\subsubsection{} We have the following pair of adjoint functors
\[
	\colim_{\mathbf{d}\in \multfil{m}}: \matheur{C}^{\multfil{m}} \rightleftarrows \matheur{C}: \const_{\multfil{m}},
\]
where the left adjoint is taking colimit and the right adjoint is the constant functor, i.e. sending $V$ to the constant filtered object $V$.

\begin{notation}
For any $V\in \matheur{C}^{\multfil{m}}$, we will use
\[
	\lbar{V} = \colim_{\mathbf{d}\in \multfil{m}} V_\mathbf{d}
\]
to denote the colimit. Moreover, unless confusion might occur, we will write $\const$ instead of $\const_{\multfil{m}}$.
\end{notation}

A couple of remarks are in order.

\begin{rmk} \label{rmk:diagonal_cofinal}
Since the diagonal embedding
\[
	\filtered \to \multfil{m}
\]
is co-final, one reduces the computation of $\lbar{V}$ to that of a sequential colimit.
\end{rmk}

\begin{rmk}
Using the same reasoning, we see that $\lbar{V}$ only depends the behavior of $V$ on $\matheur{C}^{\multfilplus{m}}$. Because of this, we will also use the notation $\lbar{V}$ even for the case where $V\in \matheur{C}^{\multfilplus{m}}$.
\end{rmk}

\subsubsection{} We have the following lemma, which is direct from the fact that colimits commute with colimits and that the definition of the monoidal structures on $\matheur{C}^{\multfil{m}}$ and $\matheur{C}^{\multfilplus{m}}$ is defined using colimits.
\begin{lem}
The functor
\[
	\lbar{(-)}: \matheur{C}^{\multfil{m}} \to \matheur{C}
\]
is symmetric monoidal. As a consequence, its right adjoint $\const$ is right-lax monoidal.
\end{lem}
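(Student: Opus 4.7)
The plan is to unwind the definitions: both the source monoidal structure and the functor $\lbar{(-)}$ are constructed as colimits, and colimits commute with colimits, so symmetric monoidality will fall out essentially by a Fubini-style rearrangement.

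More concretely, first I would observe that $\lbar{(-)}$ is the left Kan extension of $\id_{\matheur{C}}$ along the (trivially symmetric monoidal) projection $\multfil{m} \to \pt$. Using the coend/Day-convolution formula for the tensor on $\matheur{C}^{\multfil{m}}$, for $V, W \in \matheur{C}^{\multfil{m}}$ I would compute
\[
\lbar{V\otimes W} \;=\; \colim_{\mathbf{d}\in\multfil{m}} \; \colim_{\mathbf{d}_1+\mathbf{d}_2\to \mathbf{d}} V_{\mathbf{d}_1}\otimes W_{\mathbf{d}_2}.
\]
Combining the two colimits (Fubini) collapses the filtration structure, yielding
\[
\lbar{V\otimes W} \;\simeq\; \colim_{(\mathbf{d}_1,\mathbf{d}_2)\in \multfil{m}\times\multfil{m}} V_{\mathbf{d}_1}\otimes W_{\mathbf{d}_2}.
\]
Since, by our standing convention (\S\ref{subsubsec:convention_categories}), the tensor product $\otimes$ on $\matheur{C}$ commutes with colimits in each variable, this last colimit splits as
\[
\Bigl(\colim_{\mathbf{d}_1} V_{\mathbf{d}_1}\Bigr) \otimes \Bigl(\colim_{\mathbf{d}_2} W_{\mathbf{d}_2}\Bigr) \;\simeq\; \lbar{V}\otimes \lbar{W}.
\]
A parallel computation handles the unit (the monoidal unit of $\matheur{C}^{\multfil{m}}$ goes to $\munit_{\matheur{C}}$), and the same argument applies verbatim to $\matheur{C}^{\multfilplus{m}}$.

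To upgrade these pointwise equivalences to a symmetric monoidal structure on the functor, I would appeal to the general fact (recorded, e.g., in \cite{lurie_higher_2017-1}*{\S2.2.6}) that left Kan extension along a symmetric monoidal functor between symmetric monoidal $\infty$-categories, when the target has colimits preserved by $\otimes$ in each variable, is symmetric monoidal for the Day convolution structures. The second assertion, that $\const$ is right-lax monoidal, is then purely formal: the right adjoint of a symmetric monoidal functor in $\DGCat_{\pres,\cont}$ inherits a right-lax monoidal structure. I do not expect any obstacles; the whole content is the interchange of two colimits together with cocontinuity of $\otimes$.
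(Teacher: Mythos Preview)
Your proposal is correct and follows essentially the same approach as the paper: the paper's proof is the one-sentence observation that the monoidal structure on $\matheur{C}^{\multfil{m}}$ is defined via colimits and $\lbar{(-)}$ is itself a colimit, so symmetric monoidality follows from colimits commuting with colimits (together with cocontinuity of $\otimes$ in each variable). You have simply unwound this in more detail and supplied the reference to Lurie for the coherent upgrade, which is appropriate.
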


\begin{cor} \label{cor:adjunction_for_algebras_colim}
We have the following pairs of adjoint functors
\begin{align*}
	\lbar{(-)}: \ComAlg(\matheur{C}^{\multfil{m}}) &\rightleftarrows \ComAlg(\matheur{C}): \const, \\
	\lbar{(-)}: \ComAlg^{\un}(\matheur{C}^{\multfil{m}}) &\rightleftarrows \ComAlg^{\un}(\matheur{C}): \const, \\
	\lbar{(-)}: \ComAlg^{\aug}(\matheur{C}^{\multfil{m}}) &\rightleftarrows \ComAlg^{\aug}(\matheur{C}): \const, \\
	\lbar{(-)}: \ComAlg^{\un, \aug}(\matheur{C}^{\multfil{m}}) &\rightleftarrows \ComAlg^{\un, \aug}(\matheur{C}): \const.
\end{align*}
\end{cor}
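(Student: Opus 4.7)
The plan is to deduce all four adjunctions from the preceding lemma, which gives that $\lbar{(-)}$ is strong symmetric monoidal with $\const$ as its right adjoint (which is therefore automatically right-lax monoidal). Once we have a symmetric monoidal adjunction of the underlying stable $\infty$-categories, the passage to commutative algebras is a formal matter, applying the general principle (see e.g.~\cite{lurie_higher_2017-1}) that a strong monoidal left adjoint $F$ with right-lax monoidal right adjoint $G$ induces an adjunction $F \dashv G$ between the corresponding categories of commutative algebra objects.

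Concretely, for the first adjunction, I would observe that $\lbar{(-)}$, being strong symmetric monoidal, sends commutative algebras to commutative algebras, while $\const$, being right-lax monoidal, does the same in the other direction. The unit and counit of the underlying adjunction on $\matheur{C}^{\multfil{m}} \rightleftarrows \matheur{C}$ are compatible with the algebra structures — this is because $\lbar{(-)}$ preserves tensor products strictly and $\const$ preserves them up to the canonical lax structure — so they promote to unit and counit of an adjunction on commutative algebras. The triangle identities transfer verbatim from the underlying adjunction.

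For the three variants with units or augmentations, the argument is the same with only a small addition. Since $\lbar{(-)}$ is \emph{strong} (not merely lax) monoidal, it preserves monoidal units: $\lbar{\munit_{\matheur{C}^{\multfil{m}}}} \simeq \munit_{\matheur{C}}$ (indeed, the monoidal unit of $\matheur{C}^{\multfil{m}}$ is the constant filtered object with value $\munit_{\matheur{C}}$, whose colimit over $\multfil{m}$ is again $\munit_{\matheur{C}}$ by a straightforward cofinality argument as in Remark~\ref{rmk:diagonal_cofinal}). Consequently $\lbar{(-)}$ takes unital (resp.\ augmented, resp.\ unital-augmented) algebras to algebras of the same flavour, and $\const$ does the same since $\const(\munit_{\matheur{C}}) \simeq \munit_{\matheur{C}^{\multfil{m}}}$ tautologically. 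The unit and counit of the adjunction are morphisms of unital (resp.\ augmented) algebras whenever they are applied to such, so the adjunction restricts to each of the four subcategories.

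No step presents a genuine obstacle; the entire content of the corollary is the monoidal-adjunction principle together with the strong (as opposed to merely lax) monoidality of $\lbar{(-)}$, which is exactly what gives us compatibility with units and hence with augmentations.
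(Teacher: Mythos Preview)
Your proposal is correct and follows exactly the approach the paper intends: the corollary is stated without proof because it is meant to be an immediate formal consequence of the preceding lemma (that $\lbar{(-)}$ is strong symmetric monoidal with right-lax right adjoint $\const$), via the standard fact that such adjunctions lift to commutative algebra objects and their unital/augmented variants. Your write-up simply makes this implicit reasoning explicit.
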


\begin{rmk}
Let $\matheur{A}$ be an algebra object in any of the categories appearing on the left side of Corollary~\ref{cor:adjunction_for_algebras_colim}, the algebra $\lbar{\matheur{A}}$ should be thought of as the stable part of $\matheur{A}$. Roughly speaking, this is because when $\matheur{A}$ exhibits a certain homological stability property, then $\lbar{\matheur{A}}$ captures precisely this stable homology. This is the content of \S\ref{subsec:relation_to_hom_stab}.
\end{rmk}

\subsubsection{} Since the functor $\lbar{(-)}$ is defined as a colimit, it behaves nicely with respect to the functor $\add^\to_!$ considered in~\S\ref{subsec:change_of_gradings}, and we have the following
\begin{lem}
\label{lem:add_!_vs_stabilization}
We have the following commutative diagram
\[
\xymatrix{
	\matheur{C}^{\multfil{m}} \ar[r]^{\add^\to_!} \ar[dr]_{\lbar{(-)}} & \matheur{C}^{\filtered} \ar[d]^{\lbar{(-)}} \\
	& \matheur{C}
}
\]
\end{lem}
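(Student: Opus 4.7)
The cleanest route is to identify both functors as left Kan extensions along maps of indexing categories, and then invoke the (categorical) fact that left Kan extension along a composition is the composition of left Kan extensions. Explicitly, the functor $\lbar{(-)}: \matheur{C}^{\multfil{m}} \to \matheur{C}$ is the left Kan extension along the tautological map $\pi_m: \multfil{m} \to \pt$, since taking a colimit is exactly left Kan extension along a map to the point. Similarly, $\lbar{(-)}: \matheur{C}^{\filtered} \to \matheur{C}$ is left Kan extension along $\pi_1: \filtered \to \pt$, and by construction $\add^\to_!$ is left Kan extension along $\add^\to: \multfil{m} \to \filtered$. Since $\pi_1 \circ \add^\to = \pi_m$, the transitivity of left Kan extensions gives the desired equivalence $\lbar{(-)} \circ \add^\to_! \simeq \lbar{(-)}$.

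If one prefers an argument at the level of adjoints rather than appealing to Kan-extension transitivity, the proof can equivalently be packaged as follows. Both $\lbar{(-)} \circ \add^\to_!$ and $\lbar{(-)}: \matheur{C}^{\multfil{m}} \to \matheur{C}$ are left adjoints, with right adjoints $\add^{\to,!} \circ \const_{\filtered}$ and $\const_{\multfil{m}}$ respectively. It therefore suffices to produce a natural equivalence $\const_{\multfil{m}} \simeq \add^{\to,!} \circ \const_{\filtered}$ and deduce the equivalence of left adjoints by uniqueness. This equivalence is tautological: restricting the constant filtered object with value $V$ along $\add^\to$ yields the constant $\multfil{m}$-filtered object with value $V$.

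Neither route presents a real obstacle; the only thing one has to be careful about is the choice of which presentation of $\lbar{(-)}$ to use (it makes sense on $\matheur{C}^{\multfil{m}}$ as well as on its subcategory $\matheur{C}^{\multfilplus{m}}$, but the colimit formulas agree). Once the formalism is in place the lemma is essentially a tautology. I would write up the one-line proof using the adjunction approach, since the identification $\const_{\multfil{m}} \simeq \add^{\to,!}\circ \const_{\filtered}$ is immediately visible and requires no additional calculation.
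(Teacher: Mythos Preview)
Your proposal is correct and matches the paper's intent. The paper does not spell out a proof for this lemma; it simply prefaces it with ``Since the functor $\lbar{(-)}$ is defined as a colimit, it behaves nicely with respect to the functor $\add^\to_!$,'' which is exactly your first argument (transitivity of left Kan extensions), while your adjoint-based alternative is the same pattern the paper uses explicitly in nearby results such as Lemma~\ref{lem:add_!_vs_ass_gr} and Proposition~\ref{prop:stable_piece_easy_case_tensor}.
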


\subsubsection{}
\label{subsubsec:oblv_gr_def}
From the formal point of view, the functor $\lbar{(-)}$ above has an analog in the graded case, which we now record for later use. Namely, we have the following pair of adjoint functors
\[
	\oblv_{\gr}: \matheur{C}^{\multgr{m}} \rightleftarrows \matheur{C}: \const_{\multgr{m}}
\]
where we see easily that $\oblv_{\gr}$ is the functor of taking direct sum. As above, we will write $\const$ instead of $\const_{\multgr{m}}$ unless confusion is likely to occur.

By the same token as for the functor $\lbar{(-)}$, $\oblv_{\gr}$ commutes with colimits, and is symmetric monoidal. And thus, the adjoint pair $\oblv_{\gr} \dashv \const$ upgrades to adjoint pairs between various categories of commutative algebras over $\matheur{C}^{\multgr{m}}$ and $\matheur{C}$ as in Corollary~\ref{cor:adjunction_for_algebras_colim}.

\subsubsection{Telescope construction}
\label{subsubsec:telescope_construction}
Let $\matheur{A} \in \ComAlg^{\grun}(\matheur{C}^{\multgr{m}})$, i.e. a unital graded commutative algebra equipped with the structure of a $\Lambda[\multgr{m}]$-algebra. Then, by~\S\ref{subsec:graded_unital_graded_augmented}, $\matheur{A}$ can be viewed as an object in $\ComAlg^{\un}(\matheur{C}^{\multfil{m}})$, and hence, we can talk about its stabilization $\lbar{\matheur{A}} \in \ComAlg^{\un}(\matheur{C})$, obtained by a telescope construction. Similarly, if $\matheur{A} \in \ComAlg^{\grun, \graug}(\matheur{C}^{\multgr{m}})$, we get $\lbar{\matheur{A}} \in \ComAlg^{\un, \aug}(\matheur{C})$.

\begin{rmk}
At the level of spaces, we learned this construction from~\cite{mcduff_homology_1976}. In their setting, the authors want to find a model for the group completion of an $E_n$-algebra (naturally graded by the monoid of connected components). Roughly speaking, by choosing a special point on a connected component and using it to act on the original $E_n$-algebra, we can form a sequential diagram. The group completion is then computed as the homotopy colimit of this diagram, which has an explicit model as the telescope construction.

The passage from a graded space to a filtered one using the action of a point is similar to our passage from $\ComAlg^{\grun}(\matheur{C}^{\multgr{m}})$ to $\ComAlg^{\un}(\matheur{C}^{\multfil{m}})$ using the action of $\Lambda[\multgr{m}]$. Moreover, in both cases, the final object is constructed as the homotopy colimit of the resulting diagram. This is the reason why we call this the telescope construction.
\end{rmk}

\subsection{Relation to homological stability} \label{subsec:relation_to_hom_stab}
We will now turn to the link between the stabilization construction via colimit above and stable homology. The main result of this subsection, Proposition~\ref{prop:stable_homology_vs_stabilization}, states that the stabilization $\lbar{V}$ of an object $V\in \matheur{C}^{\multfil{m}}$ captures the stable homology of $V$ when $V$ has homological stability. To start, we need to make sense of what homological stability means. For that, we will assume that $\matheur{C}$ is equipped with a $t$-structure.\footnote{As before, for the applications of this paper, it suffices to consider the case where $\matheur{C} = \Vect$. We choose to state the results in more generality since the proof is the same for $\Vect$ as for the more general case. The readers who prefer to think about $\Vect$ will not miss anything.} The term cohomology will be with respect to this $t$-structure, i.e. for any object $V \in \matheur{C}$, ${}^t\Ho^0(V) = \tau_{\leq 0} \tau_{\geq 0} V$ and more generally, ${}^t\Ho^k(V) = {}^t\Ho^0(V[k])$.

\begin{defn}[Homological stability]
Let $V\in \matheur{C}^{\multfil{m}}$. We say that $V$ satisfies weak homological stability if for any integer $c$, there exists an integer $D$ such that the map
\[
	\tau_{\leq c} V_{(d, d, \dots, d)} \to \tau_{\leq c} V_{(d+1, d+1, \dots, d+1)}
\]
is an equivalence for all $d\geq D$.

We say that $V$ satisfies (strong) homological stability if for any integer $c$, there exists an integer $D$ such that the maps
\[
	\tau_{\leq c} V_{(d_1, d_2, \dots, d_k, \dots, d_m)} \to \tau_{\leq c} V_{(d_1, d_2, \dots, d_k + 1, \dots, d_m)}
\]
are equivalences for all $1\leq k\leq m$ and for all $(d_1, \dots, d_m)$ with $d_k \geq D$.
\end{defn}

It is straightforward to see that (strong) homological stability implies weak homological stability.

\begin{defn}
Let $V \in \matheur{C}^{\multfil{m}}$ satisfy weak homological stability. Then for each integer $c$, we use $\tau_{\leq c}^{\stab} V$ to denote the stable homology of $V$ in cohomological degrees $\leq c$. More explicitly,
\[
	\tau^{\stab}_{\leq c} V = \tau_{\leq c} V_{(d, \dots, d)}
\]
when $d \gg 0$.
\end{defn}

We now come to the main result of this subsection. In what follows, we will assume that taking infinite direct sums in $\matheur{C}$ is $t$-exact.

\begin{prop}
\label{prop:stable_homology_vs_stabilization}
Let $V\in \matheur{C}^{\multfil{m}}$ satisfy weak homological stability. Then, for any integer $c$, the natural map
\[
	\tau_{\leq c}^{\stab} V \to \tau_{\leq c} \lbar{V}
\]
is an equivalence.
\end{prop}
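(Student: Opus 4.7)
The plan is to reduce the problem to a sequential colimit along the diagonal, commute the truncation with the colimit, and then invoke weak homological stability to see that the colimit stabilizes.

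First, by Remark~\ref{rmk:diagonal_cofinal}, the diagonal embedding $\filtered \hookrightarrow \multfil{m}$ is cofinal, so
\[
	\lbar{V} \simeq \colim_{d\in \filtered} V_{(d,d,\dots,d)},
\]
a sequential (in particular, filtered) colimit. Thus it suffices to show that
\[
	\tau_{\leq c}\bigl(\colim_{d\in \filtered} V_{(d,\dots,d)}\bigr)
\]
agrees with $\tau^{\stab}_{\leq c} V$, with the comparison map induced by the structural maps from each $V_{(d,\dots,d)}$ into the colimit.

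Second, I would argue that $\tau_{\leq c}$ commutes with this sequential colimit. This is where the hypothesis that infinite direct sums are $t$-exact comes in: any filtered colimit in a stable $\infty$-category can be computed as a cofiber of a self-map of a direct sum (the standard telescope presentation $\colim V_d \simeq \coFib(\bigoplus_d V_d \xrightarrow{\mathrm{sh}-\mathrm{id}} \bigoplus_d V_d)$), so $t$-exactness of arbitrary direct sums upgrades to $t$-exactness of filtered colimits. Applying $\tau_{\leq c}$ to the fiber sequence $\tau_{\geq c+1} V_d \to V_d \to \tau_{\leq c} V_d$ and passing to the colimit, both endpoints stay in the correct half of the $t$-structure, which forces
\[
	\tau_{\leq c} \lbar{V} \simeq \colim_{d\in\filtered} \tau_{\leq c} V_{(d,\dots,d)}.
\]

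Third, weak homological stability says that for some $D = D(c)$, every transition map $\tau_{\leq c} V_{(d,\dots,d)} \to \tau_{\leq c} V_{(d+1,\dots,d+1)}$ is an equivalence once $d\geq D$. The colimit on the right is therefore eventually constant and equal to $\tau_{\leq c} V_{(D,\dots,D)}$, which is precisely $\tau^{\stab}_{\leq c} V$ by definition. Tracing the maps shows this composite identification is induced by the natural comparison, completing the proof.

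The only nontrivial point is the commutation of $\tau_{\leq c}$ with filtered colimits; the rest is bookkeeping. That commutation is a standard consequence of the assumed $t$-exactness of infinite direct sums (compare~\cite{lurie_higher_2017}*{\S1.2.4}), and so I expect the argument to be short once that input is stated cleanly.
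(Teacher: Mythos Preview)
Your proof is correct and takes essentially the same approach as the paper: both reduce to a sequential colimit along the diagonal and then use the telescope presentation together with $t$-exactness of infinite direct sums to control truncations. The paper's Lemma~\ref{lem:stable_homology_as_Vbar} packages this slightly differently---working at level $c+1$ and only checking that $\colim_d \tau_{>c+1} V_d$ stays in degrees $\geq c+1$, rather than isolating the general fact that sequential colimits are $t$-exact---but the content is the same.
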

\begin{proof}
Because of Remark~\ref{rmk:diagonal_cofinal}, we can reduce to the case where $m = 1$. By throwing away elements at the beginning of the sequence if necessary, we can further assume that for all $d\in \filtered$, the map
\[
	\tau_{\leq c + 1} V_d \to \tau_{\leq c + 1} V_{d+1}
\]
is an equivalence. The desired conclusion comes from the following lemma.
\end{proof}

\begin{lem}
\label{lem:stable_homology_as_Vbar}
Let $V \in \matheur{C}^{\filteredplus}$ and $c$ an integer such that for all $d\in \filtered$, the natural map
\[
	\tau_{\leq c+1} V_d \to \tau_{\leq c+1}V_{d+1}
\]
is an equivalence. Then, the natural map
\[
	\tau_{\leq c} V_d \to \tau_{\leq c} \lbar{V}
\]
is an equivalence for all $d$. Equivalently, the natural map
\[
	\tau^{\stab}_{\leq c} V \to \tau_{\leq c} \lbar{V}
\]
is an equivalence.
\end{lem}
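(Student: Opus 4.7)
The plan is to exploit the fact that $\lbar{V}$ is defined as a sequential colimit and that the $t$-structure on $\matheur{C}$ is compatible with such colimits, so truncation can be moved inside.

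First, I would unpack the definition: since $V \in \matheur{C}^{\filteredplus}$, we have
\[
	\lbar{V} = \colim_{d\in \filtered} V_d,
\]
which is a sequential (hence filtered) colimit. The assumption in \S\ref{subsec:relation_to_hom_stab} that infinite direct sums in $\matheur{C}$ are $t$-exact implies that filtered colimits are $t$-exact (in a stable presentable $\infty$-category, filtered colimits are built from coproducts and finite colimits), and in particular that the truncation functor $\tau_{\leq c}$ commutes with sequential colimits. Thus
\[
	\tau_{\leq c}\lbar{V} \simeq \colim_{d\in \filtered} \tau_{\leq c} V_d.
\]

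Next, I would observe that the hypothesis on $\tau_{\leq c+1}$ being an equivalence formally implies that $\tau_{\leq c} V_d \to \tau_{\leq c} V_{d+1}$ is an equivalence for every $d$, since $\tau_{\leq c}$ factors through $\tau_{\leq c+1}$. Therefore the sequential diagram $(\tau_{\leq c} V_d)_{d\in \filtered}$ consists entirely of equivalences, so its colimit is canonically equivalent to any of its terms:
\[
	\colim_{d\in \filtered} \tau_{\leq c} V_d \simeq \tau_{\leq c} V_{d_0}
\]
for any fixed $d_0$. Combining this with the previous display yields the desired equivalence $\tau_{\leq c} V_d \to \tau_{\leq c} \lbar{V}$, and checking that the constructed equivalence agrees with the natural map is immediate by naturality of the colimit cocone.

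I expect the only nontrivial point to be the assertion that $\tau_{\leq c}$ commutes with the sequential colimit; everything else is formal. In the paper's setting this is a standing hypothesis on the $t$-structure (via $t$-exactness of infinite direct sums), so there is no real obstacle, but it is the step that would need to be invoked explicitly. Note that the slightly stronger hypothesis on $\tau_{\leq c+1}$ (rather than $\tau_{\leq c}$) is not actually required for this lemma in isolation; its role is presumably to provide the extra room needed when one later feeds this lemma into Proposition~\ref{prop:stable_homology_vs_stabilization}, where one wants the conclusion for a range of $c$ after truncating an initial segment.
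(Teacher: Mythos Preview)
Your argument is correct but differs from the paper's, and your justification of the one nontrivial step needs sharpening.

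The paper does not commute $\tau_{\leq c}$ past the colimit. It takes the fiber sequence $\tau_{\leq c+1} V_d \to V_d \to \tau_{>c+1} V_d$, passes to the colimit to obtain $\tau^{\stab}_{\leq c+1} V \to \lbar{V} \to \colim_d \tau_{>c+1} V_d$, and then shows $\colim_d \tau_{>c+1} V_d \in \matheur{C}^{\geq c+1}$ using only the crude bound that a cofiber of objects in $\matheur{C}^{\geq c+2}$ lies in $\matheur{C}^{\geq c+1}$. This loses one degree, and that is exactly why the hypothesis sits at level $c+1$: the paper's own argument needs the slack, not the downstream application you guessed.

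Your route avoids this loss, but the parenthetical you give---filtered colimits are built from coproducts and finite colimits---does not establish what you need, since cofibers do not preserve $\matheur{C}^{\geq n}$. What actually works is specific to sequential colimits: writing $\colim_d W_d \simeq \coFib\bigl(\bigoplus_d W_d \xrightarrow{\,1-\sigma\,} \bigoplus_d W_d\bigr)$ and using $t$-exactness of direct sums, the long exact sequence identifies the only possibly nonzero cohomology of the colimit below degree $n$ with $\ker(1-\sigma)$ on $\bigoplus_d \Ho^n(W_d)$, and $1-\sigma$ is visibly injective on a direct sum indexed by $\mathbb{Z}_{\geq 0}$. This is the same cofiber presentation the paper invokes, with the injectivity of $1-\sigma$ kept rather than discarded.
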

\begin{proof}
Taking the colimit of the following fiber sequence
\[
	\tau_{\leq c + 1} V_d \to V_d \to \tau_{> c+1} V_d \to \cdots,
\]
we get the following fiber sequence
\[
	\tau^{\stab}_{\leq c+1} V \to \lbar{V} \to \colim_d (\tau_{>c+1} V_d) \to \cdots.
\]

Now, it suffices to show that
\[
	\colim_{d}(\tau_{>c+1} V_d)
\]
lives in cohomological degrees $\geq c + 1$. The fact that this colimit fits into the following cofiber sequence
\[
	\bigoplus_d \tau_{>c+1} V_d \to \bigoplus_d \tau_{> c+1} V_d \to \colim_d (\tau_{> c+1} V_d) \to \cdots
\]
gives us the desired conclusion.
\end{proof}

\begin{rmk}
\label{rmk:stable_homology_as_Vbar}
Note that when $V\in \matheur{C}^{\multfil{m}}$ satisfies weak homological stability, then the $\tau_{\leq c}^{\stab} V$ naturally map into each other and form a tower
\[
	\cdots \to \tau_{\leq c-1}^{\stab} V \to \tau_{\leq c}^{\stab} V \to \tau_{\leq c+1}^{\stab} V \to \cdots
\]
The discussion above implies that if we let
\[
	V^{\stab} = \colim_{c} \tau_{\leq c}^{\stab} V,
\]
then for any $c$,
\[
	\tau_{\leq c}^{\stab} V \simeq \tau_{\leq c} V^{\stab} 
\] 
and moreover, the natural map
\[
	V^{\stab} \to \lbar{V}
\]
is an equivalence.

Being independent from homological stability, however, the general construction of $\lbar{V}$ is more robust, and is generally easier to manipulate. For example, when $V$ is an algebra object, $\lbar{V}$ canonically inherits this algebra structure (see Corollary~\ref{cor:adjunction_for_algebras_colim}). We will see more instances of this phenomenon later.
\end{rmk}

\subsubsection{} Using the result above and Corollary~\ref{cor:adjunction_for_algebras_colim}, we see that if we start with a graded-unital algebra $\matheur{A}$ satisfying weak homological stability. Then, the ``stable homology'' $\lbar{\matheur{A}} \simeq \matheur{A}^{\stab}$ carries a natural unital algebra structure.

\begin{rmk} \label{rmk:abuse_stable_homology}
	Even if an object $V\in \matheur{C}^{\multfil{m}}$ exhibits no homological stability phenomenon, by abuse of terminology, we will still refer to the object $\lbar{V}$ as the stable (co)homology of $V$.
\end{rmk}

\subsection{Homological stability of a limit}
\label{subsec:homological_stability_of_a_limit}
In this subsection, we record a simple lemma which provides a method to study the homological stability of a filtered-object that is itself a sequential limit. It allows us to reduce the study of homological stability of the original object to that of the successive fibers of the terms in the limit.

\begin{lem} \label{lem:homological_stability_of_limit}
Let $V \in \matheur{C}^{\multfil{m}}$ such that 
\[
	V \simeq \lim(\dots \leftarrow V_{-1} \leftarrow V_0 \leftarrow V_1 \leftarrow V_2 \leftarrow \cdots)
\]
where $V_1, V_2, \dots \in \matheur{C}^{\multfil{m}}$, and where for each $\mathbf{d}$, $V_{i, \mathbf{d}} \simeq 0$ when $i\ll 0$.
Let
\[
	F_i = \Fib(V_i \to V_{i-1}) \in \matheur{C}^{\multfil{m}}.
\]
Let $c\in \mathbb{Z}, 1\leq k\leq m$, and $\mathbf{d} \in \multgr{m}$ be such that the following map is an equivalence
\[
	\tau_{\leq c+1} F_{i, \mathbf{d}} \to \tau_{\leq c+1} F_{i, \mathbf{d} + \mathbf{1}_k}, \forall i. \teq\label{eq:map_stability_for_F}
\]

Then, the same is true for
\[
	\tau_{\leq c} V_\mathbf{d} \to \tau_{\leq c} V_{\mathbf{d} + \unit_k}. \teq\label{eq:map_stability_for_V}
\]
\end{lem}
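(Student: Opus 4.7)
\emph{Plan.} The strategy is to reformulate both hypothesis and conclusion as connectivity claims about fibers, run a short induction on $i$ powered by the fiber sequence $F_i \to V_i \to V_{i-1}$, and then pass to the limit.

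First, note that in a stable $\infty$-category equipped with a $t$-structure, a map $f$ becomes an equivalence after $\tau_{\leq c}$ if and only if $\Fib(f) \in \matheur{C}^{\geq c+1}$, because $\tau_{\leq c}$, being right adjoint to the inclusion $i_{\leq c}$, preserves fibers, and in a stable category a map with vanishing fiber is an equivalence. Setting
\[
	G_i^F := \Fib(F_{i,\mathbf{d}} \to F_{i,\mathbf{d}+\unit_k}), \quad G_i := \Fib(V_{i,\mathbf{d}} \to V_{i,\mathbf{d}+\unit_k}), \quad G := \Fib(V_\mathbf{d} \to V_{\mathbf{d}+\unit_k}),
\]
hypothesis~\eqref{eq:map_stability_for_F} reads $G_i^F \in \matheur{C}^{\geq c+2}$ for every $i$, and conclusion~\eqref{eq:map_stability_for_V} reads $G \in \matheur{C}^{\geq c+1}$.

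Next, I would induct on $i$ to show $G_i \in \matheur{C}^{\geq c+1}$. Taking vertical fibers in the map of fiber sequences $F_i \to V_i \to V_{i-1}$ at indices $\mathbf{d}$ and $\mathbf{d}+\unit_k$ produces a fiber sequence $G_i^F \to G_i \to G_{i-1}$. The base case is immediate: for $i$ below the thresholds associated to $\mathbf{d}$ and to $\mathbf{d}+\unit_k$, both $V_{i,\mathbf{d}}$ and $V_{i,\mathbf{d}+\unit_k}$ vanish, so $G_i = 0$. For the inductive step, the inclusion $G_i^F \in \matheur{C}^{\geq c+2} \subset \matheur{C}^{\geq c+1}$ combined with the inductive hypothesis $G_{i-1} \in \matheur{C}^{\geq c+1}$ forces $G_i \in \matheur{C}^{\geq c+1}$ by closure of $\matheur{C}^{\geq c+1}$ under extensions.

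Finally, limits in $\matheur{C}^{\multfil{m}}$ are computed pointwise and $\Fib$ commutes with limits, so $G \simeq \lim_i G_i$; and $\matheur{C}^{\geq c+1}$ is closed under arbitrary limits in $\matheur{C}$ (the inclusion $i_{\geq c+1}$ is right adjoint to $\tau_{\geq c+1}$), so $G \in \matheur{C}^{\geq c+1}$, as desired. There is no real obstacle; the only point to highlight is why the hypothesis is imposed at level $c+1$ on $F_i$ while the conclusion holds only at level $c$ on $V$: this one-degree drop is exactly what is consumed by the connecting homomorphism in the long exact sequence extracting $V_i$ from the extension $F_i \to V_i \to V_{i-1}$.
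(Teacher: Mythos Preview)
Your proof is correct and follows essentially the same strategy as the paper's: induct on $i$ to establish the analogous statement for each $V_i$ (using the vanishing for $i \ll 0$ as the base case), then pass to the limit using that the relevant truncation is a right adjoint. Your repackaging via the fibers $G_i^F, G_i, G$ and closure of $\matheur{C}^{\geq c+1}$ under extensions and limits is just a reformulation of the paper's ``simple induction on $i$'' combined with ``$\tr_{\leq c}$ commutes with limits''; the arguments are interchangeable.

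One minor remark: your explanation of the one-degree drop is slightly off. In your fiber formulation, the induction actually carries through with $G_i \in \matheur{C}^{\geq c+2}$ (not just $\matheur{C}^{\geq c+1}$), since $\matheur{C}^{\geq c+2}$ is closed under extensions and the base case is $0$; so your argument in fact proves $\tau_{\leq c+1} V_\mathbf{d} \to \tau_{\leq c+1} V_{\mathbf{d}+\unit_k}$ is an equivalence. The paper states the weaker conclusion at level $c$, presumably with an eye toward the variants in the subsequent lemmas where only injectivity at level $c+2$ on the $F_i$ is assumed and the connecting map genuinely costs a degree. This does not affect the correctness of what you wrote.
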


\begin{proof}
Since $\tau_{\leq c} \simeq i_{\leq c} \circ \tr_{\leq c}$ and $i_{\leq c}$ is fully-faithful, it suffices to prove~\eqref{eq:map_stability_for_V} but with $\tau_{\leq c}$ being replaced by $\tr_{\leq c}$. Being a right adjoint, $\tr_{\leq c}$ commutes with limits, and hence, it suffices to show that for each $i$, the following map is an equivalence,
\[
	\tr_{\leq c} V_{i, \mathbf{d}} \to \tr_{\leq c} V_{i, \mathbf{d} + \unit_k},
\]
which, as above, is equivalent to the following map being an equivalence for each $i$
\[
	\tau_{\leq c} V_{i, \mathbf{d}} \to \tau_{\leq c} V_{i, \mathbf{d} + \unit_k}.
\]

Now, we induct on $i$, using the following diagram whose rows are distinguished triangles
\[
\xymatrix{
	F_{i, \mathbf{d}+\unit_k} \ar[r] & V_{i, \mathbf{d} + \unit_k} \ar[r] & V_{i-1, \mathbf{d} + \unit_k} \\
	F_{i, \mathbf{d}} \ar[u] \ar[r] & V_{i, \mathbf{d}} \ar[u] \ar[r] & V_{i-1, \mathbf{d}} \ar[u]
}
\]
The base case is given when $i\ll 0$, where all terms in the diagram vanish and the statement is trivially true. The induction step is routine.
\end{proof}

\subsubsection{}
In practice, the maps~\eqref{eq:map_stability_for_F}, and hence also~\eqref{eq:map_stability_for_V}, are equivalences for a range $\mathbf{d} \geq \mathbf{D}$ for some $\mathbf{D}$. This Lemma is thus really about homological stability of a sequential limit as we mentioned above. The bound is where homological stability, with respect to homological degrees $\leq c$, occurs.

In other words, for the applications that we will encounter, the Lemma above says that if $V \in \matheur{C}^{\multgr{m}}$ is a sequential limit, whose associated graded satisfies homological stability, then so does $V$. Moreover, the homological degree at which we have homological stability (i.e. the number $c$ in the above lemma) for $V$ is only $1$ worse than that of the associated graded (i.e. $c+1$ vs $c$).

We have the following variant of the Lemma above
\begin{lem}
\label{lem:homological_stability_of_limit_improvement}
In the situation of Lemma~\ref{lem:homological_stability_of_limit}, suppose further that
\[
	{}^t\Ho^{c+2}(F_{i, \mathbf{d}}) \to {}^t\Ho^{c+2} (F_{i, \mathbf{d}})
\]
is injective. Then we can improve the homological degree at which stability occurs. Namely, the following map is an equivalence
\[
	\tau_{\leq c+1} V_{\mathbf{d}} \to \tau_{\leq c+1} V_{\mathbf{d}+\unit_k}.
\]
\end{lem}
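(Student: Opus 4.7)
The plan is to strengthen, rather than replace, the induction already used in the proof of Lemma~\ref{lem:homological_stability_of_limit}. Since $\tr_{\leq c+1}$ is a right adjoint, it commutes with the sequential limit $V \simeq \lim_i V_i$, so it is enough to establish the map in question after replacing $V$ by each $V_i$. To make the induction go through, however, we need a joint inductive statement: for every $i$,
\begin{enumerate}[(a)]
\item $\tau_{\leq c+1} V_{i,\mathbf{d}} \to \tau_{\leq c+1} V_{i,\mathbf{d}+\unit_k}$ is an equivalence, and
\item ${}^t\Ho^{c+2}(V_{i,\mathbf{d}}) \to {}^t\Ho^{c+2}(V_{i,\mathbf{d}+\unit_k})$ is injective.
\end{enumerate}
Note that the conclusion we want is exactly (a) in the limit, while (b) is the extra piece that has to be carried along to keep the induction closed. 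The base case is handled as before by the hypothesis that $V_{i,\mathbf{d}} \simeq 0$ for $i \ll 0$.

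For the inductive step, I would apply ${}^t\Ho^\bullet$ to the map of fiber sequences
\[
\xymatrix{
F_{i,\mathbf{d}} \ar[r] \ar[d] & V_{i,\mathbf{d}} \ar[r] \ar[d] & V_{i-1,\mathbf{d}} \ar[d] \\
F_{i,\mathbf{d}+\unit_k} \ar[r] & V_{i,\mathbf{d}+\unit_k} \ar[r] & V_{i-1,\mathbf{d}+\unit_k}
}
\]
to obtain a map of long exact sequences. By the hypotheses of Lemma~\ref{lem:homological_stability_of_limit}, the $F_i$-columns are isomorphisms in all degrees $\leq c+1$, and the new hypothesis of the present lemma promotes this to an injection in degree $c+2$. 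The $V_{i-1}$-columns satisfy properties (a) and (b) by the inductive hypothesis. A standard application of the four-lemma for isomorphism then yields (a) for $V_i$, and the four-lemma for injectivity then yields (b) for $V_i$.

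The main subtlety, which is really the entire point of bundling (a) and (b) together, is just the bookkeeping in the long exact sequence: the isomorphism on ${}^t\Ho^{c+1}(V_i)$ requires both the isomorphism on ${}^t\Ho^{c+1}(F_i)$ and the injection on ${}^t\Ho^{c+2}(F_i)$ (together with control of ${}^t\Ho^{c}$ and ${}^t\Ho^{c+1}$ of $V_{i-1}$), while the injection on ${}^t\Ho^{c+2}(V_i)$ requires in addition the injection on ${}^t\Ho^{c+2}(V_{i-1})$ supplied by (b) at stage $i-1$. No genuinely new input beyond Lemma~\ref{lem:homological_stability_of_limit} and its hypothesis-strengthening is needed; this is why the degree at which stability holds improves by exactly one.
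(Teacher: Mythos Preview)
Your proof is correct and follows the same induction-plus-four/five-lemma strategy as the paper's (very terse) proof. One minor remark: carrying statement (b) along is not actually needed for this lemma---the five-lemma argument for (a) at stage $i$ only uses (a) at stage $i-1$ together with the injectivity hypothesis on ${}^t\Ho^{c+2}(F_i)$, so the induction on (a) alone is self-contained. Your (b) is precisely the extra content that the paper isolates in the \emph{next} lemma (where the $t$-exactness of infinite products is then used to pass the injectivity to the limit), so you have effectively proved both at once.
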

\begin{proof}
The proof goes the same way as in Lemma~\ref{lem:homological_stability_of_limit}. The improvement comes from using the four-lemma for injective maps.
\end{proof}

\subsubsection{} In the case where taking infinite products in $\matheur{C}$ is $t$-exact, such as the case of $\Vect$, we can push it one step further.

\begin{lem}
\label{lem:homological_stability_of_limit_improvement_w_t-exact_infty_prod}
When taking infinite products in $\matheur{C}$ is $t$-exact, in the situation of Lemma~\ref{lem:homological_stability_of_limit_improvement}, the following map is an equivalence (as in Lemma~\ref{lem:homological_stability_of_limit_improvement})
\[
	\tau_{\leq c+1} V_{\mathbf{d}} \to \tau_{\leq c+1} V_{\mathbf{d}+\unit_k}
\]
and moreover, the map
\[
	{}^t\Ho^{c+2}(V_{\mathbf{d}}) \to {}^t\Ho^{c+2}(V_{\mathbf{d}+\unit_k})
\]
is injective.
\end{lem}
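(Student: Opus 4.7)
The first conclusion, that $\tau_{\leq c+1} V_\mathbf{d} \to \tau_{\leq c+1} V_{\mathbf{d}+\unit_k}$ is an equivalence, is already the content of Lemma~\ref{lem:homological_stability_of_limit_improvement} and in fact does not require $t$-exactness of $\prod$: following that proof, one inducts on $i$ (with base case $V_{i,\mathbf{d}} \simeq 0$ for $i\ll 0$) to show that $\tau_{\leq c+1} V_{i,\mathbf{d}} \to \tau_{\leq c+1} V_{i,\mathbf{d}+\unit_k}$ is an equivalence for every $i$, and then uses that $\tr_{\leq c+1}$ commutes with limits.

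For the new injectivity statement on $\Ho^{c+2}$, my plan is to strengthen the induction above so as to track, at each stage, both an equivalence on $\tau_{\leq c+1}$ and an injection on $\Ho^{c+2}$. For the inductive step, consider the fragment
\[
\Ho^{c+1}(V_{i-1}) \to \Ho^{c+2}(F_i) \to \Ho^{c+2}(V_i) \to \Ho^{c+2}(V_{i-1})
\]
of the long exact cohomology sequence attached to $F_i \to V_i \to V_{i-1}$, and form the ladder comparing gradings $\mathbf{d}$ and $\mathbf{d}+\unit_k$. The leftmost vertical map is an isomorphism by the $\tau_{\leq c+1}$ part of the induction at step $i-1$; the second is injective by the standing hypothesis on $F_i$; the fourth is injective by the $\Ho^{c+2}$ part of the induction at step $i-1$. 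The four-lemma for injective maps then gives injectivity of the third map, completing the induction.

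To pass from the individual $V_i$ to $V = \lim_i V_i$, I invoke $t$-exactness of $\prod$ on $\matheur{C}$. Applied to the standard fibre sequence
\[
V_\mathbf{d} \longrightarrow \prod_i V_{i,\mathbf{d}} \xrightarrow{\;\id - \sigma\;} \prod_i V_{i,\mathbf{d}}
\]
computing the sequential limit, together with $\Ho^n(\prod_i V_{i,\mathbf{d}}) \simeq \prod_i \Ho^n(V_{i,\mathbf{d}})$, this produces the Milnor short exact sequence
\[
0 \to {\lim}^1_i\, \Ho^{c+1}(V_{i,\mathbf{d}}) \to \Ho^{c+2}(V_\mathbf{d}) \to \lim_i \Ho^{c+2}(V_{i,\mathbf{d}}) \to 0,
\]
and similarly at $\mathbf{d}+\unit_k$. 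By the strengthened induction, the map on $\Ho^{c+1}(V_{i,\cdot})$ is an isomorphism at every $i$, hence the map on $\lim^1$ terms is an isomorphism; the map on $\Ho^{c+2}(V_{i,\cdot})$ is injective at every $i$, hence the map on $\lim$ terms is injective (since $\lim$ is left exact). One more application of the four-lemma to the ladder of Milnor sequences then delivers the desired injectivity of $\Ho^{c+2}(V_\mathbf{d}) \to \Ho^{c+2}(V_{\mathbf{d}+\unit_k})$.

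The only place where $t$-exactness of $\prod$ is really used is in the construction of the Milnor sequence, and that is also the main (mild) obstacle: one must set up the inductive statement with an injection on $\Ho^{c+2}$ baked in from the start so that the four-lemma in the inductive step actually goes through, and then keep track of degrees carefully when chasing the ladder of Milnor sequences. Everything else is formal diagram chasing.
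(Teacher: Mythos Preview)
Your proof is correct and follows essentially the same line as the paper's: both first strengthen the induction on $i$ to carry along injectivity on $\Ho^{c+2}(V_{i,\cdot})$ via the four-lemma, and then pass to the limit using the fibre sequence $\lim_i V_i \to \prod_i V_i \to \prod_i V_i$ together with $t$-exactness of products. The only cosmetic difference is that you extract the Milnor $\lim^1$ short exact sequence explicitly, whereas the paper applies the four-lemma directly to the long exact sequence in cohomology of that fibre sequence; these are the same argument.
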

\begin{proof}
By induction, and repeated application of the four-lemma for injective morphisms, we get that the following map is an equivalence
\[
	\tau_{\leq c+1} V_{i, \mathbf{d}} \to \tau_{\leq c+1} V_{i, \mathbf{d}+\unit_k}
\]
and that
\[
	{}^t\Ho^{c+2}(V_{i, \mathbf{d}}) \to {}^t\Ho^{c+2}(V_{i, \mathbf{d} + \unit_k})
\]
is injective. Now, it suffices to show that taking sequential limit preserves injectivity at cohomological degree $c+2$.

Indeed, consider the following diagram
\[
\xymatrix{
	\lim_{i} V_{i, \mathbf{d}} \ar[d] \ar[r] & \prod V_{i, \mathbf{d}} \ar[d] \ar[r] & \prod V_{i-1, \mathbf{d}} \ar[d] \\
	\lim_{i} V_{i, \mathbf{d}+\unit_k} \ar[r] & \prod V_{i, \mathbf{d} + \unit_k} \ar[r] & \prod V_{i-1, \mathbf{d}+\unit_k} 
}
\]
where the rows are exact sequences. Now, taking cohomology and applying four-lemma, we get the desired conclusion.
\end{proof}

\subsection{Stable homology for objects of the form $W \otimes \Lambda[\multgr{m}]$}
\label{subsec:stable_homology_simple}
In this subsection, we give a description for $\lbar{V}$ when $V$ is of the form $W \otimes \Lambda[\multgr{m}]$. In this case $\lbar{V}$ has a particularly simple description.

\begin{prop} \label{prop:stable_piece_easy_case_tensor}
Let $W \in \matheur{C}^{\multgr{m}}$ and $V = W \otimes \Lambda[\multgr{m}] \in \matheur{C}^{\multfil{m}}$. Then, we have a natural equivalence
\[
	\lbar{V} \simeq \oblv_{\gr} (W) \simeq \bigoplus_{\mathbf{d} \in \multgr{m}} W_\mathbf{d}.
\]
\end{prop}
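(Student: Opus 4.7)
The plan is to reduce the statement to a tautological comparison of right adjoints, since the three functors involved are all left adjoints. By~\S\ref{subsubsec:another_take_Rees}, the object $V = W\otimes \Lambda[\multgr{m}]$ is precisely $\Rees(W)$, where $\Rees\colon \matheur{C}^{\multgr{m}}\rightleftarrows \matheur{C}^{\multfil{m}}\colon \filToGr$ is the adjoint pair recalled there. The stabilization $\lbar{(-)}\colon \matheur{C}^{\multfil{m}} \to \matheur{C}$ is left adjoint to $\const_{\multfil{m}}$ (see the discussion preceding Corollary~\ref{cor:adjunction_for_algebras_colim}), while $\oblv_{\gr}\colon \matheur{C}^{\multgr{m}} \to \matheur{C}$ is left adjoint to $\const_{\multgr{m}}$ by~\S\ref{subsubsec:oblv_gr_def}. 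Therefore, to prove the desired equivalence $\lbar{\Rees(W)} \simeq \oblv_{\gr}(W)$ functorially in $W$, it is enough to exhibit a natural equivalence of right adjoints
\[
\filToGr \circ \const_{\multfil{m}} \simeq \const_{\multgr{m}}.
\]

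For this, I would simply unwind: for any $V\in \matheur{C}$, the object $\const_{\multfil{m}}(V)$ is the filtered diagram with constant value $V$ and identity transition maps, and $\filToGr$ is restriction along the (discrete) inclusion $\multgr{m}\hookrightarrow \multfil{m}$, which returns the graded object assigning $V$ to every $\mathbf{d}$, i.e. $\const_{\multgr{m}}(V)$. Uniqueness of left adjoints then yields the claim.

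I do not expect any genuine obstacle. If a more concrete verification is preferred, one can note that $\Rees(W)_{\mathbf{d}} \simeq \bigoplus_{\mathbf{d}'\leq \mathbf{d}} W_{\mathbf{d}'}$ with transition maps given by the evident inclusions, and then invoke Remark~\ref{rmk:diagonal_cofinal} to reduce the computation of $\lbar{V}$ to the sequential colimit along the diagonal, whose value is manifestly $\bigoplus_{\mathbf{d}'\in \multgr{m}} W_{\mathbf{d}'} = \oblv_{\gr}(W)$.
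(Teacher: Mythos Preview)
Your proof is correct and takes essentially the same approach as the paper: identify $V$ with $\Rees(W)$, then observe that the right adjoints $\filToGr\circ\const_{\multfil{m}}$ and $\const_{\multgr{m}}$ agree, so the left adjoints $\lbar{(-)}\circ\Rees$ and $\oblv_{\gr}$ agree as well. The paper presents this as a triangle of adjoint pairs (your diagram is implicit), and your additional concrete computation via the diagonal cofinality is a nice bonus not in the original.
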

\begin{proof}
Consider the following diagram
\[
\xymatrix{
	\matheur{C}^{\multfil{m}} \ardis[dd]^{\filToGr} \ardis[rr]^{\lbar{(-)}} && \ardis[ll]^{\const} \matheur{C} \ardis[ddll]^{\const} \\ \\
	\matheur{C}^{\multgr{m}} \ardis[uu]^{\Rees} \ardis[uurr]^{\oblv_{\gr}}
} \teq\label{eq:adjunction_grtoFil_oplus_colim}
\]
where parallel arrows are adjoint pairs. Moreover, the right adjoints commute, and hence, so do the left adjoints.

Recall that by~\S\ref{subsubsec:another_take_Rees}, we have 
\[
	\Rees \simeq - \otimes \Lambda[\multgr{m}]. \teq\label{eq:gr2Fil_is_tensor}
\]
But now, the commutativity of the left adjoints in the diagram above gives us the desired conclusion
\[
	\lbar{V} \simeq \lbar{W\otimes[\multgr{m}]} \simeq \lbar{\Rees(W)} \simeq \oblv_{\gr} W \simeq \bigoplus_{\mathbf{d} \in \multgr{m}} W_\mathbf{d}.
\]
\end{proof}

\begin{rmk}
It is easy to see that the same result holds for $\matheur{C}^{\multgrplus{m}}$. Note, however, that $V$ is still of the form $W \otimes \Lambda[\multgr{m}]$ rather than $W\otimes \Lambda[\multgr{m}]_+$. The only difference is that $W$ is now in $\matheur{C}^{\multgrplus{m}}$ instead, which is a property, rather than an extra piece of data. See also~\S\ref{subsubsec:assgr_as_tensoring} for a similar discussion.
\end{rmk}

\begin{rmk}
Note that the object $V = W\otimes \Lambda[\multgr{m}] \in \matheur{C}^{\multfil{m}}$ where $W\in \matheur{C}^{\multgr{m}}$ does not necessarily have homological stability. However, as in Remark~\ref{rmk:abuse_stable_homology}, we still call $\lbar{V} \simeq W$ the stable homology of $V$.
\end{rmk}

\begin{cor} \label{cor:stable_piece_com_alg}
Let $\matheur{A} = \matheur{B} \otimes \Lambda[\multgr{m}] \in \ComAlg(\matheur{C}^{\multfil{m}})$ where $\matheur{B} \in \ComAlg(\matheur{C}^{\multgr{m}})$. Then, we have a natural equivalence of algebras in $\matheur{C}$
\[
	\lbar{\matheur{A}} \simeq \oblv_{\gr} (\matheur{B}) \in \ComAlg(\matheur{C}).
\] 
\end{cor}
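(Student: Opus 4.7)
The plan is to upgrade the equivalence of underlying objects furnished by Proposition~\ref{prop:stable_piece_easy_case_tensor} to an equivalence of commutative algebra objects. All the functors appearing as left adjoints in the diagram~\eqref{eq:adjunction_grtoFil_oplus_colim} are symmetric monoidal: the functor $\Rees$ is symmetric monoidal by the discussion preceding~\S\ref{subsubsec:associated_graded} (it is defined by left Kan extension along a monoidal functor, so Day convolution makes it monoidal), the functor $\lbar{(-)}$ is symmetric monoidal by the lemma immediately preceding Corollary~\ref{cor:adjunction_for_algebras_colim}, and $\oblv_{\gr}$ is symmetric monoidal by~\S\ref{subsubsec:oblv_gr_def}.

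Consequently, each of the three functors induces a functor between the corresponding categories of commutative algebra objects, and the commutative triangle of left adjoints in~\eqref{eq:adjunction_grtoFil_oplus_colim} lifts canonically to a commutative triangle of symmetric monoidal functors
\[
\xymatrix{
	\ComAlg(\matheur{C}^{\multgr{m}}) \ar[r]^<<<<<{\Rees} \ar[dr]_{\oblv_{\gr}} & \ComAlg(\matheur{C}^{\multfil{m}}) \ar[d]^{\lbar{(-)}} \\
	& \ComAlg(\matheur{C})
}
\]
The only remaining point is to identify $\matheur{A} = \matheur{B} \otimes \Lambda[\multgr{m}]$ with $\Rees(\matheur{B})$ as commutative algebras in $\matheur{C}^{\multfil{m}}$. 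But this identification is the content of~\eqref{eq:gr2Fil_is_tensor} (see~\S\ref{subsubsec:another_take_Rees}), which describes $\Rees$ as tensoring with $\Lambda[\multgr{m}]$, and this tensoring is a monoidal operation so it preserves the commutative algebra structure.

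Putting everything together, we obtain the natural equivalence of commutative algebras
\[
	\lbar{\matheur{A}} \simeq \lbar{\Rees(\matheur{B})} \simeq \oblv_{\gr}(\matheur{B}),
\]
which is precisely the desired statement. There is no real obstacle here; the entire content is bookkeeping of the monoidal structures, and the only place where one has to be mildly careful is in checking that the identification $\matheur{A} \simeq \Rees(\matheur{B})$ is indeed compatible with commutative algebra structures, rather than just with underlying objects. This is automatic from the fact that $-\otimes \Lambda[\multgr{m}]$ is the base change along the unit map $\munit_{\matheur{C}^{\multgr{m}}} \to \Lambda[\multgr{m}]$ in $\ComAlg(\matheur{C}^{\multgr{m}})$.
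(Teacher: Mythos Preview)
Your proof is correct and matches the paper's intended argument. The paper leaves the corollary without an explicit proof, as it is meant to follow immediately from Proposition~\ref{prop:stable_piece_easy_case_tensor} by observing that the left adjoints in diagram~\eqref{eq:adjunction_grtoFil_oplus_colim} are all symmetric monoidal; you have simply made this reasoning explicit.
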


\begin{rmk}
Note that if $\matheur{B} \in \ComAlg^\un(\matheur{C}^{\multgr{m}})$, then $\matheur{A} = \matheur{B} \otimes \Lambda[\multgr{m}] \in \ComAlg^{\grun}(\matheur{C}^{\multfil{m}})$ (and similarly for $\ComAlg^{\graug}$ and $\ComAlg^{\grun, \graug}$). $\lbar{\matheur{A}}$ then also carries the relevant unital/augmented structure, which is compatible with that of $\matheur{B}$ via the equivalence in Corollary~\ref{cor:stable_piece_com_alg} above.
\end{rmk}

\section{Cohomological Chevalley complex}
\label{sec:cohomological_chevalley_complex}
Using the general framework set up above, we will now study various aspects of the cohomological Chevalley complex associated to a $\coLie$-coalgebra. We will start with a homological stability result in~\S\ref{subsec:homological_stability_of_coChev}, whose proof, as indicated earlier, reduces to the case of trivial (aka. abelian) coLie-coalgebras. In~\S\ref{subsec:cofiltration_at_infty}, we will provide a means to studying the limiting cohomology when we have homological stability. Finally, we will, in~\S\ref{subsec:decategorification_coChev}, indicate how various decategorification processes could be used to extract information from the stable cohomology.

\subsection{Homological stability} \label{subsec:homological_stability_of_coChev}
We will now prove homological stability for the cohomological Chevalley complexes of a large class of coLie-coalgebras. The actual work needed at this stage is in fact small: the results here are essentially formal consequences of the machinery developed above. We start with the following
\begin{defn}
A $\coLie$-coalgebra $\mathfrak{a} \in \coLie(\matheur{C}^{\multgrplus{m}})$ is said to be unital if it is equipped with a $\coLie$-coalgebra map
\[
	\bigoplus_{k=1}^m \munit_{\matheur{C}}[1]_{\unit_k} \to \mathfrak{a}, \teq\label{eq:unit_map_coLie}
\]
where the object on the LHS is equipped with the trivial coLie-structure.\footnote{Recall that the subscript $\unit_k$ in $\munit_{\matheur{C}}[1]_{\unit_k}$ says that $\munit_{\matheur{C}}[1]$ is placed in graded degree $\unit_k$. See also~\S\ref{subsubsec:notation_graded_degree_1k}.} We will use $\coLie^\un(\matheur{C}^{\multgrplus{m}})$ to denote the category of unital $\coLie$-coalgebras.
\end{defn}

\subsubsection{} \label{subsubsec:unital_coLie_stabilization_map}
By Koszul duality, we see at once that when $\mathfrak{a} \in \coLie^\un(\matheur{C}^{\multgrplus{m}})$, $\coChev^{\un} \mathfrak{a} \in \ComAlg^{\grun}(\matheur{C}^{\multgr{m}})$ which automatically upgrades to an object $\coChev^{\un} \mathfrak{a} \in \ComAlg^{\un}(\matheur{C}^{\multfil{m}})$, see~\eqref{eq:filt_as_mod_cat_over_gr}. In particular, for each $\mathbf{d} \in \multgr{m}$ and $1\leq k\leq m$, we have a naturally defined map
\[
	(\coChev^{\un} \mathfrak{a})_{\mathbf{d}} \to (\coChev^{\un} \mathfrak{a})_{\mathbf{d} + \unit_k}.
\]
We are thus in the position to formulate homological stability.

\subsubsection{}
In what follows, to simplify the notation, we will write $\munit_{\matheur{C}}[1]^{\oplus m}$ to denote the $\coLie$-coalgebra on the LHS of~\eqref{eq:unit_map_coLie}. For example, when $\matheur{C} = \Vect$, we will write $\Lambda[1]^{\oplus m}$. More generally, we will drop the gradings from the notation, unless it is not clear from the context.

\subsubsection{}
\label{subsubsec:quot_un_for_coLie}
Since $\oblv_{\coLie}$ is a left adjoint, it commutes with colimits. In other words, at the level of the underlying objects, colimits of coLie-coalgebras could be computed inside the underlying category.

Let
\[
	\Quot_\un: \coLie^\un(\matheur{C}^{\multgrplus{m}}) \to \coLie(\matheur{C}^{\multgrplus{m}})
\]
denote the functor of removing (or more accurately, quotient-ing out) the unit of the $\coLie$-coalgebra by taking the cofiber of the map~\eqref{eq:unit_map_coLie}.

\subsubsection{}
\label{subsubsec:comparing_quot_un}
By Koszul duality, we have the following equivalence of categories
\[
	\coPrim[1]: \ComAlg^{\grun}(\matheur{C}^{\multgrplus{m}}) \simeq \coLie^{\un}(\matheur{C}^{\multgrplus{m}}): \coChev.
\]
Indeed, this is because
\[
	\coChev (\munit^{\oplus m}_\matheur{C}[1]) \simeq \Sym(\munit^{\oplus m}_\matheur{C})_+ \simeq \Lambda[\multgrplus{m}].
\]
Moreover, the following diagram commutes since $\coChev$ is an equivalence of categories, and hence, in particular, preserves colimits.
\[
\xymatrix{
	\ComAlg^{\grun}(\matheur{C}^{\multgrplus{m}}) \ar[d]_{\Quot_{\grun}} \ardis[r] & \coLie^\un(\matheur{C}^{\multgrplus{m}}) \ar[d]^{\Quot_\un} \ardis[l]^>>>>>{\coChev} \\
	\ComAlg(\matheur{C}^{\multgrplus{m}}) \ardis[r] & \coLie(\matheur{C}^{\multgrplus{m}}) \ardis[l]^>>>>>>{\coChev}
}
\]
Here, $\Quot_\un$ is discussed above, and $\Quot_{\grun}$ in~\S\ref{subsubsec:quot_grun_comalg}.

Note that we have a similar commutative diagram for $\coChev^\un$ (see also~\S\ref{subsubsec:non_unital_quotient}).

\subsubsection{} From now to the end of this subsection, to keep the notation and conditions simple, we will restrict ourselves to the case where $\matheur{C} = \Vect$.
\footnote{The readers are invited to formulate the general results for a general symmetric monoidal stable $\infty$-category $\matheur{C}$ equipped with a $t$-structure. Essentially, the conditions one has to spell out are about the interaction between the monoidal structure on $\matheur{C}$ and the given $t$-structure.}

We are interested in the following special class of $\coLie$-coalgebra.

\begin{defn}
An object $\mathfrak{a} \in \coLie^{\un}(\Vect^{\multgrplus{m}})$ is said to be \emph{strongly unital} if it lives in cohomological degrees $\geq -1$, and moreover, the unit map induces an equivalence at cohomological degree $-1$, namely, we have an equivalence
\[
	\Lambda^{\oplus m} \simeq \Ho^{-1}(\mathfrak{a})
\]
i.e. $\Quot_\un(\mathfrak{a})$ lives in cohomological degrees $\geq 0$.
\end{defn}

As mentioned before, the proof of homological stability will be reduced to the case of trivial coLie-coalgebras. In this case, we record the following lemma which is immediate by inspection.

\begin{lem}
\label{lem:homolgical_stab_coChev_cotriv_coLie}
Let $\mathfrak{a} \in \coLie^\un(\Vect^{\multgrplus{m}})$ be a strongly unital trivial $\coLie$-coalgebra. Fix $c_0 \in \mathbb{N}, 1 \leq k \leq m$ and suppose that there exist $s, s_1, \dots, s_{k-1}, s_{k+1}, \dots, s_m \in \mathbb{N}$ such that for any non-zero $v\in \Ho^*(\Quot_\un(\mathfrak{a}))$ of cohomological degree $c-1\leq c_0$ and graded degree $\mathbf{d} \in \multgrplus{m}$, we have
\[
	\mathbf{d}_k \leq sc + \sum_{i\neq k} s_i \mathbf{d}_i.\teq\label{eq:vanishing_generation_condition}
\]

Then, for all $c\leq c_0$ and $\mathbf{d} \in \multgrplus{m}$ such that
\[
	\mathbf{d}_k \geq sc + \sum_{i\neq k} s_i \mathbf{d}_i, \teq \label{eq:condition_on_d_k}
\]
the following natural map (see~\S\ref{subsubsec:unital_coLie_stabilization_map}) is an equivalence
\[
	\tau_{\leq c} (\coChev^\un \mathfrak{a})_\mathbf{d} \to \tau_{\leq c} (\coChev^\un \mathfrak{a})_{\mathbf{d} + \unit_k}. \teq \label{eq:homological_stability_tau_leq_c}
\]
More generally, for any $c$, the following map is injective
\[
	\Ho^c(\coChev^{\un} \mathfrak{a})_\mathbf{d} \to \Ho^c(\coChev^{\un} \mathfrak{a})_{\mathbf{d}+\unit_k}. \teq \label{eq:general_injectivity_statement}
\]
\end{lem}
\begin{proof}
Since the $\coLie$-coalgebra we are dealing with is abelian,
\[
	\coChev^{\un} \mathfrak{a} \simeq \Sym \mathfrak{a}[-1],
\]
and hence, the statement we need to prove is simply about symmetric algebras. But now, the injectivity statement for~\eqref{eq:general_injectivity_statement} is automatic since multiplying by a generator living in an even cohomological degree is injective in a free commutative algebra.

For the other statement regarding~\eqref{eq:homological_stability_tau_leq_c}, it suffices to show that for each $c\leq c_0$ and $\mathbf{d}_k$ satisfying~\eqref{eq:condition_on_d_k}, the map
\[
	\Ho^c(\Sym \mathfrak{a}[-1])_\mathbf{d} \simeq \Ho^c(\coChev^{\un} \mathfrak{a})_\mathbf{d} \to \Ho^c(\coChev^{\un} \mathfrak{a})_{\mathbf{d} + \unit_k} \simeq \Ho^c(\Sym \mathfrak{a}[-1])_{\mathbf{d} + \unit_k},
\]
induced by multiplying with $\munit_{\unit_k}$ via the unital structure of $\mathfrak{a}$, is surjective. But this is obvious since~\eqref{eq:vanishing_generation_condition} guarantees that any monomial living in $\Ho^c(\Sym \mathfrak{a}[-1])_{\mathbf{d} + \unit_k}$ has at least one $\munit_{\unit_k}$ factor and we are done.
\end{proof}

\begin{expl}
When $\mathfrak{a}$ is such that $\coChev^{\un} \mathfrak{a} = \Lambda[\multgr{m}]$, $\Quot_\un(\mathfrak{a}) = 0$, and hence, for any $c_0$, we can take the $s$ and $s_i$'s to be $0$. Lemma~\ref{lem:homolgical_stab_coChev_cotriv_coLie} then implies that we have homological stability for all homological degrees for all graded-degrees. This is, of course, obvious by looking directly at $\Lambda[\multgr{m}]_+$.
\end{expl}

\begin{expl}
\label{expl:typical_triv_coLie}
Suppose that $\mathfrak{a}$ is a strongly unital trivial $\coLie$-coalgebra supported only at graded degrees $\unit_1, \dots, \unit_m$. Then, for any $c_0$ and $k$, we can take $s = 1, s_i = 0, \forall i \neq k$. It follows that the map below is an equivalence
\[
	\tau_{\leq c}(\coChev^{\un} \mathfrak{a})_\mathbf{d} \to \tau_{\leq c}(\coChev^{\un} \mathfrak{a})_{\mathbf{d}+\unit_k}
\]
whenever $\mathbf{d}_k \geq c$.
\end{expl}

\subsubsection{}
Recall that the result of~\S\ref{subsec:cofil_coBar} says that $\coChev$ (and hence, $\coChev^{\un}$) could be computed as a sequential limit, whose successive fibers form the $\coChev^{\un}$ of the trivial coLie-coalgebra with the same underlying object. Lemma~\ref{lem:homolgical_stab_coChev_cotriv_coLie} above establishes homological stability of $\coChev^{\un}$ for trivial coLie-coalgebras. Applying Lemma~\ref{lem:homological_stability_of_limit_improvement_w_t-exact_infty_prod}, with a simple reindexing, we obtain the following result regarding homological stability of $\coChev^{\un}$ in general. 


\begin{thm}
\label{thm:homological_stability_coChev}
Let $\mathfrak{a} \in \coLie^\un(\Vect^{\multgrplus{m}})$ be a strongly unital $\coLie$-coalgebra. Fix $c_0 \in \mathbb{N}$, $1\leq k\leq m$ and suppose that there exist $s, s_1, \dots, s_{k-1}, s_{k+1}, \dots, s_m \in \mathbb{N}$ such that for any non-zero $v\in \Ho^*(\Quot_\un(\mathfrak{a}))$ of cohomological degree $c-1\leq c_0$ and graded degree $\mathbf{d} \in \multgrplus{m}$, we have
\[
	\mathbf{d}_k \leq sc + \sum_{i\neq k} s_i \mathbf{d}_i. \teq\label{eq:inequality_slope}
\]

Then, for all $c\leq c_0$ and $\mathbf{d} \in \multgrplus{m}$ such that
\[
	\mathbf{d}_k \geq sc + \sum_{i\neq k} s_i \mathbf{d}_i,
\]
the following map is an equivalence
\[
	\tau_{\leq c}(\coChev^{\un} \mathfrak{a})_{\mathbf{d}} \to \tau_{\leq c} (\coChev^{\un} \mathfrak{a})_{\mathbf{d} + \unit_k},
\]
and the following map is injective
\[
	\Ho^{c+1}(\coChev^{\un} \mathfrak{a})_\mathbf{d} \to \Ho^{c+1}(\coChev^{\un} \mathfrak{a})_{\mathbf{d}+\unit_k}.
\]
\end{thm}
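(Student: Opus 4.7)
The plan is to combine three ingredients, along the lines sketched immediately above the statement. First, invoke Corollary~\ref{cor:coChev_as_limit} in the pro-nilpotent category $\Vect^{\multgrplus{m}}$ (pro-nilpotent by Lemma~\ref{lem:multgrplus_multfilplus_pronilpotent}) to present $\coChev^{\un}\mathfrak{a}$ canonically as a sequential limit $\coChev^{\un}\mathfrak{a} \simeq \lim_n V_n$ of $\multgrplus{m}$-graded graded-unital commutative algebras, whose successive fibers $F_n = \Fib(V_n \to V_{n-1})$ identify, as $\multgrplus{m}$-graded objects, with $\Sym^n(\mathfrak{a}[-1])$ (equivalently, with the extra-graded-$n$ component of $\coChev^{\un}$ applied to the abelianization of $\mathfrak{a}$). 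The graded-unital structure, and hence the stability maps in the $\multfilplus{m}$-direction, are preserved at every level of this limit because the unit embedding $\Lambda^{\oplus m}[1] \hookrightarrow \mathfrak{a}$ is natural under the whole $\addCoFil$--$\coBarP_{\coLie}$--$\assgr$ pipeline of diagram~\eqref{eq:add_cofil_trick_fund}.

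Second, I apply Lemma~\ref{lem:homolgical_stab_coChev_cotriv_coLie} to the abelianization of $\mathfrak{a}$, whose $\Quot_{\un}$ cohomology coincides with $\Ho^*(\Quot_{\un}\mathfrak{a})$. The bound~\eqref{eq:inequality_slope} hypothesized here is precisely the bound required by Lemma~\ref{lem:homolgical_stab_coChev_cotriv_coLie}; the lemma thus yields, for every $n$, every $c \leq c_0$, and every $\mathbf{d}$ with $\mathbf{d}_k \geq sc + \sum_{i \neq k} s_i \mathbf{d}_i$, an equivalence $\tau_{\leq c} F_{n,\mathbf{d}} \to \tau_{\leq c} F_{n,\mathbf{d}+\unit_k}$ together with an injection $\Ho^{c+1}(F_{n,\mathbf{d}}) \to \Ho^{c+1}(F_{n,\mathbf{d}+\unit_k})$ (the latter injectivity holding freely in every cohomological degree because the stability maps on each $F_n$ are multiplication maps inside a symmetric algebra).

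Third, I transfer these fiberwise statements to the limit via Lemma~\ref{lem:homological_stability_of_limit_improvement_w_t-exact_infty_prod}, applicable because infinite products are $t$-exact in $\Vect$. The auxiliary hypothesis $V_{i,\mathbf{d}} \simeq 0$ for $i \ll 0$ required by that lemma is supplied by the strong convergence of Remark~\ref{rmk:limit_coBar_converges_strongly}: at any fixed $\mathbf{d}$, only finitely many extra-graded pieces of the associated graded can contribute. The conclusion of Lemma~\ref{lem:homological_stability_of_limit_improvement_w_t-exact_infty_prod} then reads verbatim as the conclusion of the theorem.

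The main delicate point I would treat carefully is the compatibility claim in the first paragraph: the stability map for $\coChev^{\un}\mathfrak{a}$ arising from its graded-unital structure must restrict on each fiber $F_n$ to precisely the stability map for $\coChev^{\un}$ of the trivial $\coLie$-coalgebra covered by Lemma~\ref{lem:homolgical_stab_coChev_cotriv_coLie}. This is not a computation but a naturality chase through diagram~\eqref{eq:add_cofil_trick_fund}, using that the entire pipeline is symmetric monoidal and natural in the input $\coLie$-coalgebra, so that the unit map $\Lambda^{\oplus m}[1] \to \mathfrak{a}$ furnishes a compatible graded-unital structure at every level of the co-filtration and on each associated-graded piece. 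Once this compatibility is in hand, the rest of the proof is a routine assembly of the three ingredients above.
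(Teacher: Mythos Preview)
Your proposal is correct and follows essentially the same three-ingredient assembly as the paper's proof: Corollary~\ref{cor:coChev_as_limit} for the sequential-limit presentation, Lemma~\ref{lem:homolgical_stab_coChev_cotriv_coLie} for the trivial/symmetric-algebra case on each fiber, and Lemma~\ref{lem:homological_stability_of_limit_improvement_w_t-exact_infty_prod} to pass from fibers to the limit. One small clarification: the hypothesis $V_{i,\mathbf{d}} \simeq 0$ for $i \ll 0$ is automatic because the co-filtration of Corollary~\ref{cor:coChev_as_limit} is indexed by $\mathbb{Z}_{>0}$ (so one simply sets $V_i = 0$ for $i \leq 0$); Remark~\ref{rmk:limit_coBar_converges_strongly} concerns stabilization at the \emph{other} end ($i \to \infty$) and is not what supplies this condition.
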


\begin{expl}
Let $\mathfrak{a} \in \coLie^\un(\Vect^{\multgrplus{m}})$ be a strongly unital $\coLie$-coalgebra supported at graded degrees $\unit_1, \dots, \unit_m$. Then, as in Example~\ref{expl:typical_triv_coLie}, for any $c_0$ and $k$, we can take $s=1, s_i = 0, \forall i \neq k$. It follows that whenever $\mathbf{d}_k \geq c$ the map below is an equivalence
\[
	\tau_{\leq c} (\coChev^{\un} \mathfrak{a})_\mathbf{d} \to \tau_{\leq c} (\coChev^{\un} \mathfrak{a})_{\mathbf{d}+\unit_k}
\]
and the map below is injective
\[
	\Ho^{c+1}(\coChev^{\un} \mathfrak{a})_\mathbf{d} \to \Ho^{c+1}(\coChev^{\un} \mathfrak{a})_{\mathbf{d}+\unit_k}.
\]
\end{expl}

We have the following easy Corollary of the Theorem above.
\begin{cor} \label{cor:qualitative_homological_stab_fin_dim}
Let $\mathfrak{a} \in \coLie^\un(\Vect^{\multgrplus{m}})$ be a strongly unital $\coLie$-coalgebra such that $\Ho^*(\mathfrak{a})$ is finite dimensional. Then, $\matheur{A} = \coChev^\un \mathfrak{a}$ satisfies homological stability and moreover, the stable homology $\lbar{\matheur{A}}$ is a commutative algebra with finite dimensional $\Ho^i(\lbar{\matheur{A}})$ for each $i$.
\end{cor}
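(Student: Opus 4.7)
The plan is to reduce everything to Theorem~\ref{thm:homological_stability_coChev} and then read off finite-dimensionality from the filtration of Corollary~\ref{cor:coChev_as_limit}.

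Since $\oblv_{\coLie}$ preserves colimits (it is a left adjoint), the underlying object of $\Quot_\un(\mathfrak{a})$ is the cofiber, in $\Vect^{\multgrplus{m}}$, of the unit map $\Lambda[1]^{\oplus m} \to \mathfrak{a}$. Finite-dimensionality of $\Ho^*(\mathfrak{a})$ therefore propagates to $\Ho^*(\Quot_\un\mathfrak{a})$, so there is a single integer $D \in \mathbb{N}$ bounding $\mathbf{d}_k$ (for every $k$) on the finite set of multi-graded degrees at which $\Ho^*(\Quot_\un\mathfrak{a})$ is non-zero. For each $1 \leq k \leq m$ I apply Theorem~\ref{thm:homological_stability_coChev} with $s := D$, $s_i := 0$ for $i\neq k$, and $c_0$ arbitrary. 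Strong unitality places $\Quot_\un(\mathfrak{a})$ in cohomological degrees $\geq 0$, so every non-zero class has $c \geq 1$ and therefore $\mathbf{d}_k \leq D \leq Dc = sc + \sum_{i\neq k} s_i\mathbf{d}_i$. The slope hypothesis~\eqref{eq:inequality_slope} is satisfied unconditionally, and Theorem~\ref{thm:homological_stability_coChev} delivers strong homological stability in the $k$-th direction with stability range $\mathbf{d}_k \geq Dc$. Doing this for every $k$ gives the desired homological stability for $\matheur{A} = \coChev^\un\mathfrak{a}$.

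The commutative algebra structure on $\lbar{\matheur{A}}$ is then automatic from Corollary~\ref{cor:adjunction_for_algebras_colim}: stabilization $\lbar{(-)}$ is symmetric monoidal and lifts to $\ComAlg^{\un,\aug}$. For the finite-dimensionality claim, I invoke Proposition~\ref{prop:stable_homology_vs_stabilization} to identify $\Ho^i(\lbar{\matheur{A}})$ with $\Ho^i(\matheur{A}_\mathbf{d})$ for any $\mathbf{d}$ in the stable range of degree $i$ (e.g.\ $\mathbf{d} = (Di,\dots,Di)$), thereby reducing to showing that each $\Ho^i(\matheur{A}_\mathbf{d})$ is finite-dimensional. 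Here I use Corollary~\ref{cor:coChev_as_limit} together with Remark~\ref{rmk:limit_coBar_converges_strongly}: in the pro-nilpotent category $\Vect^{\multgrplus{m}}$, the co-filtered presentation $(\coChev^n\mathfrak{a})_\mathbf{d}$ stabilizes at a finite stage $N\leq|\mathbf{d}|$, and its associated-graded pieces are $\Sym^n(\mathfrak{a}[-1])_\mathbf{d}$ with $n\leq N$, each finite-dimensional in every cohomological degree by K\"unneth applied to the finite-dimensional $\mathfrak{a}$. Assembling finitely many cofiber sequences whose fibers are finite-dimensional in every cohomological degree preserves this property, so $\Ho^i(\matheur{A}_\mathbf{d})$, and hence $\Ho^i(\lbar{\matheur{A}})$, is finite-dimensional.

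The step I would be most careful about is the boundary case in the slope verification: one has to use strong unitality in earnest to rule out $c=0$ contributions, since otherwise the choice $s_i=0$ would force the impossible inequality $\mathbf{d}_k \leq 0$. Beyond that, the argument is a direct assembly of the machinery already proved: Theorem~\ref{thm:homological_stability_coChev} for stability, Corollary~\ref{cor:adjunction_for_algebras_colim} for the algebra structure, and Corollary~\ref{cor:coChev_as_limit} plus K\"unneth for the finiteness bookkeeping.
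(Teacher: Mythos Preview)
Your proof is correct and is precisely the intended argument: the paper states this as an ``easy Corollary'' of Theorem~\ref{thm:homological_stability_coChev} without writing out the details, and your choice of slope parameters $s=D$, $s_i=0$ (together with the observation that strong unitality forces $c\geq 1$) is exactly how one verifies the hypothesis~\eqref{eq:inequality_slope} in the finite-dimensional case. The finite-dimensionality and algebra-structure claims are likewise handled by the results you cite (Corollary~\ref{cor:adjunction_for_algebras_colim}, Proposition~\ref{prop:stable_homology_vs_stabilization}, Corollary~\ref{cor:coChev_as_limit}); the paper leaves these implicit as well.
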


\begin{rmk}
\label{rmk:can_do_better_than_finite_dim}
It is obvious how Theorem~\ref{thm:homological_stability_coChev} allows us to extend beyond the finite dimensional case of  Corollary~\ref{cor:qualitative_homological_stab_fin_dim}. Indeed, there is no finite dimensionality restriction in the theorem. For the applications in this paper, however, the finite dimensional case suffices.
\end{rmk}

\begin{rmk}
\label{rmk:action_of_T}
Suppose that our unital $\coLie$-coalgebra $\mathfrak{a}$ carries the action of some operator $T$ (which will be the Frobenius in our application), then by functoriality, $\coChev^{\un} \mathfrak{a}$ also carries this structure. Suppose the action of $T$ on the unit of $\mathfrak{a}$ is the identity, then we see that $\coChev^{\un} \mathfrak{a}$, as an object of $\ComAlg(\Vect^{\multfil{m}})$, carries the action of $T$ as well. In other words, the action of $T$ is compatible with maps in the filtration. As a result, $T$ acts on the stabilization $\lbar{\coChev^{\un} \mathfrak{a}}$. In particular, when homological stability does occur, $T$ acts naturally on the stable homology.

In what follows, unless otherwise specified, whenever we have an action of an operator $T$ on a $\coLie$-coalgebra $\mathfrak{a}$, we will always assume that the action on the unit is the identity.
\end{rmk}

\subsection{Stable homology} \label{subsec:cofiltration_at_infty}
Let $\mathfrak{a} \in \coLie^\un(\matheur{C}^{\multgrplus{m}})$ and $\matheur{A} = \coChev \mathfrak{a}$. The goal of this subsection is to understand $\lbar{\matheur{A}}$ in terms of $\mathfrak{a}$ itself. When we are in the situation where $\lbar{\matheur{A}}$ satisfies homological stability, Lemma~\ref{lem:stable_homology_as_Vbar} says that the homology of $\lbar{\matheur{A}}$ is the stable homology (see also Remark~\ref{rmk:stable_homology_as_Vbar}). It is thus crucial to understand $\lbar{\matheur{A}}$.

Note that in this section, we do not concern ourselves with questions about homological stability. Thus, we will work with a general symmetric monoidal stable $\infty$-category $\matheur{C}$, since that does not bring in any extra complexity.

\subsubsection{} We start with the simplest case, where $\mathfrak{a}$ is abelian, or more generally, when the unit of $\mathfrak{a}$ splits. 

\begin{prop}
\label{prop:stable_homology_split_coLie}
Let $\mathfrak{a} \in \coLie^\un(\matheur{C}^{\multgrplus{m}})$ such that the unit of $\mathfrak{a}$ splits (as objects in $\coLie^\un(\matheur{C}^{\multgrplus{m}})$), i.e. we have the following equivalence of $\coLie$-coalgebras
\[
	\mathfrak{a} \simeq \Lambda^{\oplus m}[1] \oplus \mathfrak{b}.
\]
Then, we have the following equivalence
\[
	\lbar{\coChev \mathfrak{a}} \simeq \lbar{\coChev^\un \mathfrak{a}} \simeq \oblv_{\gr} (\coChev^\un \mathfrak{b}) \in \ComAlg^\un(\matheur{C})
\]
\end{prop}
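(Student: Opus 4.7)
The plan is to reduce the statement to a direct application of Corollary~\ref{cor:stable_piece_com_alg} by exhibiting $\coChev^\un \mathfrak{a}$ in the form $\Lambda[\multgr{m}] \otimes \coChev^\un \mathfrak{b}$ with the correct $\Lambda[\multgr{m}]$-action. First, I would view the splitting $\mathfrak{a} \simeq \Lambda^{\oplus m}[1] \oplus \mathfrak{b}$ as a decomposition in $\coLie(\matheur{C}^{\multgrplus{m}})$ in which $\Lambda^{\oplus m}[1]$ is the trivial $\coLie$-coalgebra sitting in graded-degrees $\unit_1, \ldots, \unit_m$ and the inclusion of the first summand realizes the unit map of $\mathfrak{a}$. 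Since $\coChev$ is an equivalence of categories and since finite direct sums in $\coLie$ coincide with coproducts (stability), applying $\coChev$ yields
\[
    \coChev \mathfrak{a} \;\simeq\; \coChev(\Lambda^{\oplus m}[1]) \,\sqcup\, \coChev(\mathfrak{b}) \qquad \text{in } \ComAlg(\matheur{C}^{\multgrplus{m}}),
\]
and direct computation gives $\coChev(\Lambda^{\oplus m}[1]) \simeq \Sym(\Lambda^{\oplus m})_+ \simeq \Lambda[\multgrplus{m}]$.

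Next, I would apply $\addUnit$. Using that $\addUnit$ converts coproducts in $\ComAlg$ to tensor products in $\ComAlg^{\un, \aug}$ (see~\S\ref{subsubsec:non_unital_quotient}), this produces
\[
    \coChev^\un \mathfrak{a} \;\simeq\; \coChev^\un(\Lambda^{\oplus m}[1]) \otimes \coChev^\un(\mathfrak{b}) \;\simeq\; \Lambda[\multgr{m}] \otimes \coChev^\un \mathfrak{b}.
\]
The main technical point to check here, which is the principal obstacle, is that the $\Lambda[\multgr{m}]$-algebra structure on $\coChev^\un \mathfrak{a}$ coming from its graded-unital structure (i.e.\ induced by the unit map of $\mathfrak{a}$ under the equivalence $\coLie^\un \simeq \ComAlg^{\grun}$ of~\S\ref{subsubsec:comparing_quot_un}) agrees with the structure obtained from inclusion into the first tensor factor. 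This follows by naturality: the unit map $\Lambda^{\oplus m}[1] \to \mathfrak{a}$, which is the inclusion of the first summand of the splitting, transports under $\coChev^\un$ to the composite $\Lambda[\multgr{m}] \to \Lambda[\multgr{m}] \otimes \coChev^\un \mathfrak{b} \simeq \coChev^\un \mathfrak{a}$ in which the first arrow is inclusion into the first factor.

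With this tensor decomposition established, Corollary~\ref{cor:stable_piece_com_alg} applied with $\matheur{B} = \coChev^\un \mathfrak{b} \in \ComAlg^\un(\matheur{C}^{\multgr{m}})$ immediately gives
\[
    \lbar{\coChev^\un \mathfrak{a}} \;\simeq\; \oblv_{\gr}(\coChev^\un \mathfrak{b}) \qquad \text{in } \ComAlg^\un(\matheur{C}).
\]
It remains to observe the equivalence $\lbar{\coChev \mathfrak{a}} \simeq \lbar{\coChev^\un \mathfrak{a}}$. As filtered objects, $\coChev \mathfrak{a}$ and $\coChev^\un \mathfrak{a}$ differ only by the presence of $\munit_\matheur{C}$ in graded-degree $\mathbf{0}$; after restricting to $\multfilplus{m}$ the two agree, and since the inclusion $\multfilplus{m} \hookrightarrow \multfil{m}$ is cofinal (already the diagonal $\filteredplus \hookrightarrow \multfilplus{m}$ is cofinal by Remark~\ref{rmk:diagonal_cofinal}), the two colimits compute the same object. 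This completes the plan.
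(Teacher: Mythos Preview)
Your proposal is correct and follows essentially the same route as the paper: decompose $\coChev^\un \mathfrak{a}$ as $\Lambda[\multgr{m}] \otimes \coChev^\un \mathfrak{b}$ using that $\coChev$ sends the coproduct $\Lambda^{\oplus m}[1] \oplus \mathfrak{b}$ to a coproduct in $\ComAlg$, then apply Corollary~\ref{cor:stable_piece_com_alg}; the first equivalence is handled identically by noting $\coChev$ and $\coChev^\un$ differ only at $\mathbf{0}$. Your extra verification that the $\Lambda[\multgr{m}]$-module structure matches the graded-unital structure is a detail the paper leaves implicit.
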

\begin{proof}
Since $\coChev \mathfrak{a}$ and $\coChev^\un \mathfrak{a}$ only differ at $\mathbf{0}$, their colimits over $\multfil{m}$ are the same, which gives the first equivalence.

For the second equivalence, note that for any $\coLie$-coalgebras $\mathfrak{u}, \mathfrak{v} \in \coLie(\matheur{C}^{\multgrplus{m}})$, we have
\[
	\coChev(\mathfrak{u} \oplus \mathfrak{v}) \simeq \coChev(\mathfrak{u}) \sqcup \coChev(\mathfrak{v}),
\]
where the coproduct is taken inside $\ComAlg(\matheur{C}^{\multgrplus{m}})$. Indeed, $\coChev$ exhibits an equivalence of categories, and hence, preserves colimits. Thus, we have (see~\S\ref{subsec:unital_vs_nonunital})
\[
	\coChev^\un(\mathfrak{u} \oplus \mathfrak{v}) \simeq \coChev^\un(\mathfrak{u}) \otimes \coChev^\un(\mathfrak{v}).
\]
Applying this to the case at hand, we have
\[
	\coChev^\un(\mathfrak{a}) \simeq \Lambda[\multgr{m}] \otimes \coChev^\un(\mathfrak{b}).
\]
Corollary~\ref{cor:stable_piece_com_alg} then gives us the desired conclusion.
\end{proof}

\subsubsection{} In the case where the unit of $\mathfrak{a} \in \coLie^\un(\matheur{C}^{\multgrplus{m}})$ does not split, we do not have a simple expression for the stable piece. However, we have the following result.

\begin{lem}
\label{lem:assgr_coChev}
Let $\mathfrak{a} \in \coLie^\un(\matheur{C}^{\multgrplus{m}})$. Then, we have the following natural equivalence
\[
	\assgr_{\matheur{C}, m} (\coChev^\un(\mathfrak{a})) \simeq \coChev^\un(\Quot_\un \mathfrak{a}).
\]
\end{lem}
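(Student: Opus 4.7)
The plan is to assemble the lemma from three facts already developed in the text, with no real new content beyond their combination.

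First, by Lemma~\ref{lem:multgrplus_multfilplus_pronilpotent} the category $\matheur{C}^{\multgrplus{m}}$ is pro-nilpotent, so Koszul duality yields an equivalence $\coChev \colon \coLie(\matheur{C}^{\multgrplus{m}}) \simeq \ComAlg(\matheur{C}^{\multgrplus{m}})$, and (as explained in \S\ref{subsubsec:comparing_quot_un}) this upgrades via the identification $\coChev(\munit_\matheur{C}^{\oplus m}[1]) \simeq \Lambda[\multgrplus{m}]$ to an equivalence $\coChev^{\un} \colon \coLie^{\un}(\matheur{C}^{\multgrplus{m}}) \simeq \ComAlg^{\grun}(\matheur{C}^{\multgrplus{m}})$. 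I will invoke the ``unital analog'' of the commutative square of \S\ref{subsubsec:comparing_quot_un} (alluded to in the final sentence of that subsection), which gives a natural equivalence
\[
\coChev^{\un} \circ \Quot_{\un} \;\simeq\; \Quot_{\grun} \circ \coChev^{\un}.
\]
This holds because $\Quot_{\un}$ and $\Quot_{\grun}$ are cofibers of the respective unit maps, and an equivalence of categories preserves colimits; functoriality applied to the map $\munit_\matheur{C}^{\oplus m}[1] \to \mathfrak{a}$ ensures that the unit of $\coChev^{\un} \mathfrak{a}$ is precisely the image of the unit of $\mathfrak{a}$.

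Second, by \S\ref{subsubsec:quot_grun_comalg} and \S\ref{subsubsec:assgr_as_tensoring}, on a graded-unital commutative algebra the functor $\Quot_{\grun}$ is computed as the relative tensor $-\otimes_{\Lambda[\multgr{m}]} \munit_\matheur{C}$, and this agrees with $\assgr_{\matheur{C}, m}$ under the identification of $\matheur{C}^{\multfilplus{m}}$ with $\Lambda[\multgr{m}]$-modules in $\matheur{C}^{\multgrplus{m}}$ (formula \eqref{eq:assgr_tensor}). Applied to $\matheur{A} = \coChev^{\un} \mathfrak{a}$, this gives
\[
\assgr_{\matheur{C}, m}(\coChev^{\un} \mathfrak{a}) \;\simeq\; \Quot_{\grun}(\coChev^{\un} \mathfrak{a}).
\]
Chaining with the previous equivalence yields $\assgr_{\matheur{C}, m}(\coChev^{\un} \mathfrak{a}) \simeq \coChev^{\un}(\Quot_{\un} \mathfrak{a})$, which is the desired statement.

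I do not foresee a real obstacle; the lemma is essentially a repackaging of earlier material. The one point to verify carefully is that the $\Lambda[\multgr{m}]$-module structure on $\coChev^{\un} \mathfrak{a}$ (used in the $\assgr$-as-tensor interpretation) really is the graded-unital structure coming from Koszul duality applied to the unit map of $\mathfrak{a}$. But this is automatic: the functor $\coChev^{\un}$ is an equivalence, hence carries the distinguished map $\munit_\matheur{C}^{\oplus m}[1] \to \mathfrak{a}$ to the distinguished map $\Lambda[\multgr{m}] \to \coChev^{\un} \mathfrak{a}$ defining the graded-unital structure, so the two $\Lambda[\multgr{m}]$-actions coincide.
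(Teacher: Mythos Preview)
Your proposal is correct and follows essentially the same approach as the paper's proof: both chain together the identification $\assgr_{\matheur{C}, m} \simeq -\otimes_{\Lambda[\multgr{m}]} \munit_{\matheur{C}} \simeq \Quot_{\grun}$ from \S\ref{subsubsec:assgr_as_tensoring} and \S\ref{subsubsec:quot_grun_comalg} with the compatibility $\Quot_{\grun} \circ \coChev^{\un} \simeq \coChev^{\un} \circ \Quot_{\un}$ from \S\ref{subsubsec:comparing_quot_un}. The paper presents this as a single displayed chain of equivalences, while you spell out the same steps in the reverse order with a bit more justification of the module-structure compatibility.
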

\begin{proof}
The lemma follows directly from~\S\ref{subsubsec:assgr_as_tensoring}. Indeed, we have
\[
	\assgr_{\matheur{C}, m} (\coChev^\un(\mathfrak{a})) \simeq \coChev^\un(\mathfrak{a}) \otimes_{\Lambda[\multgr{m}]} \munit_{\matheur{C}} \simeq \Quot_{\grun} (\coChev^\un (\mathfrak{a})) \simeq \coChev^\un(\Quot_\un \mathfrak{a}), \teq\label{eq:assgr_coChevun_sequence}
\]
where the first and last equivalences are due to~\S\ref{subsubsec:assgr_as_tensoring} and~\S\ref{subsubsec:comparing_quot_un} respectively. The middle equivalence is by definition.
\end{proof}

\begin{rmk}
Note that the Lemma above is a special feature in the unital setting, i.e. $\coChev^\un$ rather than $\coChev$. Indeed, due to~\S\ref{subsubsec:assgr_as_tensoring},
\[
	\assgr_{\matheur{C}, m}(\coChev \mathfrak{a}) \simeq \coChev(\mathfrak{a}) \otimes_{\Lambda[\multgr{m}]} \munit_\matheur{C}.
\]
However,
\[
	\Quot_{\grun}(\coChev\mathfrak{a}) \simeq (\coChev \mathfrak{a}) \sqcup_{\Lambda[\multgrplus{m}]} 0 \simeq  (\coChev^\un \mathfrak{a} \otimes_{\Lambda[\multgr{m}]} \munit_\matheur{C})_+.
\]
And these two do not agree, in fact, already in the case where $m=1$ and $\mathfrak{a} = \Lambda[1]$ sitting in graded-degree 1. Of course, this does not cause us any problem, since we can always use $\coChev^{\un}$ instead and since, as we mentioned earlier, as far as homological stability and stable homology are concerned, there is no difference.
\end{rmk}

\subsection{Decategorification} \label{subsec:decategorification_coChev}
In this section, we will provide various decategorification processes which can be used to extract information out of the functor $\coChev$. More precisely, let $\mathfrak{a} \in \coLie^\un(\Vect^{\multgrplus{m}})$ be a strongly unital $\coLie$-coalgebra. Given a ring $R$ and a decategorification procedure, i.e. an assignment to each equivalence class of object in $\matheur{C}$ an element in $R$,
\[
	\DeCat: \matheur{C} \to R
\]
where $\matheur{C}$ is $\Vect$, $\Vect^{\multgr{m}}$ or $\Vect^{}\multgrplus{m}$ (plus some appropriate finiteness conditions), we want to understand $\DeCat(\coChev^\un \mathfrak{a})$ and $\DeCat(\lbar{\coChev^\un \mathfrak{a}})$ (when $\mathfrak{a}$ satisfies some finiteness condition) in relation to various push-out operations. Moreover, we want to understand how $\DeCat$ of $\coChev^\un(-)$ and of $\lbar{\coChev^\un(-)}$ behave with respect to taking quotients of $\coLie$-coalgebras. For the geometric applications in this paper, this part is used as a link between quotients of algebras and quotients (in the ring $R$) of their decategorifications, i.e. what we called densities in the introduction.

\subsubsection{} For a fixed decategorification $\DeCat$, we say that it is additive, if for an exact triangle
\[
	V' \to V \to V'' \to V[1],
\]
we have
\[
	\DeCat(V) = \DeCat(V') + \DeCat(V'').
\]

\subsubsection{} \label{subsubsec:decategorification_procedures} In this paper, instances of $\DeCat$ we are interested in are as follows
\begin{enumerate}[(i)]
	\item \emph{Graded Euler characteristics.} Let $V\in \Vect^{\multgr{m}}$ such that each graded piece is perfect (i.e. bounded and finite dimensional). Then, the graded Euler characteristic of $V$ is defined to be
	\[
		\chi^\gr(V) = \sum_{\mathbf{d}\in \multgr{m}} \chi(V_{\mathbf{d}}) t^{|\mathbf{d}|} \in \mathbb{Z}\llrrb{t}.
	\]
	Here, the $\chi$ on the RHS is the usual Euler characteristics of a perfect complex.
	
	\item \textit{Graded traces.} More generally, we let $V\in \Vect^{\multgr{m}}$ as above, now, also equipped with an action of an operator $T$. Then, the graded trace of $T$ on $V$ is defined to be
	\[
		\chi_T^\gr(V) = \sum_{\mathbf{d} \in \multgr{m}} \chi_T(V_{\mathbf{d}}) t^{|\mathbf{d}|} \in \mathbb{Z} \llrrb{t}.
	\]
	As above, the $\chi_T$ on the RHS is the usual trace. For the applications that we have in mind, $V$ is equipped with the action of the Frobenius, and we are interested in computing the Frobenius trace.
	
	Clearly, when $T = \id$, we recover graded Euler characteristic above $\chi_{\id} = \chi$. 
	
	\item \textit{\Poincare{} polynomials.} Let $V\in \Vect$ such that $\Ho^i(V)$ is finite dimensional for each $i$. Then, we define the \Poincare{} polynomial of $V$ to be
	\[
		\Poinc(V) = \sum_{i\in \mathbb{Z}} \dim \Ho^i(V) t^i \in \mathbb{N}\llrrb{t^{\pm 1}}.
	\]
	
	\item \textit{Virtual \Poincare{} polynomials.} For the geometric applications we have in mind, $V$ is equipped with the weight filtration $W$ (from the action of the Frobenius). Let $\Gr_W^i(V)$ denote the sub-quotient of $\Ho^*(V)$ with weight $i$. We define the virtual \Poincare{} polynomial of $V$ to be
	\[
		\PoincVir(V) = \sum_{i \in \mathbb{Z}} \chi(\Gr_W^i(V)) t^i \in \mathbb{Z}\llrrb{t^{\pm 1}}.
	\]
	
	\item \textit{Trace.} Let $V\in \Vect$ be equipped with an action of an operator $T$ such that
	\begin{enumerate}[--]
		\item $\Ho^c(V)$ is finite dimensional for each $i$.
		\item For each $\lambda \in \mathbb{C}$ and every integer $c$, let $d_{\lambda, c}$ denote the dimension of the generalized $\lambda$-eigenspace of $T$.\footnote{Here, we implicitly choose an identification $\Lambda \simeq \mathbb{C}$.} Then the sum
		\[
			|V|_T = \sum_{\lambda, c} d_{\lambda, c}|\lambda|
		\]
		converges.
	\end{enumerate}
	Then the trace of $T$ on $V$ is defined in the usual way, i.e.
	\[
		\chi_T(V) = \sum_{\lambda, c} (-1)^c d_{\lambda, c} \lambda.
	\]
	We call such a graded vector space $V$ satisfying the two conditions above $T$-summable.\footnote{We took this notion from~\cite{gaitsgory_weils_2014}*{Defn. 6.3.1}.}
\end{enumerate}

Note that in the above, everything except for \Poincare{} polynomials is additive. In this paper, the first three instances will be used to decategorify $\coChev^\un \mathfrak{a}$ whereas the last two are for $\lbar{\coChev^\un \mathfrak{a}}$.

\begin{rmk}
We will also use the notation, $\DeCat(-, t)$ to emphasize that the formal variable is $t$. For example, $\chi^\gr(V, t), \chi^\gr_T(V, t), \Poinc(V, t)$ etc.
\end{rmk}

\subsubsection{}
As we mentioned in the introduction, in many cases, the process of taking quotients of $\coLie$-coalgebras, which is equivalent to taking pushouts (i.e. relative tensor) of the corresponding commutative algebras via $\coChev^\un$, transforms to taking quotients (i.e. dividing, literally) of power series at the decategorified level. In other words, pushouts of commutative algebras categorify quotients of power series.

\subsubsection{}
Observe that in all of the above, the expressions defining the decategorification processes make sense as soon as each of the coefficients make sense. Suppose we are interested in understanding $\DeCat(\coChev^\un(\mathfrak{a}))$ and $\DeCat(\lbar{\coChev^\un(\mathfrak{a})})$, then we will need some finiteness conditions on $\mathfrak{a}$. For example, it is easy to see, using~\S\ref{subsec:cofil_coBar}, that when $\mathfrak{a} \in \coLie(\Vect^{\multgrplus{m}})$ is strongly unital, and $\Ho^*(\mathfrak{a})$ is finite dimensional then all the sums defining $\chi^{\gr}, \chi^\gr_T,$ and $\Poinc$ are well-defined. In the case of $\PoincVir$, since the coefficients gather multiple cohomological degrees, it suffices to make sure that each weight can only appear in finitely many cohomological degrees. When $\Ho^*(\mathfrak{a})$ is finite dimensional, it is easy to see that this is satisfied when $\Quot_\un \mathfrak{a}$ has weight of at least 1, since tensor is additive with respect to weight.

\subsubsection{}
The common theme is that if $\DeCat$ is additive, $\DeCat(\coChev^\un -)$ is, in some sense, ``multiplicative.'' The main point is that by~\S\ref{subsec:cofil_coBar}, one can reduce the study of $\coChev$ to that of $\Sym$, which is multiplicative. 

We learned this approach from~\cite{gaitsgory_weils_2014}*{\S6.3}, which we will refer to for the proofs. The main difference is that our decategorification processes, with the exception of trace (the last item on the list above), have milder finiteness conditions. This is simply because we have this variable $t$ to ``spread'' things out, so we do not have to worry about convergence, since each coefficient will be a finite sum. Another difference is that we consider the $\coLie$-coalgebra structure instead of just the underlying complex (see also~\cite{gaitsgory_weils_2014}*{Prop. 6.1.10.}). However, for additive categorification processes, this makes no difference, since~\S\ref{subsec:cofil_coBar} allows us to reduce to the trivial case. We thus have the following result, whose proof is the same as that of~\cite{gaitsgory_weils_2014}*{Prop. 6.3.4}.

\begin{prop}
\label{prop:graded_euler_decat}
Let $\mathfrak{a} \in \coLie(\Vect^{\multgrplus{m}})$ be a $\coLie$-coalgebra such that $\Ho^*(\mathfrak{a})$ is finite dimensional. Assume that $\mathfrak{a}$ is equipped with an action of an operator $T$. Then, we have
\[
	\chi_T^\gr(\coChev^{\un} \mathfrak{a}, t) = \chi^{\gr}_T(\Sym(\mathfrak{a}[-1]), t) = \exp\left(-\sum_{n>0} \frac{1}{n} \chi_{T^n}^\gr(\mathfrak{a}, t^n)\right).
\]
\end{prop}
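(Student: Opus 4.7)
The plan has two parts. First, reduce from $\coChev^{\un} \mathfrak{a}$ to $\Sym(\mathfrak{a}[-1])$ using the sequential-limit description of~\S\ref{subsec:cofil_coBar}. Second, apply the classical cycle-index formula for graded traces on symmetric algebras.

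For the first equality, observe that by Lemma~\ref{lem:multgrplus_multfilplus_pronilpotent} the category $\Vect^{\multgrplus{m}}$ is pro-nilpotent, so Corollary~\ref{cor:coChev_as_limit} gives a presentation $\coChev \mathfrak{a} \simeq \lim_{n} \coChev^n \mathfrak{a}$ whose associated graded (with respect to the new auxiliary filtration) is $\Sym(\mathfrak{a}[-1])_+$. By Remark~\ref{rmk:limit_coBar_converges_strongly}, evaluated in each multi-graded degree $\mathbf{d} \in \multgrplus{m}$ the tower stabilizes at a finite stage, so only finitely many symmetric powers $\Sym^j(\mathfrak{a}[-1])$ contribute to degree $\mathbf{d}$. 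Since $\chi^\gr_T$ is additive on exact triangles and $T$ respects the filtration (cf.~Remark~\ref{rmk:action_of_T}), it is equal on $\coChev \mathfrak{a}$ and on $\Sym(\mathfrak{a}[-1])_+$, degree by degree. Adjoining the unit changes $\Sym(\mathfrak{a}[-1])_+$ to $\Sym(\mathfrak{a}[-1])$ and $\coChev \mathfrak{a}$ to $\coChev^\un \mathfrak{a}$ by adding a single copy of $\Lambda$ in multi-graded degree $\mathbf{0}$, which contributes the same $+1$ on each side. This yields the first equality.

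For the second equality, the argument is exactly the one in~\cite{gaitsgory_weils_2014}*{Prop.~6.3.4}, which we adapt to our graded setting. The finite dimensionality of $\Ho^*(\mathfrak{a})$ ensures that each multi-graded coefficient of $\chi^\gr_T(\Sym(\mathfrak{a}[-1]), t)$ is a finite sum, so no convergence issue arises. For any $T$-equivariant object $V \in \Vect^{\multgrplus{m}}$ with perfect graded pieces one has the classical identity
\[
\chi^\gr_T(\Sym V, t) \;=\; \exp\!\left(\sum_{n \geq 1} \tfrac{1}{n}\, \chi^\gr_{T^n}(V, t^n)\right),
\]
proved by reducing (via Jordan decomposition and additivity) to the rank-one case, where it amounts to $1/(1-\lambda t^{d}) = \exp\sum_{n \geq 1} (\lambda t^{d})^n/n$, together with the Koszul sign rule that turns $1/(1-\lambda t^d)$ into $(1+\lambda t^d)$ on odd generators. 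Applying this with $V = \mathfrak{a}[-1]$ and using the cohomological shift identity $\chi^\gr_{T^n}(\mathfrak{a}[-1], t^n) = -\chi^\gr_{T^n}(\mathfrak{a}, t^n)$ produces the desired minus sign inside the exponential.

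The only genuine content is the first step, where one must justify computing $\chi^\gr_T$ after passing through an \emph{infinite} sequential limit; this is precisely the point at which Corollary~\ref{cor:coChev_as_limit} and its strong-convergence feature (Remark~\ref{rmk:limit_coBar_converges_strongly}) do all the work, since the multi-grading disperses contributions so that stabilization happens at a finite stage in each multi-graded degree.
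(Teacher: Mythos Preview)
Your proof is correct and follows the same approach as the paper's own (sketch) proof: the first equality via Corollary~\ref{cor:coChev_as_limit} and Remark~\ref{rmk:limit_coBar_converges_strongly}, and the second equality as standard. One small quibble: your appeal to Remark~\ref{rmk:action_of_T} for $T$-equivariance of the filtration is slightly off, since that remark concerns the $\multfil{m}$-filtration from the graded-unital structure, whereas here the relevant filtration is the auxiliary one from the $\addCoFil$ construction; but $T$-equivariance of the latter is immediate from functoriality, so the argument goes through.
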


\begin{rmk}
Note that compared to~\cite{gaitsgory_weils_2014}*{Prop. 6.3.4.}, we have the extra minus sign because our $\mathfrak{a}$ and their $V$ differ by a cohomological shift (see also~\cite{gaitsgory_weils_2014}*{Prop. 6.1.10}).
\end{rmk}

\begin{proof}[Proof (sketch).]
The second equality is standard, see, for example,~\cite{gaitsgory_weils_2014}*{Prop. 6.3.4.}. It remains to show the first equality. However, this is clear from Corollary~\ref{cor:coChev_as_limit} and Remark~\ref{rmk:limit_coBar_converges_strongly}.
\end{proof}

\subsubsection{}
Note that since the exponent is additive in $\mathfrak{a}$, and since $\exp$ is multiplicative, $\chi_T^\gr(\coChev^\un, \mathfrak{a})$ is also multiplicative. Namely, we have the following 

\begin{cor}
\label{cor:quotient_decat_graded_euler}
Consider the following push-out diagram of $\coLie$-coalgebras equipped with the action of some operator $T$
\[
\xymatrix{
	\mathfrak{a} \ar[d] \ar[r] & \mathfrak{b} \ar[d] \\
	0 \ar[r] & \mathfrak{c}
}
\]
where $\mathfrak{a}$ and $\mathfrak{b}$ are with finite dimensional $\Ho^*(\mathfrak{a})$ and $\Ho^*(\mathfrak{b})$ (which implies the same for $\mathfrak{c}$). Then, we have
\[
	\frac{\chi_T^\gr(\coChev^\un \mathfrak{b})}{\chi_T^\gr(\coChev^\un \mathfrak{a})} = \chi_T^\gr(\coChev^\un \mathfrak{b} \otimes_{\coChev^\un \mathfrak{a}} \Lambda) = \chi_T^\gr(\coChev^\un \mathfrak{c}).
\]
\end{cor}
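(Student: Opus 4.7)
The plan is to prove the two equalities separately, and in opposite order from how they appear.

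For the second equality (identification with the relative tensor), the argument runs as follows. By Lemma~\ref{lem:multgrplus_multfilplus_pronilpotent} the category $\Vect^{\multgrplus{m}}$ is pro-nilpotent, so the Koszul-duality functor $\coChev \colon \coLie(\Vect^{\multgrplus{m}}) \to \ComAlg(\Vect^{\multgrplus{m}})$ is an equivalence of categories, and in particular preserves the pushout $\mathfrak{c} \simeq \mathfrak{b}\sqcup_\mathfrak{a} 0$. Since $\coChev$ sends $0$ to $0$, we get an equivalence $\coChev(\mathfrak{c}) \simeq \coChev(\mathfrak{b}) \sqcup_{\coChev(\mathfrak{a})} 0$ in $\ComAlg(\Vect^{\multgrplus{m}})$. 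Applying the formula of~\S\ref{subsubsec:non_unital_quotient} for a non-unital pushout in terms of a unital relative tensor and then adding back the unit via $\addUnit$, one obtains
\[
	\coChev^\un(\mathfrak{c}) \simeq \coChev^\un(\mathfrak{b}) \otimes_{\coChev^\un(\mathfrak{a})} \Lambda,
\]
and the second equality follows by applying $\chi_T^\gr$.

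For the first equality, I would invoke Proposition~\ref{prop:graded_euler_decat} three times, applied to $\mathfrak{a}$, $\mathfrak{b}$, and $\mathfrak{c}$ (the finite-dimensionality hypothesis on $\mathfrak{c}$ follows from the long exact sequence). The exact triangle $\mathfrak{a} \to \mathfrak{b} \to \mathfrak{c}$ gives additivity of each $\chi_{T^n}^\gr$, so
\[
	\chi_{T^n}^\gr(\mathfrak{b}, t^n) = \chi_{T^n}^\gr(\mathfrak{a}, t^n) + \chi_{T^n}^\gr(\mathfrak{c}, t^n),
\]
and multiplicativity of $\exp$ translates this into
\[
	\chi_T^\gr(\coChev^\un \mathfrak{b}) = \chi_T^\gr(\coChev^\un \mathfrak{a}) \cdot \chi_T^\gr(\coChev^\un \mathfrak{c}).
\]
The ratio in the statement makes sense in $\mathbb{Z}\llrrb{t}$ because the constant (i.e. $\mathbf{d}=\mathbf{0}$) coefficient of $\chi_T^\gr(\coChev^\un \mathfrak{a})$ is $\chi_T(\Lambda) = 1$ (the graded-degree $\mathbf{0}$ piece is the unit, on which $T$ acts as the identity by Remark~\ref{rmk:action_of_T}), so the denominator is a unit in $\mathbb{Z}\llrrb{t}$; dividing yields the first equality.

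There is no real obstacle beyond bookkeeping with units: one must keep straight that the pushout computed on the $\coLie$-side is non-unital, while the relative tensor appearing on the algebra side is most naturally formulated for $\coChev^\un$, and the reconciliation between the two is precisely the content of~\S\ref{subsubsec:non_unital_quotient}. Once this is unwound, the statement is a formal consequence of Proposition~\ref{prop:graded_euler_decat} together with the fact that $\coChev$ is an equivalence.
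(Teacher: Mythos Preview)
Your proof is correct and follows essentially the same approach as the paper, which simply notes (in the sentence preceding the corollary) that the exponent in Proposition~\ref{prop:graded_euler_decat} is additive in the coalgebra and $\exp$ is multiplicative. You supply more detail than the paper does, particularly on the identification $\coChev^\un(\mathfrak{c}) \simeq \coChev^\un(\mathfrak{b}) \otimes_{\coChev^\un(\mathfrak{a})} \Lambda$ via \S\ref{subsubsec:non_unital_quotient}, which the paper leaves implicit. One small remark: your invocation of Remark~\ref{rmk:action_of_T} for the invertibility of the denominator is slightly misplaced, since that remark concerns \emph{unital} $\coLie$-coalgebras; the cleaner reason here is simply that $\mathfrak{a}$ lives in $\Vect^{\multgrplus{m}}$, so $\chi_{T^n}^\gr(\mathfrak{a}, t^n)$ has no constant term and the exponential automatically has constant term~$1$.
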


\subsubsection{}
We will now move on to studying $\Poinc$. Since $\Poinc$ is not additive, we can only treat the case where the $\coLie$-coalgebras involved are abelian, in which case, the statement is simply about symmetric powers.

\begin{prop}
\label{prop:quotient_decat_Poincare}
Let
\[
	\mathfrak{a} = \bigoplus_{k=1}^m \Lambda_{\unit_k}[1] \oplus \mathfrak{u} \oplus \mathfrak{v}, \quad \mathfrak{b} = \bigoplus_{k=1}^m \Lambda_{\unit_k}[1] \oplus \mathfrak{u}, \quad \mathfrak{c} = \bigoplus_{k=1}^m \Lambda_{\unit_k}[1] \oplus \mathfrak{v}
\]
be strongly unital $\coLie$-coalgebras, where $\Ho^*(\mathfrak{u})$ and $\Ho^*(\mathfrak{v})$ are finite dimensional, and where the direct sum are taken in the category of $\coLie$-coalgebras.\footnote{I.e. no interaction between different summands.} Then
\[
	\frac{\Poinc(\lbar{\coChev^\un \mathfrak{a}})}{\Poinc(\lbar{\coChev^\un \mathfrak{b}})} = \Poinc(\lbar{\coChev^\un \mathfrak{c}}) = \Poinc(\oblv_{\gr}(\coChev^{\un} \mathfrak{v})).
\]
\end{prop}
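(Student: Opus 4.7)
The plan is to reduce the statement to the multiplicativity of Poincaré polynomials under tensor products of bounded-below chain complexes with degreewise finite-dimensional cohomology. First, I would apply Proposition~\ref{prop:stable_homology_split_coLie} to each of the strongly unital $\coLie$-coalgebras $\mathfrak{a}, \mathfrak{b}, \mathfrak{c}$, whose units split off by hypothesis, to obtain
\[
\lbar{\coChev^\un \mathfrak{a}} \simeq \oblv_{\gr}(\coChev^\un(\mathfrak{u} \oplus \mathfrak{v})), \quad \lbar{\coChev^\un \mathfrak{b}} \simeq \oblv_{\gr}(\coChev^\un \mathfrak{u}), \quad \lbar{\coChev^\un \mathfrak{c}} \simeq \oblv_{\gr}(\coChev^\un \mathfrak{v}).
\]
Since $\mathfrak{u} \oplus \mathfrak{v}$ is a coproduct in $\coLie$-coalgebras and $\coChev$ is an equivalence of categories (hence preserves coproducts), the same argument as in the proof of Proposition~\ref{prop:stable_homology_split_coLie} gives a natural equivalence $\coChev^\un(\mathfrak{u} \oplus \mathfrak{v}) \simeq \coChev^\un \mathfrak{u} \otimes \coChev^\un \mathfrak{v}$ of commutative algebras. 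Applying the symmetric monoidal functor $\oblv_{\gr}$ (see~\S\ref{subsubsec:oblv_gr_def}) then yields the key multiplicative decomposition
\[
\lbar{\coChev^\un \mathfrak{a}} \simeq \lbar{\coChev^\un \mathfrak{b}} \otimes \lbar{\coChev^\un \mathfrak{c}}.
\]

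Next I would verify that all three Poincaré polynomials are well defined, i.e.\ that $\Ho^i$ of each stable piece is finite-dimensional for every $i$. Strong unitality together with the splitting forces $\mathfrak{u}$ and $\mathfrak{v}$ to live in cohomological degrees $\geq 0$, so $\Sym^n(\mathfrak{u}[-1])$ is concentrated in cohomological degrees $\geq n$, and similarly for $\mathfrak{v}$. By Corollary~\ref{cor:coChev_as_limit} and Remark~\ref{rmk:limit_coBar_converges_strongly}, at each fixed graded degree $\mathbf{d}$ the limit defining $\coChev^\un \mathfrak{u}$ stabilizes at stage $|\mathbf{d}|$, so $(\coChev^\un \mathfrak{u})_\mathbf{d}$ is built out of finitely many pieces of the form $\Sym^j(\mathfrak{u}[-1])_\mathbf{d}$ with $j \leq |\mathbf{d}|$. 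Combined with the finite-dimensionality of $\Ho^*(\mathfrak{u})$, this shows that for any fixed cohomological degree $c$, only finitely many $\mathbf{d}$ contribute to $\oblv_{\gr}(\coChev^\un \mathfrak{u})$ in degree $c$, and each contribution is finite-dimensional. The same reasoning applies to $\mathfrak{v}$ and to $\mathfrak{u} \oplus \mathfrak{v}$.

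Finally, since both $\lbar{\coChev^\un \mathfrak{b}}$ and $\lbar{\coChev^\un \mathfrak{c}}$ are bounded below in cohomological degree with degreewise finite-dimensional cohomology, the Poincaré polynomial is multiplicative on their tensor product, giving
\[
\Poinc(\lbar{\coChev^\un \mathfrak{a}}) = \Poinc(\lbar{\coChev^\un \mathfrak{b}}) \cdot \Poinc(\lbar{\coChev^\un \mathfrak{c}}),
\]
which rearranges to the first equality. The second equality is just the identification of $\lbar{\coChev^\un \mathfrak{c}}$ with $\oblv_{\gr}(\coChev^\un \mathfrak{v})$ already established in the first paragraph. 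The only nontrivial point is the finite-dimensionality check in the second paragraph; the rest is a formal consequence of Proposition~\ref{prop:stable_homology_split_coLie}, the compatibility of $\coChev$ with coproducts, and the symmetric monoidality of $\oblv_{\gr}$.
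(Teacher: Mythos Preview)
Your proof is correct and follows essentially the same approach as the paper. The paper arrives at the key tensor decomposition $\lbar{\coChev^\un \mathfrak{a}} \simeq \lbar{\coChev^\un \mathfrak{b}} \otimes_{\Lambda} \lbar{\coChev^\un \mathfrak{c}}$ by first writing $\coChev^\un \mathfrak{a} \simeq \coChev^\un \mathfrak{b} \otimes_{\Lambda[\multgr{m}]} \coChev^\un \mathfrak{c}$ (from the pushout $\mathfrak{a} \simeq \mathfrak{b} \sqcup_{\oplus_k \Lambda_{\unit_k}[1]} \mathfrak{c}$) and then applying the symmetric monoidal functor $\lbar{(-)}$, whereas you reach the same decomposition by applying Proposition~\ref{prop:stable_homology_split_coLie} to each coalgebra first and then using that $\oblv_{\gr}$ is symmetric monoidal; these are just two orderings of the same argument. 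Your additional verification that the Poincar\'e polynomials are well defined is a useful point the paper leaves implicit.
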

\begin{proof}
We have,
\[
	\coChev^\un \mathfrak{a} \simeq \coChev^\un \mathfrak{b} \otimes_{\Lambda[\multgr{m}]} \coChev^\un \mathfrak{c},
\]
and hence
\[
	\lbar{\coChev^\un \mathfrak{a}} \simeq \lbar{\coChev^\un \mathfrak{b}} \otimes_{\Lambda} \lbar{\coChev^\un \mathfrak{c}}.
\]
The first equality follows immediately from the multiplicative nature of $\Poinc$. The second equality is due to Proposition~\ref{prop:stable_homology_split_coLie}.
\end{proof}

\subsubsection{}
We will now turn to $\PoincVir$. The upshot of Proposition~\ref{prop:poinc_vir_decat} below is that when the decategorification procedure is additive, the decategorification of the stable piece can be extracted from the decategorification of the original object, but with the unit removed (hence, the appearance of $\Quot_\un$ in the statement). This is not surprising since $\Quot_\un$ of the $\coLie$-coalgebra is equivalent to taking associated graded (see Lemma~\ref{lem:assgr_coChev}).

\begin{prop}
\label{prop:poinc_vir_decat}
Let $\matheur{A} \in \ComAlg^{\grun}(\Vect^{\multgr{m}})$ where $\matheur{A} = \coChev^{\un} \mathfrak{a}$ for some strongly unital $\coLie$-coalgebra $\mathfrak{a} \in \coLie^\un(\Vect^{\multgrplus{m}})$. Assume further that $\mathfrak{a}$ is equipped with a weight filtration, $\Ho^*(\mathfrak{a})$ is finite dimensional, and moreover, the weights of $\Ho^*(\Quot_\un(\mathfrak{a}))$ are at least $1$. Then, we have
\[
	\PoincVir(\lbar{\coChev^\un \mathfrak{a}}, t) = \PoincVir(\Sym(\oblv_{\gr}(\Quot_\un \mathfrak{a}[-1])), t) = \exp\left(-\sum_{n>0} \frac{1}{n} \PoincVir(\Quot_\un(\mathfrak{a}), t^n)\right).
\]
\end{prop}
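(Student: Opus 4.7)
The strategy is to adapt the argument of Proposition~\ref{prop:graded_euler_decat}: use additivity of $\PoincVir$ on cofiber sequences (the feature which distinguishes it from the ordinary \Poincare{} polynomial) to reduce the left-hand side first to the $\PoincVir$ of a symmetric algebra on a weight-graded vector space, and then apply the classical plethystic identity to obtain the exponential form. The weight-$\geq 1$ hypothesis on $\Quot_\un\mathfrak{a}$ is exactly what will ensure that every coefficient of every formal power series in the argument is a finite sum.

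First I will establish
\[
    \PoincVir(\lbar{\coChev^\un\mathfrak{a}}, t) = \PoincVir(\oblv_\gr(\coChev^\un\Quot_\un\mathfrak{a}), t).
\]
Viewing $\coChev^\un\mathfrak{a}$ as an object of $\ComAlg^\un(\Vect^{\multfil{m}})$ via its graded-unital structure, Lemma~\ref{lem:assgr_coChev} identifies its $\multfil{m}$-associated graded with $\coChev^\un\Quot_\un\mathfrak{a}$. By Remark~\ref{rmk:diagonal_cofinal} I may compute $\lbar{\coChev^\un\mathfrak{a}}$ along the diagonal $\mathbf{d}=(d,\dots,d)$, so that successive cofibers are built out of graded pieces of $\coChev^\un\Quot_\un\mathfrak{a}$. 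Additivity of $\PoincVir$ on cofiber sequences, together with the observation that a class of graded degree $\mathbf{d}$ in $\coChev^\un\Quot_\un\mathfrak{a}$ arises from at most $|\mathbf{d}|$ tensor factors from $\Quot_\un\mathfrak{a}$, each of weight $\geq 1$, ensures that at any fixed weight $i$ only finitely many $\mathbf{d}\in\multgr{m}$ contribute, and the equality then holds coefficient-by-coefficient in $t$.

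Second, I will pass from $\coChev^\un\Quot_\un\mathfrak{a}$ to $\Sym(\Quot_\un\mathfrak{a}[-1])$. Inside the pro-nilpotent category $\Vect^{\multgrplus{m}}$ (Lemma~\ref{lem:multgrplus_multfilplus_pronilpotent}), Corollary~\ref{cor:coChev_as_limit} presents $\coChev^\un\Quot_\un\mathfrak{a}$ as the limit of a cofiltered object whose successive fibers are $\Sym^n(\Quot_\un\mathfrak{a}[-1])$, and by Remark~\ref{rmk:limit_coBar_converges_strongly} this limit stabilizes at each graded degree. Additivity of $\PoincVir$ together with the symmetric monoidality of $\oblv_\gr$ (see~\S\ref{subsubsec:oblv_gr_def}) then yields
\[
    \PoincVir(\oblv_\gr(\coChev^\un\Quot_\un\mathfrak{a}), t) = \PoincVir(\Sym(\oblv_\gr(\Quot_\un\mathfrak{a})[-1]), t),
\]
which is the first claimed equality (after identifying $\oblv_\gr(\Quot_\un\mathfrak{a}[-1])$ with $\oblv_\gr(\Quot_\un\mathfrak{a})[-1]$).

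Finally, I will invoke the classical plethystic identity
\[
    \PoincVir(\Sym(V[-1]), t) = \exp\Bigl(-\sum_{n>0}\tfrac{1}{n}\PoincVir(V, t^n)\Bigr),
\]
valid whenever the weight filtration of $V$ has finite-dimensional graded pieces bounded below (which holds here, again by the weight-$\geq 1$ hypothesis). Both sides are multiplicative under direct sums in $V$ (the right side because $\PoincVir$ is additive and $\exp$ turns sums into products, the left side because $\Sym$ turns direct sums into tensor products and $\PoincVir$ is multiplicative on tensor products), so it suffices to verify this identity when $V$ is one-dimensional concentrated in a single weight $w$ and cohomological degree $c$. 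In that case $\Sym(V[-1])$ is either the exterior algebra or the polynomial algebra on a one-dimensional generator depending on the parity of $c$, and both sides are computed by inspection.

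The main obstacle will be bookkeeping to legitimize the termwise manipulation of infinite formal power series at each reduction step; the weight-$\geq 1$ hypothesis on $\Quot_\un\mathfrak{a}$ is what makes every coefficient a finite sum so that these manipulations are unambiguous.
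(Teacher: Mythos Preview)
Your proposal is correct and follows essentially the same route as the paper: reduce $\lbar{\coChev^\un\mathfrak{a}}$ to the associated graded $\coChev^\un\Quot_\un\mathfrak{a}$ via additivity of $\PoincVir$ and Lemma~\ref{lem:assgr_coChev}, then to $\Sym(\Quot_\un\mathfrak{a}[-1])$ via Corollary~\ref{cor:coChev_as_limit}, then apply the plethystic identity. The only substantive difference is in justifying the first step: the paper invokes homological stability (Corollary~\ref{cor:qualitative_homological_stab_fin_dim}) to identify $\lbar{\matheur{A}}$ with the stable homology before passing to the associated graded, whereas you argue weight-by-weight directly. Your phrasing ``arises from at most $|\mathbf{d}|$ tensor factors'' is slightly backwards though: what you need is that at fixed weight $i$ there are at most $i$ tensor factors (each of weight $\geq 1$), and since $\Quot_\un\mathfrak{a}$ is finite-dimensional and hence supported in bounded graded degree, this bounds $|\mathbf{d}|$ from above.
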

\begin{proof}
We have,
\begin{align*}
	\PoincVir(\lbar{\coChev^\un \mathfrak{a}}, t)
	&= \PoincVir(\oblv_{\gr} (\assgr(\coChev^\un \mathfrak{a})), t) \\
	&= \PoincVir(\oblv_{\gr} (\coChev^\un (\Quot_\un \mathfrak{a})), t) \\
	&= \PoincVir(\oblv_{\gr}(\Sym(\Quot_\un \mathfrak{a}[-1])), t) \\
	&= \PoincVir(\Sym(\oblv_{\gr}(\Quot_\un \mathfrak{a}[-1])), t) \\
	&= \exp\left(-\sum_{n>0} \frac{1}{n} \PoincVir(\Quot_\un(\mathfrak{a}), t^n)\right),
\end{align*}
where
\[
	\oblv_{\gr}: \Vect^{\multgr{m}} \to \Vect
\]
is the functor of taking direct sum (see also~\S\ref{subsubsec:oblv_gr_def}). 

The finite dimensionality of $\Ho^*(\mathfrak{a})$ ensures that homological stability happens for $\matheur{A}$, by Corollary~\ref{cor:qualitative_homological_stab_fin_dim}. In this case, information about the stable homology can be recovered by the associated graded, since $\PoincVir$ is additive, and we get the first equality. The second equality comes from Lemma~\ref{lem:assgr_coChev}. The third equality is due to~\S\ref{subsec:cofil_coBar}. The fourth equality is due to the fact that $\oblv_{\gr}$ is symmetric monoidal. And finally, as above, the last equality can be seen directly, since we are now simply dealing with symmetric algebras. Note that throughout, the conditions imposed on $\mathfrak{a}$ ensure that for each power of $t$, the coefficient is a finite sum, and hence we do not need to worry about well-definedness.
\end{proof}

\begin{cor}
\label{cor:quotient_decat_poinc_vir}
Suppose we have the following pushout diagram
\[
\xymatrix{
	\matheur{A} \ar[d] \ar[r] & \matheur{B} \ar[d] \\
	\Lambda[\multgr{m}] \ar[r] & \matheur{B} \otimes_{\matheur{A}} \Lambda[\multgr{m}] = \matheur{Q}
}
\]
in $\ComAlg^{\grun}(\multgr{m})$. Assume further that, $\matheur{A} = \coChev^\un \mathfrak{a}, \matheur{B} = \coChev^\un \mathfrak{b}$, and $\matheur{Q} = \coChev^\un \mathfrak{q}$ where $\mathfrak{a}$, $\mathfrak{b}$, and $\mathfrak{q}$ satisfy the conditions of the Proposition above, and all the maps are compatible with Frobenius actions.

Then
\[
	\frac{\PoincVir(\lbar{\matheur{B}})}{\PoincVir(\lbar{\matheur{A}})} = \PoincVir(\lbar{\matheur{B}} \otimes_{\lbar{\matheur{A}}} \Lambda) = \PoincVir(\Sym(\oblv_{\gr}(0\sqcup_{\mathfrak{a}} \mathfrak{b})[-1])). \teq\label{eq:quotient_decat_poinc_vir}
\]
\end{cor}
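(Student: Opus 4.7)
The plan is to apply Proposition~\ref{prop:poinc_vir_decat} to each of $\mathfrak{a}$, $\mathfrak{b}$, and $\mathfrak{q}$ and reduce both equalities in~\eqref{eq:quotient_decat_poinc_vir} to additivity of $\PoincVir$ along a cofiber sequence in $\coLie$. First I would translate the pushout square via the Koszul equivalence $\coChev^{\un}: \coLie^\un \simeq \ComAlg^{\grun}$: since this is an equivalence of categories it preserves all colimits, so the defining pushout $\matheur{Q} = \matheur{B} \otimes_\matheur{A} \Lambda[\multgr{m}]$ corresponds under $\coPrim[1]$ to $\mathfrak{q} \simeq \mathfrak{b} \sqcup_\mathfrak{a} \Lambda^{\oplus m}[1]$ in $\coLie^\un$.

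Next I would identify $\Quot_\un \mathfrak{q}$ with the cofiber of $\Quot_\un \mathfrak{a} \to \Quot_\un \mathfrak{b}$, so that the abuse of notation $0 \sqcup_\mathfrak{a} \mathfrak{b}$ is justified. Because $\oblv_{\coLie}$ is a left adjoint, colimits in $\coLie$ are computed on underlying complexes; in the stable $\infty$-category $\matheur{C}^{\multgrplus{m}}$ a pushout square is bicartesian, so
\[ \coFib(\mathfrak{a} \to \mathfrak{b}) \simeq \coFib(\Lambda^{\oplus m}[1] \to \mathfrak{q}) = \Quot_\un \mathfrak{q}. \]
The octahedral axiom applied to the cofiber sequences $\Lambda^{\oplus m}[1] \to \mathfrak{a} \to \Quot_\un \mathfrak{a}$ and $\Lambda^{\oplus m}[1] \to \mathfrak{b} \to \Quot_\un \mathfrak{b}$ further gives $\coFib(\mathfrak{a} \to \mathfrak{b}) \simeq \coFib(\Quot_\un \mathfrak{a} \to \Quot_\un \mathfrak{b})$, so that $\Quot_\un \mathfrak{q} \simeq 0 \sqcup_\mathfrak{a} \mathfrak{b}$ in $\coLie$. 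Additivity of $\PoincVir$ on the resulting cofiber sequence then yields $\PoincVir(\Quot_\un \mathfrak{q}, t^n) = \PoincVir(\Quot_\un \mathfrak{b}, t^n) - \PoincVir(\Quot_\un \mathfrak{a}, t^n)$ for every $n\geq 1$.

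Substituting this into the exponential formula of Proposition~\ref{prop:poinc_vir_decat} converts subtraction into division and gives $\PoincVir(\lbar{\matheur{Q}}, t) = \PoincVir(\lbar{\matheur{B}}, t)/\PoincVir(\lbar{\matheur{A}}, t)$. The same Proposition applied directly to $\mathfrak{q}$ gives $\PoincVir(\lbar{\matheur{Q}}, t) = \PoincVir(\Sym(\oblv_{\gr}(\Quot_\un \mathfrak{q})[-1]), t)$, which under the previous identification is exactly the rightmost expression in~\eqref{eq:quotient_decat_poinc_vir}. To match the middle expression, I would observe that $\lbar{(-)}: \ComAlg^\un(\matheur{C}^{\multfil{m}}) \to \ComAlg^\un(\matheur{C})$ is a left adjoint by Corollary~\ref{cor:adjunction_for_algebras_colim} and hence preserves pushouts; since $\Lambda[\multgr{m}]$ is the constant filtered diagram with value $\Lambda$ and $\multfil{m}$ is filtered, $\lbar{\Lambda[\multgr{m}]} \simeq \Lambda$, so $\lbar{\matheur{Q}} \simeq \lbar{\matheur{B}} \otimes_{\lbar{\matheur{A}}} \Lambda$ and $\PoincVir$ of the two sides agree.

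The main obstacle I expect is the identification in the second paragraph: $\Quot_\un$ is not a left adjoint, so one must reason through the bicartesian property of stable $\infty$-categories and the octahedral axiom rather than a formal preservation-of-colimits argument. Frobenius compatibility propagates through the entire argument automatically, since all of the functors above (the Koszul equivalence, stabilization, and $\Quot_\un$) are canonical and the assumption in the hypothesis ensures that weight filtrations are respected by all the relevant maps.
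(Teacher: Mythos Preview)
Your proposal is correct and follows essentially the same approach as the paper: both arguments hinge on applying Proposition~\ref{prop:poinc_vir_decat} to $\mathfrak{a}$, $\mathfrak{b}$, $\mathfrak{q}$, identifying $\Quot_\un \mathfrak{q} \simeq 0 \sqcup_{\mathfrak{a}} \mathfrak{b}$, and using additivity of $\PoincVir$ on the resulting cofiber sequence to turn the exponential formula into a quotient. The paper presents the pushout square for $\Quot_\un(\mathfrak{a}) \to \Quot_\un(\mathfrak{b}) \to \Quot_\un(\mathfrak{q})$ and the identification $\Quot_\un(\mathfrak{q}) \simeq 0 \sqcup_{\mathfrak{a}} \mathfrak{b}$ via a stacked pushout diagram rather than your octahedral argument, and it writes $\lbar{\matheur{B}} \otimes_{\lbar{\matheur{A}}} \Lambda \simeq \lbar{\matheur{Q}}$ directly in the first displayed line rather than invoking Corollary~\ref{cor:adjunction_for_algebras_colim} explicitly, but these are cosmetic differences.
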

\begin{proof}
We have
\begin{align*}
	\PoincVir(\lbar{\matheur{B}} \otimes_{\lbar{\matheur{A}}} \Lambda, t)
	&= \PoincVir(\lbar{\matheur{B} \otimes_{\matheur{A}} \Lambda[\multgr{m}]}, t) \\
	&= \PoincVir(\lbar{\coChev^{\un} \mathfrak{q}}, t) \\
	&= \exp\left(-\sum_{n>0} \frac{1}{n} \PoincVir(\Quot_\un(\mathfrak{q}), t^n)\right),
\end{align*}
where the two last equalities are by Proposition~\ref{prop:poinc_vir_decat} above. But now, we have established the first equality of~\eqref{eq:quotient_decat_poinc_vir} since $\PoincVir$ is additive and since we have the following pushout diagram
\[
\xymatrix{
	\Quot_\un(\mathfrak{a}) \ar[d] \ar[r] & \Quot_\un(\mathfrak{b}) \ar[d] \\
	0 \ar[r] & \Quot_\un(\mathfrak{q})
}
\]

For the second equality of~\eqref{eq:quotient_decat_poinc_vir}, observe that we have the pushout diagram
\[
\xymatrix{
	\mathfrak{a} \ar[d] \ar[r] & \mathfrak{b} \ar[d] \\
	\bigoplus_{k=1}^m \Lambda_{\unit_k}[1] \ar[d] \ar[r] & \mathfrak{q} \ar[d] \\
	0 \ar[r] & \Quot_\un(\mathfrak{q})
}
\]
which implies that
\[
	\Quot_\un(\mathfrak{q}) \simeq 0 \sqcup_{\mathfrak{a}} \mathfrak{b}. \teq\label{eq:quot_un_coLie_q}
\]
\end{proof}

\subsubsection{} Finally, we will turn to trace, $\chi_T$. As mentioned above, more care has to be taken in this case since we have to deal with convergence issues.

\begin{prop}
\label{prop:trace_decat}
Let $V \in \Vect^{\multfil{m}}$ living in cohomological degrees $\geq 0$ and satisfies homological stability. Assume that $V$ is equipped with an action of an operator $T$ such that
\[
	\oblv_{\gr}(\assgr(V)) = \oblv_{\gr} (\Lambda \otimes_{\Lambda[\multgr{m}]} V)
\]
is $T$-summable. Then $\lbar{V}$ is $T$-summable, and moreover
\begin{align*}
	\chi_T(\lbar{V}) 
	&= \chi_T(\oblv_{\gr}(\assgr(V))) = \chi_T(\oblv_{\gr}(\Lambda\otimes_{\Lambda[\multgr{m}]} V)) \\
	&= \chi_T^{\gr}(\assgr(V), 1) = \chi_T^{\gr}(\Lambda \otimes_{\Lambda[\multgr{m}]} V, 1).
\end{align*}

\end{prop}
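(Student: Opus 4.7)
The plan is to reduce the statement to the single key equality $\chi_{T}(\lbar{V})=\chi_{T}(\oblv_{\gr}(\assgr(V)))$, which we establish by truncating $\lbar{V}$ at each cohomological degree and then combining additivity of the trace along cofibers with a dominated-convergence argument. The remaining equalities follow formally from $\assgr(V)\simeq \Lambda\otimes_{\Lambda[\multgr{m}]}V$ (see~\S\ref{subsubsec:assgr_as_tensoring}) and from the identity $\chi_{T}(\oblv_{\gr}(W))=\chi_{T}^{\gr}(W,1)$, immediate from the definition of the graded trace.

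First, by strong homological stability, for each $c\geq 0$ there is $D_{c}\geq 0$ such that every transition map $V_{\mathbf{d}}\to V_{\mathbf{d}+\unit_{k}}$ with $d_{k}\geq D_{c}$ is an equivalence on $\tau_{\leq c}$. Iterating the description of $\assgr_{\matheur{C},m}$ as successive one-variable associated gradeds (see~\S\ref{subsubsec:associated_graded}), one concludes $\tau_{\leq c}\assgr(V)_{\mathbf{d}}\simeq 0$ as soon as any $d_{k}>D_{c}$; in particular, for each $c$ only finitely many graded pieces of $\assgr(V)$ carry cohomology in degree $c$. Proposition~\ref{prop:stable_homology_vs_stabilization}, combined with cofinality of the diagonal (Remark~\ref{rmk:diagonal_cofinal}), then gives $\tau_{\leq c}\lbar{V}\simeq \tau_{\leq c}V_{(D,\ldots,D)}$ whenever $D\geq D_{c}$, so in particular $\Ho^{c}(\lbar{V})$ is finite dimensional.

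Second, iterating the cofiber sequences $V_{\mathbf{d}}\to V_{\mathbf{d}+\unit_{k}}\to \coFib(V_{\mathbf{d}}\to V_{\mathbf{d}+\unit_{k}})$ along any linear extension of the partial order on $\{\mathbf{d}'\leq (D,\ldots,D)\}$ exhibits $V_{(D,\ldots,D)}$ as an iterated $T$-equivariant cofiber with successive fibers $\assgr(V)_{\mathbf{d}'}$ for $\mathbf{d}'\leq(D,\ldots,D)$; $T$-equivariance is automatic since $T$ acts on $V$ in $\Vect^{\multfil{m}}$. Additivity of the trace and of the summability semi-norm $|\cdot|_{T}$ along cofibers (via long exact sequences in cohomology and multiplicativity of characteristic polynomials) gives
\[
        |V_{(D,\ldots,D)}|_{T}\;\leq\;\sum_{\mathbf{d}'\leq (D,\ldots,D)}|\assgr(V)_{\mathbf{d}'}|_{T}\;\leq\;|\oblv_{\gr}(\assgr(V))|_{T}<\infty
\]
uniformly in $D$, together with
\[
        \chi_{T}(V_{(D,\ldots,D)})\;=\;\sum_{\mathbf{d}'\leq (D,\ldots,D)}\chi_{T}(\assgr(V)_{\mathbf{d}'}).
\]

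Third, let $D\to\infty$. The uniform bound on $|V_{(D,\ldots,D)}|_{T}$, coupled with the stabilization $\tau_{\leq c}\lbar{V}\simeq \tau_{\leq c}V_{(D,\ldots,D)}$ for $D\geq D_{c}$, descends to $|\lbar{V}|_{T}\leq |\oblv_{\gr}(\assgr(V))|_{T}$, proving $T$-summability of $\lbar{V}$. The same uniform bound justifies interchanging $\lim_{D}$ with the alternating sum over cohomological degrees defining $\chi_{T}$, giving $\chi_{T}(\lbar{V})=\lim_{D}\chi_{T}(V_{(D,\ldots,D)})$; combining with the additivity identity above yields the promised $\chi_{T}(\lbar{V})=\chi_{T}(\oblv_{\gr}(\assgr(V)))$. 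The main technical point is this final limit-sum interchange, which is controlled by the uniform summability bound established in step two.
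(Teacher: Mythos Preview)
Your argument is essentially the paper's approach carried further. The paper's written proof establishes the uniform bound $|V_{\mathbf d}|_T \leq |\oblv_{\gr}(\assgr(V))|_T$ (by the same filtration idea you use) and then deduces $T$-summability of $\lbar V$ via $\tau_{\leq c}\lbar V \simeq \tau_{\leq c}V_{\mathbf d}$; it stops there, leaving the actual trace identity $\chi_T(\lbar V)=\chi_T(\oblv_{\gr}(\assgr(V)))$ implicit. You make this last step explicit with a dominated-convergence argument, which is a genuine addition.

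One point to tighten: your second step asserts that iterating the cofibers $V_{\mathbf d}\to V_{\mathbf d+\unit_k}$ ``along any linear extension of the partial order'' gives a filtration of $V_{(D,\dots,D)}$ whose successive graded pieces are exactly the $\assgr(V)_{\mathbf d'}$. For $m>1$ this is not what those one-step cofibers produce: a single cofiber $V_{\mathbf d}\to V_{\mathbf d+\unit_k}$ gives only a one-variable associated graded, and a maximal chain visits $mD+1$ vertices, not all $(D+1)^m$. The conclusion you need, namely $\chi_T(V_{(D,\dots,D)})=\sum_{\mathbf d'\leq(D,\dots,D)}\chi_T(\assgr(V)_{\mathbf d'})$ and the bound $|V_{(D,\dots,D)}|_T\leq\sum_{\mathbf d'}|\assgr(V)_{\mathbf d'}|_T$, is correct, but the clean justification is to use the description of $\assgr_{\matheur C,m}$ as an iterated one-variable associated graded (\S\ref{subsubsec:associated_graded}) and induct on $m$. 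With that fix, your proof goes through; in particular, the uniform tail bound $|\tau_{>C}V_{(D,\dots,D)}|_T\leq |\tau_{>C}\oblv_{\gr}(\assgr(V))|_T$ (which one extracts from the same filtration, degree by degree) is exactly what makes the limit--sum interchange in your third step valid.
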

\begin{proof}
Note that the equalities between the second and fourth terms, third and fifth terms are tautological.\footnote{The 1's appearing in the fourth and fifth terms are used to denote the evaluation at $t=1$.} Moreover, the equality between the second and third terms follow from how associated graded is computed. It thus remains to show that the first and second terms are equal.

For any $\mathbf{d} \in \multgr{m}$, using the fact that $\oblv_{\gr}(\assgr(V))$ is $T$-summable, it is easy to see that $V_{\mathbf{d}}$ is also $T$-summable (see also~\cite{gaitsgory_weils_2014}*{Remark 6.3.3}), and moreover,
\[
	|V_\mathbf{d}|_T \leq |\oblv_{\gr}(\assgr(V))|_T.
\]
Since $V$ satisfies homological stability, we know that for any cohomological degree $c$, $\tau_{\leq c}\lbar{V}$ is also $T$-summable, with
\[
	|\tau_{\leq c} \lbar{V}|_T \leq |\oblv_{\gr}(\assgr(V))|_T
\]
since
\[
	\tau_{\leq c} \lbar{V} \simeq \tau_{\leq c} V_\mathbf{d}
\]
for some $\mathbf{d}$. But now, this implies that $\lbar{V}$ is $T$-summable since
\[
	|\lbar{V}|_T = \lim_{c\to \infty} |\tau_{\leq c} \lbar{V}|_T \leq |\oblv_{\gr}(\assgr(V))|_T,
\]
and we are done.
\end{proof}

\section{Factorization homology}
\label{sec:fact_hom}

The discussion we have so far in this paper has no geometry. In this section, we will discuss factorization algebras and factorization homology, which is the geometric tool needed to bridge the two worlds of geometry and algebra. In~\S\ref{subsec:graded_com_fact_}--\S\ref{subsec:multiplicative_trace}, we give a brief review of the theory and provide the first link to the geometric objects $\gConf{m}{\mathbf{n}}(X)$ that we are interested in. The materials appearing in these subsections are mostly adapted from~\cite{francis_chiral_2011,gaitsgory_weils_2014,gaitsgory_atiyah-bott_2015,gaitsgory_eisenstein_2015}. In~\S\ref{subsec:homological_stability_fact_coh}, we prove the second main result of this paper, which is a statement about homological stability of factorization cohomology. Finally, \S\ref{subsec:variants_of_otimesstar} provides a simple construction which allows one to produce new commutative factorization algebras from a given one. It will be used in~\S\ref{sec:cohomology_of_Z_m_n} to compute the commutative factorization algebras responsible for the cohomology of $\gConf{m}{\mathbf{n}}(X)$.

\subsection{Graded commutative factorization algebras}
\label{subsec:graded_com_fact_}
We will take factorization homology with coefficients in graded commutative factorization algebras. In the introduction of this paper, we briefly reviewed the notion of the $\Ran$ space as well as commutative factorization algebras. We will now visit the graded variant of these notions. Most of what we present in this section comes from~\cite{gaitsgory_eisenstein_2015}*{\S4.1} or are otherwise straight-forward adaptations of~\cite{francis_chiral_2011,gaitsgory_weils_2014,gaitsgory_atiyah-bott_2015}.

\subsubsection{Colored $\Ran$ space/prestack}
Let $X$ be a scheme. $\Ran(X, \multgrplus{m})$ is the prestack defined as follows: for a test scheme $S$,
\[
	\Ran(X, \multgrplus{m})(S) = \{I \subset X(S), \phi: I \to \multgrplus{m}\}.
\]
For $\mathbf{d} \in \multgrplus{m}$, we define $\Ran(X, \multgrplus{m})^{\mathbf{d}}$ to be the component of $\Ran(X, \multgrplus{m})$ consisting of those points $(I, \phi)$ such that
\[
	\sum_{i\in I} \phi(i) = \mathbf{d}.
\]
We have
\[
	\Ran(X, \multgrplus{m}) = \bigsqcup_{\mathbf{d} \in \multgrplus{m}} \Ran(X, \multgrplus{m})^{\mathbf{d}}.
\]

\subsubsection{} Unlike the usual (i.e. ungraded) $\Ran$ space, the colored variant is essentially finite dimensional in the following sense: we have the following canonical map
\[
	\Ran(X, \multgrplus{m})^\mathbf{d} \to \Sym^\mathbf{d} X
\]
that is, by~\cite{gaitsgory_eisenstein_2015}*{Lemma 4.1.3}, an isomorphism after a sheafification in the topology generated by finite surjective maps (see~\S\ref{subsubsec:intro_define_spaces} for the definition of $\Sym^\mathbf{d} X$). Thus, the two spaces have the same sheaf and sheaf cohomology theory. The colored $\Ran$ space serves as a combinatorial model for the latter which is easier to manipulate.

\subsubsection{}
\label{subsubsec:Ran_n_def}
Fix an $m$-tuple of positive integers $\mathbf{n}$, we define $\Ran(X, \multgrplus{m})_\mathbf{n}$ to be the open subfunctor of $\Ran(X, \multgrplus{m})$ fitting into the following pullback diagram (see~\S\ref{subsubsec:intro_define_general_spaces} for the definition of $\gConf{m}{\mathbf{n}}(X)$)
\[
\xymatrix{
	\Ran(X, \multgrplus{m})^\mathbf{d}_\mathbf{n} \ar[r] \ar[d] & \gConf{\mathbf{d}}{\mathbf{n}}(X) \ar[d] \\
	\Ran(X, \multgrplus{m})^\mathbf{d} \ar[r] & \Sym^\mathbf{d} X
}
\]

Similarly to the above,
\[
	\Ran(X, \multgrplus{m})^\mathbf{d}_\mathbf{n} \to \gConf{\mathbf{d}}{\mathbf{n}}(X)
\]
is an isomorphism after a sheafification.

\subsubsection{}
For later use, we will adopt the following notation.
\begin{notation}
\label{notation:gConf_altogether}
We will use (see~\S\ref{subsubsec:intro_define_general_spaces} for the definition of $\gConf{m}{\mathbf{d}}$)
\[
	\gConf{m}{\mathbf{n}}(X) = \bigsqcup_{\mathbf{d} \in \multgr{m}} \gConf{\mathbf{d}}{\mathbf{n}}(X)
\]
to denote the disjoint union of all the schemes $\gConf{\mathbf{d}}{\mathbf{n}}(X)$, where when $\mathbf{d} = \mathbf{0}$ we choose, by convention, that $\gConf{\mathbf{0}}{\mathbf{n}}(X) = \pt$. In particular,
\[
	\gConf{m}{\infty}(X) = \Sym X = \bigsqcup_{\mathbf{d} \in \multgr{m}} \Sym^\mathbf{d} X,
\]
where the $\infty$ in the subscript means $\mathbf{n} = (\infty, \dots, \infty)$.

We will use $\gConf{m}{\mathbf{n}}(X)_+$ to denote
\[
	\gConf{m}{\mathbf{n}}(X)_+ = \gConf{m}{\mathbf{n}}(X) - \gConf{\mathbf{0}}{\mathbf{n}}(X) = \bigsqcup_{\mathbf{d}\in \multgrplus{m}} \gConf{\mathbf{d}}{\mathbf{n}}(X),
\]
and similarly for $(\Sym X)_+$.
\end{notation}

Similarly, for the $\Ran$ side, we have
\begin{notation}
We will use
\[
	\Ran(X, \multgrplus{m})_\mathbf{n} = \bigsqcup_{\mathbf{d} \in \multgrplus{m}} \Ran(X, \multgrplus{m})_\mathbf{n}^\mathbf{d} \subset \Ran(X, \multgrplus{m}).
\]
to collect all the components $\Ran(X, \multgrplus{m})^\mathbf{d}_\mathbf{n}$.
\end{notation}

\subsubsection{} We define $\gConfOpen{\mathbf{d}}(X)$ to be the open subscheme of $\Sym^\mathbf{d} X$ such that no points are allowed to collide (i.e. all multiplicities have to be at most 1). And similarly, we define
\[
	\RanOpen(X, \multgrplus{m}) = \bigsqcup_{\mathbf{d} \in \multgrplus{m}} \RanOpen(X, \multgrplus{m})^{\mathbf{d}}
\]
to be the open subprestack of $\Ran(X, \multgrplus{m})$ where $\RanOpen(X, \multgrplus{m})^\mathbf{d}$ fits into the following pullback diagram
\[
\xymatrix{
	\RanOpen(X, \multgrplus{m})^{\mathbf{d}} \ar[r] \ar[d] & \gConfOpen{\mathbf{d}}(X) \ar[d] \\
	\Ran(X, \multgrplus{m})^\mathbf{d} \ar[r] & \Sym^\mathbf{d} X
}
\]

\subsubsection{Graded commutative factorization algebras}
Similar to the case of the usual $\Ran$ space, $\Ran(X, \multgrplus{m})$ has a natural commutative semi-group structure given by
\[
	\union: \Ran(X, \multgrplus{m})^k \to \Ran(X, \multgrplus{m}).
\]
One can thus define the $\otimesstar$-symmetric monoidal structure on $\Shv(\Ran(X, \multgrplus{m}))$ by convolution in the usual way: for $\matheur{F}_1, \dots, \matheur{F}_k \in \Shv(\Ran(X, \multgrplus{m}))$, we define
\[
	\matheur{F}_1 \otimesstar\dots \otimesstar\matheur{F}_k = \union_!(\matheur{F}_1 \boxtimes \dots\boxtimes \matheur{F}_k).
\]

\subsubsection{} As in the non-graded case, this allows us to define the category
\[
	\ComAlgstar(\Ran(X, \multgrplus{m})) =  \ComAlg(\Shv(\Ran(X, \multgrplus{m}))^{\otimesstar})
\]
of commutative algebra objects, as well as the full subcategory
\[
	\Factstar(X, \multgrplus{m}) \subset \ComAlgstar(\Ran(X, \multgrplus{m})),
\]
consisting of factorizable objects, which will call commutative factorization algebras.

For the reader's convenience, let us quickly review what this means. An object $\matheur{F} \in \ComAlgstar(\Ran(X, \multgrplus{m}))$ is equipped with multiplication maps
\[
	\union_!(\matheur{F} \boxtimes \dots \boxtimes \matheur{F}) \to \matheur{F}
\]
which, by adjunction, give rise to maps of the form
\[
	\matheur{F} \boxtimes \dots\boxtimes \matheur{F} \to \union^! \matheur{F}. \teq\label{eq:adjoint_mult_Ran}
\]
We say that $\matheur{F}$ is factorizable if these maps induce equivalences on $\Ran(X, \multgrplus{m})^k_{\disj}$, i.e. we have
\[
	(\matheur{F} \boxtimes \dots\boxtimes \matheur{F})|_{\Ran(X, \multgrplus{m})^k_{\disj}} \simeq (\union^! \matheur{F})|_{\Ran(X, \multgrplus{m})^k_{\disj}}.
\]

\subsubsection{Graded commutative algebras}
The category $\Shv(X)$ of sheaves on $X$ is equipped with the $\otimesshriek$-symmetric monoidal structure. Thus, we can talk about categories of graded commutative algebras in it as in~\S\ref{subsec:augmented_unital_algebras}, i.e. $\ComAlg(\Shv(X)^{\multgrplus{m}})$ and $\ComAlg(\Shv(X)^{\multgr{m}})$ etc. Sometimes, to emphasize that we are using the $\otimesshriek$-monoidal structure, we will write $\ComAlgshriek$ instead of just $\ComAlg$.

Alternatively, we can view $\Shv(X)^{\multgrplus{m}}$ as follows. Consider
\[
	\multgrplus{m} \times X = \bigsqcup_{\mathbf{d} \in \multgrplus{m}} X,
\]
i.e. disjoint copies of $X$ indexed by $\multgrplus{m}$. Using the $\otimesshriek$-monoidal structure on $X$, and keeping track of the grading, we obtain a symmetric monoidal structure on $\Shv(\multgrplus{m} \times X)$ in an obvious way, which will still be denoted by $\otimesshriek$. Moreover, we have a natural equivalence of symmetric monoidal categories
\[
	\Shv(X)^{\multgrplus{m}} \simeq \Shv(\multgrplus{m} \times X).
\]

Using this identification, we will not distinguish between these two categories in what follows, unless confusion is likely to occur.

\subsubsection{} 
Let
\[
	\delta: \multgrplus{m} \times X \to \Ran(X, \multgrplus{m})
\]
be the ``diagonal'' embedding which sends $x_{\mathbf{d}} \in X(S)$ to the pair $(I, \phi) = (\{x\}, x\mapsto \mathbf{d})$. The functor
\[
	\delta^!: \Shv(\Ran(X, \multgrplus{m})) \to \Shv(X)^{\multgrplus{m}}
\]
commutes with limits, being a right adjoint to $\delta_!$, and moreover, it is symmetric monoidal. Thus, it upgrades to a functor between the corresponding categories of commutative algebras, which will still be denoted by $\delta^!$. Since $\oblv_{\ComAlg}$ is conservative and preserves limits (being a right adjoint to $\Free_{\ComAlg}$), the functor $\delta^!$ between the commutative algebra categories also commutes with limits. Thus, it admits a left adjoint, which will be denoted by $\delta_?$, i.e. we have the following pair of adjoint functors
\[
	\delta_?: \ComAlgshriek(\Shv(X)^{\multgrplus{m}}) \rightleftarrows \ComAlgstar(\Shv(\Ran(X, \multgrplus{m}))): \delta^!.
\]

\begin{prop}[\cite{gaitsgory_weils_2014}*{Thm. 5.6.4}]
\label{prop:comalg_vs_comfact}\footnote{The proof given in~\cite{gaitsgory_weils_2014} is for the non-graded case. However, the same proof carries over to this setting verbatim.}
The pair of functors $\delta_?$ and $\delta^!$ induces an equivalence of categories
\[
	\delta_?: \ComAlgshriek(\Shv(X)^{\multgrplus{m}}) \simeq \Factstar(X, \multgrplus{m}): \delta^!
\]
\end{prop}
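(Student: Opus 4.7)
The plan is to carry over the proof of \cite{gaitsgory_weils_2014}*{Thm. 5.6.4} by tracking the coloring $\phi: I\to \multgrplus{m}$ through every step; the passage from the ungraded case requires no new ideas, only bookkeeping of graded degrees. The argument breaks into four stages: constructing the adjunction, verifying that $\delta_?$ lands in $\Factstar$, and checking that the unit and counit of the induced adjunction are equivalences.

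First I would establish that the underlying $!$-restriction functor $\delta^!: \Shv(\Ran(X,\multgrplus{m}))\to \Shv(\multgrplus{m}\times X)$ is right-lax symmetric monoidal from $\otimesstar$ to $\otimesshriek$. Concretely, for $\matheur{F}_1, \matheur{F}_2$ on the Ran prestack, the natural map $\delta^!\matheur{F}_1 \otimesshriek \delta^!\matheur{F}_2 \to \delta^!(\matheur{F}_1 \otimesstar \matheur{F}_2)$ comes from base change along the square formed by the diagonals $\delta\times \delta$ and $\delta$ together with the union maps, and on the graded side one simply records that the image of $(x_{\mathbf{d}_1}, x_{\mathbf{d}_2})$ lands in degree $\mathbf{d}_1 + \mathbf{d}_2$. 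Since $\delta^!$ commutes with limits, so does the induced functor $\delta^!: \ComAlgstar(\Ran(X,\multgrplus{m}))\to \ComAlgshriek(\Shv(X)^{\multgrplus{m}})$, so by the adjoint functor theorem it admits the desired left adjoint $\delta_?$.

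Next I would unpack $\delta_?\matheur{A}$ on each component using the colimit presentation $\Ran(X,\multgrplus{m})^{\mathbf{d}} \simeq \colim_{(J,\phi)} X^J$, indexed by finite sets $J$ together with $\phi: J\to\multgrplus{m}$ satisfying $\sum_j\phi(j)=\mathbf{d}$ and morphisms given by surjections compatible with $\phi$. Because $\delta_?$ is a left adjoint, the value of $\delta_?\matheur{A}$ on the chart $X^J$ is reconstructed, using the iterated multiplication of $\matheur{A}$, as $\matheur{A}_{\phi(1)}\boxtimes\cdots\boxtimes\matheur{A}_{\phi(|J|)}$ away from diagonals and as appropriate $!$-restrictions along diagonals elsewhere. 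The crucial observation is that on the disjoint locus $\Ran(X,\multgrplus{m})^k_\disj$ the factorization map~\eqref{eq:adjoint_mult_Ran} becomes the identity on an external tensor product, so $\delta_?\matheur{A}$ automatically lies in $\Factstar(X,\multgrplus{m})$.

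Finally I would verify that unit and counit are equivalences by restricting along the family of charts $X^J\to\Ran(X,\multgrplus{m})$, which is jointly conservative. For the unit $\matheur{A}\to\delta^!\delta_?\matheur{A}$, restriction to the ``principal'' diagonal $\multgrplus{m}\times X\hookrightarrow X^J$ with $|J|=1$ recovers $\matheur{A}$ tautologically from the description above. For the counit $\delta_?\delta^!\matheur{B}\to\matheur{B}$ applied to a factorizable $\matheur{B}$, the factorization property of $\matheur{B}$ identifies $\matheur{B}|_{X^J}$ with exactly the iterated-multiplication expression that computes $\delta_?\delta^!\matheur{B}|_{X^J}$. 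The main obstacle, as in the ungraded case, is ensuring that the cofinality and colimit-compatibility arguments underlying the descent from $X^J$-charts to $\Ran(X,\multgrplus{m})$ are correctly coordinated with the coloring data; but since a surjection $J\twoheadrightarrow J'$ automatically inherits a unique compatible coloring whenever it is obtained by partitioning $J$ according to coincidences, the combinatorics of colors refines rather than obstructs the ungraded diagram, and the proof of loc.\ cit.\ applies verbatim.
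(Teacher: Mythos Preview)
The paper does not actually prove this proposition: it is stated with a citation to \cite{gaitsgory_weils_2014}*{Thm.~5.6.4} and the footnote asserts that the proof there ``carries over to this setting verbatim.'' Your proposal is a reasonable sketch of what that verbatim carry-over looks like, and is consistent with the paper's claim; there is nothing further to compare.
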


\begin{rmk}
The curious reader can take a look at~\cite{gaitsgory_semi-infinite_2017}*{\S2.3} for a concrete implementation of the functor $\delta_?$. We do not need this concrete description in this paper.
\end{rmk}

\subsubsection{} Let $\pi: X \to \pt$ denote the structure map of $X$. For a similar reason as above, we have the following pair of adjoint functors
\[
	\pi_?: \ComAlgshriek(\Shv(X)^{\multgrplus{m}}) \rightleftarrows \ComAlg(\Vect^{\multgrplus{m}}): \pi^!.
\]

\begin{prop}[\cite{gaitsgory_weils_2014}*{Expl. 5.6.9}]
We have the following commutative diagram
\[
\xymatrix{
	\ComAlgshriek(\Shv(X)^{\multgrplus{m}}) \ar[dd]_{\pi_?} \ar@{=}[r]^>>>>>{\delta_?}_<<<<<{\delta^!} & \Factstar(X, \multgrplus{m}) \ar[ddl]^{\hspace{1em}C^*_c(\Ran(X, \multgrplus{m}),-)} \\ \\
	\ComAlg(\Vect^{\multgrplus{m}})
} \teq\label{eq:pi_?_vs_C^*_c(Ran)}
\]
\end{prop}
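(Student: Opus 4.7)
The strategy is to factor $\pi$ through the colored Ran prestack and exhibit the identity as a chain of adjunctions. Let $p: \Ran(X, \multgrplus{m}) \to \pt$ denote the structure map. Unwinding the definition of $\delta$, one sees that $p \circ \delta$ is canonically identified with the structure map $\pi: \multgrplus{m} \times X \to \pt$ (each copy of $X$ indexed by $\mathbf{d} \in \multgrplus{m}$ lands in the component $\Ran(X, \multgrplus{m})^{\mathbf{d}}$, which is projected to a point in the correct graded degree). Taking $!$-pullback, we obtain a canonical equivalence
\[
	\pi^! \simeq \delta^! \circ p^!
\]
of functors $\Vect^{\multgrplus{m}} \to \Shv(X)^{\multgrplus{m}}$.

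Next, observe that $p^!$ is right-lax symmetric monoidal with respect to the $\otimesstar$-structure on $\Shv(\Ran(X, \multgrplus{m}))$: for any $V, W \in \Vect$ we have the Künneth-style map $p^! V \otimesstar p^! W = \union_!(p^! V \boxtimes p^! W) \to p^!(V\otimes W)$ produced by the counit $\union_!\union^! \to \id$ applied to $p^!(V\otimes W)$, together with the identification $p^! V \boxtimes p^! W \simeq \union^! p^!(V \otimes W)$. Similarly $\delta^!$ is right-lax monoidal (this is already implicit in the construction of $\delta_?$ used in Proposition~\ref{prop:comalg_vs_comfact}), and the equivalence above is an equivalence of right-lax symmetric monoidal functors. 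Hence it lifts to an equivalence of functors on the corresponding categories of commutative algebras; passing to left adjoints, we obtain
\[
	\pi_? \simeq p_? \circ \delta_?,
\]
where $p_?$ denotes the left adjoint to $p^!|_{\ComAlgstar(\Shv(\Ran(X, \multgrplus{m})))}$.

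Finally, we identify $p_?$ on $\Factstar(X, \multgrplus{m}) \subset \ComAlgstar(\Shv(\Ran(X, \multgrplus{m})))$ with the functor $C^*_c(\Ran(X, \multgrplus{m}), -)$. At the level of underlying sheaves, $p_? = p_!$, which literally computes compactly supported cohomology. The only nontrivial point is that the algebra structure produced by the adjunction from the $\otimesstar$-algebra structure agrees with the ``factorization homology'' algebra structure on $C^*_c(\Ran(X, \multgrplus{m}), \matheur{A})$, but this is exactly the content of the general formalism of commutative factorization algebras and factorization homology (see~\cite{gaitsgory_weils_2014}*{\S5--\S6}).

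The main obstacle is verifying the symmetric monoidal coherence in the middle step: one must check that the equivalence $\pi^! \simeq \delta^! \circ p^!$ of underlying functors is compatible with the respective right-lax symmetric monoidal structures so that it passes to commutative algebras. This is a routine diagram chase using the base change identities $\union^! p^! \simeq p^! \boxtimes p^!$ and analogous compatibilities for $\delta$, and no genuinely new input beyond Proposition~\ref{prop:comalg_vs_comfact} is required.
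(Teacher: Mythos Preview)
The paper does not give a proof of this proposition; it is stated as a citation to~\cite{gaitsgory_weils_2014}*{Expl. 5.6.9}. So there is no argument in the paper to compare your proposal against.

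Your outline is correct: factor $\pi$ as $p \circ \delta$, pass to left adjoints on commutative algebras, and identify $p_?$ with $C^*_c(\Ran(X,\multgrplus{m}),-)$. However, there is a gap in the final step. You assert that ``at the level of underlying sheaves, $p_? = p_!$'', but this does not follow from $p^!$ being merely right-lax monoidal. In general, the left adjoint on commutative algebras of a right-lax monoidal functor is computed by a bar construction and need not agree with the left adjoint on underlying objects---indeed, this is exactly the phenomenon for $\pi$ itself, where $\pi_? \neq \pi_!$ on underlying objects. What makes $p$ different is that $p_!$ is genuinely symmetric monoidal with respect to $\otimesstar$: since $p \circ \union$ and the projection $\Ran \times \Ran \to \pt$ coincide, K\"unneth gives $p_!(\matheur{F} \otimesstar \matheur{G}) \simeq p_!\matheur{F} \otimes p_!\matheur{G}$. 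Once $p_!$ is symmetric monoidal, it carries commutative algebras to commutative algebras and is automatically left adjoint to the (right-lax monoidal) $p^!$ at the level of $\ComAlg$, which gives $p_? \simeq p_!$ on underlying objects. You should make this explicit rather than deferring it to ``the general formalism''.
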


More concretely, this Proposition states that the functor of taking cohomology of compact support along the $\Ran$ space implements the abstractly defined functor $\pi_?$. We will thus call $\pi_?$ the functor of taking factorization homology.

\subsection{Koszul duality}
\label{subsec:koszul_duality_ran}
We will now review the interaction between $\pi_?, \pi^!$, and Koszul duality. 

\subsubsection{} Note that since $\Shv(X)^{\multgrplus{m}}$ is pro-nilpotent (see Lemma~\ref{lem:multgrplus_multfilplus_pronilpotent}), we have an equivalence of categories by Koszul duality
\[
	\coPrim[1]: \ComAlgshriek(\Shv(X)^{\multgrplus{m}}) \rightleftarrows \coLieshriek(\Shv(X)^{\multgrplus{m}}): \coChev.
\]

Since the functor $\pi^!$ is monoidal, its left adjoint, $\pi_!$ is left-lax monoidal, and hence, the adjoint pair $\pi_! \dashv \pi^!$ upgrades to a pair of adjoint functors
\[
	\pi_!: \coLieshriek(\Shv(X)^{\multgrplus{m}}) \rightleftarrows \coLie(\Vect^{\multgrplus{m}}): \pi^!.
\]

Since the monoidal functor $\pi^!$ commutes with limits (being a right adjoint), the right adjoints (and hence, also the left adjoints) in the diagram below commute
\[
\xymatrix{
	\ComAlgshriek(\Shv(X)^{\multgrplus{m}}) \ardis[r]^<<<<<{\coPrim[1]} \ardis[d]^{\pi_?} & \ardis[l]^<<<<<{\coChev} \coLieshriek(\Shv(X)^{\multgrplus{m}}) \ardis[d]^{\pi_!} \\
	\ComAlg(\Vect^{\multgrplus{m}}) \ardis[u]^{\pi^!} \ardis[r]^>>>>>>>{\coPrim[1]} & \ardis[l]^>>>>>>>>{\coChev} \coLie(\Vect^{\multgrplus{m}}) \ardis[u]^{\pi^!}
}
\]
But since the horizontal functors are equivalences of categories, everything commutes in the diagram above, and we have
\[
	\pi_? \circ \coChev \simeq \coChev \circ \pi_!,
\]
and
\[
	\pi^! \circ \coPrim[1] \simeq \coPrim[1] \circ \pi^!.
\]

\subsection{Factorization cohomology}
\label{subsec:fact_cohomology}
We will now explain the process of taking factorization cohomology. Since the theory of sheaves on prestacks is biased toward the functors $(-)^!$ and $(-)_!$, in general, one can only talk about homology rather than cohomology. However, suppose $X$ admits a compactification
\[
	j: X \to \lbar{X},
\]
we can make sense of it in such a way that is compatible with all the functors we have defined so far. This is the goal of this subsection.

\subsubsection{}
Let
\[
	j: X \to \lbar{X}
\]
be as above.

First, observe that both functors 
\[
	j^* \simeq j^!: \Shv(\lbar{X}) \rightleftarrows \Shv(X): j_*
\]
are symmetric monoidal, where $j^* \simeq j^!$ since $j$ is an open embedding. Hence, we have the following diagram
\[
\xymatrix{
	\ComAlgshriek(\Shv(X)^{\multgrplus{m}}) \ardis[d]^{j_*} \ardis[r]^>>>>>{\coPrim[1]} & \ardis[l]^>>>>>{\coChev} \coLieshriek(\Shv(X)^{\multgrplus{m}}) \ardis[d]^{j_*} \\
	\ComAlgshriek(\Shv(\lbar{X})^{\multgrplus{m}})\ardis[u]^{j^*} \ardis[r]^>>>>>{\coPrim[1]} & \coLieshriek(\Shv(\lbar{X})^{\multgrplus{m}}) \ardis[l]^>>>>>{\coChev} \ardis[u]^{j^*}
}
\]
where parallel arrows are adjoint pairs. Moreover, a priori, left/right adjoints commute with left/right adjoints, but since all the horizontal maps are equivalences everything commutes.

\subsubsection{} Arguing similarly, we have the following diagram, where everything commutes
\[
\xymatrix{
	\ComAlgshriek(\Shv(X)^{\multgrplus{m}}) \ardis[d]^{j_*} \ardis[r]^>>>>>{\delta_?} & \ardis[l]^>>>>>{\delta^!} \Factstar(X, \multgrplus{m}) \ardis[d]^{j_*} \\
	\ComAlgshriek(\Shv(\lbar{X})^{\multgrplus{m}})\ardis[u]^{j^*} \ardis[r]^>>>>>{\delta_?} & \Factstar(\lbar{X}, \multgrplus{m}) \ardis[l]^>>>>>{\delta^!} \ardis[u]^{j^*}
}
\]

\begin{defn}
We define the functor of taking factorization cohomology
\[
	\pi_{?*}: \ComAlgshriek(\Shv(X)^{\multgrplus{m}}) \to \ComAlgshriek(\Vect^{\multgrplus{m}})
\]
to be
\[
	\pi_{X, ?*} = \pi_{\lbar{X}, ?} \circ j_*
\]
where $\pi_X$ and $\pi_{\lbar{X}}$ are structure maps of $X$ and $\lbar{X}$ respectively. Namely,
\[
\xymatrix{
	\ComAlgshriek(\Shv(X)^{\multgrplus{m}}) \ar[r]^{j_*} \ar@/_1pc/[rr]_{\pi_{?*}} & \ComAlgshriek(\Shv(\lbar{X})^{\multgrplus{m}}) \ar[r]^{\pi_?} & \ComAlg(\Vect^{\multgrplus{m}}).
}
\]
\end{defn}

\begin{rmk}
Observe that
\begin{align*}
	\pi_{?*} 
	&\simeq \pi_? \circ j_* \simeq C^*_c(\Ran(\lbar{X}, \multgrplus{m}), \delta_?(j_*(-))) \\ 
	&\simeq C^*_c(\Ran(\lbar{X}, \multgrplus{m}), j_*(\delta_?(-)))
	\simeq C^*(\Ran(X, \multgrplus{m}), \delta_?(-)),
\end{align*}
where the last term can be made sense of using the fact that as far as sheaves and cohomology are involved, $\Ran(X, \multgrplus{m})^\mathbf{d}$ is the same as $\Sym^\mathbf{d} X$. This implies that $\pi_{?*}$ does not depend on the choice of the compactification $\lbar{X}$ of $X$.
\end{rmk}

\subsubsection{}
Note that the discussion above also implies that our functor $\pi_{?*}$ behaves nicely with respect to $\coChev$. Indeed, let $\mathfrak{a} \in \coLieshriek(\Shv(X)^{\multgrplus{m}})$. Then
\[
	\pi_{?*} (\coChev \mathfrak{a}) \simeq \pi_? (j_* (\coChev \mathfrak{a})) \simeq \pi_?(\coChev(j_* \mathfrak{a})) \simeq \coChev(\pi_!(j_* \mathfrak{a})) \simeq \coChev(\pi_* \mathfrak{a}), \teq\label{eq:coChev_vs_pi_?*}
\]
where we have used $\pi$ to denote both the structure map of $X$ and of $\lbar{X}$. Note also that for $\lbar{X}$, $\pi_* \simeq \pi_!$ since $\lbar{X}$ is proper, and hence $\pi_! \circ j_* \simeq \pi_* \circ j_* \simeq \pi_*$, which is used in the last equivalence above.

The $\coLie$-coalgebra structure of $\pi_* \mathfrak{a}$ (in~\eqref{eq:coChev_vs_pi_?*}) is induced by the fact that
\[
	\pi_*: \Shv(X)^{\multgrplus{m}} \to \Vect^{\multgrplus{m}}
\]
is left-lax symmetric monoidal. Indeed, this is because
\[
	\pi_* \simeq \pi_!\circ j_*,
\]
where $j_*$ is monoidal, and $\pi_!$ is left-lax monoidal. 

\subsubsection{} When our $\coLie$-coalgebra has the form $\pi^! \mathfrak{a}$ for some $\mathfrak{a} \in \Vect^{\multgrplus{m}}$, we have
\[
	\pi_* \pi^! \mathfrak{a} \simeq C^*(X, \omega_X) \otimes \mathfrak{a} \simeq \pi_* \omega_X \otimes \mathfrak{a}. \teq\label{eq:push_forward_constant_coLie}
\]
Since $C^*(X, -)$ is left-lax monoidal, $C^*(X, \omega_X) \in \Vect$ has a natural co-commutative coalgebra structure since $\omega_X$ does, namely
\[
	\omega_X \otimesshriek \omega_X \simeq \omega_X.
\]
The $\coLie$-coalgebra structure on $\pi_* \omega_X\otimes \mathfrak{a}$ is then induced by that of $\mathfrak{a}$ and the co-commutative co-algebra structure on $C^*(X, \omega_X)$. See~\cite{gaitsgory_study_2017}*{Vol. II, Chapter 6, \S1.2} and~\cite{ho_free_2017}*{Example. 4.2.8} for a discussion of the dual situation, where one tensors a $\Lie$-algebra with a commutative algebra to get a new $\Lie$-algebra. See also~\cite{ho_free_2017}*{\S5.3} for a computational example where the resulting $\Lie$-algebra is abelian.

\begin{rmk}
\label{rmk:fact_coh_triv_coalg}
When the co-algebra $C^*(X, \omega_X)$ is trivial (for eg. when $X = \mathbb{A}^d$), $\pi_*\omega_X \otimes \mathfrak{a}$ is abelian. In many cases, this can be exploited to compute the factorization cohomology (for eg. in the case where $X = \mathbb{A}^d$).
\end{rmk}

\subsubsection{}
\label{subsubsec:unital_variant_pi_?*}
Let $\matheur{A} \in \ComAlg^{\un, \aug}(\Shv(X)^{\multgrplus{m}})$ (see~\S\ref{subsec:unital_vs_nonunital}) with augmentation ideal $\matheur{A}_+$. We will use
\[
	\pi_{?*}^{\un} \matheur{A} = \Lambda \oplus \pi_{?*} (\matheur{A}_+)
\]
to denote the unital factorization cohomology. By definition, when the Koszul dual of $\matheur{A}_+$ is $\mathfrak{a}$, we have
\[
	\pi_{?*}^{\un} \matheur{A} \simeq \coChev^{\un}(\pi_* \mathfrak{a}).
\]

When $\matheur{A} \in \ComAlg(\Shv(X)^{\multgrplus{m}})$, we will abuse notation and write
\[
	\pi_{?*}^{\un} \matheur{A} = \Lambda \oplus \pi_{?*} \matheur{A}.
\]
If $\mathfrak{a}$ is the Koszul dual of $\matheur{A}$, then we still have
\[
	\pi_{?*}^{\un} \matheur{A} \simeq \coChev^{\un}(\pi_* \mathfrak{a}).
\]

\subsection{A multiplicative trace formula}
\label{subsec:multiplicative_trace}
We will now come to an analog of the Grothendieck-Lefschetz trace formula, but for the functor $\pi_{?*}$ instead of $\pi_*$. If $\pi_*$ is an additive functor, in some sense, we will see that $\pi_{?*}$ is multiplicative. The result presented here is a straightforward adaptation of~\cite{gaitsgory_weils_2014}*{\S6.5}. Our case is simpler since we do not have to worry about convergence issues.

We start with the following finiteness condition.
\begin{defn}
For a category $\matheur{C}$, we will use $\matheur{C}^{c, \multgr{m}}$ to denote the fullsubcategory of $\matheur{C}^{\multgr{m}}$ consisting objects that are compact in each degree. More concretely, in the cases of interest $\Shv(X)$ and $\Vect$, we have
\begin{enumerate}[(i)]
	\item $\Shv(X)^{c, \multgr{m}}$ consists of graded-sheaves that are constructible in each graded-degree;
	\item $\Vect^{c, \multgr{m}}$ consists of graded-chain complexes such that are bounded (on both sides) at each graded-degree.
\end{enumerate}
\end{defn}

\subsubsection{} It is easy to see that for $\Shv(X)^{\multgrplus{m}}$ and $\Vect^{\multgrplus{m}}$, $\coChev$ and $\coPrim[1]$ preserve compactness. Indeed, this is because when we restrict to each graded degree, the limits/colimits computing them are finite (essentially the same argument used in the case of pro-nilpotent categories). But now, for schemes, since the functor $\pi_*$ preserves compactness, so does the functor $\pi_{?*}$ since it's computed as
\[	
	\pi_{?*} \simeq \coChev \circ \pi_* \circ \coPrim[1].
\]

\begin{prop}
\label{prop:multiplicative_trace}
When the scheme $X$ and $\matheur{A} \in \ComAlg^{\un, \aug}(\Shv(X)^{c, \multgrplus{m}})$ are obtained from pulling back objects defined over a finite field $\Fq$. Then $\pi_{?*} \matheur{A}$ naturally carries a Frobenius action and we have the following identity
\[
	\chi^{\gr}_{\Frob^{-1}_q}(\pi_{?*}^{\un} \matheur{A}, t) = \prod_{x\in |X|} \chi^{\gr}_{\Frob_{x}^{-1}}(i_x^! \matheur{A}, t^{\deg(x)}),
\]
where $\Frob^{-1}$ denotes the inverse of the geometric Frobenius, and $|X|$ denotes the set of all closed points of $X$.
\end{prop}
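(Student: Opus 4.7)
The plan is a direct computation combining the exponential formula for the graded Frobenius trace of $\coChev^{\un}$ from Proposition~\ref{prop:graded_euler_decat} with the Grothendieck--Lefschetz trace formula on $X$, and then rearranging the resulting double sum into an Euler product via the classical identity $\prod_x \exp(f_x) = \exp(\sum_x f_x)$.

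First I reduce to the Koszul-dual side. By Koszul duality in the pro-nilpotent category $\Shv(X)^{\multgrplus{m}}$ (see~\S\ref{subsec:koszul_duality_ran}), write $\matheur{A}_+ \simeq \coChev(\mathfrak{a})$ with $\mathfrak{a} := \coPrim[1](\matheur{A}_+) \in \coLieshriek(\Shv(X)^{\multgrplus{m}})$; then by~\eqref{eq:coChev_vs_pi_?*} and \S\ref{subsubsec:unital_variant_pi_?*} one has $\pi_{?*}^{\un}\matheur{A} \simeq \coChev^{\un}(\pi_* \mathfrak{a})$. Graded-compactness of $\matheur{A}$ is inherited by $\mathfrak{a}$, by $\pi_* \mathfrak{a}$, and by each stalk $i_x^!\matheur{A}$, so all graded traces below are well-defined term by term; the Frobenius action on $\matheur{A}$ descends compatibly to each of these objects by functoriality.

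Applying Proposition~\ref{prop:graded_euler_decat} with operator $\Frob_q^{-1}$ to $\pi_*\mathfrak{a}$ yields
\[
    \chi^{\gr}_{\Frob_q^{-1}}(\pi_{?*}^{\un}\matheur{A},\,t) \;=\; \exp\!\left(-\sum_{n>0} \frac{1}{n}\,\chi^{\gr}_{\Frob_q^{-n}}(\pi_*\mathfrak{a},\,t^n)\right).
\]
For each graded component $\mathfrak{a}_\mathbf{d}$, the appropriate form of Grothendieck--Lefschetz (applied via the compactification $\lbar{X}$ that defines $\pi_{?*}$) gives
\[
    \chi_{\Frob_q^{-n}}(\pi_*\mathfrak{a}_\mathbf{d}) \;=\; \sum_{y \in X(\mathbb{F}_{q^n})} \chi_{\Frob_q^{-n}}(i_y^! \mathfrak{a}_\mathbf{d}).
\]
A closed point $x\in|X|$ of degree $d$ contributes $d$ distinct $\mathbb{F}_{q^n}$-points exactly when $d\mid n$, and at each of them $\Frob_q^{-n}$ restricts to $\Frob_x^{-n/d}$ on the stalk $i_x^!\mathfrak{a}_\mathbf{d}$. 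Writing $n = kd$ and summing over $\mathbf{d}$,
\[
    \sum_{n>0} \frac{1}{n}\,\chi^{\gr}_{\Frob_q^{-n}}(\pi_*\mathfrak{a},\,t^n) \;=\; \sum_{x\in|X|}\sum_{k>0} \frac{1}{k}\,\chi^{\gr}_{\Frob_x^{-k}}(i_x^!\mathfrak{a},\,t^{k\deg(x)}).
\]

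Taking the exponential converts the outer sum over $|X|$ into an infinite product; each factor is precisely the right-hand side of Proposition~\ref{prop:graded_euler_decat} applied to the stalk $i_x^!\mathfrak{a}$, with formal variable $t^{\deg(x)}$ and operator $\Frob_x^{-1}$. Since $i_x^!$ is symmetric monoidal and continuous it commutes with Koszul duality, so $i_x^!\matheur{A} \simeq \coChev^{\un}(i_x^!\mathfrak{a})$ and each factor equals $\chi^{\gr}_{\Frob_x^{-1}}(i_x^!\matheur{A},\,t^{\deg(x)})$, yielding the desired Euler product. Convergence in $\mathbb{Z}\llrrb{t}$ is automatic: each Euler factor has constant term $1$ and its next nonzero contribution starts at $t^{\deg(x)}$, while finite-type-ness of $X$ bounds the number of closed points of degree $\leq N$, so each coefficient of $t^N$ is a finite sum.

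The main obstacles are bookkeeping rather than conceptual: (i) tracking the sign and the $\Frob$-vs-$\Frob^{-1}$ normalization consistently between the global trace on $X$ and each local factor; (ii) verifying the Grothendieck--Lefschetz input in the $i_x^!$-convention used here, which reduces to the standard trace-formula package once one replaces $\pi_{?*}$ by $\pi_{\lbar{X},!}\circ j_*$; and (iii) justifying the interchange of the summations over $n$ and $x$, which is handled by the convergence argument above.
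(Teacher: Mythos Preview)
Your proof is correct and follows essentially the same route as the paper's: pass to the Koszul dual $\mathfrak{a}$, apply the exponential formula of Proposition~\ref{prop:graded_euler_decat} to $\coChev^{\un}(\pi_*\mathfrak{a})$, feed in Grothendieck--Lefschetz term by term, regroup the double sum over $n$ and closed points via $n=k\deg(x)$, and re-apply Proposition~\ref{prop:graded_euler_decat} at each $x$ to recognise the Euler factors. One small terminological slip: when you justify that $i_x^!$ commutes with Koszul duality, you say it is ``continuous'', but in this paper's conventions that means colimit-preserving, whereas what is actually needed for $\coChev$ (a limit) is that $i_x^!$ commutes with \emph{limits}; the paper states it this way.
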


Before we start the proof of Proposition~\ref{prop:multiplicative_trace}, let us recall the following (Verdier-dual) form of the Grothendieck-Lefschetz trace formula, see, for example~\cite{gaitsgory_weils_2014}*{Theorem 1.3.2}.

\begin{thm}[Grothendieck]
\label{thm:Grothendieck_Lefschetz_dual}
Let $X$ be a scheme and $\matheur{F} \in \Shv(X)$ a constructible sheaf. Assume that they both are pullbacks of objects defined over a finite field $\Fq$. Then, we have the following equality
\[
	\chi_{\Frob^{-1}_q}(C^*(X, \matheur{F})) = \sum_{x\in X(\Fq)} \chi_{\Frob^{-1}_q}(i_x^! \matheur{F}),
\]
where $\chi_{\Frob_q^{-1}}(-)$ denotes the trace of $\Frob_q^{-1}$.\footnote{This is sometimes denoted as $\tr(\Frob_q^{-1}, -)$.}
\end{thm}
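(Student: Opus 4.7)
The plan is to deduce Theorem~\ref{thm:Grothendieck_Lefschetz_dual} from the classical Grothendieck--Lefschetz trace formula by an application of Verdier duality. Since $\matheur{F}$ is a constructible sheaf pulled back from $\Fq$, the Verdier dual $\mathbb{D}_X \matheur{F}$ is again constructible and pulled back from $\Fq$. Applying the classical (not dual) trace formula to $\mathbb{D}_X \matheur{F}$ therefore yields
\[
  \chi_{\Frob_q}\!\bigl(C^*_c(X, \mathbb{D}_X\matheur{F})\bigr) \;=\; \sum_{x\in X(\Fq)} \chi_{\Frob_q}\!\bigl(i_x^*\, \mathbb{D}_X\matheur{F}\bigr).
\]

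I would then rewrite both sides using the standard six-functor compatibilities of Verdier duality. Letting $\pi\colon X \to \pt$ denote the structure map, the canonical equivalence $\pi_!\circ \mathbb{D}_X \simeq \mathbb{D}_{\pt}\circ \pi_*$ gives a Frobenius-equivariant identification $C^*_c(X, \mathbb{D}_X\matheur{F}) \simeq \mathbb{D}_{\pt}\bigl(C^*(X,\matheur{F})\bigr)$, and for each closed point $i_x\colon \{x\} \hookrightarrow X$ the equivalence $i_x^*\circ \mathbb{D}_X \simeq \mathbb{D}_{\{x\}}\circ i_x^!$ gives $i_x^*\,\mathbb{D}_X\matheur{F} \simeq \mathbb{D}_{\{x\}}(i_x^!\matheur{F})$. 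The classical identity above thus becomes
\[
  \chi_{\Frob_q}\!\bigl(\mathbb{D}_{\pt}(C^*(X,\matheur{F}))\bigr) \;=\; \sum_{x\in X(\Fq)} \chi_{\Frob_q}\!\bigl(\mathbb{D}_{\{x\}}(i_x^!\matheur{F})\bigr).
\]

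The remaining step is to understand how the Frobenius trace interacts with Verdier duality at the level of a point. For a perfect complex of $\Lambda$-vector spaces $V$ carrying a Frobenius endomorphism $F$ with eigenvalues $\{\lambda_i\}$, the Verdier dual $\mathbb{D}_{\pt} V$ is canonically $V^\vee$ endowed with the conjugation action $\phi \mapsto F\circ \phi\circ F^{-1}$ (equivalently $(F^{-1})^T$ after choosing a basis), whose eigenvalues are $\{\lambda_i^{-1}\}$. Consequently
\[
  \chi_{\Frob_q}\!\bigl(\mathbb{D}_{\pt}V\bigr) \;=\; \chi_{\Frob_q^{-1}}(V).
\]
Substituting this identity into both sides of the preceding display produces exactly the claimed formula
\[
  \chi_{\Frob^{-1}_q}\!\bigl(C^*(X,\matheur{F})\bigr) \;=\; \sum_{x\in X(\Fq)} \chi_{\Frob^{-1}_q}\!\bigl(i_x^!\matheur{F}\bigr).
\]

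The main obstacle is the last point, namely the careful verification that Verdier duality on a point inverts the Frobenius eigenvalues. This is a matter of pinning down conventions (geometric versus arithmetic Frobenius, the induced action on internal Hom) and confirming that under the chosen conventions the induced Frobenius on the dual is indeed the inverse transpose; a useful sanity check is $X = \mathbb{P}^1$ with $\matheur{F} = \Lambda$, where both sides evaluate to $1 + q^{-1}$. Once the eigenvalue inversion is established, the rest of the argument is a formal chain of base-change and duality compatibilities with the six functors.
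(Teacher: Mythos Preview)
The paper does not give a proof of this theorem: it is stated as a classical result attributed to Grothendieck and then immediately used in the proof of Proposition~\ref{prop:multiplicative_trace}. So there is no ``paper's own proof'' to compare against.

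Your derivation from the usual form of the Grothendieck--Lefschetz trace formula via Verdier duality is exactly the standard way one obtains this dual statement, and the argument is correct. One small wording issue: what you call ``the conjugation action $\phi\mapsto F\circ\phi\circ F^{-1}$'' on $V^\vee=\Hom(V,\Lambda)$ is really the contragredient action $\phi\mapsto \phi\circ F^{-1}$ (since Frobenius acts trivially on the coefficient field $\Lambda$), which is why the eigenvalues invert; your parenthetical ``equivalently $(F^{-1})^T$'' is the right description, so the conclusion $\chi_{\Frob_q}(\mathbb{D}_{\pt}V)=\chi_{\Frob_q^{-1}}(V)$ stands. The sanity check with $\mathbb{P}^1$ is also fine. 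Nothing further is needed.
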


\begin{proof}[Proof of Proposition~\ref{prop:multiplicative_trace}]

Let $\mathfrak{a}$ be the Koszul dual of $\matheur{A}_+$. Then for each $x\in |X|$, $i_x^! \mathfrak{a}$ is the Koszul dual of $i_x^! \matheur{A}_+$, since $i_x^!$ is symmetric monoidal and commute with limits. In particular,
\[
	i_x^! \matheur{A} \simeq \coChev^{\un}(i^!_x \mathfrak{a}). \teq\label{eq:i_x^!_vs_coChev}
\]

We have
\begin{align*}
	&\quad\,\, \chi^{\gr}_{\Frob_q^{-1}}(\pi_{?*}^{\un} \matheur{A}, t) \\
	&= \chi^{\gr}_{\Frob_q^{-1}}(\coChev^{\un} \pi_* \mathfrak{a}, t) \tag{\S\ref{subsubsec:unital_variant_pi_?*}} \\
	&= \exp\left(-\sum_{n>0}\frac{1}{n} \chi^{\gr}_{\Frob_q^{-n}}(\pi_* \mathfrak{a}, t^n)\right) \tag{Proposition~\ref{prop:graded_euler_decat}} \\
	&= \exp\left(-\sum_{n>0} \sum_{x\in X(\mathbb{F}_{q^n})} \frac{1}{n} \chi^{\gr}_{\Frob_q^{-n}}(i_x^! \mathfrak{a}, t^n) \right) \tag{Theorem~\ref{thm:Grothendieck_Lefschetz_dual}}\\
	&= \exp\left(-\sum_{n>0} \sum_{\substack{x\in |X| \\ \deg x | n}} \frac{\deg x}{n} \chi^{\gr}_{\Frob_x^{-n/\deg x}}(i_x^! \mathfrak{a}, t^n)\right) \\
	&= \exp\left(-\sum_{x\in |X|} \sum_{m > 0} \frac{1}{m} \chi^{\gr}_{\Frob_x^{-m}}(i_x^! \mathfrak{a}, t^{m\deg x})\right) \\
	&= \prod_{x\in |X|} \exp\left(-\sum_{m>0} \frac{1}{m} \chi^{\gr}_{\Frob_x^{-m}}(i_x^! \mathfrak{a}, t^{m\deg x})\right) \\
	&= \prod_{x\in |X|} \chi^{\gr}_{\Frob_x^{-1}}(i_x^! \matheur{A}, t^{\deg x}). \tag{Proposition~\ref{prop:graded_euler_decat} and~\eqref{eq:i_x^!_vs_coChev}}
\end{align*}

\end{proof}

\subsection{Homological stability}
\label{subsec:homological_stability_fact_coh}
We will now come to the second main result of the paper: homological stability of factorization cohomology with commutative factorization algebra coefficients. Thanks to the preparation that we have done so far, the result is now quite straightforward to prove.

\subsubsection{}
In this subsection, we will work with irreducible schemes of dimension $d$. Let $X$ be such a scheme. Then
\[
	\dim \Ho^{-2d}(X, \omega_X) = \dim \Ho^{2d}_c(X, \Lambda) = 1,
\]
where the first equality is due to Verdier duality. Moreover, we have a natural map of algebras
\[
	C^*_c(X, \Lambda) \to \Ho^{2d}_c(X, \Lambda)[-2d] \simeq \Lambda[-2d](-d)
\]
inducing an equivalence at cohomological degree $2d$, where the RHS is equipped with the trivial algebra structure. Dually, we have the following map of co-algebras
\[
	\Lambda[2d](d) \to C^*(X, \omega_X),
\]
inducing an equivalence at cohomological degree $-2d$, where the co-algebra structure on the LHS is trivial.

\subsubsection{}
In parallel to the situation in~\S\ref{subsec:homological_stability_of_coChev}, we have the following
\begin{defn}
\label{defn:d_shifted_unital}
An $\mathfrak{a}\in \coLie(\Vect^{\multgrplus{m}})$ is said to be $d$-shifted unital if it is equipped with a map of $\coLie$-coalgebras\footnote{Note that the extra Tate twist is to make sure that we will eventually fall into the setting of Remark~\ref{rmk:action_of_T}.}
\[
	\bigoplus_{k=1}^{m} \Lambda_{\unit_k}[-2d+1](-d) \to \mathfrak{a},
\]
where the LHS is (necessarily) equipped with the trivial $\coLie$-coalgebra structure.

Such an $\mathfrak{a}$ is said to be $d$-shifted strongly unital if it lives in cohomological degrees $\geq 2d-1$ and the map above induces an equivalence at cohomological degree $2d-1$.
\end{defn}

\begin{lem}
\label{lem:shifted_strongly_unital_to_strongly_unital}
Let $X$ be an irreducible scheme of dimension $d$, and $\mathfrak{a} \in \coLie(\Vect^{\multgrplus{m}})$ is a $d$-shifted strongly unital $\coLie$-coalgebra. Then
\[
	C^*(X, \omega_X) \otimes \mathfrak{a} \in \coLie^{\un}(\Vect^{\multgrplus{m}})
\]
is strongly unital.
\end{lem}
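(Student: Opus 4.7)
The plan is to construct the required unit map on $C^*(X,\omega_X)\otimes\mathfrak{a}$ by tensoring together the two partial unit maps already available: the coalgebra map $\Lambda[2d](d)\to C^*(X,\omega_X)$ recalled at the start of this subsection, and the $d$-shifted $\coLie$-unit map $\bigoplus_{k=1}^m \Lambda_{\unit_k}[-2d+1](-d)\to \mathfrak{a}$ coming from the assumption that $\mathfrak{a}$ is $d$-shifted strongly unital. Tensoring these and using the compatibility between the shifts/twists (which cancel to produce $[1]$) yields a candidate map
\[
\bigoplus_{k=1}^m \Lambda_{\unit_k}[1] \;\longrightarrow\; C^*(X,\omega_X)\otimes \mathfrak{a}.
\]

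Next, I would check that this candidate is indeed a $\coLie$-coalgebra map, where the source carries the trivial $\coLie$-structure. For this, recall that the $\coLie$-structure on $C^*(X,\omega_X)\otimes \mathfrak{a}$ is built from the cocommutative coalgebra structure on $C^*(X,\omega_X)$ and the $\coLie$-structure on $\mathfrak{a}$ (as recalled at~\eqref{eq:push_forward_constant_coLie}). Since both constituent maps are maps into targets with nontrivial structure from sources with trivial structure, and since tensoring a trivial cocommutative coalgebra with a trivial $\coLie$-coalgebra gives a trivial $\coLie$-coalgebra, functoriality of the tensor-of-structures construction forces the composite to be a map of $\coLie$-coalgebras out of a trivial source.

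It then remains to check the two cohomological conditions defining strong unitality. Since $X$ is irreducible of dimension $d$, the complex $C^*(X,\omega_X)$ lives in cohomological degrees $\geq -2d$, and $\mathfrak{a}$ lives in cohomological degrees $\geq 2d-1$ by assumption; hence $C^*(X,\omega_X)\otimes \mathfrak{a}$ lives in degrees $\geq -1$. For the equivalence at degree $-1$, the K\"unneth-type computation forces the only contribution to $\Ho^{-1}(C^*(X,\omega_X)\otimes \mathfrak{a})$ to come from the pair $(i,j)=(-2d,2d-1)$, giving
\[
\Ho^{-1}\bigl(C^*(X,\omega_X)\otimes \mathfrak{a}\bigr) \;\simeq\; \Ho^{-2d}(X,\omega_X)\otimes \Ho^{2d-1}(\mathfrak{a}).
\]
Since both tensor factors of our unit map induce equivalences at the extremal cohomological degrees ($-2d$ on the $\omega_X$-side and $2d-1$ on the $\mathfrak{a}$-side, by the $d$-shifted strong unitality hypothesis), the tensor-product map is an equivalence at degree $-1$.

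The main (mild) obstacle is the verification that the tensored map is genuinely a map of $\coLie$-coalgebras in the $\infty$-categorical sense, rather than merely being so at the level of underlying objects. This can be handled by invoking the compatible lifts described at~\eqref{eq:push_forward_constant_coLie} and the fact that the formation of $C^*(X,\omega_X)\otimes(-)$ upgrades to a functor $\coLie(\Vect^{\multgrplus{m}})\to \coLie^{\un}(\Vect^{\multgrplus{m}})$ (with unit supplied by $\Lambda[2d](d)\to C^*(X,\omega_X)$) on the nose; everything else is a degree/K\"unneth bookkeeping.
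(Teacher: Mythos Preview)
Your proposal is correct and follows essentially the same approach as the paper: the paper's proof consists solely of writing down the very same tensored unit map
\[
\bigoplus_{k=1}^m \Lambda_{\unit_k}[1] \simeq \Lambda[2d](d) \otimes \bigoplus_{k=1}^m \Lambda_{\unit_k}[-2d+1](-d) \to C^*(X, \omega_X) \otimes \mathfrak{a}
\]
and leaves the verification of the strong unitality conditions to the reader, while you additionally spell out the K\"unneth bookkeeping and the $\coLie$-compatibility that the paper suppresses.
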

\begin{proof}
Indeed, the unit map is given by
\[
	\bigoplus_{k=1}^m \Lambda_{\unit_k}[1] \simeq \Lambda[2d](d) \otimes \bigoplus_{k=1}^m \Lambda_{\unit_k}[-2d+1](-d) \to C^*(X, \omega_X) \otimes \mathfrak{a}.
\]
\end{proof}

\subsubsection{} We are now in the purview of Theorem~\ref{thm:homological_stability_coChev}, which could be used to investigate homological stability of the factorization cohomology of $X$ with coefficient in $\pi^! (\coChev \mathfrak{a}) \simeq \coChev(\pi^! \mathfrak{a})$ since, by~\eqref{eq:coChev_vs_pi_?*} and~\eqref{eq:push_forward_constant_coLie},
\[
	\pi_{?*} (\pi^! \coChev \mathfrak{a}) \simeq \coChev(C^*(X, \omega_X) \otimes \mathfrak{a}).
\]
In particular, the following result follows immediately from Corollary~\ref{cor:qualitative_homological_stab_fin_dim}.

\begin{thm}
\label{thm:homological_stab_fact_coh}
Let $X$ be an irreducible scheme of dimension $d$. Let $\matheur{A} \in \ComAlg(\Vect^{\multgrplus{m}})$ such that $\mathfrak{a} = \coPrim[1](\matheur{A})$ is a finite dimensional $d$-shifted strongly unital $\coLie$-coalgebra. Then,
\[
	\pi_{?*}^{\un} \pi^! \matheur{A} \simeq \coChev^{\un}(C^*(X, \omega_X) \otimes \mathfrak{a})
\]
satisfies homological stability.
\end{thm}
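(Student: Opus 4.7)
The plan is to reduce this statement directly to the algebraic homological stability result already established for cohomological Chevalley complexes in Corollary~\ref{cor:qualitative_homological_stab_fin_dim}. The key observation is that $\pi_{?*}^{\un}\pi^!\matheur{A}$ can be rewritten as the unital Chevalley complex of a concrete $\coLie$-coalgebra built from $\mathfrak{a}$, and this coalgebra turns out to satisfy precisely the hypotheses (strong unitality plus finite dimensionality) required for that corollary.

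First, I would establish the equivalence $\pi_{?*}^{\un}\pi^!\matheur{A}\simeq\coChev^{\un}(C^*(X,\omega_X)\otimes\mathfrak{a})$. This is a diagram chase combining three ingredients already in the paper: (i) $\pi^!\matheur{A}\simeq\coChev(\pi^!\mathfrak{a})$ via the commutation of $\pi^!$ with $\coChev$ in~\S\ref{subsec:koszul_duality_ran}; (ii) the formula $\pi_{?*}\circ\coChev\simeq\coChev\circ\pi_*$ from~\eqref{eq:coChev_vs_pi_?*}; and (iii) the identification $\pi_*\pi^!\mathfrak{a}\simeq C^*(X,\omega_X)\otimes\mathfrak{a}$ from~\eqref{eq:push_forward_constant_coLie}, where the $\coLie$-coalgebra structure on the right comes from the cocommutative coalgebra structure on $C^*(X,\omega_X)$ together with that of $\mathfrak{a}$. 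Passing to the unital version by definition of $\pi_{?*}^{\un}$ in~\S\ref{subsubsec:unital_variant_pi_?*} yields the stated equivalence.

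Second, I would check the hypotheses of Corollary~\ref{cor:qualitative_homological_stab_fin_dim} for $\mathfrak{b}:=C^*(X,\omega_X)\otimes\mathfrak{a}$. Strong unitality is exactly Lemma~\ref{lem:shifted_strongly_unital_to_strongly_unital}: the $d$-shifted unit map $\bigoplus_k\Lambda_{\unit_k}[-2d+1](-d)\to\mathfrak{a}$ combines with the coalgebra map $\Lambda[2d](d)\to C^*(X,\omega_X)$ (which is an equivalence in top cohomological degree since $X$ is irreducible of dimension $d$) to produce the unit $\bigoplus_k\Lambda_{\unit_k}[1]\to\mathfrak{b}$ that is an equivalence on $\Ho^{-1}$. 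Finite dimensionality of $\Ho^*(\mathfrak{b})$ follows from finite dimensionality of $\Ho^*(\mathfrak{a})$ (by hypothesis) together with the fact that $C^*(X,\omega_X)$ has finite dimensional cohomology in each degree and is bounded, since $X$ is a finite type scheme.

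With both conditions verified, Corollary~\ref{cor:qualitative_homological_stab_fin_dim} applied to $\mathfrak{b}$ immediately gives homological stability of $\coChev^{\un}\mathfrak{b}\simeq\pi_{?*}^{\un}\pi^!\matheur{A}$. There is no real obstacle here beyond carefully unwinding the definitions; the only mild subtlety is keeping track of the cohomological shift by $2d$ introduced when tensoring with $C^*(X,\omega_X)$, which is precisely what the ``$d$-shifted'' terminology in Definition~\ref{defn:d_shifted_unital} is designed to absorb so that the output lands in the non-shifted strongly unital setting of~\S\ref{subsec:homological_stability_of_coChev}.
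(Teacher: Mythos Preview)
Your proposal is correct and follows essentially the same approach as the paper: the paper states that the theorem ``follows immediately from Corollary~\ref{cor:qualitative_homological_stab_fin_dim}'' after having recorded the equivalence $\pi_{?*}(\pi^!\coChev\mathfrak{a})\simeq\coChev(C^*(X,\omega_X)\otimes\mathfrak{a})$ via~\eqref{eq:coChev_vs_pi_?*} and~\eqref{eq:push_forward_constant_coLie}, and having established strong unitality in Lemma~\ref{lem:shifted_strongly_unital_to_strongly_unital}. Your write-up is in fact slightly more detailed than the paper's, since you explicitly verify the finite-dimensionality hypothesis on $\Ho^*(C^*(X,\omega_X)\otimes\mathfrak{a})$, which the paper leaves implicit.
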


A couple of remarks are in order.

\begin{rmk}
\label{rmk:bounded_generation}
Morally speaking, Theorem~\ref{thm:homological_stab_fact_coh} states that factorization cohomology of an irreducible scheme $X$ of dimension $d$ with coefficients in a graded commutative algebra $\matheur{A}$ satisfies homological stability if the cotangent fiber of $\matheur{A}$ is finite dimensional.\footnote{Cotangent fiber in the sense~\cite{gaitsgory_weils_2014}*{\S6.1}. This is linked to our $\coPrim[1]$ via~\cite{gaitsgory_weils_2014}*{Thm. 6.1.10}.} The extra unital assumption is there just so that we can construct stabilization maps and hence to make sense of questions about homological stability.

In this sense, this result is closely related to the one of~\cite{kupers_$e_n$-cell_2018}. Indeed, the finiteness condition on $\mathfrak{a}$ (or more generally, the bound on the cohomological amplitudes stated in Theorem~\ref{thm:homological_stability_coChev}) could be viewed as the analog of the bounded generation property of loc. cit. As we mentioned in Remark~\ref{rmk:can_do_better_than_finite_dim}, this Theorem can be extended beyond the case of finite dimensionality. We choose not to do that since the numerics involved would look somewhat complicated,\footnote{Even though it would be just some shift of the conditions listed in Theorem~\ref{thm:homological_stability_coChev}.} and moreover, in practice, we can just apply Theorem~\ref{thm:homological_stability_coChev} directly to get stability if we care about the stable range.
\end{rmk}

\begin{rmk}
Note that by Theorem~\ref{thm:homological_stability_coChev}, the homological stability range of $\pi_{?*} \matheur{A}$ depends solely on the cohomological amplitudes of $\Ho^*(X, \omega_X)$ and $\Ho^*(\mathfrak{a})$. In~\S\ref{sec:cohomology_of_Z_m_n}, we will present many examples where the ranges for homological stability can be easily extracted from Theorem~\ref{thm:homological_stability_coChev}.
\end{rmk}

\subsection{Variants}
\label{subsec:variants_of_otimesstar}
For the application of this paper, we also need various variants of the $\otimesstar$-monoidal structure, along with the corresponding notions of commutative algebras and commutative factorization algebras. More specifically, we will need an analog of the $\otimesstar$-monoidal structure on $\Shv(\Ran(X, \multgrplus{m})_\mathbf{n})$. Just like in the case of $\Ran(X, \multgrplus{m})$, this is done by equipping $\Ran(X, \multgrplus{m})_\mathbf{n}$ with the structure of a commutative semi-group, and then, the monoidal structure is defined as convolution. However, this time, it is a commutative semi-group object in the category $\Corr(\PreStk)$ of correspondences in prestacks.

This theory is developed in~\cite{raskin_chiral_2015}. We used it in~\cite{ho_free_2017} to study the cohomology of configuration spaces. The construction we give here is a variant of the one given there.

\subsubsection{} Since $\Ran(X, \multgrplus{m})$ is a semi-group object in $\PreStk$, it is automatically a semi-group object in $\Corr(\PreStk)$, i.e. the multiplication maps are given by
\[
\xymatrix{
	& \Ran(X, \multgrplus{m})^k \ar@{=}[dl] \ar[dr]^{\union} \\
	\Ran(X, \multgrplus{m})^k && \Ran(X, \multgrplus{m})
}
\]
From this, we can equip $\Ran(X, \multgrplus{m})_\mathbf{n}$ with the structure of a semi-group object by pulling back the multiplication map (i.e. $\union$) along the open morphism
\[
	\iota_\mathbf{n}: \Ran(X, \multgrplus{m})_\mathbf{n} \hookrightarrow \Ran(X, \multgrplus{m}).
\]
Namely, the commutative semi-group structure on $\Ran(X, \multgrplus{m})_\mathbf{n}$ is given by
\[
\xymatrix{
	& \Ran(X, \multgrplus{m})^k \times_{\Ran(X, \multgrplus{m})} \Ran(X, \multgrplus{m})_\mathbf{n} \ar[dl]_j \ar[dr]^{\union} \\
	(\Ran(X, \multgrplus{m})_\mathbf{n})^k && \Ran(X, \multgrplus{m})_\mathbf{n}
} \teq\label{eq:semi_group_Ran_n}
\]
Note that $\union$ is pseudo-proper, since it is a pullback of a pseudo-proper morphism. Moreover, we see easily that $j$ is an open embedding.

\subsubsection{} We are now ready to define the $\otimesstar$-monoidal structure on $\Shv(\Ran(X, \multgrplus{m})_\mathbf{n})$: for $\matheur{F}_1, \dots, \matheur{F}_k \in \Shv(\Ran(X, \multgrplus{m})_\mathbf{n})$, we define
\[
	\matheur{F}_1 \otimesstar \dots \otimesstar \matheur{F}_k = \union_! j^! (\matheur{F}_1 \boxtimes \dots \boxtimes \matheur{F}_k) \simeq \union_! j^* (\matheur{F}_1 \boxtimes \dots \boxtimes \matheur{F}_k),
\]
where the equivalence is due to the fact that $j$ is an open embedding.

As in the case of $\Ran(X, \multgrplus{m})$, we will use
\[
	\ComAlgstar(\Ran(X, \multgrplus{m})_\mathbf{n}) = \ComAlg(\Shv(\Ran(X, \multgrplus{m})_\mathbf{n})^{\otimesstar})
\]
to denote the category of commutative algebra objects in $\Shv(\Ran(X, \multgrplus{m})_\mathbf{n})$ with respect to the $\otimesstar$-monoidal structure. 

We define the full subcategory
\[
	\Factstar(X, \multgrplus{m})_\mathbf{n} \subset \ComAlgstar(\Ran(X, \multgrplus{m})_\mathbf{n})
\]
consisting of factorizable objects, namely, those commutative algebras $\matheur{F}$ such that the
maps (obtained from the multiplication maps by adjunction)
\[
	j^! (\matheur{F} \boxtimes \dots \boxtimes \matheur{F}) \to \union^! \matheur{F} \teq\label{eq:adjoint_mult_Ran_n}
\]
induce equivalences
\[
	(j^! (\matheur{F} \boxtimes \dots \boxtimes \matheur{F}))|_{(\Ran(X, \multgrplus{m})_\mathbf{n})^k_{\disj}} \to (\union^! \matheur{F})|_{(\Ran(X, \multgrplus{m})_\mathbf{n})^k_{\disj}}.
\]

\subsubsection{} We will now study the interaction between the $\otimesstar$-monoidal structures on $\Shv(\Ran(X, \multgrplus{m})_\mathbf{n})$ and on $\Shv(\Ran(X, \multgrplus{m}))$. We start with the following
\begin{lem}
The functor
\[
	\iota^!_\mathbf{n} \simeq \iota^*_\mathbf{n}: \Shv(\Ran(X, \multgrplus{m})) \to \Shv(\Ran(X, \multgrplus{m})_\mathbf{n})
\]
is symmetric monoidal with respect to the $\otimesstar$-monoidal structures on both sides. As a result, its right adjoint $\iota_{\mathbf{n},*}$ is right-lax monoidal.
\end{lem}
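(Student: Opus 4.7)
The plan is to verify the pointwise equivalence
\[
	\iota^!_\mathbf{n}(\matheur{F}_1 \otimesstar \cdots \otimesstar \matheur{F}_k) \simeq \iota^!_\mathbf{n}\matheur{F}_1 \otimesstar \cdots \otimesstar \iota^!_\mathbf{n}\matheur{F}_k
\]
by a direct base-change computation; the symmetric monoidal coherences will then follow formally from the naturality of all the equivalences invoked, and the right-lax monoidality of $\iota_{\mathbf{n},*}$ is automatic by adjunction.

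First, I would expand the left-hand side by definition as $\iota^!_\mathbf{n}\,\union_!(\matheur{F}_1 \boxtimes \cdots \boxtimes \matheur{F}_k)$. The semi-group structure~\eqref{eq:semi_group_Ran_n} on $\Ran(X,\multgrplus{m})_\mathbf{n}$ was constructed precisely by pulling back $\union$ along the open embedding $\iota_\mathbf{n}$, so in particular we have a cartesian square
\[
\xymatrix{
	P \ar[r]^{\union'} \ar[d]_{j'} & \Ran(X,\multgrplus{m})_\mathbf{n} \ar[d]^{\iota_\mathbf{n}} \\
	\Ran(X,\multgrplus{m})^k \ar[r]^{\union} & \Ran(X,\multgrplus{m})
}
\]
in which $P$ is the fibre product and $j' = \iota_\mathbf{n}^{\times k}\circ j$, with $j: P \hookrightarrow (\Ran(X,\multgrplus{m})_\mathbf{n})^k$ the open embedding appearing in~\eqref{eq:semi_group_Ran_n}. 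The map $\union$ is pseudo-proper, so base change applies and produces $\iota^!_\mathbf{n}\,\union_! \simeq \union'_!\, j'^!$. Combining this with the standard identity $(\iota_\mathbf{n}^{\times k})^!(\matheur{F}_1 \boxtimes \cdots \boxtimes \matheur{F}_k) \simeq \iota^!_\mathbf{n}\matheur{F}_1 \boxtimes \cdots \boxtimes \iota^!_\mathbf{n}\matheur{F}_k$ for box products yields
\[
	\iota^!_\mathbf{n}(\matheur{F}_1 \otimesstar \cdots \otimesstar \matheur{F}_k) \simeq \union'_!\, j^!(\iota^!_\mathbf{n}\matheur{F}_1 \boxtimes \cdots \boxtimes \iota^!_\mathbf{n}\matheur{F}_k) = \iota^!_\mathbf{n}\matheur{F}_1 \otimesstar \cdots \otimesstar \iota^!_\mathbf{n}\matheur{F}_k,
\]
the right-hand equality being the definition of $\otimesstar$ on $\Shv(\Ran(X,\multgrplus{m})_\mathbf{n})$.

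The only subtlety, though I do not expect it to present real difficulty, is upgrading this equivalence of underlying functors to a \emph{symmetric monoidal} structure, i.e. one compatible with associativity, units, and the $\Sigma_k$-actions. However, both $\otimesstar$-structures are produced by the same convolution recipe from commutative semi-group structures in prestacks (the one on $\Ran(X,\multgrplus{m})_\mathbf{n}$ obtained from the one on $\Ran(X,\multgrplus{m})$ by pullback along $\iota_\mathbf{n}$), and each equivalence invoked above---base change along a pseudo-proper map, and the compatibility of $!$-pullback with $\boxtimes$---is natural in all inputs and functorial in $k$. The higher coherences therefore propagate automatically, and $\iota^!_\mathbf{n}$ acquires the required symmetric monoidal structure; the right-lax monoidality of $\iota_{\mathbf{n},*}$ is then formal.
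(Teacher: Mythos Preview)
Your proof is correct and follows essentially the same approach as the paper: the paper's one-sentence proof invokes precisely the base-change theorem for the pseudo-proper map $\union$ together with the fact that the semi-group structure on $\Ran(X,\multgrplus{m})_\mathbf{n}$ is defined by pullback along $\iota_\mathbf{n}$, and you have simply spelled out this base-change computation in detail.
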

\begin{proof}
This follows immediately from the base-change theorem for pseudo-proper morphisms~\S\ref{subsubsec:pseudo_proper_basechange} and the fact that the semi-group structure on $\Ran(X, \multgrplus{m})_\mathbf{n}$ is defined via pulling-back along the pseudo-proper map $\union$ for $\Ran(X, \multgrplus{m})$.
\end{proof}

\begin{cor}
The adjoint pair $\iota^*_\mathbf{n} \dashv \iota_{\mathbf{n}, *}$ upgrades to a pair of adjoint functors between the corresponding commutative algebra objects:
\[
	\iota^*_\mathbf{n} \simeq \iota^!_\mathbf{n}: \ComAlgstar(\Ran(X, \multgrplus{m})) \rightleftarrows \ComAlgstar(\Ran(X, \multgrplus{m})_\mathbf{n}): \iota_{\mathbf{n}, *}.
\]
\end{cor}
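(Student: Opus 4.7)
The plan is to treat this as a purely formal consequence of the preceding lemma, since the hard content (that $\iota^*_\mathbf{n} \simeq \iota^!_\mathbf{n}$ is strongly symmetric monoidal, and that $\iota_{\mathbf{n},*}$ is right-lax monoidal) has already been established there. What remains is essentially a general principle: an adjunction whose left adjoint is symmetric monoidal and whose right adjoint is right-lax monoidal automatically lifts to an adjunction between the associated categories of commutative algebra objects.

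More concretely, I would proceed as follows. First, the preceding lemma gives that $\iota^*_\mathbf{n}$ is a symmetric monoidal functor and $\iota_{\mathbf{n}, *}$ is its right-lax symmetric monoidal right adjoint, both with respect to the $\otimesstar$-monoidal structures. Since strongly symmetric monoidal functors trivially preserve commutative algebra objects, $\iota^*_\mathbf{n}$ induces a functor
\[
\iota^*_\mathbf{n}: \ComAlgstar(\Ran(X, \multgrplus{m})) \to \ComAlgstar(\Ran(X, \multgrplus{m})_\mathbf{n}).
\]
Right-lax symmetric monoidal functors equally preserve commutative algebra objects (this is one of the basic properties of the $\infty$-operadic framework, cf.~\cite{lurie_higher_2017-1}), so $\iota_{\mathbf{n}, *}$ induces a functor in the opposite direction.

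Next, I would invoke the standard $\infty$-categorical fact (see~\cite{lurie_higher_2017-1}*{\S7.3.2}, or equivalently the relative adjoint functor theorem for maps of $\infty$-operads) which asserts that whenever an adjunction $L \dashv R$ between symmetric monoidal $\infty$-categories has $L$ strongly symmetric monoidal (so that the unit and counit are compatible with the monoidal structures in the required lax sense), the induced functors on commutative algebra objects form an adjunction with the same unit and counit. Applying this to $L = \iota^*_\mathbf{n}$ and $R = \iota_{\mathbf{n}, *}$ yields the required adjunction at the level of $\ComAlgstar$.

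There is no real obstacle here: everything reduces to citing the general machinery, and the only thing that had to be checked by hand—the compatibility of the monoidal structures with base-change along the pseudo-proper morphism $\union$—was already dealt with in the preceding lemma. If one wanted to be entirely self-contained, the unit/counit compatibility can be checked directly: the unit $\id \to \iota_{\mathbf{n}, *}\iota^*_\mathbf{n}$ and counit $\iota^*_\mathbf{n} \iota_{\mathbf{n}, *} \to \id$ are natural transformations of (lax) symmetric monoidal functors (the latter is the counit of a symmetric monoidal adjunction, hence automatic; the former inherits lax structure from $\iota_{\mathbf{n}, *}$), which is precisely what is needed to promote the adjunction to algebra objects.
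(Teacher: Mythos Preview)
Your proposal is correct and matches the paper's approach: the paper states this corollary without proof immediately after the lemma establishing that $\iota^*_\mathbf{n}$ is symmetric monoidal (hence $\iota_{\mathbf{n},*}$ is right-lax), treating it as a formal consequence of exactly the general principle you invoke. Your write-up simply makes explicit the standard $\infty$-categorical fact the paper leaves implicit.
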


\subsubsection{} The adjoint pair above also behaves nicely with respect to factorizability.
\begin{lem}
The functors $\iota_\mathbf{n}^*$ and $\iota_{\mathbf{n}, *}$ preserve factorizability. Hence, we have a pair of adjoint functors
\[
	\iota^!_\mathbf{n} \simeq \iota^*_\mathbf{n}: \Factstar(X, \multgrplus{m}) \rightleftarrows \Factstar(X, \multgrplus{m})_\mathbf{n}: \iota_{\mathbf{n},*}.
\]
\end{lem}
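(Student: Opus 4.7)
The plan is to reduce both statements to one key geometric fact and then verify the factorization condition by a diagram chase. The fact I would establish first is that the open condition cutting out $\Ran(X, \multgrplus{m})_\mathbf{n}$ inside $\Ran(X, \multgrplus{m})$ is \emph{local on the disjoint locus}: a union $(I_1, \phi_1) \cup \cdots \cup (I_k, \phi_k)$ of pairwise disjoint colored subsets of $X$ lies in $\Ran(X, \multgrplus{m})_\mathbf{n}$ if and only if each $(I_i, \phi_i)$ does. This is immediate from the definition of $\Ran(X, \multgrplus{m})_\mathbf{n}$ in~\S\ref{subsubsec:Ran_n_def}, since the multiplicity constraint at each $x\in X$ only involves the unique $I_i$ containing $x$. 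Equivalently, the square
\[
\xymatrix{
(\Ran(X, \multgrplus{m})_\mathbf{n})^k_{\disj} \ar[d]_{\union} \ar[r]^{(\iota_\mathbf{n})^k} & \Ran(X, \multgrplus{m})^k_{\disj} \ar[d]^{\union} \\
\Ran(X, \multgrplus{m})_\mathbf{n} \ar[r]^{\iota_\mathbf{n}} & \Ran(X, \multgrplus{m})
}
\]
is Cartesian, with horizontal maps open embeddings.

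For the first functor: given $\matheur{A} \in \Factstar(X, \multgrplus{m})$, the factorizability map~\eqref{eq:adjoint_mult_Ran_n} for $\iota_\mathbf{n}^* \matheur{A}$ is obtained, via the base-change in the square above (using the compatibility $\iota_\mathbf{n}^* \simeq \iota_\mathbf{n}^!$ for open embeddings), from the restriction of the factorizability map~\eqref{eq:adjoint_mult_Ran} for $\matheur{A}$ along the open embedding $(\iota_\mathbf{n})^k \cap \disj$. Since the latter is already an equivalence on $\Ran(X, \multgrplus{m})^k_{\disj}$, its restriction is an equivalence on $(\Ran(X, \multgrplus{m})_\mathbf{n})^k_{\disj}$, as required.

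For the second functor: because $\iota_\mathbf{n}$ is schematic (being an open embedding), the $*$-pushforward commutes with $\boxtimes$ and satisfies base change, so that
\[
(\iota_{\mathbf{n},*} \matheur{B})^{\boxtimes k} \simeq (\iota_\mathbf{n}^{\times k})_*(\matheur{B}^{\boxtimes k}).
\]
Applying $j^!$ on $\Ran(X, \multgrplus{m})^k_{\disj}$ and using the Cartesian square above together with base change for $*$-pushforward, one identifies the source and target of the factorizability map~\eqref{eq:adjoint_mult_Ran} for $\iota_{\mathbf{n},*} \matheur{B}$ (restricted to $\Ran(X, \multgrplus{m})^k_{\disj}$) with the $\iota_{\mathbf{n},*}$-pushforward of the source and target of the factorizability map~\eqref{eq:adjoint_mult_Ran_n} for $\matheur{B}$ (restricted to $(\Ran(X, \multgrplus{m})_\mathbf{n})^k_{\disj}$). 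The factorization property of $\matheur{B}$ then yields the desired equivalence after pushforward.

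The main thing to be careful about is the rigorous justification of the base-change and compatibility-with-$\boxtimes$ steps for $\iota_{\mathbf{n},*}$ in the prestack formalism, i.e. making sure $\iota_\mathbf{n}$ and its self-products are schematic so that the relevant six-functor identities from the introduction apply. Given the Cartesian diagram and that $\iota_\mathbf{n}$ is an open embedding, everything else reduces to a formal diagram chase; the adjunction assertion at the end of the lemma is then automatic from the passage to full subcategories.
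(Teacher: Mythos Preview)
Your argument is correct and follows essentially the same strategy as the paper's proof. The only cosmetic difference is the Cartesian square you foreground: you use the square with $\union$ on the disjoint loci, whereas the paper uses the square involving the correspondence $\Ran(X,\multgrplus{m})^k \times_{\Ran(X,\multgrplus{m})} \Ran(X,\multgrplus{m})_\mathbf{n}$ mapping to $\Ran(X,\multgrplus{m})^k$; both encode the same ``locality of the $\mathbf{n}$-condition on the disjoint locus,'' and the remaining steps (functoriality of $(-)^!$ for $\iota_\mathbf{n}^*$, base change for $(-)_*$ against $(-)^!$ for $\iota_{\mathbf{n},*}$) are identical.
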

\begin{proof}
We start with the case for $\iota_\mathbf{n}^!$. The statement follows from the following pullback diagram
\[
\xymatrix{
	(\Ran(X, \multgrplus{m})_\mathbf{n})^k_\disj \ar[d] \ar[r] & \Ran(X, \multgrplus{m})^k \times_{\Ran(X, \multgrplus{m})} \Ran(X, \multgrplus{m})_\mathbf{n} \ar[d]^j \\
	\Ran(X, \multgrplus{m})^k_{\disj}\ar[r] & \Ran(X, \multgrplus{m})^k
}
\]
Indeed, for $\matheur{F} \in \Factstar(X, \multgrplus{m})$, the maps
\[
	j^! (\matheur{F} \boxtimes \dots \boxtimes \matheur{F}) \to \union^! \matheur{F},
\]
of~\eqref{eq:adjoint_mult_Ran_n} are obtained from the corresponding maps for $\Ran(X, \multgrplus{m})$,~\eqref{eq:adjoint_mult_Ran}, by pulling along $j$. Proving factorizability amounts to showing that pulling this back further to $(\Ran(X, \multgrplus{m})_\mathbf{n})^k_\disj$ along the top horizontal arrow gives an equivalence. But by functoriality of $(-)^!$, we can pullback along the other circuit of the diagram above. As an intermediate step, however, when we pull back to $\Ran(X, \multgrplus{m})^k_{\disj}$, we get an equivalence, provided by the given factorizability condition on $\Ran(X, \multgrplus{m})$, and we are done.

The case for $\iota_{\mathbf{n}, *}$ is argued similarly, using base change for $(-)_*$ and $(-)^!$ in place of functoriality of $(-)^!$.
\end{proof}

\subsubsection{} 
Now, we want to understand what $\iota_{\mathbf{n}, *} \iota^!_\mathbf{n}$ does to commutative factorization algebras. More specifically, we want to understand the following composition of functors (the dotted arrow below)
\[
\xymatrix{
	\ComAlgshriek(\Shv(X)^{\multgrplus{m}}) \ar[r]^>>>>>{\delta_?}_>>>>>\simeq \ar@{.>}[dd] & \Factstar(X, \multgrplus{m}) \ar[d]^{\iota_\mathbf{n}^!} \\
	& \Factstar(X, \multgrplus{m})_\mathbf{n} \ar[d]^{\iota_{\mathbf{n}, *}} \\
	\ComAlgshriek(\Shv(X)^{\multgrplus{m}}) & \Factstar(X, \multgrplus{m}) \ar[l]_>>>>>>{\delta^!}^>>>>>>\simeq
}
\]

Consider the following pullback diagram
\[
\xymatrix{
	(\multgrplus{m} \times X)_\mathbf{n} \ar[r]^<<<<<{\delta} \ar[d]_{\iota_\mathbf{n}} & \Ran(X, \multgrplus{m})_\mathbf{n} \ar[d]_{\iota_\mathbf{n}} \ar[r] & \gConf{m}{\mathbf{n}}(X) \ar[d] \\
	\multgrplus{m} \times X \ar[r]^<<<<<<\delta & \Ran(X, \multgrplus{m}) \ar[r] & \gConf{m}{\infty}(X)
}
\]
where $(\multgrplus{m} \times X)_\mathbf{n} \subset \multgrplus{m} \times X$ consists of copies of $X$ indexed by $\mathbf{d}$ where for at least one $k, 1\leq k\leq m$, we have $\mathbf{d}_k < \mathbf{n}_k$. Now,
\[
	\delta^! \circ \iota_{\mathbf{n}, *} \circ \iota_\mathbf{n}^! \circ \delta_? \simeq \iota_{\mathbf{n}, *} \circ \delta^! \circ \iota_\mathbf{n}^! \circ \delta_? \simeq \iota_{\mathbf{n}, *} \circ \iota_\mathbf{n}^! \circ \delta^! \circ \delta_? \simeq \iota_{\mathbf{n}, *} \circ \iota_\mathbf{n}^!,
\]
where the first equivalence is by base change~\S\ref{subsubsec:base_change_for_*}, the second by commutativity of the diagram, and the third by Proposition~\ref{prop:comalg_vs_comfact}. But now, $\iota_{\mathbf{n}, *} \circ \iota_\mathbf{n}^!$ (on the left side of the diagram) is easy to understand: for $\matheur{F} \in \Shv(X)^{\multgrplus{m}}$ we have
\[
	(\iota_{\mathbf{n}, *}(\iota_\mathbf{n}^! \matheur{F}))_{\mathbf{d}} \simeq \begin{cases}
		0, & \text{if } \mathbf{d}_k \geq \mathbf{n}_k, 1\leq k\leq m, \\
		\matheur{F}_{\mathbf{d}}, & \text{otherwise}.
	\end{cases}
\]

We have thus proved the following
\begin{prop}
\label{prop:taking_quotient_setting_some_d_to_0}
Let $\matheur{A} \in \ComAlgshriek(\Shv(X)^{\multgrplus{m}})$, then the commutative algebra $\matheur{A}' = \delta^!(\iota_{\mathbf{n}, *}(\iota_\mathbf{n}^!(\delta_?(\matheur{A}))))$ is obtained from $\matheur{A}$ by setting all components $\matheur{A}_\mathbf{d}$ to $0$, for all $\mathbf{d}$ such that $\mathbf{d}_k \geq \mathbf{n}_k$, for all $1\leq k\leq m$.
\end{prop}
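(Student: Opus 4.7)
The plan is to reduce the composite $\delta^! \circ \iota_{\mathbf{n},*} \circ \iota_{\mathbf{n}}^! \circ \delta_?$ to just $\iota_{\mathbf{n},*} \circ \iota_{\mathbf{n}}^!$ acting on $\matheur{A}$ in $\Shv(X)^{\multgrplus{m}}$, and then to compute that operation directly from the concrete geometry of $\multgrplus{m}\times X$.

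First I would exploit the Cartesian square
\[
\xymatrix{
(\multgrplus{m}\times X)_{\mathbf{n}} \ar[r]^<<<<{\delta} \ar[d]_{\iota_{\mathbf{n}}} & \Ran(X,\multgrplus{m})_{\mathbf{n}} \ar[d]^{\iota_{\mathbf{n}}} \\
\multgrplus{m}\times X \ar[r]^<<<<{\delta} & \Ran(X,\multgrplus{m})
}
\]
displayed immediately before the statement. Since $\iota_{\mathbf{n}}$ is an open embedding (hence schematic), base change for $(-)_*$ against $(-)^!$ gives $\delta^! \circ \iota_{\mathbf{n},*} \simeq \iota_{\mathbf{n},*} \circ \delta^!$, while functoriality of $(-)^!$ applied to the commutative square gives $\delta^! \circ \iota_{\mathbf{n}}^! \simeq \iota_{\mathbf{n}}^! \circ \delta^!$. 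Combining these two commutations and invoking the equivalence $\delta^! \circ \delta_? \simeq \id$ from Proposition~\ref{prop:comalg_vs_comfact} yields
\[
\delta^! \circ \iota_{\mathbf{n},*} \circ \iota_{\mathbf{n}}^! \circ \delta_? \simeq \iota_{\mathbf{n},*} \circ \iota_{\mathbf{n}}^! \circ \delta^! \circ \delta_? \simeq \iota_{\mathbf{n},*} \circ \iota_{\mathbf{n}}^!
\]
as endofunctors of $\Shv(X)^{\multgrplus{m}}$.

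It only remains to identify $\iota_{\mathbf{n},*}\circ \iota_{\mathbf{n}}^!$ on $\Shv(X)^{\multgrplus{m}}\simeq \Shv(\multgrplus{m}\times X)$. By construction, $\iota_{\mathbf{n}}:(\multgrplus{m}\times X)_{\mathbf{n}}\hookrightarrow \multgrplus{m}\times X$ is the open-and-closed inclusion of the direct summand indexed by those $\mathbf{d}$ for which some $\mathbf{d}_k < \mathbf{n}_k$. For any such inclusion of a subset of components in a disjoint union, $\iota_{\mathbf{n},*}\iota_{\mathbf{n}}^!$ acts on each graded degree independently: it preserves $\matheur{A}_\mathbf{d}$ on the retained components and sends $\matheur{A}_\mathbf{d}$ to $0$ on the discarded ones, namely those $\mathbf{d}$ with $\mathbf{d}_k\geq \mathbf{n}_k$ for every $k$. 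This matches the description of $\matheur{A}'$ in the statement. I do not expect any substantive obstacle, since the argument is entirely formal; the only point worth double-checking is that the square above is genuinely Cartesian, which follows directly from the definition of $\Ran(X,\multgrplus{m})_{\mathbf{n}}$ in~\S\ref{subsubsec:Ran_n_def}.
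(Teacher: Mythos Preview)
Your proposal is correct and follows essentially the same route as the paper: the paper also uses the Cartesian square involving $(\multgrplus{m}\times X)_{\mathbf{n}}$, applies base change to swap $\delta^!$ past $\iota_{\mathbf{n},*}$, then functoriality of $(-)^!$ to swap it past $\iota_{\mathbf{n}}^!$, and finally invokes Proposition~\ref{prop:comalg_vs_comfact} for $\delta^!\circ\delta_?\simeq\id$, before reading off the effect of $\iota_{\mathbf{n},*}\iota_{\mathbf{n}}^!$ componentwise.
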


\section{Cohomology of $\gConf{m}{\mathbf{n}}(X)$ as factorization cohomology}
\label{sec:cohomology_of_Z_m_n}

We will now use the theory developed so far to study cohomology of the spaces $\gConf{m}{\mathbf{n}}(X)$ (see Notation~\ref{notation:gConf_altogether}). As we have seen, the cohomology of $\gConf{m}{\mathbf{n}}(X)_+$ is equivalent, i.e. quasi-isomorphic, to that of $\Ran(X, \multgrplus{m})_\mathbf{n}$, and similarly for $\gConf{m}{\infty}(X)_+$ and $\Ran(X, \multgrplus{m})$. We can thus perform our manipulations on the $\Ran$ side.

Since this section denotes different (but related) objects with the same letter, differing only by the fonts used, let us quickly orient the readers with the notation. In general, we adopt the following convention: the ``fancier'' the object, the ``fancier'' the font used to denote it. We start with augmented unital algebras $\algOr{m}{\mathbf{n}}$ in $\Vect^{\multgr{m}}$, along with its augmentation ideal $(\algOr{m}{\mathbf{n}})_+$, and its Koszul dual $\liealgOr{m}{\mathbf{n}}$. These objects are very simple, as reflected by the font. !-pulling back to $X$ gives us corresponding objects in $\Shv(X)^{\multgr{m}}$, which we will denote by $\algOr{m}{\mathbf{n}}(X), (\algOr{m}{\mathbf{n}}(X))_+$, and $\liealgOr{m}{\mathbf{n}}(X)$ respectively. Taking factorization cohomology of $(\algOr{m}{\mathbf{n}}(X))_+$, we obtain $(\alg{m}{\mathbf{n}}(X))_+$, which is the augmentation ideal of $\alg{m}{\mathbf{n}}(X)$, and which has Koszul dual $\liealg{m}{\mathbf{n}}(X)$. These objects are more complicated, also reflected by the font. Indeed, $\alg{m}{\mathbf{n}}(X)$ is precisely the cohomology of $\gConf{m}{\mathbf{n}}(X)$. Finally, we use $\algb{m}{\mathbf{n}}(X)$ to denote the stabilization of $\alg{m}{\mathbf{n}}(X)$.

We will now briefly review the content of this section. We start, in~\S\ref{subsec:commutative_factorization_algebras_Zmn}, with the definition of the algebras $\algOr{m}{\mathbf{n}}$ and show that the factorization cohomology with coefficients in $\algOr{m}{\mathbf{n}}(X)$ computes the cohomology of $\gConf{m}{\mathbf{n}}(X)$. The algebras $\algOr{m}{\mathbf{n}}$ are of a very simple nature: they are the quotient of a symmetric algebra $\algOr{m}{\infty}$ by one monomial. In~\S\ref{subsec:densities_algOr}, we perform the simple computation of $\Lambda \otimes_{\algOr{m}{\infty}} \algOr{m}{\mathbf{n}}$ where coincidences readily manifest themselves. Via factorization cohomology, these coincidences get translated to the level of cohomology of the spaces $\gConf{m}{\mathbf{n}}(X)$. In~\S\ref{subsec:coLie_coalgebras_Zmn}, we compute the Koszul duals of the $\algOr{m}{\mathbf{n}}$'s. It is shown in~\S\ref{subsec:homological_stability_Zmn} that homological stability for $\gConf{m}{\mathbf{n}}(X)$ is now a direct consequence of Theorem~\ref{thm:homological_stability_coChev}. In~\S\ref{subsec:densities_at_infty}, building on the coincidences among the densities of $\alg{m}{\mathbf{n}}(X)$, we prove coincidences at the limits, i.e. of $\algb{m}{\mathbf{n}}(X)$. In~\S\ref{subsec:decategorifications_Zmn}, we decategorify these coincidences to get interesting identities, some of which are new, others are known previously but only as combinatorial coincidences. Finally, in~\S\ref{subsec:L_infty_algebras_computation}, through a simple $\Linfty$-algebra computation, we deal with the case of \Poincare{} polynomials, which is less well-behaved. Using the same technique, we obtain a simple expression for the stable homological density $\Lambda \otimes_{\algb{m}{\infty}(X)} \algb{m}{\mathbf{n}}(X)$.

Throughout this section, we fix an irreducible scheme $X$ of dimension $d$. In what follows, the algebras that we define will have their cohomological gradings depending on this $d$. We will, however, suppress it from the notation unless confusion is likely to occur.

\subsection{Commutative factorization algebras} 
\label{subsec:commutative_factorization_algebras_Zmn}
We will now produce commutative factorization algebras whose factorization cohomology groups compute the cohomology of $\gConf{m}{\mathbf{n}}(X)$.

\subsubsection{}
Let
\[
	\freeAlgOr{m} \in \ComAlg^{\un, \aug}(\Vect^{\multgr{m}})
\]
be the free (unital) commutative algebra generated by
\[
	\bigoplus_{k=1}^m \Lambda_{\unit_k}[-2d](-d).\teq\label{eq:generators_of_freeAlgOr}
\]
We write symbolically
\[
	\freeAlgOr{m} = \Lambda[x_1, \dots, x_m]
\]
where $x_k$ sits in graded degree $\unit_k$, cohomological degree $2d$, and $-d$ Tate twist. Its augmentation ideal is
\[
	(\freeAlgOr{m})_+ = \Lambda[x_1, \dots, x_m]_+ \in \ComAlg(\Vect^{\multgrplus{m}}).
\]

\subsubsection{} 
For an $m$-tuple of positive integers $\mathbf{n}$, let
\[
	\algOr{m}{\mathbf{n}} = \freeAlgOr{m}/(\sqcap_{k=1}^m x_k^{\mathbf{n}_k}) = \Lambda[x_1, \dots, x_m]/(\sqcap_{k=1}^m x_k^{\mathbf{n}_k}).
\]
In other words, $\algOr{m}{\mathbf{n}}$ is obtained from $\freeAlgOr{m}$ by setting $(\freeAlgOr{m})_{\mathbf{d}}$ to $0$ for all $\mathbf{d} \in \multgrplus{m}$ such that $\mathbf{d}_k \geq \mathbf{n}_k$ for all $1\leq k\leq m$. Similarly, we let $(\algOr{m}{\mathbf{n}})_+$ be the augmentation ideal of $\algOr{m}{\mathbf{n}}$.

\subsubsection{}
Let $\pi: X \to \pt$ be the structure map of $X$. We let
\[
	\freeAlgOr{m}(X) = \pi^! \freeAlgOr{m} = \omega_X \otimes \freeAlgOr{m} \quad\text{and} \quad \algOr{m}{\mathbf{n}}(X) = \pi^! \algOr{m}{\mathbf{n}} = \omega_X \otimes \algOr{m}{\mathbf{n}},
\]
where $\omega_X$ is the dualizable sheaf on $X$.

Note that the generators of $\freeAlgOr{m}(X)$ and $\algOr{m}{\mathbf{n}}(X)$ live in cohomological degrees $0$ and have no Tate twist, since the generators of $\freeAlgOr{m}$ and $\algOr{m}{\mathbf{n}}$ live in cohomological degrees $2d$ and have $-d$ Tate twist. However, since these are algebras with respect to the $\otimesshriek$-monoidal structure, their products do not live in cohomological degrees $0$ and have non-trivial Tate twists.

\subsubsection{} We will make use of the following
\begin{notation}
\label{notation:sOmega}
Let $S = \bigsqcup S_i$ be a possibly infinite disjoint union of irreducible schemes. We will use $\sOmega_S$ to denote the sheaf $\omega_S[-2\dim S](-\dim S)$, by which we mean the sheaf
\[
	\bigoplus \omega_{S_i}[-2\dim S_i](-\dim S_i).
\]
Note that when $S$ is smooth, $\sOmega_S \simeq \Lambda_S$.

More generally, when we shift a sheaf by some factor of the dimension of $S$, we mean on each component, we shift using the dimension of that factor. The same convention applies to Tate twists as well.
\end{notation}

\subsubsection{} Since $\pi^!$ is symmetric monoidal,
\[
	\freeAlgOr{m}(X)_+ \in \ComAlgshriek(\Shv(X)^{\multgrplus{m}})
\]
is the free (non-unital) commutative algebra generated by\footnote{When $X$ is smooth, the formula becomes simply $\bigoplus_{k=1}^m \Lambda_{\unit_k}$.}
\[
	\bigoplus_{k=1}^m (\sOmega_X)_{\unit_k}.
\]
Thus, by inspecting the free commutative algebra on $\Shv(\Ran(X, \multgrplus{m}))$ using the $\otimesstar$-monoidal structure, we get
\[
	\delta_? \freeAlgOr{m}(X)_+ \simeq \sOmega_{\Ran(X, \multgrplus{m})} \in \Factstar(X, \multgrplus{m}).
\]

By Proposition~\ref{prop:taking_quotient_setting_some_d_to_0}, we see immediately that
\[
	\delta_? \algOr{m}{\mathbf{n}}(X)_+ \simeq \iota_{\mathbf{n}, *} \iota_\mathbf{n}^! \sOmega_{\Ran(X, \multgrplus{m})} \simeq \iota_{\mathbf{n}, *} \sOmega_{\Ran(X, \multgrplus{m})_\mathbf{n}} \in \Factstar(X, \multgrplus{m}).
\]

The discussion above implies the following result, which links the cohomologies of $\gConf{m}{\mathbf{n}}(X)$ to factorization cohomology (see Notation~\ref{notation:gConf_altogether} for the notation, and~\S\ref{subsec:fact_cohomology} for the discussion of factorization cohomology).

\begin{prop}
\label{prop:factorization_cohomology_vs_Zmn}
We have
\[
	\pi_{?*} (\algOr{m}{\mathbf{n}}(X)_+) \simeq C^*(\gConf{m}{\mathbf{n}}(X)_+, \sOmega_{\gConf{m}{\mathbf{n}}(X)}).
\]
In particular, when $X$ is smooth, we have
\[
	\pi_{?*} (\algOr{m}{\mathbf{n}}(X)_+) \simeq C^*(\gConf{m}{\mathbf{n}}(X)).
\]
\end{prop}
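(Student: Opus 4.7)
The bulk of the work has already been done in the build-up to the statement; the proposal is essentially to string the identifications together carefully, being cautious about cohomological shifts and Tate twists.

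First, I would unwind the definition of $\pi_{?*}$ from \S\ref{subsec:fact_cohomology}. By construction, $\pi_{?*}(\algOr{m}{\mathbf{n}}(X)_+) \simeq \pi_? \circ j_* (\algOr{m}{\mathbf{n}}(X)_+)$ for a compactification $j\colon X \hookrightarrow \lbar X$, and composing with Proposition~\ref{prop:comalg_vs_comfact} and~\eqref{eq:pi_?_vs_C^*_c(Ran)} I may write this as $C^*_c(\Ran(\lbar X, \multgrplus{m}), j_* \delta_? \algOr{m}{\mathbf{n}}(X)_+)$, or, as noted in the remark following the definition, as $C^*(\Ran(X, \multgrplus{m}), \delta_? \algOr{m}{\mathbf{n}}(X)_+)$ once interpreted via the sheafification-equivalent schemes $\Sym^{\mathbf{d}} X$.

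Next, I would plug in the computation of $\delta_? \algOr{m}{\mathbf{n}}(X)_+$ worked out just before the statement, namely $\delta_? \algOr{m}{\mathbf{n}}(X)_+ \simeq \iota_{\mathbf{n}, *} \sOmega_{\Ran(X, \multgrplus{m})_\mathbf{n}}$. Using the projection formula / $\iota_{\mathbf{n}, *}$ being pushforward along an open embedding, this identifies $\pi_{?*}(\algOr{m}{\mathbf{n}}(X)_+)$ with $C^*(\Ran(X, \multgrplus{m})_\mathbf{n}, \sOmega_{\Ran(X, \multgrplus{m})_\mathbf{n}})$.

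Finally, I would compare the Ran-theoretic space to the configuration space componentwise. Recall from \S\ref{subsec:graded_com_fact_} (and~\S\ref{subsubsec:Ran_n_def}) that on each graded component the canonical map $\Ran(X, \multgrplus{m})^\mathbf{d}_\mathbf{n} \to \gConf{\mathbf{d}}{\mathbf{n}}(X)$ is an isomorphism after sheafification in the topology generated by finite surjective maps, hence induces an equivalence on $!$-pullbacks of sheaves and, by the invariance of sheaf cohomology under this sheafification, also on global sections. Since the target $\gConf{\mathbf{d}}{\mathbf{n}}(X)$ is equidimensional of dimension $|\mathbf{d}|d$, the shifts and twists defining $\sOmega$ on the two sides match up, so one obtains an equivalence $C^*(\Ran(X, \multgrplus{m})^\mathbf{d}_\mathbf{n}, \sOmega) \simeq C^*(\gConf{\mathbf{d}}{\mathbf{n}}(X), \sOmega_{\gConf{\mathbf{d}}{\mathbf{n}}(X)})$; summing over $\mathbf{d} \in \multgrplus{m}$ gives the desired formula. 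The smooth case follows from the fact that $\sOmega_S \simeq \Lambda_S$ when $S$ is smooth.

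The only real subtlety — and the place where I expect to have to be careful — is keeping the bookkeeping of the cohomological shifts and Tate twists straight: the generators of $\algOr{m}{\mathbf{n}}$ were placed in degree $2d$ with twist $-d$ precisely so that after $!$-pulling back by $\pi^!$ (which shifts by $2d$ and twists by $d$) they become the ``constant sheaf'' $\sOmega$-normalization on each point, and so that after $\otimesshriek$-multiplication the contribution in graded degree $\mathbf{d}$ lands in exactly the shift $-2|\mathbf{d}|d$ with twist $-|\mathbf{d}|d$ appearing in $\sOmega_{\gConf{\mathbf{d}}{\mathbf{n}}(X)}$. Once this normalization is verified on the free algebra $\freeAlgOr{m}$ (by direct inspection of the generators~\eqref{eq:generators_of_freeAlgOr} and the known $\otimesstar$-description of $\delta_? \freeAlgOr{m}(X)_+$ as $\sOmega_{\Ran(X, \multgrplus{m})}$ given in the text), the quotient case follows from Proposition~\ref{prop:taking_quotient_setting_some_d_to_0} with no further calculation.
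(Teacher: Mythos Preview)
Your proposal is correct and follows essentially the same route as the paper: the proposition is stated immediately after the computation $\delta_?\,\algOr{m}{\mathbf{n}}(X)_+ \simeq \iota_{\mathbf{n},*}\,\sOmega_{\Ran(X,\multgrplus{m})_\mathbf{n}}$, and the paper simply says ``The discussion above implies the following result.'' You have spelled out more explicitly the passage from $\Ran(X,\multgrplus{m})_\mathbf{n}$ to $\gConf{m}{\mathbf{n}}(X)$ and the bookkeeping of shifts, but the argument is the same.
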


\begin{notation}
We will use $\alg{m}{\mathbf{n}}(X)$ to denote the algebra $C^*(\gConf{m}{\mathbf{n}}(X), \sOmega_{\gConf{m}{\mathbf{n}}(X)})$, and
\[
	\alg{m}{\mathbf{n}}(X)_+ = \pi_{?*} (\algOr{m}{\mathbf{n}}(X)_+) \simeq C^*(\gConf{m}{\mathbf{n}}(X)_+, \sOmega_{\gConf{m}{\mathbf{n}}(X)_+}),
\]
including the case where $\mathbf{n}=\infty$.
\end{notation}

\begin{rmk}
This result generalizes that of~\cite{knudsen_betti_2017}. The algebra structure on $\alg{m}{\mathbf{n}}(X)$ can also be constructed geometrically in a similar way as in~\cite{knudsen_betti_2017}*{\S5.2}. We do not need this geometric description in the sequel.
\end{rmk}

\subsection{Densities}
\label{subsec:densities_algOr}
We will now perform an easy, but fundamental, computation, Proposition~\ref{prop:density_alg_Or}, from which coincidences observed at the level of cohomology are an easy consequence, Theorem~\ref{thm:dependence_on_mn_of_graded_quotients}.

Consider the following pushout in $\ComAlg^{\un, \aug}(\Vect^{\multgr{m}})$
\[
\xymatrix{
	\freeAlgOr{m} \ar[d] \ar[r] & \algOr{m}{\mathbf{n}} \ar[d] \\
	\Lambda \ar[r] & \Lambda \otimes_{\freeAlgOr{m}} \algOr{m}{\mathbf{n}}
} \teq\label{eq:density_alg_Or}
\]
Since $\algOr{m}{\mathbf{n}}$ is defined by one equation, a simple computation yields the following result.

\begin{prop}
\label{prop:density_alg_Or}
We have the following equivalence
\[
	\Lambda \otimes_{\freeAlgOr{m}} \algOr{m}{\mathbf{n}} \simeq \Lambda_{\mathbf{0}} \oplus \Lambda_{\mathbf{n}}[1-2d|\mathbf{n}|](-d|\mathbf{n}|).
\]
\end{prop}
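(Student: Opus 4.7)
The plan is to realize $\algOr{m}{\mathbf{n}}$ as the cofiber of an explicit map of free modules and then base change to $\Lambda$. The key point is that, by construction, $\algOr{m}{\mathbf{n}}$ is the quotient of $\freeAlgOr{m}$ by the single monomial $P := \prod_{k=1}^m x_k^{\mathbf{n}_k}$. Since each generator $x_k$ sits in the even cohomological degree $2d$, the algebra $\freeAlgOr{m}$ is just the classical multigraded polynomial ring (up to cohomological shift and Tate twist), and $P$ is a non-zerodivisor. Hence the derived quotient agrees with the classical one.

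Concretely, I would exhibit a cofiber sequence of $\freeAlgOr{m}$-modules
\[
\freeAlgOr{m}[-2d|\mathbf{n}|](-d|\mathbf{n}|)_{\mathbf{n}} \xrightarrow{\;\cdot P\;} \freeAlgOr{m} \longrightarrow \algOr{m}{\mathbf{n}},
\]
where the subscript $\mathbf{n}$ records that the generator of the source has multidegree $\mathbf{n}$, and the cohomological shift and Tate twist are forced by matching the tri-degree of $P$ (obtained by multiplying the tri-degrees of the $x_k$'s, which live in $(\unit_k, 2d, -d)$). The one piece of bookkeeping to double-check here is that the shifts on the left-hand copy of $\freeAlgOr{m}$ indeed produce $[-2d|\mathbf{n}|](-d|\mathbf{n}|)$ in the $\mathbf{n}$-component, which follows from additivity of the gradings under multiplication.

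Now I would apply $\Lambda \otimes_{\freeAlgOr{m}} (-)$ to this cofiber sequence. This yields
\[
\Lambda[-2d|\mathbf{n}|](-d|\mathbf{n}|)_{\mathbf{n}} \xrightarrow{\;\cdot P\;} \Lambda \longrightarrow \Lambda \otimes_{\freeAlgOr{m}} \algOr{m}{\mathbf{n}}.
\]
The leftmost map is nullhomotopic because $P$ lies in the augmentation ideal, and $\freeAlgOr{m}$ acts on $\Lambda$ through its augmentation. Consequently the cofiber sequence splits, producing
\[
\Lambda \otimes_{\freeAlgOr{m}} \algOr{m}{\mathbf{n}} \;\simeq\; \Lambda_{\mathbf{0}} \;\oplus\; \Lambda[1-2d|\mathbf{n}|](-d|\mathbf{n}|)_{\mathbf{n}},
\]
with the cohomological degree shifted up by one due to the cofiber/suspension, exactly as claimed. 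The only real content is the cofiber-sequence presentation; everything else is formal, so I do not expect any substantive obstacle, just careful tracking of the three degrees (multigrading, cohomological, and Tate).
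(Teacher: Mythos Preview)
Your proposal is correct and is essentially the same argument as the paper's: both use the two-term free resolution of $\algOr{m}{\mathbf{n}}$ coming from the single regular element $P=\prod_k x_k^{\mathbf{n}_k}$ and then base change to $\Lambda$. The only cosmetic difference is that the paper first applies a monoidal sheering autoequivalence $\sheerLeft$ to place $\freeAlgOr{m}$ and $\algOr{m}{\mathbf{n}}$ in the heart of the $t$-structure (so the resolution and tensor product can be computed classically), and then sheers back, whereas you track the cohomological shifts and Tate twists by hand throughout; the content is identical.
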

\begin{proof}
Consider the following monoidal self-equivalence $\sheerRight$ of $\Vect^{\multgr{m}}$, where
\[
	\sheerRight(V)_\mathbf{d}= V_\mathbf{d}\left[-2d|\mathbf{d}|\right]\left(-d|\mathbf{d}|\right),
\]
and its inverse\footnote{Here, $\sheerRight$ and $\sheerLeft$ stand for sheering right and left respectively. Note also that the definitions of these functors depend on $d = \dim X$ we fixed throughout.}
\[
	\sheerLeft(V)_\mathbf{d} = V_\mathbf{d}\left[2d|\mathbf{d}|\right]\left(d|\mathbf{d}|\right).
\]

Observe that $\sheerLeft(\freeAlgOr{m})$ and $\sheerLeft(\algOr{m}{\mathbf{n}})$ are in the heart of the $t$-structure of $\Vect^{\multgr{m}}$, and hence, we can compute the tensor using classical means. Namely, we have the following two-step resolution of $\sheerLeft(\algOr{m}{\mathbf{n}})$
\[
\xymatrix{
	0 \ar[r] & \Lambda[x_1, x_2, \dots, x_m] \ar[r] & \Lambda[x_1, x_2, \dots, x_m] \ar[r] &  \Lambda[x_1, \dots, x_m]/(\sqcap_{k=q}^m x_k^{\mathbf{n}_k}) \simeq \sheerLeft(\algOr{m}{\mathbf{n}}),
}
\]
where the first non-zero term is shifted to graded degrees $\mathbf{n}$. This gives us
\[
	\Lambda \otimes_{\sheerLeft(\freeAlgOr{m})} \sheerLeft(\algOr{m}{\mathbf{n}}) \simeq \Lambda_{\mathbf{0}} \oplus \Lambda_{\mathbf{n}}[1].
\]

Sheering back using $\sheerRight$, we get the desired result.
\end{proof}

\subsubsection{}
\label{subsubsec:crushing_gradings_simple}
The computation above shows that
\[
	\add_!(\Lambda \otimes_{\algOr{m}{\infty}} \algOr{m}{\mathbf{n}}) \in \ComAlg(\Vect^{\graded})
\]
depends only on $|\mathbf{n}|$. We can amplify this fact, by taking factorization cohomology, to obtain a statement about the cohomology of $\gConf{m}{\mathbf{n}}(X)$.

\begin{thm}
\label{thm:dependence_on_mn_of_graded_quotients}
The relative tensor
\[
	\add_!(\Lambda \otimes_{\alg{m}{\infty}(X)} \alg{m}{\mathbf{n}}(X)) \in \ComAlg^{\un, \aug}(\Vect^{\gradedplus})
\]
depends only on $|\mathbf{n}|$.
\end{thm}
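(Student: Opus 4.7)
The plan is to leverage Proposition~\ref{prop:density_alg_Or} and to transport its conclusion through the two key geometric operations, $\pi^!$ and $\pi_{?*}^{\un}$, both of which interact well with relative tensors of commutative algebras, and then to observe that the final application of $\add_!$ collapses everything to something visibly depending only on $|\mathbf{n}|$.

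First, I will set $Q_m^{\mathbf{n}} := \Lambda \otimes_{\freeAlgOr{m}} \algOr{m}{\mathbf{n}} \in \ComAlg^{\un,\aug}(\Vect^{\multgr{m}})$, so that by Proposition~\ref{prop:density_alg_Or},
\[
	Q_m^{\mathbf{n}} \;\simeq\; \Lambda_{\mathbf{0}}\;\oplus\;\Lambda_{\mathbf{n}}[1-2d|\mathbf{n}|](-d|\mathbf{n}|).
\]
Applying $\add_!\colon \Vect^{\multgr{m}}\to\Vect^{\graded}$ (which concentrates every graded piece indexed by $\mathbf{e}$ into total-degree $|\mathbf{e}|$) directly yields
\[
	\add_!(Q_m^{\mathbf{n}}) \;\simeq\; \Lambda_{0}\;\oplus\;\Lambda_{|\mathbf{n}|}[1-2d|\mathbf{n}|](-d|\mathbf{n}|),
\]
which depends only on $N := |\mathbf{n}|$. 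So the content of the theorem lies in propagating this $|\mathbf{n}|$-dependence through $\pi^!$ and $\pi_{?*}^{\un}$.

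Next, I will argue $\Lambda\otimes_{\alg{m}{\infty}(X)}\alg{m}{\mathbf{n}}(X) \simeq \pi_{?*}^{\un}(\pi^! Q_m^{\mathbf{n}})$. The functor $\pi^!$ is symmetric monoidal (\S\ref{subsec:koszul_duality_ran}), so it preserves pushouts in $\ComAlg^{\un,\aug}$; this gives the intermediate identification $\omega_X\otimes_{\algOr{m}{\infty}(X)}\algOr{m}{\mathbf{n}}(X) \simeq \pi^!Q_m^{\mathbf{n}}$. The functor $\pi_{?*}^{\un}$ preserves relative tensors of augmented commutative algebras — this is the key geometric input highlighted in the introduction, and can be seen from the identification $\pi_{?*}^{\un}\circ \pi^! \simeq \coChev^{\un}\circ(C^*(X,\omega_X)\otimes -)\circ \coPrim[1]$, since each factor (Koszul duality, tensoring with a fixed object, and $\coChev^{\un}$) preserves colimits of algebras in the pro-nilpotent category $\Vect^{\multgrplus{m}}$. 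Combined with $\pi_{?*}^{\un}\pi^!\Lambda \simeq \Lambda$, this produces the desired identification.

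Finally, I will show that $\add_!$ commutes with $\pi_{?*}^{\un}\circ\pi^!$ in the sense that
\[
	\add_!\bigl(\pi_{?*,m}^{\un}(\pi_m^!\,(-))\bigr) \;\simeq\; \pi_{?*,1}^{\un}(\pi_1^!\,\add_!(-)).
\]
Indeed, the induced map $\add\colon \multgr{m}\to\graded$ is a symmetric monoidal functor of discrete monoids, so $\add_!$ is a symmetric monoidal left adjoint on $\Vect^{(\cdot)}$; it therefore commutes with all colimits and with the tensor product, hence with $\coPrim[1]$, with tensoring by $C^*(X,\omega_X)$, and — using that in pro-nilpotent categories $\coChev^{\un}$ is a limit whose stages stabilize at each fixed graded degree (Remark~\ref{rmk:limit_coBar_converges_strongly}) — with $\coChev^{\un}$ as well. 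Putting this together yields
\[
	\add_!\bigl(\Lambda\otimes_{\alg{m}{\infty}(X)}\alg{m}{\mathbf{n}}(X)\bigr) \;\simeq\; \pi_{?*,1}^{\un}\bigl(\pi_1^!\,\add_!(Q_m^{\mathbf{n}})\bigr),
\]
and the right-hand side depends on $\mathbf{n}$ only through $|\mathbf{n}|$ by the explicit formula for $\add_!(Q_m^{\mathbf{n}})$. The main obstacle is the last commutativity: verifying that $\add_!$ commutes with the limit defining $\coChev^{\un}$. For objects whose Koszul dual has support in a single positive graded degree, as is the case here, this reduces to a finite-stage check at each fixed graded degree of the output, so there is no real convergence issue — but this point is where one must invoke pro-nilpotence carefully.
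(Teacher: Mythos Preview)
Your proof is correct and follows essentially the same route as the paper: reduce to the pointwise quotient $Q_m^{\mathbf{n}}$ via Proposition~\ref{prop:density_alg_Or}, push it through $\pi^!$ and factorization cohomology, and then commute $\add_!$ past everything. The paper's own argument is terser---it passes to augmentation ideals, asserts that $\pi_{?*}$ preserves pushouts of commutative algebras, and then invokes only that $\add_!$ is continuous and symmetric monoidal to swap it with $\pi_{?*}$---whereas you justify both commutation steps through the Koszul-duality factorization $\pi_{?*}^{\un}\circ\pi^!\simeq \coChev^{\un}\circ(C^*(X,\omega_X)\otimes-)\circ\coPrim[1]$ and the finite-stage convergence of $\coChev$ in the pro-nilpotent setting. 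Your extra care is not wasted: since $\pi_{?*}=\pi_?\circ j_*$ with $j_*$ a right adjoint, it is not tautological that $\pi_{?*}$ preserves pushouts, and your route via the equivalence $\coChev\simeq\coPrim[1]^{-1}$ makes this transparent.
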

\begin{proof}
It suffices to show the statement for
\[
	\add_!(0 \sqcup_{\alg{m}{\infty}(X)_+} \alg{m}{\mathbf{n}}(X)_+). 
\]
But this is equivalent to
\begin{align*}
	\add_!(0 \sqcup_{\pi_{?*} \algOr{m}{\infty}(X)_+} \pi_{?*} \algOr{m}{\mathbf{n}}(X)_+)
	&\simeq \add_!(\pi_{?*}(0 \sqcup_{\algOr{m}{\infty}(X)} \algOr{m}{\mathbf{n}}(X))) \\
	&\simeq \add_! \pi_{?*}(\Lambda \otimes_{\algOr{m}{\infty}(X)} \algOr{m}{\mathbf{n}}(X))_+ \\
	&\simeq \pi_{?*}(\add_!(\Lambda \otimes_{\algOr{m}{\infty}(X)} \algOr{m}{\mathbf{n}}(X)))_+,
\end{align*}
where the last equivalence is due to the fact that $\add_!$ is continuous and symmetric monoidal. Now, we are done, by Proposition~\ref{prop:density_alg_Or}.
\end{proof}

\begin{rmk}
In~\S\ref{subsec:densities_at_infty}, we will give an explicit equivalence between the $\add_!(\Lambda \otimes_{\algOr{m}{\infty}} \algOr{m}{\mathbf{n}})$'s, 
and hence, also the $\add_!(\Lambda \otimes_{\alg{m}{\infty}(X)} \alg{m}{\mathbf{n}}(X))$'s for various $m$ and $\mathbf{n}$ with $|\mathbf{n}|$ fixed. See also Remarks~\ref{rmk:canonical_maps_witnessing_coincidences} and~\ref{rmk:consider_more_general_add}.
\end{rmk}

\subsection{$\coLie$-coalgebras} 
\label{subsec:coLie_coalgebras_Zmn}
Via Koszul duality, there exist $\coLie$-coalgebras $\liealgOr{m}{\mathbf{n}}$ and $\trivLieAlgOr{m}$ in $\Vect^{\multgrplus{m}}$ such that
\[
	\coChev \liealgOr{m}{\mathbf{n}} \simeq (\algOr{m}{\mathbf{n}})_+ \quad\text{and}\quad \coChev \trivLieAlgOr{m} \simeq (\freeAlgOr{m})_+,
\]
or equivalently,
\[
	\coChev^\un \liealgOr{m}{\mathbf{n}} \simeq \algOr{m}{\mathbf{n}} \quad\text{and}\quad \coChev^\un \trivLieAlgOr{m} \simeq \freeAlgOr{m}.
\]

\subsubsection{} Similar to the case of algebras, we use $\liealgOr{m}{\mathbf{n}}(X)$ and $\liealgOr{m}{\infty}(X)$ to denote $\pi^!\liealgOr{m}{\mathbf{n}}$ and $\pi^!\liealgOr{m}{\infty}$ respectively. From~\eqref{eq:coChev_vs_pi_?} and~\eqref{eq:push_forward_constant_coLie}, we see that
\[
	\alg{m}{\mathbf{n}}(X)_+ \simeq \coChev(\pi_*(\liealgOr{m}{\mathbf{n}}(X))) \simeq \coChev(\pi_* \omega_X \otimes \liealgOr{m}{\mathbf{n}})
\]
and similarly, that
\[
	\alg{m}{\infty}(X)_+ \simeq \coChev(\pi_* \omega_X\otimes \liealgOr{m}{\infty}).
\]

\subsubsection{} We will use $\liealg{m}{\mathbf{n}}(X)$ and $\liealg{m}{\infty}(X)$ to denote $\pi_* \omega_X \otimes \liealgOr{m}{\mathbf{n}}$ and $\pi_* \omega_X \otimes \liealgOr{m}{\infty}$ respectively. What we have discussed above amounts to the following

\begin{prop} \label{prop:conf_vs_cochev}
We have
\[
	\alg{m}{\mathbf{n}}(X)_+ \simeq \coChev \liealg{m}{\mathbf{n}}(X),
\]
including the case where $\mathbf{n}=\infty$.
\end{prop}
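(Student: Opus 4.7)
The statement is essentially a book-keeping exercise: all the ingredients have been assembled in the preceding section. My plan is to chain together the identifications already established, keeping careful track of where Koszul duality, factorization cohomology, and the $\pi^!$/$\pi_*$ adjunction are invoked.

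First I would recall from Proposition~\ref{prop:factorization_cohomology_vs_Zmn} that
\[
\alg{m}{\mathbf{n}}(X)_+ \simeq \pi_{?*}\bigl(\algOr{m}{\mathbf{n}}(X)_+\bigr) \simeq \pi_{?*}\bigl(\pi^!(\algOr{m}{\mathbf{n}})_+\bigr).
\]
By the definition of $\liealgOr{m}{\mathbf{n}}$ as the Koszul dual of $(\algOr{m}{\mathbf{n}})_+$ (valid in the pro-nilpotent category $\Vect^{\multgrplus{m}}$, cf.\ Lemma~\ref{lem:multgrplus_multfilplus_pronilpotent}), we have $(\algOr{m}{\mathbf{n}})_+ \simeq \coChev \liealgOr{m}{\mathbf{n}}$, so the right-hand side becomes $\pi_{?*}\bigl(\pi^!\,\coChev \liealgOr{m}{\mathbf{n}}\bigr)$.

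Next I would pull the $\coChev$ past $\pi^!$. This is the commutation $\pi^! \circ \coChev \simeq \coChev \circ \pi^!$ recorded in~\S\ref{subsec:koszul_duality_ran}, which follows formally from the fact that $\pi^!$ is symmetric monoidal and preserves limits, so it upgrades to a functor on $\coLie$-coalgebras compatible with both sides of the Koszul duality equivalence. Applying this yields
\[
\alg{m}{\mathbf{n}}(X)_+ \simeq \pi_{?*}\bigl(\coChev(\pi^! \liealgOr{m}{\mathbf{n}})\bigr).
\]

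Now I would invoke the key compatibility~\eqref{eq:coChev_vs_pi_?*}, namely $\pi_{?*}\circ\coChev \simeq \coChev\circ \pi_*$, which was proved using the factorization of $\pi_{?*}$ through a compactification and the commutation of $j_*$ with $\coChev$. This gives
\[
\alg{m}{\mathbf{n}}(X)_+ \simeq \coChev\bigl(\pi_* \pi^! \liealgOr{m}{\mathbf{n}}\bigr).
\]
Finally, the projection formula~\eqref{eq:push_forward_constant_coLie} identifies $\pi_* \pi^! \liealgOr{m}{\mathbf{n}} \simeq \pi_* \omega_X \otimes \liealgOr{m}{\mathbf{n}} = \liealg{m}{\mathbf{n}}(X)$ as a $\coLie$-coalgebra, where the $\coLie$-structure is inherited from that of $\liealgOr{m}{\mathbf{n}}$ and the co-commutative co-algebra structure on $\pi_*\omega_X$. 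Assembling, we obtain $\alg{m}{\mathbf{n}}(X)_+ \simeq \coChev \liealg{m}{\mathbf{n}}(X)$, as desired. The case $\mathbf{n} = \infty$ requires no separate argument since the entire chain of identifications is uniform in $\mathbf{n}$. There is no real obstacle here; the only thing to double-check is that the $\coLie$-coalgebra structures match at each step, which is automatic because every equivalence used is an equivalence of $\coLie$-coalgebras by construction.
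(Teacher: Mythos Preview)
Your proof is correct and follows essentially the same route as the paper. The paper presents the proposition as a direct summary of the discussion immediately preceding it, which invokes exactly the chain of identifications you spell out: the definition of $\alg{m}{\mathbf{n}}(X)_+$ via $\pi_{?*}$, the compatibility $\pi_{?*}\circ\coChev\simeq\coChev\circ\pi_*$ of~\eqref{eq:coChev_vs_pi_?*}, and the projection formula~\eqref{eq:push_forward_constant_coLie}; your only extra step is making the commutation $\pi^!\circ\coChev\simeq\coChev\circ\pi^!$ explicit, which the paper absorbs into the identification of $\liealgOr{m}{\mathbf{n}}(X)$ as the Koszul dual of $\algOr{m}{\mathbf{n}}(X)_+$.
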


\subsubsection{}
The goal now is to compute $\liealgOr{m}{\mathbf{n}}$ and $\liealgOr{m}{\infty}$. Since $\freeAlgOr{m}$ is a free commutative algebra, the corresponding $\coLie$-coalgebra $\trivLieAlgOr{m}$ is thus abelian (i.e. trivial co-multiplication map). Moreover, the underlying object in $\Vect^{\multgrplus{m}}$ given by
\[
	\trivLieAlgOr{m} \simeq \bigoplus_{k=1}^m \Lambda_{\unit_k}[-2d+1](-d).
\]
Note that there is an extra cohomological shift to the left by $1$ compared to~\eqref{eq:generators_of_freeAlgOr}. This is because for a trivial $\coLie$-coalgebra $L$, $\coChev L \simeq (\Sym L[-1])_+$.

\begin{lem}
\label{lem:density_colie_side_Or}
The algebra $(\Lambda\otimes_{\freeAlgOr{m}} \algOr{m}{\mathbf{n}})_+$ is both a free and a trivial algebra. Hence,
\begin{align*}
	\coPrim[1]((\Lambda\otimes_{\freeAlgOr{m}} \algOr{m}{\mathbf{n}})_+) 
	&\simeq \coFree_{\coLie}(\Lambda_{\mathbf{n}}[2-2d|\mathbf{n}|](-d|\mathbf{n}|)) \\
	&\simeq \cotriv_{\coLie} (\Lambda_{\mathbf{n}}[2 -2d|\mathbf{n}|](-d|\mathbf{n}|)).
\end{align*}
\end{lem}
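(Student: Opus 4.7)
The plan is to reduce the lemma to a single degree-and-grading observation, and then to invoke the Koszul duality formalism of \S\ref{subsec:koszul_duality_ran}. By Proposition~\ref{prop:density_alg_Or}, the underlying object of the augmentation ideal is
\[
V := \Lambda_{\mathbf{n}}[1-2d|\mathbf{n}|](-d|\mathbf{n}|),
\]
concentrated in graded degree $\mathbf{n}\in\multgrplus{m}$ and in the \emph{odd} cohomological degree $2d|\mathbf{n}|-1$.

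Next I would show that the algebra is simultaneously trivial and free on $V$. For triviality: any nonzero product of two elements would land in graded degree $2\mathbf{n}$, but the algebra is supported only in graded degree $\mathbf{n}$, and $2\mathbf{n}\ne\mathbf{n}$ since $\mathbf{n}\in\multgrplus{m}$; hence the multiplication $V\otimes V\to V$ vanishes and $(\Lambda\otimes_{\freeAlgOr{m}} \algOr{m}{\mathbf{n}})_+\simeq\triv_{\ComAlg}(V)$. For freeness: since $V$ is one-dimensional in odd cohomological degree, the Koszul sign rule in characteristic zero forces $\Sym^k V\simeq 0$ for all $k\ge 2$, so
\[
\Free_{\ComAlg}(V)\simeq\bigoplus_{k\ge 1}\Sym^k V\simeq V\simeq\triv_{\ComAlg}(V).
\]
This proves the first assertion of the lemma.

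To finish, I would invoke Koszul duality in the pro-nilpotent category $\Vect^{\multgrplus{m}}$ (Lemma~\ref{lem:multgrplus_multfilplus_pronilpotent}), which makes $\coPrim[1]$ an equivalence sending free commutative algebras to cotrivial coLie-coalgebras and trivial commutative algebras to cofree coLie-coalgebras. Matching $\coChev^{\un}(\cotriv_{\coLie} W)\simeq\Sym(W[-1])$ against $\addUnit(\triv_{\ComAlg} V)\simeq\Lambda\oplus V$ forces $W\simeq V[1]$, giving
\[
\coPrim[1]\bigl(\Free_{\ComAlg}(V)\bigr)\simeq\cotriv_{\coLie}(V[1]);
\]
a parallel computation through $\coPrim[1]=[1]\circ\BarO_{\ComAlg}$ together with the equivalence $[1]\colon\coLie[1](\matheur{C})\simeq\coLie(\matheur{C})$ yields
\[
\coPrim[1]\bigl(\triv_{\ComAlg}(V)\bigr)\simeq\coFree_{\coLie}(V[1]).
\]
Since the two algebra presentations of our object coincide, so do the two coLie-coalgebra descriptions, and substituting $V[1]\simeq\Lambda_{\mathbf{n}}[2-2d|\mathbf{n}|](-d|\mathbf{n}|)$ yields the lemma. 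I do not anticipate any obstacle: the entire content is the parity observation above, the remainder being bookkeeping with suspensions and with the Koszul duality formalism of \S\ref{subsec:unital_vs_nonunital}.
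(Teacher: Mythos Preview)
Your proof is correct and follows essentially the same approach as the paper's. The paper's proof is a two-line sketch: it invokes the fact that a free commutative algebra on a one-dimensional vector space in odd cohomological degree is trivial, and then cites that Koszul duality exchanges free and trivial objects. You have simply unpacked both steps---separating the graded-degree argument for triviality from the parity argument for freeness, and spelling out the suspension bookkeeping in the $\coPrim[1]$ computation---but the content is identical.
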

\begin{proof}
The string of equivalences comes from the first assertion, since Koszul duality exchanges free objects and trivial objects. But now, the first assertion comes from the fact that a free commutative algebra generated by a one dimensional vector space living in an odd cohomological degree is trivial.
\end{proof}

\begin{cor}
\label{cor:computation_lieAlgOr}
We have the following equivalence
\[
	\oblv_{\coLie} \liealgOr{m}{\mathbf{n}} \simeq \bigoplus_{k=1}^m \Lambda_{\unit_k}[-2d+1](-d) \oplus \Lambda_{\mathbf{n}}[2-2d|\mathbf{n}|](-d|\mathbf{n}|).\teq\label{eq:oblv_coLie_amn}
\]
Moreover, $\liealgOr{m}{\mathbf{n}}$ is equipped with the structure of a $d$-shifted strongly unital (see Definition~\ref{defn:d_shifted_unital}).
\end{cor}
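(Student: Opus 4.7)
The plan is to read off both the underlying object and the $d$-shifted strongly unital structure of $\liealgOr{m}{\mathbf{n}}$ by transporting the pushout square~\eqref{eq:density_alg_Or} through Koszul duality. First, applying the equivalence $(-)_+\colon \ComAlg^{\un,\aug}(\Vect^{\multgr{m}}) \simeq \ComAlg(\Vect^{\multgrplus{m}})$ of~\S\ref{subsec:unital_vs_nonunital} to~\eqref{eq:density_alg_Or} yields a pushout in $\ComAlg(\Vect^{\multgrplus{m}})$ with corners $(\freeAlgOr{m})_+$, $(\algOr{m}{\mathbf{n}})_+$, $0$, and $(\Lambda \otimes_{\freeAlgOr{m}} \algOr{m}{\mathbf{n}})_+$. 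Since $\Vect^{\multgrplus{m}}$ is pro-nilpotent by Lemma~\ref{lem:multgrplus_multfilplus_pronilpotent}, the functor $\coPrim[1]$ is an equivalence of categories and in particular preserves pushouts. Applying it, and using Lemma~\ref{lem:density_colie_side_Or} to identify the lower right corner, produces a pushout square in $\coLie(\Vect^{\multgrplus{m}})$ with vertices $\trivLieAlgOr{m}$, $\liealgOr{m}{\mathbf{n}}$, $0$, and $\cotriv_{\coLie}(\Lambda_{\mathbf{n}}[2-2d|\mathbf{n}|](-d|\mathbf{n}|))$.

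Rearranging, this is a cofiber sequence
\[
    \trivLieAlgOr{m} \to \liealgOr{m}{\mathbf{n}} \to \cotriv_{\coLie}(\Lambda_{\mathbf{n}}[2-2d|\mathbf{n}|](-d|\mathbf{n}|))
\]
in $\coLie(\Vect^{\multgrplus{m}})$. Because $\oblv_{\coLie}$ is a left adjoint (\S\ref{subsubsec:quot_un_for_coLie}), it preserves cofiber sequences. Forgetting down to $\Vect^{\multgrplus{m}}$ and recalling from~\S\ref{subsec:coLie_coalgebras_Zmn} that $\oblv_{\coLie}(\trivLieAlgOr{m}) \simeq \bigoplus_{k=1}^m \Lambda_{\unit_k}[-2d+1](-d)$ then reduces~\eqref{eq:oblv_coLie_amn} to the question of whether the resulting triangle splits.

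The key observation is that splitting is forced by graded-degree support. The outer term $\bigoplus_{k=1}^m \Lambda_{\unit_k}[-2d+1](-d)$ is supported in graded degrees $\unit_1,\dots,\unit_m$, each of total weight $1$, while the outer term $\Lambda_{\mathbf{n}}[2-2d|\mathbf{n}|](-d|\mathbf{n}|)$ is supported in graded degree $\mathbf{n}$ of total weight $|\mathbf{n}| \geq 2$. These supports are disjoint, so $\Hom$ between the two objects in $\Vect^{\multgrplus{m}}$ vanishes, forcing the cofiber sequence to split and yielding~\eqref{eq:oblv_coLie_amn}.

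Finally, the map $\trivLieAlgOr{m} \to \liealgOr{m}{\mathbf{n}}$ in the pushout diagram is the required $d$-shifted unital structure, with trivial source coLie-coalgebra structure because $\trivLieAlgOr{m}$ is abelian. To verify it is \emph{strongly} unital in the sense of Definition~\ref{defn:d_shifted_unital}, I read off cohomological degrees from the splitting: the two summands are concentrated in degrees $2d-1$ and $2d|\mathbf{n}|-2$ respectively, and for $d\geq 1$, $|\mathbf{n}|\geq 2$ the inequality $2d|\mathbf{n}|-2 \geq 4d-2 > 2d-1$ holds, so $\liealgOr{m}{\mathbf{n}}$ sits in degrees $\geq 2d-1$ and the unit map is an equivalence at the bottom degree $2d-1$. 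The only delicate step in the argument is this degree bookkeeping together with the support disjointness that underpins the splitting; once $|\mathbf{n}|\geq 2$ is invoked, everything else is formal.
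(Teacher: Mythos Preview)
Your proof is correct and follows essentially the same approach as the paper: transport the pushout~\eqref{eq:density_alg_Or} through Koszul duality to obtain a cofiber sequence in $\coLie(\Vect^{\multgrplus{m}})$, then use graded-degree disjointness to split it at the level of underlying objects. Your version is in fact more detailed than the paper's---you explicitly verify the degree inequality $2d|\mathbf{n}|-2 > 2d-1$ needed for strong unitality (which tacitly requires $d\geq 1$), whereas the paper simply asserts that the unital structure comes from Koszul-dualizing the quotient map $\algOr{m}{\infty}\to\algOr{m}{\mathbf{n}}$ without checking the ``strongly'' part.
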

\begin{proof}
The pushout diagram~\eqref{eq:density_alg_Or} gives us the following pushout diagram in $\Vect^{\multgrplus{m}}$
\[
\xymatrix{
	\bigoplus_{k=1}^m \Lambda_{\unit_k}[-2d+1](-d) \ar[d] \ar[r] & \oblv_{\coLie} \liealgOr{m}{\mathbf{n}} \ar[d] \\
	0 \ar[r] & \Lambda_{\mathbf{n}}[2-2d|\mathbf{n}|](-d|\mathbf{n}|)
}
\]
By graded-degree considerations, we get the computation of $\oblv_{\coLie} \liealgOr{m}{\mathbf{n}}$.

The $d$-shifted strongly unital structure is obtained by applying Koszul duality to the quotient map
\[
	\algOr{m}{\infty} \to \algOr{m}{\mathbf{n}}.
\]
\end{proof}

The computation above, coupled with Lemma~\ref{lem:shifted_strongly_unital_to_strongly_unital}, implies the following
\begin{cor}
\label{cor:computation_of_liealg}
We have
\begin{align*}
	&\quad\, \oblv_{\coLie} (\liealg{m}{\mathbf{n}}(X)) \\
	&\simeq \left(\bigoplus_{k=1}^m C^*(X, \omega_X) \otimes \Lambda_{\unit_k}[-2d+1](-d)\right) \oplus (C^*(X, \omega_X)\otimes \Lambda_{\mathbf{n}}[2-2d|\mathbf{n}|](-d|\mathbf{n}|))
\end{align*}
and
\[
	\liealg{m}{\infty}(X) \simeq \bigoplus_{k=1}^m(C^*(X, \omega_X) \otimes \Lambda_{\unit_k}[-2d+1](-d)),
\]
with trivial $\coLie$-structure.

Moreover, both $\coLie$-coalgebras are strongly unital.
\end{cor}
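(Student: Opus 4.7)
The proof is essentially a repackaging of Corollary~\ref{cor:computation_lieAlgOr} together with the push-pull formula~\eqref{eq:push_forward_constant_coLie} and Lemma~\ref{lem:shifted_strongly_unital_to_strongly_unital}. First I would compute the underlying object. By definition $\liealg{m}{\mathbf{n}}(X) = \pi_* \omega_X \otimes \liealgOr{m}{\mathbf{n}}$, and the functor $\oblv_{\coLie}$ is compatible with the construction of~\S\ref{subsec:fact_cohomology} in the sense that the underlying object of $C^*(X,\omega_X) \otimes L$ is $C^*(X,\omega_X) \otimes \oblv_{\coLie}(L)$. Plugging~\eqref{eq:oblv_coLie_amn} from Corollary~\ref{cor:computation_lieAlgOr} into this formula yields the claimed expression for $\oblv_{\coLie}(\liealg{m}{\mathbf{n}}(X))$. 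The formula for $\liealg{m}{\infty}(X)$ as an underlying object is the same computation, omitting the second summand (which corresponds to $\mathbf{n} = \infty$).

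Next I would verify triviality of the $\coLie$-coalgebra structure on $\liealg{m}{\infty}(X)$. Since $\freeAlgOr{m}$ is a free commutative algebra, its Koszul dual $\liealgOr{m}{\infty}$ is a trivial (abelian) $\coLie$-coalgebra: indeed, Koszul duality exchanges free and trivial objects (cf.\ Lemma~\ref{lem:density_colie_side_Or} for an analogous argument). Now the $\coLie$-structure on $C^*(X,\omega_X) \otimes L$ recalled in~\S\ref{subsec:fact_cohomology}, following~\cite{gaitsgory_study_2017}*{\S \textsc{iv}.2.1.2} and~\cite{ho_free_2017}*{Example 4.2.8}, has cobracket assembled from the comultiplication on $C^*(X,\omega_X)$ and the cobracket on $L$; when the latter vanishes identically, so does the cobracket on $C^*(X,\omega_X) \otimes L$. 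Applied to $L = \liealgOr{m}{\infty}$, this shows $\liealg{m}{\infty}(X)$ is abelian.

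Finally, the strongly unital structure on both $\liealg{m}{\mathbf{n}}(X)$ and $\liealg{m}{\infty}(X)$ follows from Lemma~\ref{lem:shifted_strongly_unital_to_strongly_unital} applied to $\mathfrak{a} = \liealgOr{m}{\mathbf{n}}$ (respectively $\liealgOr{m}{\infty}$), which is $d$-shifted strongly unital by Corollary~\ref{cor:computation_lieAlgOr}.

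There is no substantial obstacle here: the whole corollary is a direct consequence of Corollary~\ref{cor:computation_lieAlgOr}, the definition $\liealg{m}{\mathbf{n}}(X) = \pi_* \omega_X \otimes \liealgOr{m}{\mathbf{n}}$, and the lemmas already invoked. The only point requiring a moment's attention is the compatibility of the ``pushforward tensor'' $\coLie$-structure with the triviality of the underlying $\coLie$-coalgebra, but this is essentially formal given how the structure was constructed in~\S\ref{subsec:fact_cohomology}.
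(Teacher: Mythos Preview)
Your proposal is correct and matches the paper's approach: the paper simply states that the corollary follows from the computation in Corollary~\ref{cor:computation_lieAlgOr} together with Lemma~\ref{lem:shifted_strongly_unital_to_strongly_unital}, and you have spelled out exactly these ingredients. One minor remark: Corollary~\ref{cor:computation_lieAlgOr} only explicitly asserts the $d$-shifted strongly unital structure for $\liealgOr{m}{\mathbf{n}}$ with finite $\mathbf{n}$, but for $\liealgOr{m}{\infty}$ this is immediate since the unit map is the identity, so there is no gap.
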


\begin{cor}
\label{cor:computation_of_graded_quotients}
We have the following equivalences
\[
	\Lambda \otimes_{\alg{m}{\infty}(X)} \alg{m}{\mathbf{n}}(X) \simeq \Sym(C^*(X, \omega_X) \otimes \Lambda_{\mathbf{n}}[1-2d|\mathbf{n}|](-d|\mathbf{n}|)).
\]
\end{cor}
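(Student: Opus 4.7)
The plan is to translate the relative tensor of commutative algebras into a pushout of $\coLie$-coalgebras via Koszul duality, compute that pushout using Corollary~\ref{cor:computation_of_liealg}, observe that the resulting $\coLie$-coalgebra is necessarily abelian for graded-degree reasons, and finally invoke the formula $\coChev^{\un}(\mathfrak{a}) \simeq \Sym(\mathfrak{a}[-1])$ for abelian $\mathfrak{a}$.

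More concretely, first I would rewrite the left-hand side. By \S\ref{subsubsec:non_unital_quotient}, the augmentation ideal of $\Lambda \otimes_{\alg{m}{\infty}(X)} \alg{m}{\mathbf{n}}(X)$ is the pushout $0 \sqcup_{\alg{m}{\infty}(X)_+} \alg{m}{\mathbf{n}}(X)_+$ in $\ComAlg(\Vect^{\multgrplus{m}})$. Since $\coChev$ is an equivalence of categories (Lemma~\ref{lem:multgrplus_multfilplus_pronilpotent} puts us in the pro-nilpotent setting), it preserves colimits, so via Proposition~\ref{prop:conf_vs_cochev} this pushout corresponds to
\[
\mathfrak{q} \;=\; 0 \sqcup_{\liealg{m}{\infty}(X)} \liealg{m}{\mathbf{n}}(X) \;\in\; \coLie(\Vect^{\multgrplus{m}}),
\]
with $(\Lambda \otimes_{\alg{m}{\infty}(X)} \alg{m}{\mathbf{n}}(X))_+ \simeq \coChev(\mathfrak{q})$, or equivalently $\Lambda \otimes_{\alg{m}{\infty}(X)} \alg{m}{\mathbf{n}}(X) \simeq \coChev^{\un}(\mathfrak{q})$.

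Next I would compute the underlying object of $\mathfrak{q}$. Since $\oblv_{\coLie}$ is a left adjoint, it preserves colimits, and Corollary~\ref{cor:computation_of_liealg} identifies the $\oblv_{\coLie}$ of both inputs (with the map $\liealg{m}{\infty}(X) \to \liealg{m}{\mathbf{n}}(X)$ being the inclusion of the first summand in the underlying-object decomposition). Hence
\[
\oblv_{\coLie}(\mathfrak{q}) \;\simeq\; C^*(X,\omega_X) \otimes \Lambda_{\mathbf{n}}[2-2d|\mathbf{n}|](-d|\mathbf{n}|),
\]
which is supported only in graded-degree $\mathbf{n}$. The key observation is then that $\mathfrak{q}$ is forced to be abelian: any nontrivial co-bracket $\mathfrak{q} \to \mathfrak{q}\otimes \mathfrak{q}$ would, by graded-degree considerations, have to land in graded-degree $2\mathbf{n} \ne \mathbf{n}$, where $\mathfrak{q}$ vanishes; the same applies to all higher cobrackets. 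I expect this degree-based vanishing to be the only substantive step, and it is essentially immediate.

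Finally, for an abelian $\coLie$-coalgebra $\mathfrak{a}$, $\coChev^{\un}(\mathfrak{a}) \simeq \Sym(\mathfrak{a}[-1])$ (see \S\ref{subsec:unital_vs_nonunital}). Applying this to $\mathfrak{q}$ yields
\[
\coChev^{\un}(\mathfrak{q}) \;\simeq\; \Sym\bigl(C^*(X,\omega_X) \otimes \Lambda_{\mathbf{n}}[1-2d|\mathbf{n}|](-d|\mathbf{n}|)\bigr),
\]
which, combined with the identification of the first paragraph, gives exactly the claim.
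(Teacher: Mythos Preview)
Your proposal is correct and follows essentially the same approach as the paper. The only minor difference is in how triviality of the relevant $\coLie$-coalgebra is established: the paper invokes Lemma~\ref{lem:density_colie_side_Or} (triviality is checked before tensoring with $C^*(X,\omega_X)$, using that a free commutative algebra on a one-dimensional odd-degree generator is also trivial), whereas you argue directly that concentration in the single graded-degree $\mathbf{n}$ forces all co-brackets to vanish; both observations are equivalent here and equally immediate.
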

\begin{proof}
A priori, $\coChev^{\un}$ rather than $\Sym$ should appear on the RHS of the equivalence above. However, the $\coLie$-coalgebra under consideration is trivial, by Lemma~\ref{lem:density_colie_side_Or}, and we are done.
\end{proof}

\begin{rmk}
Note that since $|\mathbf{n}| \geq 2$ (by our convention), $\oblv_{\gr}(\Lambda \otimes_{\alg{m}{\infty}(X)} \alg{m}{\mathbf{n}}(X))$ is $\Frob_q^{-1}$-summable, since the expression inside $\Sym$ is finite dimensional and has positive weight.
\end{rmk}

\begin{rmk}
In the case of $X = \mathbb{A}^d$, the computation of Corollary~\ref{cor:computation_lieAlgOr} allows us to compute $\gConf{m}{\mathbf{n}}(\mathbb{A}^d)$ easily, since $C^*(X, \omega_X) \simeq \Lambda[2d](d)$ has trivial co-algebra structure in this case (see also Remark~\ref{rmk:fact_coh_triv_coalg}). Thus,
\[
	\alg{m}{\mathbf{n}}(\mathbb{A}^d) \simeq \Sym\left(\bigoplus_{k=1}^m \Lambda_{\unit_k} \oplus \Lambda_{\mathbf{n}}[1-2d|\mathbf{n}| + 2d](-d|\mathbf{n}|+d)\right).
\]
\end{rmk}

\subsection{Homological stability}
\label{subsec:homological_stability_Zmn}
The computation of $\liealgOr{m}{\mathbf{n}}$ above allows us to deduce homological stability for $\gConf{m}{\mathbf{n}}(X)$.

\begin{thm}
\label{thm:homological_stability_Zmn}
Let $X$ be an irreducible scheme of dimension $d\geq 1$. For each $1\leq k\leq m$ and $c\geq 0$, there exists a natural map
\[
	\Ho^c(\gConf{\mathbf{d}}{\mathbf{n}}(X), \sOmega_{\gConf{\mathbf{d}}{\mathbf{n}}(X)}) \to \Ho^c(\gConf{\mathbf{d} + \unit_k}{\mathbf{n}}(X), \sOmega_{\gConf{\mathbf{d}+\unit_k}{\mathbf{n}}(X)})
\]
that is
\begin{enumerate}[(i)]
	\item an equivalence when $\mathbf{d}_k \geq 2c$, and injective when $\mathbf{d}_k = 2c-1$, when $d=1, m = 1, n = 2$ (the case of configuration spaces of a curve),
	\item an equivalence when $\mathbf{d}_k \geq c$, and injective when $\mathbf{d}_k = c - 1$ otherwise.
\end{enumerate}
\end{thm}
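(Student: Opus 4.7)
The plan is to deduce the theorem from Theorem~\ref{thm:homological_stability_coChev} applied to the $\coLie$-coalgebra $\mathfrak{a} = \liealg{m}{\mathbf{n}}(X)$. By Proposition~\ref{prop:factorization_cohomology_vs_Zmn} combined with Proposition~\ref{prop:conf_vs_cochev}, we have a canonical identification
\[
\alg{m}{\mathbf{n}}(X) \simeq \coChev^{\un}(\liealg{m}{\mathbf{n}}(X))
\]
whose graded-degree $\mathbf{d}$ piece computes $C^*(\gConf{\mathbf{d}}{\mathbf{n}}(X), \sOmega_{\gConf{\mathbf{d}}{\mathbf{n}}(X)})$. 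The natural stabilization maps arise from the graded-unital structure on $\alg{m}{\mathbf{n}}(X)$, inherited from that on $\algOr{m}{\mathbf{n}}$; via the equivalence of \S\ref{subsec:graded_unital_graded_augmented}, this promotes $\alg{m}{\mathbf{n}}(X)$ to an object of $\ComAlg^{\un}(\Vect^{\multfil{m}})$, whose filtration morphisms are the stabilization maps in the statement.

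The heart of the proof is to identify the cohomological-degree support of $\Quot_\un(\liealg{m}{\mathbf{n}}(X))$ at each graded degree, and then verify the hypothesis~(\ref{eq:inequality_slope}) of Theorem~\ref{thm:homological_stability_coChev} with appropriate constants. Combining Corollary~\ref{cor:computation_of_liealg} with Lemma~\ref{lem:shifted_strongly_unital_to_strongly_unital} shows that $\liealg{m}{\mathbf{n}}(X)$ is strongly unital; the unit picks out the top Borel--Moore class in each of the $m$ copies at graded degree $\unit_k$, which exists and is one-dimensional because $X$ is irreducible of dimension $d$. Removing the unit, we find that $\Quot_\un(\liealg{m}{\mathbf{n}}(X))$ is supported at graded degree $\unit_k$ in cohomological degrees $[0, 2d-1]$ (the truncation $\tau_{\geq 0}(C^*(X, \omega_X)[-2d+1])$) and at graded degree $\mathbf{n}$ in cohomological degrees $[2d|\mathbf{n}| - 2d - 2,\ 2d|\mathbf{n}| - 2]$ (the full complex $C^*(X, \omega_X)[2 - 2d|\mathbf{n}|]$).

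In case (ii), that is, whenever $(d, m, |\mathbf{n}|) \neq (1, 1, 2)$, take $s = 1$ and $s_i = 0$ for $i \neq k$. The only non-trivial instance of~(\ref{eq:inequality_slope}) to check is at graded degree $\mathbf{n}$, where it reduces to the inequality $\mathbf{n}_k + 1 \leq 2d(|\mathbf{n}| - 1)$, which an elementary case distinction on $(d, m, |\mathbf{n}|)$ confirms in all remaining situations. Theorem~\ref{thm:homological_stability_coChev} then yields the equivalence when $\mathbf{d}_k \geq c$, and after reindexing $c \mapsto c - 1$ in the injectivity clause, injectivity when $\mathbf{d}_k = c - 1$. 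Case (i), the curve case, is handled identically but with $s = 2$: here $\Quot_\un(\liealg{1}{2}(X))$ lives at graded degree $1$ in cohomological degrees $[0, 1]$ and at graded degree $2$ in cohomological degrees $[0, 2]$, so~(\ref{eq:inequality_slope}) with $s = 2$ is trivially satisfied, and the theorem delivers exactly the bounds $\mathbf{d}_k \geq 2c$ for equivalence and $\mathbf{d}_k = 2c - 1$ for injectivity. The only genuine work is this elementary numerical verification; everything else is a formal consequence of the identifications set up in Sections~\ref{sec:cohomological_chevalley_complex} and~\ref{sec:fact_hom}.
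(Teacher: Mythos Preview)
Your proof is correct and follows exactly the same approach as the paper: apply Theorem~\ref{thm:homological_stability_coChev} to $\liealg{m}{\mathbf{n}}(X)$ using the computation of Corollary~\ref{cor:computation_of_liealg}, choosing $s=1,\,s_i=0$ in the generic case and $s=2$ in the curve case. You simply spell out the numerical verification of~\eqref{eq:inequality_slope} in more detail than the paper does.
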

\begin{proof}
The result is now a direct consequence of Theorem~\ref{thm:homological_stability_coChev}, using the computation of Corollary~\ref{cor:computation_of_liealg}. 

In the first case (i.e. the case of configuration spaces of a curve), we can take $s=2, s_i = 0, \forall i$ in~\eqref{eq:inequality_slope}. For the other cases, with $n$ finite, we can take $s=1, s_i=0, \forall i$.

The case where $n = \infty$ is treated similarly, and in fact, simpler, since the $\coLie$-coalgebra under consideration is trivial.
\end{proof}

\subsection{Stable homological densities}
\label{subsec:densities_at_infty}
Theorem~\ref{thm:dependence_on_mn_of_graded_quotients} and Corollary~\ref{cor:computation_of_graded_quotients} show that the quotients
\[
	\Lambda \otimes_{\alg{m}{\infty}(X)} \alg{m}{\mathbf{n}}(X) 
\]
depend only on $|\mathbf{n}|$. In this section, we will show how this implies the same statement about the stable homology. Namely, we want to show the following

\begin{thm}
\label{thm:equivalence_at_limit}
We have a natural equivalence
\[
	\Lambda \otimes_{\algb{m}{\infty}(X)} \algb{m}{\mathbf{n}}(X) \simeq \Lambda \otimes_{\algb{1}{\infty}(X)} \algb{1}{|\mathbf{n}|}(X).
\]
In particular, the relative tensor depends only on $|\mathbf{n}|$.
\end{thm}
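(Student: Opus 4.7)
My plan is to reduce the stable relative tensor to the explicit graded computation of Corollary~\ref{cor:computation_of_graded_quotients} via the stabilization functor $\lbar{(-)}$. The first step will be to invoke Corollary~\ref{cor:adjunction_for_algebras_colim}, which tells us that $\lbar{(-)}: \ComAlg^{\un, \aug}(\matheur{C}^{\multfil{m}}) \to \ComAlg^{\un, \aug}(\matheur{C})$ is a left adjoint (hence preserves pushouts) and is symmetric monoidal (sending $\Lambda[\multgr{m}]$ to $\Lambda$). Combined with the equivalence $\ComAlg^{\grun, \graug}(\matheur{C}^{\multgr{m}}) \simeq \ComAlg^{\un, \aug}(\matheur{C}^{\multfil{m}})$ of~\S\ref{subsec:graded_unital_graded_augmented}, this gives the identification
\[
\Lambda \otimes_{\algb{m}{\infty}(X)} \algb{m}{\mathbf{n}}(X) \simeq \lbar{\Lambda[\multgr{m}] \otimes_{\alg{m}{\infty}(X)} \alg{m}{\mathbf{n}}(X)}.
\]

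Next, I would translate the filtered pushout through Koszul duality: let $\mathfrak{a} \in \coLie^{\un}(\Vect^{\multgrplus{m}})$ be the Koszul dual of $\Lambda[\multgr{m}] \otimes_{\alg{m}{\infty}(X)} \alg{m}{\mathbf{n}}(X)$. By Lemma~\ref{lem:assgr_coChev} together with Corollary~\ref{cor:computation_of_graded_quotients}, the $\coLie$-coalgebra $\Quot_\un(\mathfrak{a})$ has underlying complex $C^*(X, \omega_X) \otimes \Lambda_{\mathbf{n}}[2-2d|\mathbf{n}|](-d|\mathbf{n}|)$, concentrated in the single graded degree $\mathbf{n}$. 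Since $|\mathbf{n}| \geq 2$, the graded degree $\mathbf{n}$ cannot be expressed as a sum of two positive graded degrees within the support of $\Quot_\un(\mathfrak{a})$, forcing the $\coLie$-comultiplication on $\Quot_\un(\mathfrak{a})$ to vanish; in other words, $\Quot_\un(\mathfrak{a})$ is abelian. Using the naturality of the graded-augmentation of $\alg{m}{\mathbf{n}}(X)$ coming from factorization cohomology, I would then show that the unit of $\mathfrak{a}$ splits in $\coLie^{\un}$, giving $\mathfrak{a} \simeq \Lambda^{\oplus m}[1] \oplus \Quot_\un(\mathfrak{a})$.

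With the splitting in hand, Proposition~\ref{prop:stable_homology_split_coLie} applies and yields
\[
\lbar{\coChev^{\un} \mathfrak{a}} \simeq \oblv_{\gr}(\coChev^{\un} \Quot_\un(\mathfrak{a})) \simeq \oblv_{\gr}(\Sym(V_\mathbf{n})),
\]
where $V_\mathbf{n}$ is the generator from Corollary~\ref{cor:computation_of_graded_quotients}. Because $\oblv_{\gr}$ forgets the grading entirely, the resulting commutative algebra depends only on the ungraded complex of $V_\mathbf{n}$, which depends only on $|\mathbf{n}|$; specializing to $(m, \mathbf{n}) = (1, |\mathbf{n}|)$ produces the asserted equivalence with $\Lambda \otimes_{\algb{1}{\infty}(X)} \algb{1}{|\mathbf{n}|}(X)$. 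The hard part will be verifying the unit splitting of $\mathfrak{a}$ in $\coLie^{\un}$: this is automatic for $|\mathbf{n}| \geq 3$, where $\mathfrak{a}$ itself is abelian by the same degree-comparison argument used for $\Quot_\un(\mathfrak{a})$, but it requires a more delicate analysis in the remaining case, where $\liealgOr{m}{\mathbf{n}}$ can carry a non-trivial cobracket $\Delta(y_\mathbf{n}) \sim y_{\unit_k} \wedge y_{\unit_l}$ reflecting the relation $\prod x_k^{\mathbf{n}_k}$, and one must use the specific form of the augmentation inherited from $\alg{m}{\mathbf{n}}(X)$.
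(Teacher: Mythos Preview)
Your first reduction,
\[
\Lambda \otimes_{\algb{m}{\infty}(X)} \algb{m}{\mathbf{n}}(X) \simeq \lbar{\Lambda[\multgr{m}] \otimes_{\alg{m}{\infty}(X)} \alg{m}{\mathbf{n}}(X)},
\]
is exactly the step the paper uses. From there, however, the paper's primary proof takes a different route: rather than computing the right-hand side outright, it constructs explicit algebra maps $\algOr{m}{\infty}\to\algOr{1}{\infty}$ and $\algOr{m}{\mathbf{n}}\to\algOr{1}{|\mathbf{n}|}$, pushes them through $\add_!$ and factorization cohomology to obtain a comparison map
\[
\Lambda[\graded]\otimes_{\alg{m}{\infty}(X)}\alg{m}{\mathbf{n}}(X)\ \longrightarrow\ \Lambda[\graded]\otimes_{\alg{1}{\infty}(X)}\alg{1}{|\mathbf{n}|}(X),
\]
and checks it is an equivalence by applying the conservative functor $\assgr\simeq \Lambda\otimes_{\Lambda[\graded]}(-)$ and invoking Theorem~\ref{thm:dependence_on_mn_of_graded_quotients}. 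Taking $\lbar{(-)}$ then finishes. This approach never needs to know whether $\mathfrak{a}$ is abelian, and it produces a \emph{natural map} realizing the coincidence rather than two separate computations that happen to agree.

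Your strategy---compute $\lbar{\coChev^{\un}\mathfrak{a}}$ directly by splitting off the unit and applying Proposition~\ref{prop:stable_homology_split_coLie}---is the paper's \emph{alternative} proof, carried out later as Proposition~\ref{prop:stable_homological_density}. The payoff of your route is an explicit formula for the stable density, not merely a dependence statement. But your handling of the ``hard part'' is off in two respects. First, the abelianness of $\Quot_\un(\mathfrak{a})$ needs no hypothesis on $|\mathbf{n}|$: it is supported in the single graded degree $\mathbf{n}$, and $\mathbf{n}\ne \mathbf{n}+\mathbf{n}$ already kills the cobracket. Second, and more importantly, for $|\mathbf{n}|=2$ the paper does not try to split the unit via any augmentation structure; instead it shows that $\mathfrak{a}$ itself is abelian by a \emph{cohomological} (not graded) degree count on the dual $\Linfty$-algebra $\mathfrak{a}^\vee$: the only operation allowed by graded degree is the $|\mathbf{n}|$-ary one, and it must land in cohomological degree at most $2d+|\mathbf{n}|(1-2d)<|\mathbf{n}|$, so it vanishes. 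Once $\mathfrak{a}$ is abelian the unit splits for free (the summands live in disjoint graded degrees), and your Proposition~\ref{prop:stable_homology_split_coLie} step goes through. Your proposed detour through ``the specific form of the augmentation'' is neither needed nor clearly workable; replace it with the $\Linfty$ degree argument and your proof is complete.
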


\subsubsection{}
Before proving the Proposition, we will need some preparation. Choosing a smooth point on $X$, and hence, a smooth point on $\gConf{\mathbf{d}}{\infty}(X)$ for each $\mathbf{d}$, we get a map of algebras
\[
	\alg{m}{\infty}(X) \to \Lambda[\multgr{m}],
\]
which is compatible with the unit map
\[
	\Lambda[\multgr{m}] \to \alg{m}{\infty}(X)
\]
induced by the structure map $\gConf{m}{\infty}(X) \to \pt$. 

The rough idea now is to construct a natural equivalence
\[
	\Lambda[\graded] \otimes_{\alg{m}{\infty}(X)} \alg{m}{\mathbf{n}}(X) \to \Lambda[\graded] \otimes_{\alg{1}{\infty}(X)} \alg{1}{|\mathbf{n}|}(X).
\]
Theorem~\ref{thm:equivalence_at_limit} is then obtained by taking
\[
	\colim = \lbar{(-)}: \Vect^{\multfil{m}} \to \Vect.
\]

\subsubsection{} We will now make this idea rigorous by using~\S\ref{subsec:change_of_gradings}. As in there, to keep the notation less cluttered, we will drop $\add_!$ from the notation. For example, whenever we have a map where the target is $\graded$-graded and the source is $\multgr{m}$-graded, $\add_!$ is implicitly applied to the source. 

\subsubsection{}
First, we have the following natural map of algebras
\[
	\Lambda[\multgr{m}] \to \Lambda[\graded]
\]
by sending all generators of the LHS to the generator of the RHS. Similarly, we have the following maps of algebras
\[
	\algOr{m}{\infty} \to \algOr{1}{\infty}
\]
and
\[
	\algOr{m}{\mathbf{n}} \to \algOr{1}{|\mathbf{n}|},
\]
which, respectively, induce maps of algebras
\[
	\alg{m}{\infty}(X) \to \alg{1}{\infty}(X)
\]
and
\[
	\alg{m}{\mathbf{n}}(X) \to \alg{1}{|\mathbf{n}|}(X).
\]

These maps, in turn, induce a map between algebras
\[
	\Lambda[\multgr{m}] \otimes_{\alg{m}{\infty}(X)} \alg{m}{\mathbf{n}}(X) \to \Lambda[\graded] \otimes_{\alg{1}{\infty}(X)} \alg{1}{|\mathbf{n}|}(X),
\]
which factors through
\[
	\Lambda[\graded] \otimes_{\alg{m}{\infty}(X)} \alg{m}{\mathbf{n}}(X) \to \Lambda[\graded] \otimes_{\alg{1}{\infty}(X)} \alg{1}{|\mathbf{n}|}(X). \teq\label{eq:map_of_algs_graded_level}
\]
Since $\Lambda\otimes_{\Lambda[\graded]}-$ of the map~\eqref{eq:map_of_algs_graded_level} is an equivalence, by Theorem~\ref{thm:dependence_on_mn_of_graded_quotients}, so is~\eqref{eq:map_of_algs_graded_level} itself, since $\Lambda\otimes_{\Lambda[\graded]}-$ is the functor of taking associated graded, by~\S\ref{subsubsec:assgr_as_tensoring}, which is conservative, by Lemma~\ref{lem:ass_gr_is_nice}. We thus obtain the following

\begin{prop}
\label{prop:graded_quotients_coincidence}
The natural map of algebras
\[
	\Lambda[\graded] \otimes_{\alg{m}{\infty}(X)} \alg{m}{\mathbf{n}}(X) \to \Lambda[\graded] \otimes_{\alg{1}{\infty}} \alg{1}{|\mathbf{n}|}(X)
\]
is an equivalence.
\end{prop}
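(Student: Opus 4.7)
The plan is to construct the natural comparison map and then show that it becomes an equivalence after applying the conservative functor $\Lambda \otimes_{\Lambda[\graded]} -$; at that point the question will reduce directly to Theorem~\ref{thm:dependence_on_mn_of_graded_quotients}, which was already proved.

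First I would assemble the natural map. A smooth closed point of $X$ furnishes an algebra map $\alg{m}{\infty}(X) \to \Lambda[\multgr{m}]$ compatible with the unit $\Lambda[\multgr{m}] \to \alg{m}{\infty}(X)$, and the same construction supplies $\alg{1}{\infty}(X) \to \Lambda[\graded]$. Collapsing all colors into one determines compatible algebra maps $\algOr{m}{\infty} \to \algOr{1}{\infty}$, $\algOr{m}{\mathbf{n}} \to \algOr{1}{|\mathbf{n}|}$, and $\Lambda[\multgr{m}] \to \Lambda[\graded]$; pulling back to $X$ via $\pi^!$ and applying $\pi_{?*}$ yields the corresponding comparison maps of factorization cohomology algebras. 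Assembling these data and extending scalars from $\Lambda[\multgr{m}]$ to $\Lambda[\graded]$, via the implicit $\add_!$ convention of~\S\ref{subsec:change_of_gradings}, produces the desired natural morphism of $\graded$-graded commutative algebras~\eqref{eq:map_of_algs_graded_level}.

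Next I would apply $\Lambda \otimes_{\Lambda[\graded]} -$ to~\eqref{eq:map_of_algs_graded_level}. Under the identification $\Mod_{\Lambda[\graded]}(\Vect^{\graded}) \simeq \Vect^{\filtered}$, this functor is precisely the associated-graded functor (see~\S\ref{subsubsec:assgr_as_tensoring}), which is conservative by Lemma~\ref{lem:ass_gr_is_nice}. Using the interplay between $\add^\to_!$, $\add_!$, and $\assgr$ recorded in Lemma~\ref{lem:add_!_vs_ass_gr}, the two sides become $\add_!(\Lambda \otimes_{\alg{m}{\infty}(X)} \alg{m}{\mathbf{n}}(X))$ and $\add_!(\Lambda \otimes_{\alg{1}{\infty}(X)} \alg{1}{|\mathbf{n}|}(X))$, and the induced map is the evident one built from the same collapsing-of-colors morphisms. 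By Theorem~\ref{thm:dependence_on_mn_of_graded_quotients}, both sides depend only on $|\mathbf{n}|$, so the induced map is an equivalence, and conservativity of $\assgr$ then upgrades this to the original~\eqref{eq:map_of_algs_graded_level}.

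The main obstacle is the change-of-grading bookkeeping: one has to verify that the map obtained after applying $\Lambda \otimes_{\Lambda[\graded]} -$ really is (naturally equivalent to) the map witnessing the coincidence in Theorem~\ref{thm:dependence_on_mn_of_graded_quotients}, i.e.\ that naturality survives passage through $\add_!$, $\add^\to_!$, and extension of scalars. Once this compatibility is tracked through the various squares of~\S\ref{subsec:change_of_gradings}, the conclusion is formal.
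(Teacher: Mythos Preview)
Your proposal is correct and follows essentially the same approach as the paper: construct the comparison map from the collapsing-of-colors morphisms and the augmentation $\alg{m}{\infty}(X)\to\Lambda[\multgr{m}]$, then observe that $\Lambda\otimes_{\Lambda[\graded]}-$ is the associated-graded functor (\S\ref{subsubsec:assgr_as_tensoring}), conservative by Lemma~\ref{lem:ass_gr_is_nice}, and that after applying it the map becomes the equivalence of Theorem~\ref{thm:dependence_on_mn_of_graded_quotients}. Your invocation of Lemma~\ref{lem:add_!_vs_ass_gr} to track the grading change is slightly more explicit than the paper, but the argument is the same.
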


\begin{proof}[Proof of Theorem~\ref{thm:equivalence_at_limit}]
We will now complete the proof of Theorem~\ref{thm:equivalence_at_limit}. Indeed, taking colimit along $\filtered$, we get
\[
	\Lambda \otimes_{\algb{m}{\infty}(X)} \algb{m}{\mathbf{n}}(X) \simeq \lbar{\Lambda[\multgr{m}] \otimes_{\alg{m}{\infty}(X)} \alg{m}{\mathbf{n}}(X)} \simeq \lbar{\Lambda[\graded] \otimes_{\alg{m}{\infty}} \alg{m}{\mathbf{n}}(X)},
\]
where the middle item is obtained by taking colimit along $\multfil{m}$ and the last one along $\filtered$. The first equivalence is due to the fact that $\lbar{(-)}$ along $\multfil{m}$ is, by Corollary~\ref{cor:adjunction_for_algebras_colim}, a left adjoint, and hence, commutes with pushouts of algebras. The second equivalence is due to Lemma~\ref{lem:add_!_vs_stabilization}.
\end{proof}

A couple of remarks are in order.
\begin{rmk}
\label{rmk:canonical_maps_witnessing_coincidences}
Observe that we have the following diagram
\[
\xymatrix{
	\algOr{m}{\infty} \ar[d] \ar[r] & \algOr{m}{\mathbf{n}} \ar[d] \\
	\algOr{1}{\infty} \ar[d] \ar[r] & \algOr{1}{|\mathbf{n}|} \ar[d] \\
	\Lambda \ar[r] & \Lambda \otimes_{\algOr{m}{\infty}} \algOr{m}{\mathbf{n}} \simeq \Lambda \otimes_{\algOr{1}{\infty}} \algOr{1}{|\mathbf{n}|}
}
\]
where each square is a pushout square in $\ComAlg(\graded)$.\footnote{One way to see that the squares are indeed pushouts is to use the same trick as in Proposition~\ref{prop:density_alg_Or}.} This provides us with a canonical map witnessing the coincidences observed in Proposition~\ref{prop:density_alg_Or}.

By tensoring (over $\Lambda[\graded]$) the map in Proposition~\ref{prop:graded_quotients_coincidence} with $\Lambda$, we get a canonical map witnessing the coincidences observed in Theorem~\ref{thm:dependence_on_mn_of_graded_quotients}.
\end{rmk}

\begin{rmk}
\label{rmk:consider_more_general_add}
Theorem~\ref{thm:equivalence_at_limit} allow us to produce a natural equivalence
\[
	\Lambda \otimes_{\algb{m}{\infty}(X)} \algb{m}{\mathbf{n}}(X) \simeq \Lambda \otimes_{\algb{m'}{\infty}(X)} \algb{m'}{\mathbf{n}'}
\]
whenever $|\mathbf{n}| = |\mathbf{n}'|$ by first passing through
\[
	\Lambda \otimes_{\algb{1}{\infty}(X)} \algb{1}{|\mathbf{n}|}(X).
\]

We can link the cases of $(m, \mathbf{n})$ and $(m', \mathbf{n}')$ directly by considering a more general form of the functor $\add_!$. Indeed, instead of only consider $\add: \multgr{m} \to \graded$, one can consider, more generally, homomorphisms $\add: \multgr{m} \to \multgr{m'}$ in an analogous way. We can then define the functor $\add_!$ by left Kan extension as usual, and the whole argument goes through unchanged. We do not need this in the sequel.
\end{rmk}

\begin{rmk}
The stable homological density
\[
	\Lambda \otimes_{\algb{m}{\infty}(X)} \algb{m}{\mathbf{n}}(X)
\]
is in fact computable and has a simple expression. This gives an alternative proof of Theorem~\ref{thm:equivalence_at_limit}. Since it relies on $\Linfty$-algebras, we postpone the actual computation to Proposition~\ref{prop:stable_homological_density}.
\end{rmk}
\subsection{Decategorifications}
\label{subsec:decategorifications_Zmn}
Using~\S\ref{subsec:decategorification_coChev}, we will now see how Theorem~\ref{thm:dependence_on_mn_of_graded_quotients} and Theorem~\ref{thm:equivalence_at_limit} immediately give us the coincidences observed in quotients of (graded) Euler characteristics, and when our scheme $X$ comes from pulling back $X_0$ over a finite field $\Fq$, also of virtual \Poincare{} series and $L$-series. Since \Poincare{} series are not additive, more care has to be taken. We will return to this in \S\ref{subsec:L_infty_algebras_computation}

As before, throughout this subsection, we will assume that $X$ is an irreducible scheme of dimension $d$.

\begin{prop}
\label{prop:graded_euler_Zmn_quotients}
The quotient of graded Euler characteristics
\[
	\frac{\chi^\gr(\alg{m}{\mathbf{n}}(X))}{\chi^{\gr}(\alg{m}{\infty}(X))} = \chi^{\gr}(\Lambda \otimes_{\alg{m}{\infty}(X)} \alg{m}{\mathbf{n}}(X)),
\]
and hence, depends only $|\mathbf{n}|$.

When $X$ is a pullback of a scheme $X_0$ over $\Fq$, then all the objects inside has the action of the geometric Frobenius $\Frob$. We have the same statement for 
\[
	\frac{\chi^\gr_{\Frob_q^{-1}}(\alg{m}{\mathbf{n}}(X), t)}{\chi^{\gr}_{\Frob_q^{-1}}(\alg{m}{\infty}(X), t)} = \chi^{\gr}_{\Frob_q^{-1}}(\Lambda \otimes_{\alg{m}{\infty}(X)} \alg{m}{\mathbf{n}}(X), t) = \prod_{x\in |X|}(1-q^{-d|\mathbf{n}| \deg x}t^{|\mathbf{n}|\deg x}). \teq\label{eq:densities_chi_gr_frob}
\]
\end{prop}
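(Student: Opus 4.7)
\emph{Plan.} My strategy is to derive both equalities from the multiplicativity of the graded Euler characteristic/Frobenius trace on $\coChev^{\un}$ (\S\ref{subsec:decategorification_coChev}) combined with the multiplicative trace formula (Proposition~\ref{prop:multiplicative_trace}) and the explicit density computation of Proposition~\ref{prop:density_alg_Or}. The closing assertion that the quotient depends only on $|\mathbf{n}|$ will follow directly from the resulting product expression.

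For the first equality (and its Frobenius-equivariant analogue), I would invoke Corollary~\ref{cor:quotient_decat_graded_euler} applied to the pushout of $\coLie$-coalgebras
\[
\xymatrix{
\liealg{m}{\infty}(X) \ar[d] \ar[r] & \liealg{m}{\mathbf{n}}(X) \ar[d] \\
0 \ar[r] & \mathfrak{c}(X)
}
\]
The hypothesis on degree-wise finite dimensionality of the cohomology is ensured by Corollary~\ref{cor:computation_of_liealg}, which identifies the underlying object as a finite sum of shifts and Tate twists of $C^{*}(X, \omega_X)$; this is perfect since $X$ is of finite type. Since $\coChev^{\un}$ is an equivalence onto $\ComAlg^{\un, \aug}$ and intertwines pushouts of $\coLie$-coalgebras with relative tensors of augmented commutative algebras (see also \S\ref{subsubsec:non_unital_quotient}), combining with Proposition~\ref{prop:conf_vs_cochev} identifies $\coChev^{\un} \mathfrak{c}(X) \simeq \Lambda \otimes_{\alg{m}{\infty}(X)} \alg{m}{\mathbf{n}}(X)$ and yields the first equality.

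For the product formula, I would apply the multiplicative trace formula to the augmented commutative algebra
\[
\matheur{B} = \omega_X \otimesshriek_{\freeAlgOr{m}(X)} \algOr{m}{\mathbf{n}}(X),
\]
which, by pulling Proposition~\ref{prop:density_alg_Or} back along the cocontinuous symmetric monoidal functor $\pi^{!}$, is equivalent to $\omega_{X, \mathbf{0}} \oplus \omega_{X}[1 - 2d|\mathbf{n}|](-d|\mathbf{n}|)_{\mathbf{n}}$. The local factor at each closed point $x \in |X|$ is then $\chi^{\gr}_{\Frob_{x}^{-1}}(\Lambda_{\mathbf{0}} \oplus \Lambda[1 - 2d|\mathbf{n}|](-d|\mathbf{n}|)_{\mathbf{n}}, t^{\deg x}) = 1 - q^{-d|\mathbf{n}| \deg x}\, t^{|\mathbf{n}| \deg x}$, since the odd cohomological shift $2d|\mathbf{n}| - 1$ contributes a sign $-1$ and the Tate twist $(-d|\mathbf{n}|)$ contributes the factor $q^{-d|\mathbf{n}| \deg x}$. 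Assembling via Proposition~\ref{prop:multiplicative_trace} then gives the claimed product, provided one identifies $\pi_{?*}^{\un} \matheur{B} \simeq \Lambda \otimes_{\alg{m}{\infty}(X)} \alg{m}{\mathbf{n}}(X)$.

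The main technical point is this last identification, i.e.\ the preservation of the relative tensor under $\pi_{?*}^{\un}$. I would handle it using the factorization $\pi_{?*}^{\un} \circ \pi^{!} \simeq \coChev^{\un} \circ (C^{*}(X, \omega_X) \otimes -) \circ \coPrim[1]$ coming from~\eqref{eq:coChev_vs_pi_?*} and~\eqref{eq:push_forward_constant_coLie}, each of whose three constituents preserves pushouts (the outer two being equivalences in the augmented unital/$\coLie$-coalgebra setting, the middle one a tensor with a fixed complex). Alternatively, the same product formula can be reached by applying Proposition~\ref{prop:multiplicative_trace} to $\alg{m}{\mathbf{n}}(X)$ and $\alg{m}{\infty}(X)$ separately and computing the ratio of Euler products pointwise, with Corollary~\ref{cor:quotient_decat_graded_euler} applied fiberwise to the constant morphism $\liealgOr{m}{\infty} \to \liealgOr{m}{\mathbf{n}}$; this bypasses the identification altogether.
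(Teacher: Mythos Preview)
Your proof is correct and follows essentially the same route as the paper: Corollary~\ref{cor:quotient_decat_graded_euler} for the first equality, and Proposition~\ref{prop:multiplicative_trace} together with Proposition~\ref{prop:density_alg_Or} (plus the identification $\pi_{?*}^{\un}(\Lambda \otimes_{\algOr{m}{\infty}(X)} \algOr{m}{\mathbf{n}}(X)) \simeq \Lambda \otimes_{\alg{m}{\infty}(X)} \alg{m}{\mathbf{n}}(X)$) for the product formula. One small point: your plan derives the dependence on $|\mathbf{n}|$ from the product expression, but that expression is only available in the Frobenius-equivariant case; for the plain $\chi^{\gr}$ you should instead invoke Theorem~\ref{thm:dependence_on_mn_of_graded_quotients} (as the paper does) or read it off from Corollary~\ref{cor:computation_of_graded_quotients}. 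This is an easy patch and does not affect the substance of your argument.
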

\begin{proof}
The equality comes directly from Corollary~\ref{cor:quotient_decat_graded_euler}. The fact that it depends only on $|\mathbf{n}|$ is a consequence of Theorem~\ref{thm:dependence_on_mn_of_graded_quotients}. The case of $\chi^{\gr}_F$ is treated the same way (see also Remark~\ref{rmk:action_of_T}).

The second equality involving $\chi^{\gr}_{\Frob_q^{-1}}$ follows from Proposition~\ref{prop:multiplicative_trace} and Proposition~\ref{prop:density_alg_Or}, using the fact that
\[
	\Lambda \otimes_{\alg{m}{\infty}(X)} \alg{m}{\mathbf{n}}(X) \simeq \pi_{?*}^{\un}(\Lambda \otimes_{\algOr{m}{\infty}(X)} \algOr{m}{\mathbf{n}}(X)).
\]
\end{proof}

Combining with Proposition~\ref{prop:trace_decat},  noting that
\[
	\Lambda \otimes_{\alg{m}{\infty}(X)} \alg{m}{\mathbf{n}}(X) \simeq \Lambda \otimes_{\Lambda[\multgr{m}]} \Lambda[\multgr{m}] \otimes_{\alg{m}{\infty}(X)} \alg{m}{\mathbf{n}}(X),
\]
where the LHS is $\Frob_q^{-1}$-summable (by the explicit computation at Corollary~\ref{cor:computation_of_graded_quotients}), we obtain the following

\begin{cor}
\label{cor:quotient_Frob_trace_infty}
When $X$ is as in the second part of Proposition~\ref{prop:graded_euler_Zmn_quotients}. Then
\[
	\chi_{\Frob_q^{-1}}(\Lambda \otimes_{\algb{m}{\infty}(X)} \algb{m}{\mathbf{n}}(X)) = \chi_{\Frob_q^{-1}}^{\gr}(\Lambda \otimes_{\alg{m}{\infty}(X)} \alg{m}{\mathbf{n}}(X), 1) = \zeta_{X_0}(d|\mathbf{n}|)^{-1}. \teq\label{eq:densities_chi_frob}
\]
\end{cor}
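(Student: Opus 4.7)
The plan is to split the claim into its two equalities and dispatch each via machinery already assembled. Set $V = \Lambda[\multgr{m}] \otimes_{\alg{m}{\infty}(X)} \alg{m}{\mathbf{n}}(X)$, viewed as an object of $\ComAlg(\Vect^{\multfil{m}})$. Because $\lbar{(-)}$ commutes with pushouts of commutative algebras (Corollary~\ref{cor:adjunction_for_algebras_colim}), we have $\lbar{V} \simeq \Lambda \otimes_{\algb{m}{\infty}(X)} \algb{m}{\mathbf{n}}(X)$; and by the tensor presentation of $\assgr$ in~\S\ref{subsubsec:assgr_as_tensoring}, the associated graded $\assgr(V) \simeq \Lambda \otimes_{\Lambda[\multgr{m}]} V$ coincides with $\Lambda \otimes_{\alg{m}{\infty}(X)} \alg{m}{\mathbf{n}}(X)$. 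The first equality of the corollary is then precisely the conclusion of Proposition~\ref{prop:trace_decat} applied to $V$, once its hypotheses are verified.

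The two hypotheses are that $V$ satisfies homological stability and that $\oblv_{\gr}(\assgr(V))$ is $\Frob_q^{-1}$-summable. Homological stability for $\alg{m}{\mathbf{n}}(X)$ is Theorem~\ref{thm:homological_stability_Zmn} (and likewise for $\alg{m}{\infty}(X)$); since tensoring with $\Lambda[\multgr{m}]$ and taking pushouts preserves stability ranges, this transfers to $V$. Summability follows from the explicit identification
\[
	\oblv_{\gr}\bigl(\assgr(V)\bigr) \simeq \oblv_{\gr}\Sym\bigl(C^*(X, \omega_X) \otimes \Lambda_{\mathbf{n}}[1-2d|\mathbf{n}|](-d|\mathbf{n}|)\bigr)
\]
supplied by Corollary~\ref{cor:computation_of_graded_quotients}, together with the observation recorded in the remark immediately following that corollary: the generator inside $\Sym$ is finite dimensional and of strictly positive Frobenius weight (since $|\mathbf{n}| \geq 2$), which ensures absolute convergence of the sum of eigenvalue magnitudes.

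For the second equality, I would simply specialize the already-established formula
\[
	\chi^{\gr}_{\Frob_q^{-1}}\bigl(\Lambda \otimes_{\alg{m}{\infty}(X)} \alg{m}{\mathbf{n}}(X),\, t\bigr) = \prod_{x \in |X|}\bigl(1 - q^{-d|\mathbf{n}|\deg x}\, t^{|\mathbf{n}|\deg x}\bigr)
\]
of Proposition~\ref{prop:graded_euler_Zmn_quotients} at $t = 1$ and recognize the resulting Euler product $\prod_{x\in |X|}(1 - q^{-d|\mathbf{n}|\deg x})$ as $\zeta_{X_0}(d|\mathbf{n}|)^{-1}$ by the definition of the Hasse--Weil zeta function of $X_0$. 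The only non-formal step is the summability verification needed to invoke Proposition~\ref{prop:trace_decat}; once the closed form of $\assgr(V)$ from Corollary~\ref{cor:computation_of_graded_quotients} is in hand, this is routine, so all the conceptual content of the corollary is in fact packaged into the preceding theorems, and this statement is essentially a formal consequence.
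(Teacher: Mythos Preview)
Your approach is essentially the paper's own: set $V = \Lambda[\multgr{m}] \otimes_{\alg{m}{\infty}(X)} \alg{m}{\mathbf{n}}(X)$, identify $\lbar{V}$ and $\assgr(V)$ with the two sides of the first equality, and invoke Proposition~\ref{prop:trace_decat}; then specialize the Euler product of Proposition~\ref{prop:graded_euler_Zmn_quotients} at $t=1$ for the second equality. The summability check via Corollary~\ref{cor:computation_of_graded_quotients} is exactly what the paper does.

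One step deserves correction. Your justification of homological stability for $V$ --- ``tensoring with $\Lambda[\multgr{m}]$ and taking pushouts preserves stability ranges'' --- is not a valid general principle: relative tensors involve a bar resolution and there is no reason for stability of the factors to survive. The clean argument is Koszul-dual: since $\coChev$ is an equivalence on the pro-nilpotent category, $V \simeq \coChev^{\un}\mathfrak{q}$ where $\mathfrak{q}$ is the pushout of $\coLie$-coalgebras
\[
\mathfrak{q} \simeq \bigoplus_{k=1}^m \Lambda_{\unit_k}[1] \sqcup_{\liealg{m}{\infty}(X)} \liealg{m}{\mathbf{n}}(X),
\]
which is strongly unital by construction and finite-dimensional by the computation in Corollary~\ref{cor:computation_of_liealg}. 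Corollary~\ref{cor:qualitative_homological_stab_fin_dim} then supplies homological stability for $V$ directly. (The paper is terse here too, but its implicit reasoning is this, not a general permanence claim for pushouts.)
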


\begin{rmk}
\label{rmk:quotient_Frob_trace_infty}
When $\mathbf{n} = (n, n, \dots, n)$, evaluating $t=1$ and using the Grothendieck-Lefschetz trace formula, Theorem~\ref{thm:Grothendieck_Lefschetz_dual}, we see that the left most term of~\eqref{eq:densities_chi_gr_frob} captures the arithmetic densities
\[
	\lim_{\mathbf{d} \to \infty} \frac{|\gConf{\mathbf{d}}{\mathbf{n}}(X_0)(\Fq)|}{|\gConf{\mathbf{d}}{\infty}(X_0)(\Fq)|}
\]
and the middle term becomes
\[
	\chi_{\Frob^{-1}_q}^{\gr}(\Lambda \otimes_{\alg{m}{\infty}(X)} \alg{m}{\mathbf{n}}(X), 1) = \chi_{\Frob_q^{-1}}(\Lambda \otimes_{\algb{m}{\infty}(X)} \algb{m}{\mathbf{n}}(X)) = \zeta(d|\mathbf{n}|) = \zeta(dmn)^{-1}.
\]

Thus, equality~\eqref{eq:densities_chi_frob} categorifies (and generalizes) arithmetic density of Theorem~\ref{thm:arithmetic_densities}.
\end{rmk}

\subsubsection{}
Arguing similarly as Proposition~\ref{prop:graded_euler_Zmn_quotients}, we get
\begin{prop}
\label{prop:Poinc_vir_Zmn_quotients}
When $X$ is a pullback of a scheme $X_0$ over $\Fq$, then we have
\[
	\frac{\PoincVir(\algb{m}{\mathbf{n}}(X))}{\PoincVir(\algb{m}{\infty}(X))} = \PoincVir(\Lambda \otimes_{\algb{m}{\infty}(X)} \algb{m}{\mathbf{n}}(X)),
\]
and hence, only depends on $|\mathbf{n}|$.
\end{prop}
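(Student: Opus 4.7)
The plan is to follow the template of Proposition~\ref{prop:graded_euler_Zmn_quotients}, but to invoke Corollary~\ref{cor:quotient_decat_poinc_vir} in place of Corollary~\ref{cor:quotient_decat_graded_euler}. Concretely, I would apply Corollary~\ref{cor:quotient_decat_poinc_vir} to the pushout square in $\ComAlg^{\grun}(\Vect^{\multgr{m}})$ formed by the map $\alg{m}{\infty}(X) \to \alg{m}{\mathbf{n}}(X)$ together with the augmentation $\alg{m}{\infty}(X) \to \Lambda[\multgr{m}]$, whose Koszul dual is the map of strongly unital $\coLie$-coalgebras $\liealg{m}{\infty}(X) \to \liealg{m}{\mathbf{n}}(X)$ furnished by Proposition~\ref{prop:conf_vs_cochev} and Corollary~\ref{cor:computation_of_liealg}. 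Since everything is pulled back from $X_0/\Fq$, all of these maps are Frobenius-equivariant in the sense of Remark~\ref{rmk:action_of_T}.

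Two hypotheses of Corollary~\ref{cor:quotient_decat_poinc_vir} then need to be checked. First, finite dimensionality of $\Ho^*(\liealg{m}{\mathbf{n}}(X))$ and $\Ho^*(\liealg{m}{\infty}(X))$, which is immediate from Corollary~\ref{cor:computation_of_liealg} together with the fact that $\Ho^*(X, \omega_X)$ is finite dimensional for $X$ of finite type. Second, the positive-weight hypothesis underlying Proposition~\ref{prop:poinc_vir_decat}, namely that $\Quot_\un$ of each of the three $\coLie$-coalgebras has weights $\geq 1$. Using Corollary~\ref{cor:computation_of_liealg} and the identification~\eqref{eq:quot_un_coLie_q}, each such $\Quot_\un$ is $C^*(X, \omega_X)$ tensored with a graded piece carrying a Tate twist by $-d$ or $-d|\mathbf{n}|$; by the standard Deligne weight bounds for Borel--Moore homology of a finite-type $\Fq$-scheme (so that $C^*(X, \omega_X)$ has weights $\leq 0$ in the normalization used here) combined with the Tate twist contributing $+2d \geq 2$ or $+2d|\mathbf{n}| \geq 4$, the resulting weights are indeed $\geq 1$. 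The degenerate case $d=0$ reduces to $X$ a point, where the proposition is trivial.

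Once the hypotheses are verified, Corollary~\ref{cor:quotient_decat_poinc_vir} directly yields the equality
\[
	\frac{\PoincVir(\algb{m}{\mathbf{n}}(X))}{\PoincVir(\algb{m}{\infty}(X))} = \PoincVir(\Lambda \otimes_{\algb{m}{\infty}(X)} \algb{m}{\mathbf{n}}(X)).
\]
The claim that this quotient depends only on $|\mathbf{n}|$ is then an immediate consequence of Theorem~\ref{thm:equivalence_at_limit}, which identifies the right-hand side across different $(m, \mathbf{n})$ with $|\mathbf{n}|$ fixed. The only real obstacle in this plan is the weight bookkeeping, and even that is routine once one commits to a normalization; there is no genuine analytic difficulty because all sums defining $\PoincVir$ converge coefficient-wise thanks to the positivity of the Tate twists.
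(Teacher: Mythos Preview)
Your proposal is correct and follows exactly the same approach as the paper's own proof, which simply says to argue as in Proposition~\ref{prop:graded_euler_Zmn_quotients} but with Corollary~\ref{cor:quotient_decat_poinc_vir} and Theorem~\ref{thm:equivalence_at_limit} in place of Corollary~\ref{cor:quotient_decat_graded_euler} and Theorem~\ref{thm:dependence_on_mn_of_graded_quotients}. Your additional verification of the finite-dimensionality and weight hypotheses is more explicit than the paper's terse proof, but is entirely in the same spirit.
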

\begin{proof}
We argue similarly as the Proposition above, but using Theorem~\ref{thm:equivalence_at_limit} and Corollary~\ref{cor:quotient_decat_poinc_vir} in place of Theorem~\ref{thm:dependence_on_mn_of_graded_quotients} and Corollary~\ref{cor:quotient_decat_graded_euler}.
\end{proof}

\begin{rmk}
Note that all the terms appearing in Propositions~\ref{prop:graded_euler_Zmn_quotients} and~\ref{prop:Poinc_vir_Zmn_quotients} are readily available in explicit forms using Propositions~\ref{prop:graded_euler_decat} and~\ref{prop:poinc_vir_decat}.
\end{rmk}

\subsection{$\Linfty$-algebras, the case of \Poincare{} series, and stable homological densities}
\label{subsec:L_infty_algebras_computation}
As we have seen above, the various equivalences of algebras given in Theorem~\ref{thm:dependence_on_mn_of_graded_quotients} and Theorem~\ref{thm:equivalence_at_limit} allow us to systematically recover information about densities of Euler-characteristics and virtual \Poincare{} polynomials by decategorification. This comes from the fact that these invariants are additive, i.e. they behave nicely with respect to fiber sequences. \Poincare{} polynomials are, unfortunately, not additive, and only exhibit good behaviors when $\coLie$-coalgebras involved are abelian. In the first part of this subsection, we will find the conditions on $X$ so that $\liealgOr{m}{\mathbf{n}}(X)$ is abelian. In the second part, using the same technique, we obtain an expression for the stable homological density $\Lambda \otimes_{\algb{m}{\infty}(X)} \algb{m}{\mathbf{n}}(X)$.

Since it is more convenient to manipulate $\Lie$-algebras than $\coLie$-coalgebras, in this subsection, we will exclusively work with the linear dual $\liealg{m}{\mathbf{n}}(X)^\vee$, which is now a $\Lie$-algebra. We can take the double-dual to get back to the original $\coLie$-coalgebra, since everything is finite dimensional.

We start with the following dual description of Corollary~\ref{cor:computation_of_liealg}. We ignore Tate twists, since it is not relevant for the purposes of taking \Poincare{} polynomials.

\begin{lem}
\label{cor:computation_of_liealg_dual}
We have
\[
	\oblv_{\Lie}(\liealg{m}{\mathbf{n}}(X)^\vee) \simeq C^*_c(X) \otimes \liealgOr{m}{\mathbf{n}}^\vee \simeq \left(\bigoplus_{k=1}^m C^*_c(X) \otimes \Lambda_{\unit_k}[2d-1]\right) \oplus (C^*_c(X)\otimes \Lambda_{\mathbf{n}}[-2 + 2d|\mathbf{n}|]).
\]
\end{lem}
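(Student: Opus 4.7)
The plan is straightforward: dualize Corollary~\ref{cor:computation_of_liealg} term by term, using that linear duality exchanges $\coLie$-coalgebras and $\Lie$-algebras, and translates the ``$C^*(X,\omega_X)$'' appearing there into ``$C^*_c(X)$'' via Verdier duality.

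First I would note that since $\liealgOr{m}{\mathbf{n}}$ is finite dimensional in each graded degree (by Corollary~\ref{cor:computation_lieAlgOr}) and $C^*(X,\omega_X)$ is bounded with finite-dimensional cohomology (as $X$ is of finite type), the underlying object $\oblv_{\coLie}(\liealg{m}{\mathbf{n}}(X))$ is finite dimensional in each graded degree. In this regime, linear duality
\[
    (-)^\vee : \Vect^{\multgrplus{m}} \to (\Vect^{\multgrplus{m}})^\op
\]
is a symmetric monoidal equivalence exchanging $\coLie(\Vect^{\multgrplus{m}})$ with $\Lie(\Vect^{\multgrplus{m}})$, and intertwines $\oblv_{\coLie}$ with $\oblv_{\Lie}$. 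Applying $(-)^\vee$ to the equivalence in Corollary~\ref{cor:computation_of_liealg} therefore gives
\[
    \oblv_{\Lie}(\liealg{m}{\mathbf{n}}(X)^\vee) \simeq \left(\bigoplus_{k=1}^m C^*(X,\omega_X)^\vee \otimes \Lambda_{\unit_k}[2d-1]\right) \oplus \left(C^*(X,\omega_X)^\vee \otimes \Lambda_{\mathbf{n}}[-2+2d|\mathbf{n}|]\right),
\]
where we use $\Lambda[c]^\vee \simeq \Lambda[-c]$ and ignore Tate twists as indicated in the statement.

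Next, Verdier duality for the structure map $\pi: X\to \pt$ identifies $C^*(X,\omega_X)^\vee \simeq C^*_c(X)$; concretely, $\pi_* \omega_X$ is the Verdier dual of $\pi_! \Lambda_X = C^*_c(X)$, and finite-dimensionality of the cohomology makes this a genuine linear duality at the level of underlying chain complexes. Substituting gives the second equivalence in the statement. The first equivalence of the statement, namely $\oblv_{\Lie}(\liealg{m}{\mathbf{n}}(X)^\vee) \simeq C^*_c(X) \otimes \liealgOr{m}{\mathbf{n}}^\vee$, follows by the same dualization applied to $\liealg{m}{\mathbf{n}}(X) = \pi_*\omega_X \otimes \liealgOr{m}{\mathbf{n}}$ (see~\S\ref{subsec:coLie_coalgebras_Zmn}).

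There is no real obstacle here: the only minor technical point is to ensure that the finite-dimensionality hypotheses are met so that linear duality behaves symmetric-monoidally (so we can distribute $(-)^\vee$ across the tensor product $\pi_*\omega_X \otimes \liealgOr{m}{\mathbf{n}}$) and so that Verdier duality reduces to honest linear duality at the level of total cohomology; both hold because $X$ is of finite type and the generators of $\liealgOr{m}{\mathbf{n}}$ from Corollary~\ref{cor:computation_lieAlgOr} are finite-dimensional in each graded degree.
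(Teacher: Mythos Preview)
Your proposal is correct and matches the paper's approach exactly: the paper states this lemma without proof, presenting it as the ``dual description of Corollary~\ref{cor:computation_of_liealg}'' and immediately noting that $C^*_c(X) \simeq C^*(X,\omega_X)^\vee$ by Verdier duality. Your write-up simply spells out that dualization carefully, including the finite-dimensionality checks that justify distributing $(-)^\vee$ across tensors.
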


Here, we use $C^*_c(X)$ to denote $C^*_c(X, \Lambda) \simeq C^*(X, \omega_X)^\vee$, where the equivalence is due to Verdier duality. Recall that $C^*_c(X)$ lives in cohomological degrees $[0, 2d]$ in general. When $X$ is smooth affine, $C^*_c(X)$ lives in cohomological degrees $[d, 2d]$.

\subsubsection{} The $\Lie$-algebra structure on $\liealg{m}{\mathbf{n}}(X)^{\vee}$ is obtained from the $E_\infty$-algebra structure on $C^*_c(X)$ (via cup-product) and the $\Lie$-algebra structure on $\liealgOr{m}{\mathbf{n}}^\vee$ (see also~\cite{gaitsgory_study_2017}*{Vol. II, Chapter 6, \S1.2} and~\cite{ho_free_2017}*{Example. 4.2.8}). By homotopy transfer (\cite{loday_algebraic_2012}*{\S10.3}), we obtain the structures of graded $\Linfty$-algebra structures on
\[
	\Ho^*(\liealgOr{m}{\mathbf{n}}^\vee) \simeq \bigoplus_{k=1}^m \Lambda_{\unit_k}[2d-1] \oplus \Lambda_\mathbf{n}[-2+2d|\mathbf{n}|]
\]
and
\[
	\Ho^*(\liealg{m}{\mathbf{n}}(X)^{\vee}) \simeq \Ho^*_c(X) \otimes \Ho^*(\liealgOr{m}{\mathbf{n}}^\vee) \simeq \bigoplus_{k=1}^m \Ho^*_c(X)[2d-1]_{\unit_k} \oplus \Ho^*_c(X)[-2 + 2d|\mathbf{n}|]_{\mathbf{n}}
\]
and a $\Cinfty$-algebra structure on $\Ho^*_c(X)$. The $\Linfty$-algebra structure on $\Ho^*(\liealg{m}{\mathbf{n}}(X)^{\vee})$ comes from the $\Cinfty$- (resp. $\Linfty$-)algebra structure on $\Ho^*_c(X)$ (resp. $\Ho^*(\liealgOr{m}{\mathbf{n}}^\vee)$).

\begin{lem}
Suppose the cup product of any $|\mathbf{n}|$ classes in $\Ho^*_c(X)$ vanishes. Then, the $\Linfty$-algebra structure on $\Ho^*(\liealg{m}{\mathbf{n}}(X)^\vee)$ is trivial.
\end{lem}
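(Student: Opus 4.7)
My plan is to analyze the potential nonzero $\Linfty$-brackets on $\Ho^*(\liealg{m}{\mathbf{n}}(X)^\vee)$ via graded-degree constraints, then identify each such bracket explicitly in terms of the cup product on $\Ho^*_c(X)$, and finally invoke the hypothesis to conclude.

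First I would exploit the multi-grading. By Corollary~\ref{cor:computation_of_liealg_dual}, $\Ho^*(\liealg{m}{\mathbf{n}}(X)^\vee)$ is supported in graded degrees $\unit_1,\dots,\unit_m,\mathbf{n}\in\multgrplus{m}$. For an $n$-ary bracket $l_n$ with inputs in graded degrees $\mathbf{d}_1,\dots,\mathbf{d}_n$ to be nonzero, the output must lie in graded degree $\sum_i\mathbf{d}_i$, which must also belong to the support. Since $l_1=0$ (we are working with cohomology), we may assume $n\geq 2$, and then $|\sum_i\mathbf{d}_i|\geq n\geq 2$ forces $\sum_i\mathbf{d}_i=\mathbf{n}$. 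If any input lay in degree $\mathbf{n}$, the remaining inputs would sum to $0$, which is impossible. Hence every $\mathbf{d}_i$ is some $\unit_{k_i}$, and counting multiplicities forces $n=|\mathbf{n}|$ with exactly $\mathbf{n}_k$ inputs in degree $\unit_k$ for each $k$. Thus the only potentially nonzero $\Linfty$-bracket is $l_{|\mathbf{n}|}$ applied to such input collections.

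Next I would identify the value of this bracket. The $\Linfty$-structure on the tensor product of the $\Cinfty$-algebra $\Ho^*_c(X)$ with the $\Linfty$-algebra $\Ho^*(\liealgOr{m}{\mathbf{n}}^\vee)$ is assembled from operations on each factor. Since the only nonzero $\Linfty$-operation on the second factor is the $|\mathbf{n}|$-ary bracket, for inputs $\alpha_i\otimes e_{k_i}$ with $\alpha_i\in\Ho^*_c(X)$ and $e_{k_i}$ a generator in degree $\unit_{k_i}$, the $|\mathbf{n}|$-ary bracket takes the form
\[
l_{|\mathbf{n}|}(\alpha_1\otimes e_{k_1},\dots,\alpha_{|\mathbf{n}|}\otimes e_{k_{|\mathbf{n}|}})=\pm(\alpha_1\cup\cdots\cup\alpha_{|\mathbf{n}|})\otimes l_{|\mathbf{n}|}(e_{k_1},\dots,e_{k_{|\mathbf{n}|}}).
\]
Higher $\Cinfty$-corrections on the $\Ho^*_c(X)$-factor would only contribute to $\Linfty$-operations whose Lie-side counterparts are nontrivial, but all such counterparts (of arity $\neq|\mathbf{n}|$) vanish; in the $|\mathbf{n}|$-ary slot, the commutative contribution matching $l_{|\mathbf{n}|}$ is the strict iterated cup product. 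One can make this precise by choosing a strictly commutative model for $C^*_c(X)$ (available over a field of characteristic $0$), in which case $C^*_c(X)\otimes L$ is a strict Lie (or $\Linfty$) algebra for any $\Linfty$-model $L$ of $\liealgOr{m}{\mathbf{n}}^\vee$, and homotopy-transferring to cohomology yields exactly the formula above.

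The final step is immediate from the hypothesis: by assumption, every $|\mathbf{n}|$-fold cup product in $\Ho^*_c(X)$ vanishes, so the right-hand side is zero, and therefore $l_{|\mathbf{n}|}=0$ on $\Ho^*(\liealg{m}{\mathbf{n}}(X)^\vee)$. Combined with the vanishing of all other brackets from the graded-degree argument, this proves the $\Linfty$-structure is trivial.

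The main obstacle is the second step, namely the identification of the $|\mathbf{n}|$-ary bracket with the iterated cup product; this requires verifying that no higher $\Cinfty$-operations on $\Ho^*_c(X)$ sneak into the relevant bracket of the tensor-product $\Linfty$-structure. The graded-degree rigidity and the exclusivity of $l_{|\mathbf{n}|}$ on the Lie side both point to this identification, and it can be confirmed either through a strictly commutative model for $C^*_c(X)$ or by unwinding the operadic definition of the $\Com\otimes\Lie$-tensor-product of infinity algebras applied to our highly-constrained inputs.
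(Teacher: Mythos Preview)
Your proposal is correct and follows essentially the same approach as the paper: both reduce to the $|\mathbf{n}|$-ary bracket via graded-degree constraints, then identify it with an $|\mathbf{n}|$-fold cup product on $\Ho^*_c(X)$ tensored with the Lie-side bracket. The only minor difference is that the paper appeals directly to the explicit homotopy transfer formula of Loday--Vallette (Thm.~10.3.5) for this identification, whereas you suggest passing through a strictly commutative model for $C^*_c(X)$; your route is arguably cleaner since it sidesteps any worry about higher $\Cinfty$-operations contributing.
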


\begin{proof}
By graded-degree considerations, we see that the $|\mathbf{n}|$-ary operations are the only non-trivial operations the $\Linfty$-algebras $\Ho^*(\liealgOr{m}{\mathbf{n}}^\vee)$ and $\Ho^*(\liealg{m}{\mathbf{n}}(X)^\vee)$ can have.\footnote{In fact, the one on $\Ho^*(\liealgOr{m}{\mathbf{n}}^\vee)$ is necessarily non-trivial since we know that $\liealgOr{m}{\mathbf{n}}$ is non-trivial. Indeed, its Koszul dual $\algOr{m}{\mathbf{n}}$ is not free.}
Using the explicit formula for homotopy transfer~\cite{loday_algebraic_2012}*{Thm. 10.3.5}, we see that the resulting $|\mathbf{n}|$-ary operation on $\Ho^*(\liealg{m}{\mathbf{n}}(X)^\vee)$ involves the $|\mathbf{n}|$-ary operation of $\Ho^*(\liealgOr{m}{\mathbf{n}}^\vee)$ (since that's the only non-trivial operation $\Ho^*(\liealgOr{m}{\mathbf{n}}^\vee)$ has) together with iterated $2$-ary operations of $\Ho^*_c(X)$, namely the cup products. But these vanish, due to the current assumptions on $\Ho^*_c(X)$ and we are done.
\end{proof}

Using Proposition~\ref{prop:quotient_decat_Poincare} and Theorem~\ref{thm:equivalence_at_limit}, we arrive at the following generalization of~\cite{farb_coincidences_2019}*{Thm. 1.2}
\begin{prop}
\label{prop:densities_Poincare_Zmn}
Let $X$ be an irreducible scheme of dimension $d$ such that the cup-product of any $|\mathbf{n}|$ classes in $\Ho^*_c(X)$ vanishes. Then
\[
	\frac{\Poinc(\algb{m}{\mathbf{n}}(X))}{\Poinc(\algb{m}{\infty}(X))} = \Poinc(\Lambda \otimes_{\algb{m}{\infty}(X)} \algb{m}{\mathbf{n}}(X)). \teq\label{eq:densities_poinc_nice_case}
\]
In particular, the quotient depends only on $|\mathbf{n}|$.
\end{prop}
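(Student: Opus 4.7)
The plan is to reduce the statement to Proposition~\ref{prop:quotient_decat_Poincare} by showing that, under the vanishing hypothesis, the relevant $\coLie$-coalgebras are formal and split as direct sums in the appropriate way. The preceding lemma already does most of the work: it establishes that $\Ho^*(\liealg{m}{\mathbf{n}}(X)^\vee)$ carries only a trivial $\Linfty$-algebra structure, since the only possible non-vanishing operation (for degree reasons) is the $|\mathbf{n}|$-ary bracket, and the homotopy transfer formula forces any such bracket to involve $|\mathbf{n}|$-fold iterated cup-products in $\Ho^*_c(X)$, which vanish by assumption.

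Passing back to $\coLie$-coalgebras by dualizing, this means that $\liealg{m}{\mathbf{n}}(X)$ is equivalent, as a $\coLie$-coalgebra, to its cohomology with trivial $\coLie$-structure. Combining with Corollary~\ref{cor:computation_of_liealg}, we obtain a (non-canonical, but sufficient) splitting of strongly unital $\coLie$-coalgebras
\[
\liealg{m}{\mathbf{n}}(X) \simeq \bigoplus_{k=1}^m \Lambda_{\unit_k}[1] \oplus \mathfrak{u} \oplus \mathfrak{v},
\]
where $\mathfrak{u}$ is the non-unital part of $\liealg{m}{\infty}(X)$ and $\mathfrak{v} \simeq C^*(X, \omega_X) \otimes \Lambda_{\mathbf{n}}[2-2d|\mathbf{n}|](-d|\mathbf{n}|)$ (with appropriate twists). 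Under this decomposition, $\liealg{m}{\infty}(X)$ corresponds to the $\bigoplus_{k=1}^m \Lambda_{\unit_k}[1] \oplus \mathfrak{u}$ summand. Recalling from Proposition~\ref{prop:conf_vs_cochev} that $\alg{m}{\mathbf{n}}(X) \simeq \coChev^\un(\liealg{m}{\mathbf{n}}(X))$, we are precisely in the setup of Proposition~\ref{prop:quotient_decat_Poincare}.

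Applying Proposition~\ref{prop:quotient_decat_Poincare} then yields the first equality
\[
\frac{\Poinc(\algb{m}{\mathbf{n}}(X))}{\Poinc(\algb{m}{\infty}(X))} = \Poinc(\Lambda \otimes_{\algb{m}{\infty}(X)} \algb{m}{\mathbf{n}}(X)).
\]
The final assertion, that the right-hand side depends only on $|\mathbf{n}|$, is then immediate from Theorem~\ref{thm:equivalence_at_limit}, which furnishes an equivalence of algebras $\Lambda \otimes_{\algb{m}{\infty}(X)} \algb{m}{\mathbf{n}}(X) \simeq \Lambda \otimes_{\algb{1}{\infty}(X)} \algb{1}{|\mathbf{n}|}(X)$. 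The only step requiring real care is the formality argument at the start: one must be sure that the $\Linfty$-structure transferred from the $E_\infty$-action of $\Ho^*_c(X)$ on $\Ho^*(\liealgOr{m}{\mathbf{n}}^\vee)$ really does vanish, which is where the hypothesis on $|\mathbf{n}|$-fold cup products enters; once formality is in hand, the remainder is a mechanical application of the machinery of~\S\ref{subsec:decategorification_coChev} and~\S\ref{subsec:densities_at_infty}.
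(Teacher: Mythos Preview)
Your proof is correct and follows essentially the same route as the paper, which simply cites Proposition~\ref{prop:quotient_decat_Poincare} and Theorem~\ref{thm:equivalence_at_limit} after invoking the preceding lemma. You have spelled out the intermediate steps (formality via homotopy transfer, the resulting direct-sum decomposition of $\liealg{m}{\mathbf{n}}(X)$, and the identification with the setup of Proposition~\ref{prop:quotient_decat_Poincare}) that the paper leaves implicit.
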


\subsubsection{}
In general, we do not have equality~\eqref{eq:densities_poinc_nice_case}. The RHS of~\eqref{eq:densities_poinc_nice_case}, which should be thought of as the ``real'' density, is explicitly computable. The following result computes the categorified homological density at the limit.

\begin{prop}
\label{prop:stable_homological_density}
We have the following equivalences
\[
	\Lambda\otimes_{\algb{m}{\infty}(X)} \algb{m}{\mathbf{n}}(X) \simeq \oblv_{\gr}(\Lambda \otimes_{\alg{m}{\infty}(X)} \alg{m}{\mathbf{n}}(X)) \simeq \oblv_{\gr} (\Sym(C^*(X, \omega_X) \otimes \Lambda_{\mathbf{n}}[1-2d|\mathbf{n}|](-d|\mathbf{n}|))).
\]
\end{prop}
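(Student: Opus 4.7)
The plan is to establish the two equivalences separately, with the second being immediate and the first requiring a reduction to the $\coLie$-coalgebra side followed by a degree-based splitting argument.

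The second equivalence is essentially formal: by Corollary~\ref{cor:computation_of_graded_quotients}, we already have $\Lambda \otimes_{\alg{m}{\infty}(X)} \alg{m}{\mathbf{n}}(X) \simeq \Sym(C^*(X,\omega_X) \otimes \Lambda_\mathbf{n}[1-2d|\mathbf{n}|](-d|\mathbf{n}|))$ in $\ComAlg^{\un}(\Vect^{\multgr{m}})$, and the functor $\oblv_{\gr}$ is symmetric monoidal (see~\S\ref{subsubsec:oblv_gr_def}), hence commutes with $\Sym$.

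For the first equivalence, I will follow the strategy used in the proof of Theorem~\ref{thm:equivalence_at_limit}. By Corollary~\ref{cor:adjunction_for_algebras_colim} applied to unital augmented algebras, $\lbar{(-)}$ commutes with relative tensor products, and since $\lbar{\Lambda[\multgr{m}]} \simeq \Lambda$, we obtain
\[
	\Lambda \otimes_{\algb{m}{\infty}(X)} \algb{m}{\mathbf{n}}(X) \simeq \lbar{\matheur{B}}, \qquad \matheur{B} := \Lambda[\multgr{m}] \otimes_{\alg{m}{\infty}(X)} \alg{m}{\mathbf{n}}(X) \in \ComAlg^{\un}(\Vect^{\multfil{m}}).
\]
Via the equivalence $\ComAlg^{\grun}(\Vect^{\multgrplus{m}}) \simeq \coLie^{\un}(\Vect^{\multgrplus{m}})$ of~\S\ref{subsubsec:comparing_quot_un}, write $\matheur{B} \simeq \coChev^{\un}(\mathfrak{q})$ where $\mathfrak{q} \in \coLie^\un(\Vect^{\multgrplus{m}})$ is the pushout $\liealg{m}{\mathbf{n}}(X) \sqcup_{\liealg{m}{\infty}(X)} \bigoplus_k \Lambda_{\unit_k}[1]$. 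Combining~\S\ref{subsubsec:assgr_as_tensoring}, Lemma~\ref{lem:assgr_coChev}, and Corollary~\ref{cor:computation_of_graded_quotients}, the quotient $\Quot_\un \mathfrak{q}$ is abelian, with underlying object $C^*(X,\omega_X) \otimes \Lambda_\mathbf{n}[2-2d|\mathbf{n}|](-d|\mathbf{n}|)$ concentrated in graded degree $\mathbf{n}$.

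The key technical step, which is where Proposition~\ref{prop:stable_homology_split_coLie} becomes applicable, is to show that $\mathfrak{q}$ splits in $\coLie^\un(\Vect^{\multgrplus{m}})$ as $\mathfrak{q} \simeq \bigoplus_k \Lambda_{\unit_k}[1] \oplus \Quot_\un\mathfrak{q}$. The underlying direct-sum splitting is automatic since $|\mathbf{n}| \geq 2$ forces $\mathbf{n} \neq \unit_k$ for all $k$, placing the two summands in disjoint graded degrees. To upgrade the section $\Quot_\un \mathfrak{q} \to \mathfrak{q}$ to a $\coLie$-morphism, one must verify that all mixed cobrackets vanish. Graded-degree bookkeeping rules out all $n$-ary mixed cobrackets except the single $|\mathbf{n}|$-ary cobracket from $\Quot_\un \mathfrak{q}$ to $(\bigoplus_k \Lambda_{\unit_k}[1])^{\otimes |\mathbf{n}|}_{\sgn,\Sigma_{|\mathbf{n}|}}$. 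The main obstacle is to show this one vanishes too, but I expect it to fall to a cohomological-degree count: the source $\Quot_\un \mathfrak{q}$ lives in cohomological degrees $[2d(|\mathbf{n}|-1)-2,\, 2d|\mathbf{n}|-2]$, which for $d \geq 1, |\mathbf{n}| \geq 2$ is contained in $\mathbb{Z}_{\geq 0}$, while the target $(\text{unit})^{\otimes|\mathbf{n}|}$ is concentrated in cohomological degree $-|\mathbf{n}| < 0$. Since cobrackets preserve cohomological degree, the map is forced to be zero.

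With the splitting established, Proposition~\ref{prop:stable_homology_split_coLie} gives $\lbar{\coChev^\un \mathfrak{q}} \simeq \oblv_{\gr}(\coChev^\un(\Quot_\un \mathfrak{q}))$. Combining with $\coChev^\un(\Quot_\un \mathfrak{q}) \simeq \assgr \matheur{B} \simeq \Lambda \otimes_{\alg{m}{\infty}(X)} \alg{m}{\mathbf{n}}(X)$ (Lemma~\ref{lem:assgr_coChev}) completes the first equivalence and hence the proof. This also conceptually matches the remark in the introduction that ``a certain $\coLie$-coalgebra happens to be abelian due to degree considerations''—the degree gap above is what rigidifies the abelian structure on $\Quot_\un\mathfrak{q}$ into a splitting of the full unital coLie-coalgebra $\mathfrak{q}$.
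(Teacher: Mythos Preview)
Your proposal is correct and takes essentially the same approach as the paper: both set up the pushout $\mathfrak{q}$, show it is abelian by a graded-plus-cohomological degree count, and then invoke Proposition~\ref{prop:stable_homology_split_coLie}. The paper carries out the degree argument on the dual side---passing to the $\Lie$-algebra $\mathfrak{q}^\vee$, applying homotopy transfer, and checking that the only graded-degree-allowed $L_\infty$ operation (the $|\mathbf{n}|$-ary bracket) vanishes for cohomological reasons---whereas you argue directly with cobrackets; these are dual formulations of the same computation.

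One small correction: the $n$-ary $\coLie_\infty$ cooperation has cohomological degree $n-2$, not $0$, so your assertion that ``cobrackets preserve cohomological degree'' is inaccurate for $|\mathbf{n}|>2$. Fortunately your conclusion survives: with the correct shift, the image of $\Quot_\un\mathfrak{q}$ lands in degrees $\geq |\mathbf{n}|-2 \geq 0$, which still misses the target sitting in degree $-|\mathbf{n}|$. If you want to make the higher-cobracket reasoning fully rigorous without dualizing, you should invoke homotopy transfer for $\coLie_\infty$ explicitly, as the paper does on the $L_\infty$ side.
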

\begin{proof}
Consider the pushout in $\coLie(\Vect^{\multgrplus{m}})$
\[
\xymatrix{
	\liealg{m}{\infty}(X) \ar[d] \ar[r] & \liealg{m}{\mathbf{n}}(X) \ar[d] \\
	\bigoplus_{k=1}^m \Lambda_{\unit_k}[1] \ar[r] &  \mathfrak{q}
}
\]
which corresponds to the quotient
\[
	\Lambda[\multgr{m}] \otimes_{\alg{m}{\infty}(X)} \alg{m}{\mathbf{n}}(X) = \coChev^{\un} \mathfrak{q}.
\]

The miracle here is that $\mathfrak{q}$ is abelian. Indeed,
\[
	\oblv_{\Lie} (\mathfrak{q}^\vee) \simeq \bigoplus_{k=1}^m \Lambda_{\unit_k}[-1] \oplus (C^*_c(X) \otimes \Lambda_\mathbf{n}[-2+2d|\mathbf{n}|]),
\]
and by graded-degree considerations, the only possibly non-trivial $\Linfty$-operations on $\Ho^*(\mathfrak{q}^\vee)$ are the $|\mathbf{n}|$-ary operations, which have cohomological degree $2 - |\mathbf{n}|$
\[
	l_{|\mathbf{n}|}: \bigotimes_{k=1}^m \Lambda_{\unit_k}[-1]^{\otimes \mathbf{n}_k} \to \Ho^*_c(X)[2d|\mathbf{n}| - |\mathbf{n}| ]_{\mathbf{n}}.
\]
Now, the highest cohomological degree of the RHS is $2d + |\mathbf{n}|(1-2d)$, which is always less than the cohomological degree of the LHS, $|\mathbf{n}|$, since $|\mathbf{n}| \geq 2$:
\[
	|\mathbf{n}| > 2d + |\mathbf{n}|(1-2d) \Leftrightarrow 0 > 2d(1-|\mathbf{n}|).
\]
This forces the operation to be trivial.

Altogether, we have
\begin{align*}
	\Lambda\otimes_{\algb{m}{\infty}(X)} \algb{m}{\mathbf{n}}(X) 
	&\simeq \lbar{\Lambda[\multgr{m}]} \otimes_{\lbar{\alg{m}{\infty}(X)}} \lbar{\alg{m}{\mathbf{n}}(X)} \\
	&\simeq \lbar{\Lambda[\multgr{m}] \otimes_{\alg{m}{\infty}(X)} \alg{m}{\mathbf{n}}(X)} \\
	&\simeq \oblv_{\gr}(\coChev^{\un} (\Quot_\un \mathfrak{q})) \tag{Proposition~\ref{prop:stable_homology_split_coLie}}\\
	&\simeq \oblv_{\gr}(\coChev^{\un}(0 \sqcup_{\liealg{m}{\infty}(X)} \liealg{m}{\mathbf{n}}(X))) \tag{by~\eqref{eq:quot_un_coLie_q}} \\
	&\simeq \oblv_{\gr}(\Lambda \otimes_{\alg{m}{\infty}(X)} \alg{m}{\mathbf{n}}(X)) \\
	&\simeq \oblv_{\gr} (\Sym(C^*(X, \omega_X) \otimes \Lambda_{\mathbf{n}}[1-2d|\mathbf{n}|](-d|\mathbf{n}|))). \tag{Corollary~\ref{cor:computation_of_graded_quotients}}
\end{align*}
\end{proof}

\section*{Acknowledgments}
This paper owes an obvious intellectual debt to the illuminating treatments of factorization homology by J. Francis, D. Gaitsgory, and J. Lurie in~\cite{gaitsgory_weils_2014,gaitsgory_atiyah-bott_2015,francis_chiral_2011}. The author would like to thank B. Farb and J. Wolfson for bringing the question of explaining coincidences in homological densities to his attention. Moreover, the author thanks J. Wolfson for many helpful conversations on the subject, O. Randal-Williams for many comments which greatly help improve the exposition, and G. C. Drummond-Cole for many useful conversations on $\Linfty$-algebras. Finally, the author is grateful to the anonymous referee for carefully reading the manuscript and for providing numerous comments which greatly helped improve the clarity and precision of the exposition.

This work is supported by the Advanced Grant ``Arithmetic and Physics of Higgs moduli spaces'' No. 320593 of the European Research Council and the Lise Meitner fellowship ``Algebro-Geometric Applications of Factorization Homology,'' Austrian Science Fund (FWF): M 2751.

\bibliography{densities}
\end{document}